\newcommand{\bp}{{\bar{P}}}
\DeclareMathOperator{\SO}{SO(3)}
\newcommand{\C}{{\mathcal{C}}}
\newcommand{\J}{{\mathcal{J}}}
\newcommand{\mx}{\mathfrak{X}}
\newcommand{\lie}[1]{\mathfrak{#1}}
\newcommand{\dr}{\mathbf{d}}
\newcommand{\ip}[1]{{\mathbf{i}}_{#1}}
\newcommand{\an}[1]{\arrowvert_{#1}}
\newcommand{\poi}[1]{\{#1\}}
\newcommand{\pois}{\poi{\cdot\,,\cdot}}
\newcommand{\D}{{\mathcal{D}}}
\newcommand{\V}{{\mathcal{V}}}
\newcommand{\K}{{\mathcal{K}}}
\newcommand{\T}{{\mathcal{T}}}
\newcommand{\inc}{\iota_{(\rho)}}
\newcommand{\incg}{\iota_{\sigma}}
\newcommand{\inv}{^{-1}}
\newcommand{\R}{\mathbb{R}}
\newcommand{\Z}{\mathbb{Z}}
\newcommand{\sfe}{\mathbb{S}^1}
\DeclareMathOperator{\dom}{Dom}
\DeclareMathOperator{\erz}{span}
\DeclareMathOperator{\Id}{Id}
\DeclareFontFamily{U}{matha}{\hyphenchar\font45}
\DeclareFontShape{U}{matha}{m}{n}{
      <5> <6> <7> <8> <9> <10> gen * matha
      <10.95> matha10 <12> <14.4> <17.28> <20.74> <24.88> matha12
      }{}
\DeclareSymbolFont{matha}{U}{matha}{m}{n}
\DeclareMathSymbol{\operp}         {2}{matha}{"6B}
\newtheorem{theorem}{Theorem}[section]
\newtheorem{lemma}[theorem]{Lemma}
\newtheorem{proposition}[theorem]{Proposition}
\newtheorem{corollary}[theorem]{Corollary}
\newtheorem{definition}[theorem]{Definition}
\newtheorem{example}[theorem]{Example}
\newtheorem{remark}[theorem]{Remark}
\theoremstyle{nonumberplain}
\newtheorem{proof}{Proof}
\begin{document}
\title{ Dirac optimal reduction}
\author{M. Jotz and T. S. Ratiu
}
\date{}
\maketitle

\begin{abstract}
\centerline{\textbf{Abstract}}
The purpose of this paper is to generalize the (Poisson) Optimal Reduction Theorem in 
\cite{OrRa04} to general proper Lie group actions on Dirac manifolds,
 formulated both in terms of point and orbit reduction. 
A comparison  to general  standard singular Dirac reduction is given emphasizing the 
desingularization role played by optimal reduction. 
\end{abstract}

\noindent \textbf{AMS Classification:} Primary subjects: 70H45, 70G65, 

\hspace{3.5cm} Secondary subjects: 70G45, 53D17, 53D99

\noindent \textbf{Keywords:} Dirac structures, singular reduction,
proper action, optimal momentum map.

\tableofcontents

\section{Introduction}

Dirac structures, introduced in \cite{CoWe88} and systematically investigated for the first time in \cite{Courant90a}, have a wide range of applications in geometry and theoretical physics. They include 2-forms, Poisson structures, foliations and  also provide a
convenient geometric setting for the theory of nonholonomic systems and
circuit theory. The study of sub-objects and quotient objects in the Dirac category, central in the theory of reduction, is of particular interest.

Recent years have seen a significant development of Dirac structures both 
from the geometric point of view as well as in applications. In the presence 
of symmetry, one can perform reduction to eliminate variables; see 
\cite{Courant90a}, \cite{Blankenstein00}, \cite{BlvdS01}, \cite{BuCaGu07}, 
\cite{JoRa08}, \cite{JoRaZa11}, \cite{MaYo07}, \cite{MaYo09}, for the regular 
case and \cite{BlRa04}, \cite{JoRaSn11}, \cite{JoRa11}
 for the singular situation. All these 
reduction procedures are in the spirit of Poisson reduction (\cite{MaRa86}, 
\cite{Sniatycki03}, \cite{FeOrRa09}, \cite{JoRa09}).

In symplectic reduction, a central role is played by the momentum map. 
While its existence in the symplectic category is not guaranteed, in 
concrete applications it is rarely the case that a symplectic Lie group action 
fails to admit a momentum map. The situation is, however, drastically 
different in the Poisson category, where the existence of the momentum 
map imposes unreasonable constraints on the symmetries. Because of 
that, \cite{OrRa04} introduced the optimal momentum map, a conservation 
law of a Poisson symmetry, that is always defined and has values in a 
topological space. A reduction method based on the optimal 
momentum map was  proved.

In the present paper we generalize the optimal momentum map and the 
optimal reduction procedure
in \cite{OrRa04} to closed Dirac manifolds\footnote{Historically, ``closed" and ``integrable" Dirac manifolds were synonyms. However, due to the relation of Dirac geometry to groupoids, an ``integrable" Dirac structure is one that, viewed as an algebroid, integrates to a groupoid. This is the reason why in this paper we use exclusively the term ``closed".}.
As we shall see, with necessary assumptions and appropriately 
extended definitions, this important desingularization 
method works also for  Dirac manifolds. The power of optimal 
reduction can be immediately  seen, already
for free actions (\ref{section_free_case}), by noting that 
it has as trivial corollary the stratification in 
presymplectic leaves of a closed Dirac manifold.

Singular optimal reduction is carried out in two steps.
 First, one 
considers  appropriately chosen distributions jointly
defined by the symmetry group and the Dirac structure.
In the free case, this reduces to one distribution that is automatically integrable in the sense
of Stefan and Sussmann if the hypotheses for regular/standard Dirac reduction are satisfied.
 In the nonfree case, this is to no longer true, in general.
However, if these generalized distributions are integrable, their leaves define the level sets of corresponding 
natural optimal momentum maps. 

Second, one passes to the quotient and 
constructs on it the reduced Dirac structure. It is not possible to extend this 
result in a naive manner to non-closed Dirac structures because the first 
consequence of non-closedness is the non-integrability of 
the projection of the Dirac structure on its tangent part, and hence, in general,
the distribution 
used in the previously described reduction process is also nonintegrable. 

\paragraph{Outline of the paper.} In Section \ref{dis} we briefly review the 
relevant definitions and properties of generalized distributions; special attention is given to   
integrability conditions for tangent distributions. Section \ref{sec:Dirac_structures} recalls the 
general theory of Dirac structures and Dirac actions of Lie groups and Lie 
algebras. Closed Dirac structures are emphasized and it is shown that 
flows of Hamiltonian vector fields defined by admissible functions leave the Dirac structure invariant. 
As a corollary, it is proved that the intersection of the Dirac structure with the tangent bundle is an 
integrable generalized distribution. The necessary material from the  
theory of proper Lie group actions is reviewed in Section \ref{sec:proper}. 
An averaging procedure that is an important technical tool in the 
proof of many statements is discussed and 
the concept of descending sections of the Pontryagin bundle is recalled. 

In Section \ref{section_free_case}, we summarize the results obtained later on in the particular case of a free 
Lie group action. No proofs are provided since they will be 
done for the general case in subsequent sections. Nevertheless,
we believe that this section is helpful, since it illustrates 
the much more complicated general theory in a simple case; 
the distribution appearing here is the image of an appropriately chosen Lie algebroid by its anchor map.

Section \ref{sec:optimal_distributions} is devoted to the study of two 
special distributions that are crucial in the reduction procedure; they are 
called optimal distributions and are associated to orbit and isotropy types, 
respectively. 
Under the necessary conditions for standard singular reduction, these two optimal distributions are 
algebraically involutive and a hypothesis is given so that the one associated to isotropy types is
integrable.

In Section \ref{sec:optimal_momentum} we introduce and study two optimal 
momentum maps, objects always available for  canonical Lie 
group actions on closed Dirac manifolds if the optimal distributions are spanned by their descending sections and are integrable. If the Dirac structure comes from a Poisson manifold, these conditions are always satisfied and the two optimal momentum maps coincide.  Optimal reduction is presented in Section \ref{sec:optimal_reduction}. Two point optimal reduction theorems associated to the two optimal momentum maps are proved. In addition, an optimal orbit reduction theorem is presented and it is shown that the three reduction procedures, i.e., 
the two optimal point reduction and the optimal orbit reduction, are  equivalent. 

In Section \ref{sec:comparison} we show that standard and optimal Dirac reduction are equivalent under appropriate hypotheses. Section 
\ref{sec:examples} is devoted to the study of several examples illustrating the theory. An appendix summarizes the technical results used to compute
the symmetry invariant generators for the set of invariant vector fields used 
in the examples.

\paragraph{Notations and conventions.}
We will write $C^\infty(M)$ for the sheaf of local functions on $M$. That is,
an element $f \in C^\infty(M)$ is a smooth function $f:U\to \R$, with $U$ an
open subset of $M$.
In the same manner, if $E$ is a vector bundle over $M$, or a generalized
distribution on $M$, we will denote by
$\Gamma(E)$ the set of local sections of $E$. In particular, the sets of local
vector fields and one-forms on $M$ will be denoted by $\mx(M)$ and
$\Omega^1(M)$, respectively. We will write $\dom(\sigma)$ for the open  
domain of definition  of $\sigma\in \Gamma(E)$.

\medskip

The Lie group $G$ is always assumed to be connected;
$\mathfrak{g}$ denotes its Lie algebra. All $G$-actions on $M$
are smooth and are denoted by $\Phi: G\times M\to M$, 
$(g, m) \mapsto gm = g \cdot m=\Phi_g(m)$, for all $g \in G$ and 
$m \in M$. If $\xi \in \mathfrak{g}$, then $\xi_M \in 
\mathfrak{X} (M) $ defined by $\xi_M(m) := 
\left.\frac{d}{dt}\right|_{t=0} \exp (t \xi) \cdot m $ is 
called the \emph{infinitesimal generator} or 
\emph{fundamental vector field} defined by $\xi$.

A section $X$ of $TM$ (respectively $\alpha$ of $T^*M$) is 
called \emph{$G$-invariant} if $\Phi_g^*X=X$ (respectively
$\Phi_g^*\alpha=\alpha$) for all $g\in G$. Here, the vector field $\Phi_g^\ast X$  
is defined by $\Phi_g^\ast X=T\Phi_{g^{-1}}\circ X\circ\Phi_g$, that is,
$(\Phi_g^\ast X)(m)=T_{gm}\Phi_{g^{-1}}X(gm)$ for all $m\in M$.

\medskip

Recall that a subset $ N \subset  M $ is an \emph{initial} submanifold 
of $ M $ if  $ N $ carries a manifold structure such that the inclusion 
$ \iota :N \hookrightarrow M$ is a smooth immersion and satisfies
the following condition: for any smooth manifold $ P $  an arbitrary
map $ g: P \rightarrow  N$ is smooth if and only if $\iota \circ  g: P 
\rightarrow  M$ is smooth; in this case, $\iota$ is said to be a \textit{regular immersion}. The notion of initial submanifold lies strictly
between those of injectively immersed and embedded submanifolds.

\section{Generalized distributions, symmetries, and standard Dirac reduction}

\subsection{Generalized distributions}\label{dis}
The \emph{Pontryagin} bundle $\mathsf P_M$ of a smooth manifold 
$M$ is the direct sum $\mathsf{P}_M=TM\oplus T^*M$.
A  \emph{generalized distribution} $\Delta$ on $M$ is a subset
$\Delta$ of $\mathsf{P}_M$  such that for each $m\in M$, the set 
$\Delta(m) :  = \Delta \cap \mathsf{P}_M(m)$ is a
vector subspace of $\mathsf{P}_M(m)= T_mM\times T_m^*M$. The 
\emph{rank}  of $\Delta$ at $ m  \in  M $ is $\dim \Delta (m)$. 
A point $m\in M$ is a \emph{regular} point of the distribution 
$\Delta$ if there exists a neighborhood $U$ of $m$ such that 
the rank of $\Delta$ is constant on $U$. Otherwise, $m$ is 
a \emph{singular} point of the distribution. 

A local \emph{differentiable  section} of $\Delta$ is a smooth 
section $\sigma\in \Gamma(\mathsf P_M)$ defined on some open 
subset $U \subset  M$ such that $\sigma(u) \in  \Delta (u)$ 
for each $u \in  U$; the open domain of definition of $\sigma$
is denoted by $\operatorname{Dom}(\sigma)$. Let $\Gamma(\Delta)$ 
be the space of differentiable local sections of $\Delta$. 
A generalized
distribution is said to be \emph{differentiable} or 
\emph{smooth} if for every point $ m \in  M $ and every 
vector $ v \in  \Delta (m) $, there is a differentiable 
section $ \sigma \in  \Gamma(\Delta)$ defined on an open
neighborhood $ U $ of $ m $ such that $ \sigma(m) = v $. 
A subset $S\subseteq\Gamma(\mathsf P_M)$ is said to 
\textit{span} the smooth generalized distribution $\Delta$ 
if $\Gamma(\Delta)=\erz_{C^\infty(M)}(S)$; $S$
spans \emph{pointwise}
$\Delta$ if for all $m\in M$, the values of 
the elements of $S$ at $m$ span $\Delta(m)$.

\medskip 

A smooth generalized distribution contained in $TM$  is
called a \emph{smooth tangent distribution}; a smooth 
generalized distribution contained in $T^*M$ is
called a \emph{smooth cotangent distribution}.

\subsubsection{Smooth orthogonals and annihilators}
The Pontryagin bundle $\mathsf{P}_M=TM \oplus T^* M$ of  a 
smooth manifold $M$ is endowed with a non-degenerate symmetric 
fiberwise bilinear form of signature $(\dim M, \dim M)$ given by 
\begin{equation}\label{pairing}
\left\langle (u_m, \alpha_m), ( v_m, \beta_m ) \right\rangle : 
= \left\langle\beta_m , u_ m \right\rangle + \left\langle
\alpha_m, v _m \right\rangle
\end{equation}
for all $u_m, v_m\in T_mM$ and $\alpha_m,\beta_m \in T^\ast_mM$. 

If $\Delta \subset \mathsf P_M$ is a smooth generalized 
distribution, its \emph{smooth} \emph{orthogonal}  is the smooth
generalized distribution $\Delta^\perp \subseteq \mathsf P_M$  
defined by 
\begin{align*}
\Delta^ \perp (m): =
 \left\{\tau(m) \left|  
\begin{array}{c}\tau \in
\Gamma(\mathsf P_M) \text{ with } m\in \dom(\tau) \text{is such 
that  for all }\\
\sigma \in\Gamma(\Delta) \text{  with } m\in\dom(\sigma),\\
\text{ we have }  \left\langle \sigma,\tau
\right\rangle = 0 \text{ on } \dom(\tau)\cap\dom(\sigma)\end
{array}\right.\right\}.
\end{align*}
Note that the smooth orthogonal of a 
smooth generalized distribution is smooth, by construction. The 
inclusion $\Delta \subset \Delta^{ \perp\perp }$ is, in general, 
strict. If the distribution $\Delta$ is a vector subbundle of 
$\mathsf P_M$, then its smooth orthogonal is
also a vector subbundle of $\mathsf P_M$. 

\medskip

Let $\T\subseteq TM$ be a tangent
distribution. The \emph{smooth annihilator} of
$\T$
is the smooth codistribution $\T^\circ \subseteq T ^\ast M$ defined by
\[\T^\circ(m):=\left\{\alpha_m\left|\begin{array}{c}
 \alpha\in\Omega^1(M), m\in\dom(\alpha) \text{
  and } 
\alpha(X)=0\\ \text{ on }\dom(\alpha)\cap\dom(X)\text{ for all }
X\in\Gamma(\T)
\end{array}
\right.\right\}
\]
for all $m\in M$.  Analogously, we define the smooth annihilator
$\C^\circ \subseteq TM$ of a
codistribution $\C \subseteq T ^\ast M $.

\medskip
  
The tangent distribution $\mathcal{V}$ spanned by the 
fundamental vector fields of the action of a Lie group $G$ 
on a manifold $M$ is defined at every point $m \in M $ by
$\mathcal{V}(m):=\{\xi_M(m)\mid \xi\in\mathfrak{g}\}$.
If the action is not free, the rank of the fibers of $\V$
can vary on $M$. The smooth annihilator  $\mathcal{V}^\circ$ 
of $\mathcal{V}$ has the expression
\[
\mathcal{V}^\circ(m)=\{\alpha(m)\mid \alpha\in\Omega^1(M), \,
m\in\dom(\alpha),\text{ such that } \alpha(\xi_M)=0 
\text{ for all }\xi \in \mathfrak{g}\}.
\]
We will also use the smooth generalized distribution 
${\mathcal K}:=\mathcal{V}\oplus
\{0\}\subseteq \mathsf P_M$ 
and its smooth orthogonal ${\mathcal K}^\perp=
TM\oplus\mathcal{V}^\circ$.

\subsubsection{Generalized foliations and integrability of 
tangent distributions}\label{int_sing_dis}

To give content to the notion of integrability of a smooth 
tangent distribution and elaborate on it, we need to quickly 
review the concept and main properties of generalized 
foliations (see \cite{Stefan74a, Stefan74b, Stefan80}, 
\cite{Sussmann73} for the original articles and 
\cite{LiMa87}, \cite{Vaisman94}, \cite{Pflaum01}, or 
\cite{OrRa04},  for a quick review of this theory). 
\medskip

A \emph{generalized foliation} on $ M $ is a partition 
$\mathfrak{F} : = \{ \mathcal{L} _\alpha \}_{ \alpha \in  A}$ 
of $ M $  into disjoint connected sets, called \emph{leaves},  
such that each point $ m \in  M $ has a
\emph{generalized foliated chart} 
$ (U, \varphi : U \rightarrow  V \subseteq
\mathds{R} ^{\dim M}) $, $ m \in  U $. This means that  
there is some natural number $ p_\alpha \leq \dim M $, 
called the \emph{dimension} of the leaf $\mathcal{L} _\alpha$, 
and a subset $S_\alpha \subset \mathds{R}^{\dim M - p_\alpha}$ 
such that $ \varphi(U \cap \mathcal{L} _\alpha ) = 
\{(x^1, \ldots ,x^{\dim M} ) \in  V \mid  
(x^{p_\alpha+1}, \ldots  ,x^{\dim M} ) \in  S _\alpha \}$. 
Note that each $ (x^{p_\alpha+1}_ \circ  , \ldots  ,x^{\dim M}_ \circ  ) \in
S _\alpha$ 
determines a connected component $ ( U \cap \mathcal{L}_\alpha )_ \circ $ of
 $U \cap \mathcal{L}_\alpha$, 
that is, $ \varphi (( U \cap \mathcal{L}_\alpha )_ \circ ) 
= \{(x^1, \ldots , x^{p_\alpha}, x^{p_\alpha+1}_ \circ  , \ldots  ,x^{\dim M}_
\circ  ) \in V \} $. The key difference with the concept of foliation is that the 
number $ p_\alpha $ can change from leaf to leaf.  
The generalized foliated charts induce on each leaf a smooth manifold
structure that 
makes them into initial submanifolds of $ M $.

A leaf $ \mathcal{L} _\alpha $ is called \emph{regular} if it has an open
neighborhood 
that intersects only leaves whose dimension equals $ \dim \mathcal{L} _\alpha
$. 
If such a neighborhood does not exist, then $ \mathcal{L} _\alpha $ 
is called a \emph{singular} leaf. A point is called \emph{regular}
(\emph{singular}) 
if it is contained in a regular (singular) leaf. The set of vectors tangent to
the 
leaves of  $ \mathfrak{F} $ is defined by
\[
T(M,  \mathfrak{F}): 
= \bigcup _{ \alpha \in  A} 
\bigcup_{m \in  \mathcal{L} _\alpha} T _m \mathcal{L} _\alpha \subset  TM.
\]

\medskip
Let us turn now to the relationship between distributions and generalized
foliations.
In all that follows, $ \T $ is a smooth tangent distribution. An
\emph{integral manifold} of $ \T $ is an injectively immersed connected
manifold $ \iota _L : L \hookrightarrow M $,  where $ \iota _L $  is the
inclusion, satisfying the condition $ T _m \iota_L (T _m  L )  \subset  \T (m)
$  for every $ m \in  L $. The integral manifold $ L $ is of \emph{maximal
  dimension} at $ m \in  L $ if $ T _m \iota_L (T _m  L )  =  \T (m) $. The
distribution $ \T $ is \emph{completely integrable} if for every $m \in  M $
there is an integral manifold $ L $ of $ \T $, $ m \in  L $, everywhere of
maximal dimension. The distribution $\T$ is \emph{involutive} if it is
invariant under the (local) flows associated to differentiable sections of
$\T$. The distribution $\T$ is \emph{algebraically involutive} if for any two
smooth vector fields defined on an open set of $M $ which take values in $\T$,
their bracket also takes values in $\T$. Clearly, involutive distributions are
algebraically involutive and the converse is true if the distribution is a vector
subbundle. 

Recall that the Frobenius theorem states that a vector subbundle of $ TM $ is algebraically 
involutive if and only if it is the tangent bundle of a
foliation on $M $.

The same is true for distributions under the involutivity asumption: \textit{A smooth distribution is
  involutive 
if and only if it coincides with the set of vectors tangent to a generalized
foliation, 
that is, it is completely integrable.}  This is known as the Stefan-Sussmann Theorem.

We will formulate the Stefan-Sussmann theorem in the setting of 
a smooth tangent distribution spanned by a family of vector 
fields. Note that each smooth tangent distribution is spanned 
by the family of its smooth sections.

\medskip

Let  $\mathcal F$  be an everywhere defined family of local vector fields on $M$.
By {\it everywhere defined} we mean that for every $m \in M$ there
exists $X \in \mathcal F$ such that $m \in {\rm Dom}(X) $.  
Associate to the flows of the vector fields in $F$ the set of local
diffeomorphisms $\mathcal{A}_{\mathcal F}:=\{\phi _t\mid \,\phi _t \text{ flow of } X \in
\mathcal F\} $ of $M$ and the pseudogroup of transformations generated by it,
\[
A _{\mathcal F}:=(\mathds I, M)\bigcup\{\phi _{t _1}^1 \circ \cdots\circ \phi _{t
_n}^n\mid n \in \mathds
N\text{ and  } \phi _{t _n}^n \in \mathcal{A}_{\mathcal F}\text{ or } (\phi
_{t _n}^n) ^{-1} \in \mathcal{A}_{\mathcal F}\}.
\]
Analogously, we also define, for any $z  \in M $, the following vector
subspaces of $T_zM $:
\begin{align*}
\mathcal{D}_{\mathcal F} (z)&:=\erz_\R\left\{ \left. \frac{d}{dt}\right\arrowvert_{t=t_0}
\!\!\!\phi _t (y)
\,\Bigr|\, \phi _t\text{ flow of }X
\in \mathcal F,
\, \phi _{t_0} (y)=z\right\}\\
&=\erz_\R\{X(z)\in T_zM| \,X\in \mathcal F \text{ and }
z \in {\rm Dom}(X)\},\\
D _{\mathcal F} (z) &:=\erz_\R\{T _y \phi_T \left(
\mathcal{D}_{\mathcal F}(y)\right)\mid \phi_T\in A _{\mathcal F},
\phi_T(y)=z\}.
\end{align*}
Note that, by construction, $\mathcal{D}_{\mathcal F}$ is a smooth tangent distribution;   
$\mathcal{D}_{\mathcal F}$ is said to be the smooth tangent distribution \emph{spanned} by $\mathcal F$. 

The $A _{\mathcal F} $-orbits, also called the \emph{accessible sets}
of the family ${\mathcal F}$,  form a generalized foliation  whose leaves
have as tangent spaces the values of $D _{\mathcal F}$ (see, for example, \cite{OrRa04}).  
An important question is determining when
the smooth tangent distribution $\mathcal{D}_{\mathcal F} $ spanned by ${\mathcal F}$ is
integrable. 

\begin{theorem}
\label{frobenius with vector fields}
{\rm(\cite{Stefan74a} and~\cite{Sussmann73})}.
Let $\mathcal{D}_{\mathcal F}$ be a differentiable generalized distribution on the smooth
manifold $M$ spanned pointwise by an everywhere defined
family of vector fields ${\mathcal F}$. The following properties are
equivalent:
\begin{enumerate}
\item The distribution $\mathcal{D}_{\mathcal F}$ is invariant  under the
pseudogroup of transformations generated by ${\mathcal F}$, that is, for each
$\phi_T\in
A_{{\mathcal F}}$ and for each
$z\in M$ in the domain of $\phi_T$,
\[
T_z\phi_T(\mathcal{D}_{\mathcal F} (z))=\mathcal{D}_{\mathcal F}(\phi_T(z)).
\]
\item  $\mathcal{D}_{\mathcal F}= D _{\mathcal F} $.
\item For any $X \in {\mathcal F} $ with flow $\phi $ and any $x\in {\rm
Dom}(X)$, there
exist:
\begin{enumerate}
\item A finite set $\{X _1, \ldots, X _p\}\!\subset\! {\mathcal F}$ such
that $\mathcal{D}_{\mathcal F}
(x)\!=\! \erz_\R\{X _1(x), \ldots, X _p(x)\} $.
\item A constant $\epsilon>0 $ and Lebesgue integrable functions
$\lambda_{ij}:(-
\epsilon,
\epsilon) \rightarrow \mathds R$ {\rm (}$1 \leq i,j \leq p${\rm )} such that for
every $t \in (-
\epsilon, \epsilon)$ and $j \in \{1, \ldots, p\} $:
\[
[X, X _j](\phi _t(x))=\sum _{i=1}^{p}\lambda_{ij}(t) X _i(\phi _t(x))
\]
and $\mathcal{D}_{\mathcal F}
(\phi _t(x))= \erz_\R\{X _1(\phi _t(x)), \ldots, X _p(\phi _t(x))\} $.
\end{enumerate}
\item The distribution $\mathcal{D}_{\mathcal F}$ is integrable and its
maximal integral
manifolds are  the $A_{{\mathcal F}}$-orbits.
\end{enumerate}
\end{theorem}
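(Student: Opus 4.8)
The plan is to prove the cycle of equivalences $(1)\Rightarrow(4)\Rightarrow(3)\Rightarrow(2)\Rightarrow(1)$, which is the classical Stefan--Sussmann argument adapted to the pointwise-spanned setting; the non-routine analytic input is the construction, via an absolutely continuous ODE, of local integral manifolds. First I would record the invariance of $\mathcal{D}_{\mathcal F}$ along the flow of a single $X\in\mathcal F$ as the basic building block: assuming $(1)$, for each $z$ and each $\phi_T\in A_{\mathcal F}$ we have $T_z\phi_T(\mathcal{D}_{\mathcal F}(z))=\mathcal{D}_{\mathcal F}(\phi_T(z))$, and in particular this holds for the flows $\phi_t$ of the vector fields themselves. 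Differentiating the relation $T\phi_{-t}\,\mathcal{D}_{\mathcal F}(\phi_t(x))=\mathcal{D}_{\mathcal F}(x)$ in $t$ will produce, after choosing a local frame $X_1,\dots,X_p\in\mathcal F$ realizing $\mathcal{D}_{\mathcal F}(x)$, the bracket identities in $(3)(b)$ with $\lambda_{ij}$ only Lebesgue integrable — this is exactly the point where one cannot hope for continuity of the structure functions, and where the absolute-continuity formalism is needed. So $(1)\Rightarrow(3)$ is almost immediate; I would instead route through $(4)$ to get the geometric content.

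For $(1)\Rightarrow(4)$, I would invoke the fact recalled just before the theorem that the $A_{\mathcal F}$-orbits form a generalized foliation whose tangent distribution is $D_{\mathcal F}$; under hypothesis $(1)$ one shows $\mathcal{D}_{\mathcal F}=D_{\mathcal F}$ (which is $(2)$, and I would actually prove $(1)\Leftrightarrow(2)$ first since it is purely formal: the inclusion $\mathcal{D}_{\mathcal F}\subseteq D_{\mathcal F}$ always holds by taking $\phi_T=\mathds{I}$, and $(1)$ says precisely that each generator $T_y\phi_T(\mathcal{D}_{\mathcal F}(y))$ of $D_{\mathcal F}(z)$ already lies in $\mathcal{D}_{\mathcal F}(z)$, giving the reverse inclusion; conversely if $\mathcal{D}_{\mathcal F}=D_{\mathcal F}$ then the stated invariance of $D_{\mathcal F}$ under $A_{\mathcal F}$ transfers to $\mathcal{D}_{\mathcal F}$). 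Then the $A_{\mathcal F}$-orbits are integral manifolds of $D_{\mathcal F}=\mathcal{D}_{\mathcal F}$ of maximal dimension everywhere, and a standard argument shows they are the maximal ones, giving $(4)$. The implication $(4)\Rightarrow(1)$ is a soft fact about foliations: if $\mathcal{D}_{\mathcal F}$ is the tangent distribution of a generalized foliation $\mathfrak F$, then each $\phi_t$ (flow of $X\in\mathcal F$, with $X$ tangent to the leaves since $X(z)\in\mathcal{D}_{\mathcal F}(z)=T_z\mathcal{L}$) maps leaves to leaves and hence its tangent map carries $T_z\mathcal{L}=\mathcal{D}_{\mathcal F}(z)$ onto $T_{\phi_t(z)}\mathcal{L}=\mathcal{D}_{\mathcal F}(\phi_t(z))$; composing gives the statement for all of $A_{\mathcal F}$.

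It remains to close the loop with the analytic core, $(3)\Rightarrow(2)$ or equivalently $(3)\Rightarrow(1)$. Here I would fix $z$, choose $X_1,\dots,X_p\in\mathcal F$ as in $(3)(a)$ spanning $\mathcal{D}_{\mathcal F}(z)$, and for a generator $X\in\mathcal F$ with flow $\phi_t$ use the integrable structure functions $\lambda_{ij}(t)$ from $(3)(b)$ to build the unique absolutely continuous matrix solution $B(t)$ of $\dot B(t)=\Lambda(t)B(t)$, $B(0)=\Id$, where $\Lambda(t)=(\lambda_{ij}(t))$. Then one verifies that $t\mapsto T_{\phi_t(x)}\phi_{-t}\big(X_j(\phi_t(x))\big)$ satisfies the same linear ODE (this is where $[X,X_j]=\pounds_X X_j$ and the flow-derivative formula $\tfrac{d}{dt}(\phi_{-t}^*X_j)=\phi_{-t}^*[X,X_j]$ enter, now in the a.e. sense), so it equals $\sum_i B_{ij}(t)X_i(x)$; since $B(t)$ is invertible this shows $T_{\phi_t(x)}\phi_{-t}(\mathcal{D}_{\mathcal F}(\phi_t(x)))=\erz_\R\{X_1(x),\dots,X_p(x)\}=\mathcal{D}_{\mathcal F}(x)$, i.e.\ invariance under each flow, and composing over words in $A_{\mathcal F}$ yields $(1)$. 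The main obstacle is precisely this step: keeping track of domains and the a.e.\ validity of the ODE identities, and justifying that the flow $\phi_t$ exists on the relevant time interval $(-\epsilon,\epsilon)$ uniformly in a neighbourhood so that the frame $\{X_i(\phi_t(x))\}$ stays a basis of $\mathcal{D}_{\mathcal F}(\phi_t(x))$ — exactly the content encoded in the last clause of $(3)(b)$. Everything else is formal manipulation with flows and the definitions of $\mathcal{D}_{\mathcal F}$, $D_{\mathcal F}$, and $A_{\mathcal F}$.
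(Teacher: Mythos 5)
The paper does not prove this theorem: it is quoted verbatim as a classical result with only the citations to Stefan and Sussmann, so there is no in-paper argument to compare against. Your outline reproduces the standard proof from those references correctly: the equivalence $(1)\Leftrightarrow(2)$ is indeed purely formal, $(1)\Leftrightarrow(4)$ follows from the stated fact that the $A_{\mathcal F}$-orbits foliate $M$ with tangent spaces $D_{\mathcal F}$, and the analytic core $(3)\Rightarrow(1)$ is exactly the Carath\'eodory linear ODE $\dot B(t)=\Lambda(t)B(t)$ with integrable coefficients, whose invertible solution transports the frame and yields flow-invariance. The only steps deserving explicit care in a written version are the ones you already flag: uniqueness for absolutely continuous solutions (a Gronwall estimate with integrable coefficient), and the open-closed argument extending invariance from $|t|<\epsilon$ to the full domain of each flow before composing over words in $A_{\mathcal F}$.
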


As already mentioned, given an involutive (and hence a completely
integrable) distribution $ \T $, each point $ m \in  M $ belongs to
exactly one connected integral manifold $ \mathcal{L} _m $ that is maximal
relative to inclusion. It turns out that $ \mathcal{L} _m $ is an initial
submanifold and that it is also the \emph{accessible} set of $ m $, that is, $
\mathcal{L} _m $ equals the subset of points in $ M $ that can be reached by
applying to $ m $ a finite number of composition of flows of elements of
$\Gamma (\T)$.  The collection of all maximal integral submanifolds of $
\T $ forms a generalized foliation $ \mathfrak{F} _\T $ such that $
\T = T(M, \mathfrak{F} _\T) $.  Conversely, given a generalized
foliation $ \mathfrak{F} $ on $ M $, the subset $ T(M, \mathfrak{F}) \subset
TM $ is a smooth completely integrable (and hence involutive) distribution
whose collection of maximal integral submanifolds coincides with $
\mathfrak{F} $. These two statements expand the Stefan-Sussmann Theorem cited
above.

\subsection{Generalities on Dirac structures}
\label{sec:Dirac_structures}

\subsubsection{Dirac structures} \label{admsble}
Recall that  the Pontryagin bundle $\mathsf{P}_M=TM \oplus T^* M$ of  a 
smooth manifold $M$ is endowed with a non-degenerate symmetric 
fiberwise bilinear form of signature $(\dim M, \dim M)$ given by 
\eqref{pairing}. A \emph{Dirac structure}  (\cite{CoWe88}, \cite{Courant90a}) 
on $M $ is a Lagrangian subbundle
$\mathsf{D} \subset\mathsf P_M $. That is, 
$ \mathsf{D}$ coincides with its orthogonal relative to \eqref{pairing} and so its
fibers are necessarily 
$\dim M $-dimensional.

The space $\Gamma(TM \oplus T ^\ast M) $ of smooth local sections of the Pontryagin
bundle is also endowed with a 
$\mathds{R}$-bilinear skew-symmetric bracket (which does not 
satisfy the Jacobi identity) given by
\begin{align}\label{wrong_bracket}
[(X, \alpha), (Y, \beta) ] : &
= \left( [X, Y],  \boldsymbol{\pounds}_{X} \beta - \boldsymbol{\pounds}_{Y} \alpha + \frac{1}{2}
  \mathbf{d}\left(\alpha(Y) 
- \beta(X) \right) \right) \nonumber \\
&= \left([X, Y],  \boldsymbol{\pounds}_{X} \beta - \mathbf{i}_Y \mathbf{d}\alpha 
- \frac{1}{2} \mathbf{d} \left\langle (X, \alpha), (Y, \beta) \right\rangle
\right)
\end{align}
(see \cite{Courant90a}). The Dirac structure is 
\emph{closed} (or \emph{integrable}) if $[ \Gamma(\mathsf{D}), \Gamma(\mathsf{D}) ] 
\subset \Gamma(\mathsf{D}) $. Since \linebreak
$\left\langle (X, \alpha), (Y, \beta) \right\rangle = 0 $ if $(X, \alpha), (Y,
\beta) \in \Gamma(\mathsf{D})$, 
integrability of the Dirac structure is often expressed in the literature
relative to a non-skew-symmetric
 bracket that differs from 
\eqref{wrong_bracket} by eliminating in the second line the third term of
the second component. This truncated expression which satisfies the Jacobi
identity but is no longer skew-symmetric is called the \emph{Courant-Dorfman bracket}:
\begin{equation}\label{Courant_bracket}
[(X, \alpha), (Y, \beta) ] : = \left( [X, Y],  \boldsymbol{\pounds}_{X} \beta - \ip{Y} \dr\alpha \right).
\end{equation}

A Dirac structure $\mathsf{D}$ on  a manifold $M$ defines two smooth tangent distributions 
$\mathsf{G_0}, \mathsf{G_1} \subset TM $ and two smooth cotangent distributions 
$\mathsf{P_0}, \mathsf{P_1} \subset T^*M$; their fibers 
at $m \in M$ are:
\begin{align*}
\mathsf{G_0}(m)&:= \{X(m) \in T_mM \mid X \in \mathfrak{X}(M), (X, 0) \in
\Gamma(\mathsf{D}) \} \\
\mathsf{G_1}(m)&:= \{X(m) \in T_mM \mid X \in \mathfrak{X}(M), \, \text{there exists}\;
\alpha \in \Omega^1(M), \text{such that}\; 
(X, \alpha) \in \Gamma(\mathsf{D})\}
\end{align*}
and
\begin{align*}
\mathsf{P_0}(m)&:= \{\alpha(m) \in T^*_mM \mid \alpha \in \Omega^1(M), 
(0, \alpha) \in \Gamma(\mathsf{D}) \} \\
\mathsf{P_1}(m)&:= \{\alpha(m) \in T^*_mM \mid  \alpha \in \Omega^1(M),\, \text{there
exists }\;
X \in\mathfrak{X}(M), \text{such that}\; (X, \alpha) \in \Gamma(\mathsf{D}) \}.
\end{align*}
The smoothness of $\mathsf{G_0}, \mathsf{G_1}, \mathsf{P_0}, \mathsf{P_1}$ is
obvious since, by definition, 
they are generated by smooth local sections. In general, these are not vector
subbundles of $TM$ and 
$T ^\ast M $, respectively. It is also clear that $\mathsf{G_0} \subset
\mathsf{G_1}$ and 
$\mathsf{P_0} \subset \mathsf{P_1}$. 

We have the equalities 
\[\mathsf{P_0}=\mathsf{G_1}^\circ, \quad \mathsf{G_0}=\mathsf{P_1}^\circ
\]
and the inclusions
\[\mathsf{P_1}\subseteq\mathsf{G_0}^\circ, 
\quad \mathsf{G_1}\subseteq\mathsf{P_0}^\circ.
\]
If $\mathsf{P_1}$ (respectively $\mathsf{G_1}$) has constant rank on $M$,
the first inclusion above (respectively the second) is an  equality.

\medskip

A function $f\in C^\infty(M)$ is called $\mathsf{D}$-\emph{admissible}, or
simply \emph{admissible} if there is no possibility of confusion, if 
there exists a vector field $X\in\mx(M)$ such that  $(X,\dr f)\in 
\Gamma(\mathsf{D})$. The section $(X,\dr f)$ is then called ($\mathsf{D}$-)
\emph{admissible} or ($\mathsf{D}$-)\emph{Hamiltonian}
and  $X=:X_f$ is a ($\mathsf{D}$-)\emph{Hamiltonian} vector field for $f$.

Note that the vector field $X_f$ is not unique if $\mathsf{G_0}\neq\{0\}$;
if $X_f$ is a Hamiltonian vector  field  for $f$, then, for any section $Z$ of $
\mathsf{G_0}$, the sum  $X_f+Z$ is  also a Hamiltonian vector field for $f$. 
Indeed, since $(X_f,\dr f), (Z,0)\in\Gamma(\mathsf{D})$, the sum $(X_f+Z,\dr 
f)=(X_f,\dr f)+(Z,0)$ is also a section of $\mathsf{D}$. The smooth tangent 
distribution $\mathsf{G_0}$ is spanned by the  Hamiltonian vector fields of 
the constant functions, which are consequently all admissible.

Define a   bracket $\pois_{\mathsf D}$  on the  set $C^\infty(M)^{\mathsf D}$ 
of admissible  functions by $\{f,g\}_{\mathsf D}:=X_f(g)=-X_g(f)$
for all $f,g\in C^\infty(M)^{\mathsf D}$. This bracket
does not depend on the choices made for $X_g$ and $X_f$, and if 
the Dirac manifold $(M,\mathsf D)$ is integrable,
$\pois_{\mathsf D}:C^\infty(M)^{\mathsf D}\times C^\infty(M)^{\mathsf D}\to
C^\infty(M)^{\mathsf D}$ is a Poisson bracket on $C^\infty(M)^{\mathsf D}$.

\subsubsection{Properties of  integrable Dirac structures}
Assume that $(M, \mathsf D) $ is an integrable  Dirac manifold. Then, 
relative to the Courant bracket \eqref{Courant_bracket} 
and the anchor $\pi_{TM}:\mathsf D\to TM$ given by the projection on the first 
factor,  $\mathsf D $ becomes a Lie algebroid over $ M $.
The smooth distribution $ \mathsf{G_1}=\pi _{TM}( \mathsf D ) \subset  TM $
is then completely integrable in the sense of Stefan and Sussmann and each
leaf $N$ of $\mathsf{G_1}$ inherits a presymplectic form 
$\omega_N$ given by
\begin{equation}\label{induceddiracN}
\omega_N(\tilde{X},\tilde{Y})(p)=\alpha(Y)(p)=-\beta(X)(p)
\end{equation}
for all $p\in N$ and $\tilde{X},\tilde{Y}\in\mathfrak{X}(N)$, 
where $X,Y\in \Gamma(\mathsf{G}_1)$ are arbitrary sections $\iota_N$-related
to  $\tilde{X},\tilde{Y}$, respectively,  and $\iota_N:N\hookrightarrow M$ is 
the inclusion; $\iota_N$-relatedness is denoted by $\tilde{X} \sim_{ \iota_N}
X$, $\tilde{Y} \sim_{ \iota_N} Y$.
The one-forms $\alpha, \beta \in \Omega^1(M)$ are such that 
$(X,\alpha), (Y,\beta)\in \Gamma(\mathsf D)$. 
Formula \eqref{induceddiracN} is independent of all the choices involved. 
Note that there is an induced Dirac
structure on $N$ given by the graph of the bundle map $\flat: TN \rightarrow
T ^\ast N$  associated to $ \omega_N$. The proofs of these facts can be 
found in \cite{Courant90a}. Since $\mathsf{G_1}$ has constant rank on $N$, 
the codistribution $\mathsf{P_0}$ has also constant rank on $N$ and $
\mathsf D\cap(TN\oplus T^*M\an{N})$  is a smooth vector bundle over $N$. 
Then the induced Dirac structure $\mathsf{D}_N$ on $N$ can be described 
in the following way:
\begin{align}\label{inducedDirac}
\Gamma(\mathsf{D}_N):=\left\{(\tilde X,\tilde \alpha)\mid \exists
  (X,\alpha)\in\Gamma(\mathsf D)\quad \text{ such that }\quad
\tilde X\sim_{\iota_N}X \quad\text{ and }\quad \tilde \alpha=\iota_N^*\alpha
\right\}
\end{align}
(see \cite{Courant90a} or \cite{JoRa08}).
\medskip

Let $(M,\mathsf{D})$ be a closed Dirac manifold and $f\in C^
\infty(M)^{\mathsf D}$  a smooth admissible function. Let $\phi$ be the flow
of a corresponding Hamiltonian vector field $X_f$ and $(N,\omega_N)$ 
a presymplectic leaf of $(M,\mathsf{D})$ intersecting the domain of 
definition of $(X_f,\dr f)$. Since $X_f$ is a section of 
$\Gamma(\mathsf{G}_1)$, there exists a vector field $\tilde X_f\in\mx(N)$
that is $\iota_N$-related to $X_f$. The flow $\tilde \phi$ of $\tilde X_f$ 
satisfies  $\iota_N\circ\tilde \phi_t=\phi_t\circ\iota_N$
for all $t$ where $\phi_t$ is defined.
We have $\ip{\tilde X_f}\omega_N=\iota_N^*\dr f=\dr (\iota_N^*f)$ by definition
of $\omega_N$. This yields 
\[\boldsymbol{\pounds}_{\tilde X_f}\omega_N=\ip{\tilde X_f}\dr \omega_N+\dr (\ip{\tilde
    X_f}\omega_N)
=0+\dr^2(\iota_N^*f)=0,\]
and hence
\[\left.\frac{d}{dt}\right\an{t=t_0}\tilde\phi_t^*\omega_N
=\tilde\phi_{t_0}^*(\boldsymbol{\pounds}_{\tilde X_f}\omega_N)=0.
\]
Thus $\tilde\phi_t^*\omega_N=\tilde\phi_0^*\omega_N=\omega_N$ for all 
$t$ for which $\tilde\phi_t$ is defined. 

We want to show that the flow 
$\phi_t$ preserves the Dirac structure, that is, $(\phi_t^*X,\phi_t^*\alpha)\in
\Gamma(\mathsf{D})$ for all $(X,\alpha)\in\Gamma(\mathsf{D})$. To see
 this, we choose a point $m\in \dom(X,\alpha)\cap\dom(X_f,\dr f)\subseteq 
 M$ and a section $(Y,\beta)$ of $\mathsf{D}$ defined on a neighborhood 
 of $m$. Let $(N,\omega_N)$ be the presymplectic leaf of $(M,\mathsf{D})$ 
 through the point $m$. Then we can write $m=\iota_N(n)$ for some point 
 $n$ in the initial immersed submanifold  $N$ of $M$. Using the definition 
 of $\omega_N$,  we compute on the common domain of definition of 
 $(X_f,\dr f)$, $(X,\alpha)$, and $(Y,\beta)$:
\begin{align*}
\langle(\phi_t^*X,\phi_t^*\alpha),(Y,\beta)\rangle(m)
&=(\phi_t^*\alpha)_{\iota_N(n)}\bigl(Y(\iota_N(n))\bigr)
+\beta_{\iota_N(n)}\bigl((\phi_t^*X)(\iota_N(n))\bigr)\\
&=(\iota_N^*\alpha)_{\tilde\phi_t(n)}\bigr((\tilde\phi_{-t}^*\tilde Y)
(\tilde\phi_t(n))\bigl)+
(\iota_N^*\beta)_{n}\bigl((\tilde\phi_t^*\tilde X)(n)\bigr)\\
&=(\ip{\tilde X}\omega_N)_{\tilde\phi_t(n)}
\bigr((\tilde\phi_{-t}^*\tilde Y)(\tilde\phi_t(n))\bigl)+
(\ip{\tilde Y}\omega_N)_{n}\bigl((\tilde\phi_t^*\tilde X)(n)\bigr)\\
&={\omega_N}(\tilde\phi_t(n))\bigl(\tilde
X(\tilde\phi_t(n)),(\tilde\phi_{-t}^*\tilde Y)(\tilde\phi_t(n)) \bigr)
+{\omega_N}(n)\bigl(\tilde Y(n),(\tilde\phi_t^*\tilde X)(n)\bigr)\\
&=(\phi_t^*\omega_N)(n)\bigl((\tilde\phi_t^*\tilde X)(n),\tilde Y(n) \bigr)
+{\omega_N}(n)\bigl(\tilde Y(n),(\tilde\phi_t^*\tilde X)(n)\bigr)\\
&=\omega_N(n)\bigl((\tilde\phi_t^*\tilde X)(n),\tilde Y(n) \bigr)
+{\omega_N}(n)\bigl(\tilde Y(n),(\tilde\phi_t^*\tilde X)(n)\bigr)=0.
\end{align*}
This shows that $(\phi_t^*X,\phi_t^*\alpha)\in 
\Gamma(\mathsf{D}^\perp)=\Gamma(\mathsf{D})$.
We have proved the following theorem.
\begin{theorem}\label{invDirac}
Let $(M,\mathsf{D})$ be a closed Dirac manifold,  $f\in C^\infty(M)$ an 
admissible function of $(M,\mathsf{D})$, and $X_f$ a Hamiltonian vector 
field for $f$. Let $\phi$ be the flow of $X_f$.
 The Dirac structure is invariant under
$\phi_t$ for all $t$ for which $\phi_t$ is defined,
 that is, $(\phi_t^*X,\phi_t^*\alpha)\in \Gamma(\mathsf{D})$
for all $(X,\alpha)\in\Gamma(\mathsf{D})$.
\end{theorem}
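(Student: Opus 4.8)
The plan is to use that $\mathsf{D}$ is Lagrangian, i.e. $\mathsf{D}=\mathsf{D}^\perp$, so that it suffices to prove $\langle(\phi_t^*X,\phi_t^*\alpha),(Y,\beta)\rangle(m)=0$ for every $(X,\alpha),(Y,\beta)\in\Gamma(\mathsf{D})$ and every point $m$ in the common domain of definition of $(X,\alpha)$, $(Y,\beta)$, and $(X_f,\dr f)$ (the statement for all $t$ for which $\phi_t$ is defined then follows pointwise). The idea is to transport the entire computation onto the presymplectic leaf through $m$, where $\mathsf{D}$ is encoded by the single presymplectic form $\omega_N$. Concretely, invoke closedness: by the properties of integrable Dirac structures recalled above, $\mathsf{G_1}=\pi_{TM}(\mathsf{D})$ is completely integrable in the sense of Stefan and Sussmann, so there is a presymplectic leaf $(N,\omega_N)$ through $m$ with inclusion $\iota_N:N\hookrightarrow M$. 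Since $X_f,X,Y$ are sections of $\mathsf{G_1}$ they are tangent to $N$; pick $\tilde X_f,\tilde X,\tilde Y\in\mx(N)$ with $\tilde X_f\sim_{\iota_N}X_f$, $\tilde X\sim_{\iota_N}X$, $\tilde Y\sim_{\iota_N}Y$. Because $X_f$ is tangent to the leaves, $\phi_t$ maps $N$ into $N$ locally, the flow $\tilde\phi$ of $\tilde X_f$ satisfies $\iota_N\circ\tilde\phi_t=\phi_t\circ\iota_N$, and consequently $\iota_N^*(\phi_t^*\alpha)=\tilde\phi_t^*(\iota_N^*\alpha)$ and $\tilde\phi_t^*\tilde X\sim_{\iota_N}\phi_t^*X$.

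Next I would establish that the Hamiltonian condition restricts to the leaf and that $\tilde\phi_t$ preserves $\omega_N$. From the defining formula \eqref{induceddiracN} of $\omega_N$ together with $(X_f,\dr f)\in\Gamma(\mathsf{D})$ one gets $\ip{\tilde X_f}\omega_N=\iota_N^*\dr f=\dr(\iota_N^*f)$, whence by Cartan's formula $\ldr{\tilde X_f}\omega_N=\ip{\tilde X_f}\dr\omega_N+\dr\ip{\tilde X_f}\omega_N=0$ and therefore $\tilde\phi_t^*\omega_N=\omega_N$ for all $t$ for which $\tilde\phi_t$ is defined. This is the presymplectic-leaf analogue of the classical fact that Hamiltonian flows preserve the symplectic form, and it is the real engine of the proof.

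Finally I would substitute. Using \eqref{induceddiracN} to rewrite $\iota_N^*\alpha=\ip{\tilde X}\omega_N$ and $\iota_N^*\beta=\ip{\tilde Y}\omega_N$, the intertwining relation $\iota_N\circ\tilde\phi_t=\phi_t\circ\iota_N$ to convert the $\phi_t$-pullbacks on $M$ into $\tilde\phi_t$-pullbacks on $N$, and then the invariance $\tilde\phi_t^*\omega_N=\omega_N$ with the skew-symmetry of $\omega_N$, the two terms of $\langle(\phi_t^*X,\phi_t^*\alpha),(Y,\beta)\rangle(m)$ cancel and the expression is $0$; hence $(\phi_t^*X,\phi_t^*\alpha)\in\Gamma(\mathsf{D}^\perp)=\Gamma(\mathsf{D})$. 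I expect the main obstacle to be precisely the bookkeeping in this last step: correctly transporting the vectors and covectors along the $\iota_N$-related flows and verifying that the outcome is independent of all choices made (of $\alpha,\beta$, of the lifts $\tilde X,\tilde Y$, and of the leaf). This is where one must use that $\mathsf{G_1}$ has constant rank along $N$ so that $\omega_N$ is well defined by \eqref{induceddiracN} and that $\iota_N$ is an initial (regular) immersion so that the lifts exist and are smooth.

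As an alternative, more algebraic route one could observe that the Courant–Dorfman bracket \eqref{Courant_bracket} gives $[(X_f,\dr f),(X,\alpha)]=(\ldr{X_f}X,\ldr{X_f}\alpha)=\tfrac{d}{dt}\big|_{t=0}\phi_t^*(X,\alpha)$, since the term $\ip{X}\dr\dr f$ vanishes; closedness then forces this derivative to lie in $\Gamma(\mathsf{D})$, and upgrading this infinitesimal statement to all $t$ by writing $\phi_t^*(X,\alpha)$ in a local frame of $\mathsf{D}$ completed to a frame of $\mathsf{P}_M$ and solving the resulting linear ODE reproves the theorem. I would nonetheless favour the leafwise argument above, as it keeps the geometry transparent and avoids the moving-frame index juggling.
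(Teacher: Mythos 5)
Your proposal is correct and follows essentially the same route as the paper's own argument: transport the pairing $\langle(\phi_t^*X,\phi_t^*\alpha),(Y,\beta)\rangle$ to the presymplectic leaf through the point, show $\ip{\tilde X_f}\omega_N=\dr(\iota_N^*f)$ so that $\tilde\phi_t^*\omega_N=\omega_N$, and conclude by skew-symmetry of $\omega_N$ that the pairing vanishes, hence $(\phi_t^*X,\phi_t^*\alpha)\in\Gamma(\mathsf{D}^\perp)=\Gamma(\mathsf{D})$. The alternative Courant--Dorfman/ODE route you sketch is not what the paper does, but your preferred leafwise argument matches the paper's proof step for step.
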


Thus, if $g\in C^\infty(M)^{\mathsf D}$ is an admissible function and 
$\phi$ is the flow of the vector field $X_f$, then 
 the function $\phi^*_tg$ is also admissible. Furthermore, if $X_H$ is a
 solution of the implicit Hamiltonian system $(X_H,\dr H)\in
 \Gamma(\mathsf{D})$  for a Hamiltonian $H\in C^\infty(M)^{\mathsf D}$, 
then the Dirac structure $\mathsf{D}$ is conserved
along the solution curves of the system.

Note that if $X$ is an arbitrary section of $\mathsf{G_1}$, it is \textit{not} 
possible to show in the same manner that the Dirac structure is conserved along the flow lines of $X$. 
Recall also that the space of sections of $\mathsf{G_1}$ is not necessarily
generated by $\{X_f \mid f \text{ admissible}\}$. Therefore, the flows of sections of 
$\mathsf{G_1}$ do not conserve the Dirac structure of $M $, in general. As we shall see later on, this is 
a major technical problem. Certain  conditions on the admissible functions will have to be imposed.

\begin{corollary}
Let $(M,\mathsf{D})$ be a closed Dirac manifold. Then the distribution
$\mathsf{G_0}$ is completely integrable in the sense of Stefan and Sussmann and
each of its leaves inherits the  trivial presymplectic structure.  
\end{corollary}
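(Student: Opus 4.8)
The plan is to exhibit $\mathsf{G_0}$ as the distribution $\mathcal{D}_{\mathcal{F}}$ spanned by an everywhere defined family $\mathcal{F}$ of vector fields whose flows preserve the Dirac structure, and then to invoke Theorem~\ref{frobenius with vector fields}. I would take $\mathcal{F}:=\Gamma(\mathsf{G_0})$, the family of all local sections of $\mathsf{G_0}$. This family is everywhere defined, since the zero section belongs to it, and it spans $\mathsf{G_0}$ pointwise because $\mathsf{G_0}$ is a smooth distribution; hence $\mathcal{D}_{\mathcal{F}}=\mathsf{G_0}$. Moreover every $X\in\mathcal{F}$ satisfies $(X,0)\in\Gamma(\mathsf{D})$ by definition of $\mathsf{G_0}$, so $X$ is a Hamiltonian vector field for the (admissible) constant function $0$ on $\dom(X)$; this is exactly the remark, already recorded above, that $\mathsf{G_0}$ is spanned by the Hamiltonian vector fields of the constant functions.

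First I would prove that $\mathsf{G_0}$ is invariant under the pseudogroup $A_{\mathcal{F}}$. Fix $X\in\mathcal{F}$ with flow $\phi$. By Theorem~\ref{invDirac}, $(\phi_t^\ast Y,\phi_t^\ast\beta)\in\Gamma(\mathsf{D})$ for every $(Y,\beta)\in\Gamma(\mathsf{D})$ and every $t$ for which $\phi_t$ is defined; applying this to sections of the form $(Z,0)$ with $Z\in\Gamma(\mathsf{G_0})$ gives $(\phi_t^\ast Z,0)\in\Gamma(\mathsf{D})$, that is, $\phi_t^\ast Z\in\Gamma(\mathsf{G_0})$. Unwinding $\phi_t^\ast Z=T\phi_{-t}\circ Z\circ\phi_t$ and relabeling $t$ by $-t$, this says $T_z\phi_t(\mathsf{G_0}(z))\subseteq\mathsf{G_0}(\phi_t(z))$ for every $z$ and every admissible $t$; applying the same inclusion at $\phi_t(z)$ with time $-t$ and composing with $T_z\phi_t$ yields the opposite inclusion, so $T_z\phi_t(\mathsf{G_0}(z))=\mathsf{G_0}(\phi_t(z))$. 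Since every element of $A_{\mathcal{F}}$ is a finite composition of such flows and of their inverses, the chain rule propagates this equality, so condition~(1) of Theorem~\ref{frobenius with vector fields} holds for $\mathcal{F}$. By the equivalence with condition~(4), $\mathsf{G_0}=\mathcal{D}_{\mathcal{F}}$ is completely integrable in the sense of Stefan and Sussmann, with maximal integral manifolds the $A_{\mathcal{F}}$-orbits.

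It then remains to identify the presymplectic form carried by a leaf $L$ of $\mathsf{G_0}$. Since $\mathsf{G_0}\subseteq\mathsf{G_1}$, the connected immersed submanifold $L$ is an integral manifold of $\mathsf{G_1}$, hence is contained in the unique maximal connected integral manifold of $\mathsf{G_1}$ through any of its points, i.e. in a single presymplectic leaf $(N,\omega_N)$ of $(M,\mathsf{D})$; because $N$ is an initial submanifold of $M$, the inclusion $\iota:L\hookrightarrow N$ is smooth, and $L$ inherits the presymplectic form $\iota^\ast\omega_N$. Now let $p\in N$ and $v\in\mathsf{G_0}(p)$, and choose $X\in\Gamma(\mathsf{G_0})$ with $X(p)=v$ and $(X,0)\in\Gamma(\mathsf{D})$. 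Then $X$ is $\iota_N$-related to a vector field $\tilde X$ on $N$, and, because formula~\eqref{induceddiracN} is independent of all the choices involved, it may be evaluated with the one-form $\alpha=0$ paired to $X$, giving $\omega_N(\tilde X,\tilde Y)(p)=0$ for every $\tilde Y\in\mathfrak{X}(N)$. Hence $\mathsf{G_0}(p)\subseteq\ker\omega_N(p)$ for all $p\in N$, so $\iota^\ast\omega_N=0$; equivalently, the Dirac structure induced on $L$ via \eqref{inducedDirac} is the graph of the zero map $TL\to T^\ast L$. This shows that each leaf of $\mathsf{G_0}$ inherits the trivial presymplectic structure.

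The step I expect to be the main obstacle is the invariance of $\mathsf{G_0}$ under $A_{\mathcal{F}}$: one must check that Theorem~\ref{invDirac} is applicable to the merely locally defined Hamiltonian vector fields in $\mathcal{F}$, and, more delicately, convert the statement ``$\phi_t^\ast Z$ is again a section of $\mathsf{G_0}$'' into the pointwise invariance $T_z\phi_t(\mathsf{G_0}(z))=\mathsf{G_0}(\phi_t(z))$ required by Theorem~\ref{frobenius with vector fields}, which amounts to correctly tracking the directions of $T\phi_{\pm t}$. Everything afterwards is a direct application of the Stefan--Sussmann machinery and of the construction of $\omega_N$ recalled above.
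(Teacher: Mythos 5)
Your proof is correct and follows essentially the same route as the paper: both use Theorem~\ref{invDirac} applied to the constant (admissible) function to show that flows of sections of $\mathsf{G_0}$ preserve $\mathsf{G_0}$, invoke condition~(1) of Theorem~\ref{frobenius with vector fields} for integrability, and then observe that formula~\eqref{induceddiracN} evaluated with $\alpha=0$ forces the induced form to vanish. Your extra care in converting the section-level statement $\phi_t^\ast Z\in\Gamma(\mathsf{G_0})$ into the pointwise equality $T_z\phi_t(\mathsf{G_0}(z))=\mathsf{G_0}(\phi_t(z))$, and in locating the $\mathsf{G_0}$-leaf inside a presymplectic leaf of $\mathsf{G_1}$ before pulling back $\omega_N$, only makes explicit what the paper leaves implicit.
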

\begin{proof}
We have to show that the flow of each element of 
$\Gamma(\mathsf{G_0})$ leaves $\mathsf{G_0}$ invariant. If
$X\in\Gamma(\mathsf{G_0})$, then $(X,0)\in\Gamma(\mathsf{D})$ and thus  
$X$ is a Hamiltonian vector field for any constant function on $M$. Thus, 
any constant function $k$ on $M$ is admissible and, by the preceding 
theorem, we get $(\phi_t^*Y,\phi_t^*\beta)\in\Gamma(\mathsf{D})$ for all 
$(Y,\beta)\in\Gamma(\mathsf{D})$, where $\phi$ is the flow of the vector 
field $X$. But then $(\phi_t^*Z,\phi_t^*0)=(\phi_t^*Z,0)\in
\Gamma(\mathsf{D})$ for all $(Z,0)\in\Gamma(\mathsf{D})$ and $t $ for 
which $\phi _t$ is defined, and hence $\phi_t^*Z\in\Gamma(\mathsf{G_0})$ 
for each $Z \in \Gamma(\mathsf{G_0})$. This shows that $\mathsf{G_0} $ is 
completely integrable by the first point of Theorem \ref{frobenius with vector fields}. 

Let $N$ be a leaf of $\mathsf{G_0}$. Then the Dirac structure defines a
$2$-form on $N$ by
\[
\omega_N(n)(\tilde X(n),\tilde Y(n))=\alpha_n(Y(n))=-\beta_n(X(n)),
\]
where  $\tilde{X}, \tilde{Y} \in \mathfrak{X}(N)$; $X, Y\in \mx(M)$ 
and $\alpha, \beta\in\Omega^1(M)$ are chosen such that
$\tilde{X}\sim_{\iota_N}X$,  
$\tilde{Y}\sim_{\iota_N}Y$ and $(X,\alpha),
(Y,\beta)\in\Gamma(\mathsf{D})$. But, since $X$ and $Y$ can be chosen in
$\Gamma(\mathsf{G_0})$,  this yields automatically $\omega_N(n)(\tilde
X(n),\tilde Y(n))=0$ for all $n\in N$, $\tilde X,\tilde Y\in\mx(N)$. 
\end{proof}
Note that the leaf of $\mathsf{G_0}$ through any point $m\in M$ is initially
immersed in the leaf of $\mathsf{G_1}$ through $m$.

\subsection{Proper actions and orbit type manifolds}
\label{sec:proper}
\subsubsection{The stratification by orbit types}\label{tubeth} 
In this section we consider a smooth and proper action 
\begin{equation}
\begin{array}{cccl}
\Phi &:G\times M&\rightarrow& M\\
&(g,m)&\mapsto& \Phi (g,m)=\Phi
_{g}(m)= gm=g\cdot m  \label{action}
\end{array}
\end{equation}
of a Lie group $G$ on a manifold $M$. Let $\pi :M\rightarrow \bar{M}: = 
M/G$ be the  orbit map.

For each closed Lie subgroup $H$ of $G$ we define the \emph{isotropy type} set
\begin{equation*}
M_{H}=\{m\in M\mid G_{m}=H\} 
\end{equation*}
where $G_{m}=\{g\in G\mid gm=m\}$ is the isotropy subgroup of $m\in M$. 
Since the action is proper, all isotropy groups are compact. The sets 
$M_{H}$, where $H $ ranges over the closed Lie subgroups of $G$ for 
which $M_{H}$ is non-empty, form a partition of $M$ and therefore they are 
the equivalence  classes of an equivalence relation in $M$.
Define the normalizer of $H$ in $G$ by
\[
N(H):=\{g\in G\mid gHg^{-1}=H\};
\]
$N(H)$ is a closed Lie subgroup of $G$. Since $H$ is a normal subgroup 
of $N(H)$ the quotient $N(H)/H$ is a Lie group. If $m\in M_{H}$, we have 
$G_{m} =H$ and $G_{gm}=gHg^{-1}$ for all $g\in G$. Consequently, $gm \in M_{H}$ if and only if $g\in N(H)$. The action of $G$ on $M$ restricts to an
action of $N(H)$ on $M_{H}$, which induces a free and proper action of 
$N(H)/H$ on $M_{H}$.

Define the \emph{orbit type} set
\begin{equation}
M_{(H)}:=\{m\in M\mid G_{m}\text{ is conjugated to }H\mathbf{\}.}  \label{P(H)}
\end{equation}
Then, 
\[
M_{(H)}=\{gm\mid g\in G,m\in M_{H}\}=\pi ^{-1}(\pi (M_{H})). 
\]
The connected components of  $M_{H}$ and $M_{(H)}$ are embedded submanifolds of $M$; 
therefore $M_H$ is called an \emph{isotropy type manifold} and $M_{(H)}$ an 
\emph{orbit type manifold}.
Moreover, 
\[
\pi \left(M_{(H)} \right)=\{gm\mid m\in M_{H}\}/G=M_{H}/N(H)=M_{H}/(N(H)/H). 
\]
Since the action of $N(H)/H$ on $M_{H}$ is free and proper, it follows that 
$M_{H}/(N(H)/H)$ is a quotient manifold of $M_{H}$. Hence the subset
$\pi(M_{(H)}) \subseteq \bar M=M/G$ is a manifold.

The partitions of $M$ by the connected components of the orbit type manifolds
 is a decomposition of the differential space $M$. The
corresponding stratification of $M$ is called the \textit{orbit type
stratification} of $M$ (\cite{DuKo00}, \cite{Pflaum01}).
The orbit space $\bar M=M/G$ with its quotient topology
has also the structure of a stratified space with strata the projections of the
connected components of the orbit type manifolds.

\subsubsection{Tube theorem and $G$-invariant average}
If the action of the Lie group $G$ on $M$ is proper, we can find for each
point $m\in M$ a $G$-invariant neighborhood of $m$ such that the action can be
described easily on this neighborhood. The proof of the following theorem can
be found, for example,  in \cite{OrRa04}.
\begin{theorem}[Tube Theorem]
\label{tube_thm}
Let $M$ be a manifold and $G$ a Lie group
acting properly on $M$. For a given point $m\in M$ denote $H := G_m$. 
Then there exists a 
$G $-invariant open neighborhood $U$ of the orbit $G\cdot m$, called 
\emph{tube at $m$}, and a $G $-equivariant diffeomorphism
$G \times_H B \stackrel{\sim}\longrightarrow  U$. The set $B$ is an open 
$H$-invariant neighborhood of $0$
in an $H $-representation space $H$-equivariantly isomorphic to 
$T_mM/T_m(G\cdot m)$. 
The $H $-representation on $T_mM/T_m(G\cdot m)$ is given by
$h\cdot (v + T_m(G \cdot m)) := T_m\Phi_h(v) + T_m(G
\cdot m)$, $h \in H $, $v \in T_mM $. The smooth manifold  
$ G \times_H B$ is the quotient of the smooth 
free and proper (twisted)
action $\Psi$ of $H$ on $G\times B$ given by $\Psi(h,(g,b)):=(g
h^{-1},h\cdot b)$, $g \in G $, $h \in H $, $b \in B$. 
The $G $-action on $G \times _H B $ is given by $k\cdot [g, b]: = 
[kg, b]_H $, where $k, g \in G $, $b \in B $, and $[g, b]_H \in G \times _H B$
 is the equivalence class  {\rm (}i.e., $H $-orbit{\rm )} of $(g,b)$.
\end{theorem}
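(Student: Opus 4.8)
The plan is to build the tube by hand from a $G$-invariant Riemannian metric on $M$, following the classical slice argument; properness will be used crucially both to produce that metric and, at the very end, to get injectivity of the tube map, which is the only delicate point.

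First I would fix a $G$-invariant Riemannian metric on $M$, which exists because the action is proper (a standard averaging/cutoff construction; see, e.g., \cite{DuKo00} or \cite{OrRa04}). Since the action is proper, $H=G_m$ is compact and the orbit $G\cdot m$ is a closed embedded submanifold, so one may form the metric-orthogonal complement
\[
S_m:=\bigl(T_m(G\cdot m)\bigr)^{\perp}\subset T_mM .
\]
As the metric is $H$-invariant and $H$ fixes $m$, the subspace $S_m$ is $H$-invariant, the induced linear $H$-action on it is the sought-after slice representation, and the projection $T_mM\to T_mM/T_m(G\cdot m)$ restricts to an $H$-equivariant isomorphism $S_m\cong T_mM/T_m(G\cdot m)$.

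Next I would use the Riemannian exponential map $\mathrm{Exp}$ of this metric. Because each $\Phi_g$ is an isometry it maps geodesics to geodesics, which yields the intertwining identity $\Phi_g\circ\mathrm{Exp}_m=\mathrm{Exp}_{gm}\circ T_m\Phi_g$ wherever both sides make sense. For small $\varepsilon>0$, $\mathrm{Exp}_m$ restricts to a diffeomorphism of the ball $B:=\{v\in S_m\mid\|v\|<\varepsilon\}$ onto an embedded submanifold $S:=\mathrm{Exp}_m(B)$ that is $H$-invariant and transverse to $G\cdot m$ at $m$. I would then define
\[
\chi\colon G\times_H B\longrightarrow M,\qquad \chi\bigl([g,b]_H\bigr):=g\cdot\mathrm{Exp}_m(b).
\]
The intertwining identity with $h\in H$ gives $\mathrm{Exp}_m(h\cdot b)=h\cdot\mathrm{Exp}_m(b)$, so $gh^{-1}\cdot\mathrm{Exp}_m(h\cdot b)=g\cdot\mathrm{Exp}_m(b)$ and $\chi$ is well defined; $G$-equivariance for the action $k\cdot[g,b]_H=[kg,b]_H$ is immediate. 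A dimension count shows $\dim(G\times_H B)=\dim M$, and, using transversality of $S$, that $T_{[e,0]_H}\chi$ is a linear isomorphism onto $T_m(G\cdot m)\oplus S_m=T_mM$; hence $\chi$ is a local diffeomorphism near $[e,0]_H$ and, by $G$-equivariance, near every point of the zero section.

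The hard part will be showing that, after shrinking $\varepsilon$, the map $\chi$ is \emph{injective}; this is exactly where properness is essential. I would argue by contradiction: if no $\varepsilon$ works, there are sequences $b_n,b_n'\to0$ in $S_m$ and elements $g_n\in G$ with $g_n\cdot\mathrm{Exp}_m(b_n)=\mathrm{Exp}_m(b_n')$ but $[g_n,b_n]_H\neq[e,b_n']_H$. Since $\mathrm{Exp}_m(b_n)\to m$ and $\mathrm{Exp}_m(b_n')\to m$, properness of the map $G\times M\to M\times M$, $(g,p)\mapsto(gp,p)$, forces a subsequence $g_{n_k}\to g_0$, and in the limit $g_0\cdot m=m$, i.e.\ $g_0\in H$. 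Translating by $g_0\in H$ (which preserves $S$, again by the intertwining identity) reduces to the case $g_n\to e$, where $\chi$ being a local diffeomorphism near $[e,0]_H$ forces $[g_n,b_n]_H=[e,b_n']_H$ for large $n$ --- the desired contradiction. Finally, with $S$ so shrunk, put $U:=G\cdot S=\chi(G\times_H B)$: it is $G$-invariant by construction and open because $\chi$ is an open map, so $\chi\colon G\times_H B\to U$ is a $G$-equivariant bijective local diffeomorphism, hence a $G$-equivariant diffeomorphism. The identification of $G\times_H B$ with the quotient of $G\times B$ by the twisted action $\Psi(h,(g,b))=(gh^{-1},h\cdot b)$ is the standard associated-bundle construction; this $H$-action is free (because $H$ acts freely by right translations on the $G$-factor) and proper (because $H$ is compact), so the quotient is a smooth manifold by the quotient manifold theorem, and the remaining equivariance assertions follow by inspection.
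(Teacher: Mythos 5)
Your proposal is correct. Note that the paper itself gives no proof of this statement; it simply refers the reader to \cite{OrRa04}, and the argument you give (a $G$-invariant metric via properness, the orthogonal slice $S_m=(T_m(G\cdot m))^{\perp}$ exponentiated to an $H$-invariant transversal, the associated-bundle map $\chi([g,b]_H)=g\cdot\mathrm{Exp}_m(b)$, and injectivity after shrinking obtained by contradiction from properness of $(g,p)\mapsto(gp,p)$) is exactly the standard slice-theorem proof found in that reference and in \cite{DuKo00}. The only step you leave slightly implicit is that being a local diffeomorphism \emph{near every point of the zero section} must be upgraded, by one more $G$-equivariant shrinking of $B$, to being a local diffeomorphism on all of $G\times_H B$ before you may conclude that the bijection $\chi$ is open and hence a diffeomorphism; this is routine and does not affect the validity of the argument.
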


Let $m\in M$ and $H:=G_m$. If the action of $G$ on $M$ is proper, the 
isotropy subgroup $H$ of $m$ is a compact Lie subgroup of $G$. Hence, 
there exists a Haar measure $dh$ on $H$, that is, a $G$-invariant measure 
on $H$ satisfying $\int_Hdh=1$ (see, for example, \cite{DuKo00}). Left 
$G$-invariance of $dh$ is equivalent to right $G$-invariance of $dh$, that is, 
$R_g^*dh=dh=L_g^*dh$ for all $g\in H$, where $L_g:H\to H$ (respectively
$R_g:H\to H$) denotes left (respectively right) translation by $g$ on $H$.  

Let $X\in\mx(M)$ be defined on the tube $U$ at $m\in M$ of the proper 
action of the Lie group $G$ on $M$. Using the Tube Theorem, we write  
the points of $U$ as equivalence classes $[g,b]_H$ with $g\in G$ and 
$b\in B$. Define the vector field $X_G$ by 
\begin{align*}
X_G([g,b]_H):=\left(\Phi_{g^{-1}}^*\left(\int_H\Phi_h^*Xdh\right)\right)([g,b]_H),
\end{align*}
that is, for each point $m'=[g,b]_H\in U$ we have
\begin{align*}
X_G([g,b]_H)=T_{[e,b]_H}\Phi_g
\left(\int_H\left(T_{[h,b]_H}\Phi_{h^{-1}}X([h,b]_H)\right)dh\right).
\end{align*}
This defines a smooth $G$-invariant vector field $X_G$ called 
the \emph{$G$-invariant average\/} of the vector field
$X$ (see \cite{JoRaSn11}). Note that $X_G$ is, 
in general, not equal to $X$ (at any point); it can
even vanish. Indeed,  $G$-invariant vector fields
are tangent to the isotropy type manifolds (see \cite{OrRa04}).
Hence, if we choose a $G$-invariant 
Riemannian 
metric on $M$
and a section $X$ of the ($G$-invariant) orthogonal $TP^\perp\subseteq TM\an{P}$ 
of $TP$
relative to this metric, where $P$ is a stratum of $M$, its $G$-invariant
average is both a section of $TP^\perp$ and tangent to
$P$, so it is  the zero section (see also
\cite{CuSn01}, Lemma 2.4). 

Similarly, define for $\alpha\in\Omega^1(M)$ the $G$-invariant
average $\alpha_G\in \Omega^1(M)^G$ of $\alpha$ by
\begin{align*}
\alpha_G([g,b]_H):=\left(\Phi_{g^{-1}}^*
\left(\int_H\Phi_h^*\alpha dh\right)\right)([g,b]_H),
\end{align*}
that is, for each point $m'=[g,b]_H\in U$ we have
\begin{align}
\label{alpha_G}
{\alpha_G}([g,b]_H)&=\left(\int_H\Phi_h^*\alpha dh\right)_{[e,b]_H}\circ
T_{[g,b]_H}\Phi_{g^{-1}} =\left(\int_H\left(\alpha([h,b]_H)\circ T_{[e,b]_H}\Phi_h\right) dh\right)\circ
T_{[g,b]_H}\Phi_{g^{-1}}.
\end{align}
The one-form $\alpha_G$ is well-defined, 
smooth, and
$G$-invariant (see \cite{JoRaSn11}). 

If $(X,\alpha)$ is a section of a $G$-invariant generalized
distribution $\Delta$, then $(X_G,\alpha_G)$ is a $G$-invariant section of $\Delta$. 

Note that, in the same manner, we can define the $G$-invariant average 
$f_G$ of a smooth  function $f$ defined on the tube $U$ for the action of $G
$ at $m$. The function $f_G$ is defined by
\[
f_G([g,b]_H):=\int_{h\in H}f([h,b]_H)dh.
\]

\subsubsection{Descending sections of $\mathsf P_M$}\label{subsec_descending_sections}
Let $\V_G^\circ$ be the cotangent
distribution on $M$ spanned pointwise 
by the $G$-equivariant sections of $\V^\circ$, 
that is, $\mathcal{V}_G^\circ(m): = \left\{\alpha_m\mid \alpha \in 
\Gamma( \mathcal{V}^{\circ})^G \right\} = \left\{\mathbf{d}f(m) \mid f 
\in C ^{\infty}(M)^G \right\}$ (see Lemma 5.8 in \cite{JoRaSn11}).
 If  $\alpha\in\Gamma(\V^\circ)^G$, it
pushes-forward to the ``one-form'' $\bar\alpha:=\pi_*\alpha$ such that,
for every $\bar Y\in \mx(\bar M)$ and every vector field  $Y\in\mx(M)$ 
satisfying $Y\sim_\pi \bar Y$, we have 
\[\pi^*(\bar\alpha(\bar Y))=\alpha(Y).\]

Each vector field $X$ satisfying
$[X,\Gamma(\V)]\subseteq\Gamma(\V)$ can be written as
$X=X^G+X^\V$, with $X^G\in\mx(M)^G$ and 
$X^\V\in\Gamma(\V)$,
and $X $ pushes-forward to a ``vector field'' $\bar X$
on $\bar M$. Since we will not need these objects in the rest of the paper,
we  will not give more details about 
what we call the ``vector fields'' and ``one-forms''
on the stratified space $\bar M=M/G$ and refer to \cite{JoRaSn11} for more information.

A local section $(X,\alpha)$ of $TM\oplus
\V^\circ=\K^\perp$ satisfying $[X,\Gamma(\V)]\subseteq
\Gamma(\V)$ and $\alpha\in\Gamma(\V^\circ)^G$ is called a
\emph{descending section} of $\mathsf{P}_M$.

\subsubsection{Review of standard Dirac reduction}

\paragraph{Symmetries of Dirac manifolds.}
Let $(M, \mathsf D)$ be a smooth Dirac manifold.
Let $G$ be a Lie group and
$\Phi: G\times M \rightarrow M$ a smooth left action. Then $G$ is called a
\emph{symmetry Lie group of} $\mathsf D$ if for every $g\in G$ the 
condition
$(X,\alpha) \in \Gamma(\mathsf D)$ implies that  $\left( \Phi_g^\ast X, 
\Phi_g^\ast  \alpha \right) \in \Gamma(\mathsf D)$. We say then that the Lie 
group $G$ acts \textit{canonically} or \textit{by Dirac actions} on $M$. 

Let $\mathfrak{g}$ be a Lie algebra and $\xi \in \mathfrak{g} \mapsto \xi_M 
\in \mathfrak{X}(M)$ be a smooth left Lie algebra action, that is, the map $(x, 
\xi) \in M \times \mathfrak{g} \mapsto \xi_M(x) \in TM $ is smooth and $\xi \in
\mathfrak{g} \mapsto \xi_M  \in \mathfrak{X}(M)$ is a Lie algebra
anti-homomorphism.  
The Lie algebra $\mathfrak{g}$ is said to be a 
\emph{symmetry Lie algebra of} $\mathsf D$ if for every $\xi \in 
\mathfrak{g}$ the condition $(X,\alpha) \in \Gamma(\mathsf D)$ implies that  
$\left(\boldsymbol{\pounds}_{\xi_M}X,\boldsymbol{\pounds}_{\xi_M}\alpha \right) \in \Gamma(\mathsf D)$.  Of 
course, if $\mathfrak{g}$ is the Lie algebra of the Lie group
$G $ and $\xi_M$ is the infinitesimal generator for all $\xi\in \mathfrak{g}$, 
then if $G$ is a symmetry Lie group  of $\mathsf D$ it follows that 
$\mathfrak{g}$ is a symmetry Lie algebra of $\mathsf D$.

\paragraph{Standard Dirac reduction} 
We present a short review of the Dirac reduction methods. For more 
details, see \cite{JoRaSn11} and \cite{JoRaZa11}.
Let $(M,\mathsf D)$ be a smooth Dirac manifold acted upon in a smooth 
proper and Dirac manner by a Lie group $G$ such that the intersection 
$\mathsf D\cap(\T\oplus\V_G^\circ)$ is spanned pointwise by its descending sections.

Consider the subset $\D^G$ of $\Gamma(\mathsf D)$ defined by
\[ 
\D^G:=\{(X,\alpha)\in\Gamma(\mathsf D)\mid \alpha\in\Gamma(\V^\circ)^G\text{ and }
[X,\Gamma(\V)]\subseteq\Gamma(\V)\},
\]
that is, the set of the descending sections of $\mathsf D$.

Each vector field $X$ satisfying
$[X,\Gamma(\V)]\subseteq\Gamma(\V)$ pushes forward to a vector field 
$\bar X$ on $\bar M$. For each stratum $\bar{P}$ of $\bar{M}$, 
the restriction of $\bar{X}$ to points
of ${\bar P}$ is a vector field $X_{\bar{P}}$ on $\bar{P}$. 
On the other hand, if
$(X,\alpha)\in\D^G$, then we have $\alpha\in\Gamma(\V^\circ)^G$ and it
pushes forward to the one-form $\bar{\alpha}:=\pi_*\alpha$ such that,
for every $\bar{Y}\in \mx(\bar{M})$ and every vector field  $Y\in\mx(M)$ satisfying
$Y\sim_\pi \bar Y$, we have 
\[\pi^*(\bar\alpha(\bar Y))=\alpha(Y).\]
Moreover, for each stratum ${\bar P}$ of $\bar M$, the restriction of 
$\bar\alpha$ to points of ${\bar P}$ defines a one-form $\alpha_{\bar P}$ 
on ${\bar P}$. Let 
\[
\bar\D:=\{(\bar X,\bar\alpha)\mid (X,\alpha)\in\D^G\}
\]
and for each stratum $\bar{P}$ of $\bar{M}$, set 
\[
\D_{\bar P}:=\{(X_{\bar P},\alpha_{\bar P})\mid (\bar X,\bar\alpha)\in\bar\D\}.\]
Define the smooth generalized
distribution $\mathsf D_{\bar P}$ on ${\bar P}$ by
\begin{equation}\label{def_D_barP}
\mathsf D_{\bar P}(s):=\{(X_{\bar P}(s),\alpha_{\bar P}(s))
\in T_s{\bar P}\times T^*_s{\bar P}\mid (X_{\bar P}, \alpha_{\bar P})\in\D_{\bar P}\}
\end{equation}
for all $s\in\bar P$.
Note that $\Gamma(\mathsf D_{\bar{P}}) = \mathcal{D}_{\bar{P}}$. 
We have the following three theorems.

\begin{theorem}\label{singred2}
Let $(M,\mathsf D)$ be a Dirac manifold with a proper Dirac action of a connected Lie
group $G$ on it. Assume that the intersection $\mathsf D\cap(\T\oplus\V_G^\circ)$ is 
spanned poitwise by its descending sections.
Then each element $(\bar X,\bar \alpha)\in\mx(\bar M)\times \Omega^1(\bar M)$
orthogonal to all the sections in $\bar \D$ is already an element of $\bar \D$.
\end{theorem}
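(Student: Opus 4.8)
The statement says that the reduced object inherits the defining property of a Dirac structure: any $(\bar X,\bar\alpha)\in\mx(\bar M)\times\Omega^1(\bar M)$ that is $\pairing$-orthogonal to $\bar\D$ already belongs to $\bar\D$, reflecting $\mathsf D=\mathsf D^\perp$ downstairs. The plan is to lift the data to $M$, transfer the orthogonality through $\pi$, and then use the spanning hypothesis to correct the lift into a genuine descending section of $\mathsf D$.

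Given $(\bar X,\bar\alpha)$ orthogonal to $\bar\D$, I would use the correspondences of Section~\ref{subsec_descending_sections} to choose $\alpha\in\Gamma(\V^\circ)^G$ with $\pi_*\alpha=\bar\alpha$ and a vector field $X$ with $[X,\Gamma(\V)]\subseteq\Gamma(\V)$ and $X\sim_\pi\bar X$; replacing $X$ by its $G$-invariant average (still $\pi$-related to $\bar X$, since the group-average does not change the projection) we may assume $X\in\mx(M)^G$, so that $(X,\alpha)$ is a $G$-invariant descending section of $\mathsf P_M$ with values in $TM\oplus\V_G^\circ$. For any $(Y,\beta)\in\D^G$, writing $(\bar Y,\bar\beta):=\pi_*(Y,\beta)\in\bar\D$ and using $Y\sim_\pi\bar Y$, $X\sim_\pi\bar X$ together with the identities $\pi^*(\bar\alpha(\bar Y))=\alpha(Y)$ and $\pi^*(\bar\beta(\bar X))=\beta(X)$, one gets
\[
\pi^*\langle(\bar X,\bar\alpha),(\bar Y,\bar\beta)\rangle=\alpha(Y)+\beta(X)=\langle(X,\alpha),(Y,\beta)\rangle .
\]
The left-hand side vanishes by hypothesis, and $\pi$ is surjective, so $(X,\alpha)$ is $\pairing$-orthogonal to $\D^G$ on the common domains.

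The core step is to deduce from this that $(X,\alpha)$ agrees with a section of $\mathsf D$ modulo a section of $\K=\V\oplus\{0\}$. By the spanning hypothesis the values at each $m$ of the sections in $\D^G$ span $\bigl(\mathsf D\cap(\T\oplus\V_G^\circ)\bigr)(m)$, so $(X(m),\alpha(m))$ lies in its $\pairing$-orthogonal, which — $\mathsf D$ being Lagrangian — equals $\mathsf D(m)+\bigl(\T\oplus\V_G^\circ\bigr)(m)^\perp$. To promote this pointwise statement to an honest $\K$-correction I would stratify by orbit type and work inside a tube $G\times_H B$ at a given orbit: a $G$-invariant descending section is tangent to, and restricts along, the isotropy type manifold $M_H$, and the spanning hypothesis is precisely what forces $\mathsf D$ to pull back cleanly to a closed Dirac structure $\mathsf D_{M_H}$ on $M_H$ whose descending sections for the induced \emph{free} proper $N(H)/H$-action are exactly the restrictions of the elements of $\D^G$. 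This reduces the question to the free case of Section~\ref{section_free_case}, which supplies on the stratum $\bar P=M_H/(N(H)/H)$ a descending section of $\mathsf D_{M_H}$ reducing to $(\bar X,\bar\alpha)|_{\bar P}$; reassembling these stratawise corrections — all built from the single $G$-invariant section $(X,\alpha)$, hence compatible up to sections of $\K$, and kept smooth and $G$-invariant by the averaging operator of Section~\ref{sec:proper} — produces $v\in\Gamma(\V)$ with $(X-v,\alpha)\in\Gamma(\mathsf D)$. Since $X-v$ is still $\pi$-projectable and $\alpha\in\Gamma(\V^\circ)^G$, we get $(X-v,\alpha)\in\D^G$ and $\pi_*(X-v,\alpha)=(\bar X,\bar\alpha)$, hence $(\bar X,\bar\alpha)\in\bar\D$.

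The main obstacle is exactly this core step: checking that spanning-by-descending-sections is the condition that makes $\mathsf D$ restrict cleanly to each isotropy type manifold and makes the $\K$-valued correction smooth and $G$-invariant across the singular strata — without it, $\T\oplus\V_G^\circ$ need not meet $\mathsf D$ along a distribution adapted to the orbit structure and the pointwise argument fails at singular points. The remedy is the Tube Theorem together with the $G$-invariant averaging of Section~\ref{sec:proper}, the whole analysis bootstrapping from the free case of Section~\ref{section_free_case}.
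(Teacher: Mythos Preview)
This theorem is not proved in the paper: it is stated in the review subsection on standard Dirac reduction and attributed to \cite{JoRaSn11}, so there is no proof in this text to compare against.

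On the merits of your proposal, the opening moves are fine: lifting $(\bar X,\bar\alpha)$ to a $G$-invariant descending pair $(X,\alpha)$ and transferring the orthogonality through $\pi$ is exactly right, and your displayed identity is correct. The difficulty is in what you call the core step, and I do not think your sketch closes it. From orthogonality of $(X,\alpha)$ to $\D^G$ you obtain, at each $m$, that $(X(m),\alpha(m))$ lies in $\bigl(\mathsf D(m)\cap(\T(m)\oplus\V_G^\circ(m))\bigr)^\perp=\mathsf D(m)+(\T(m)\oplus\V_G^\circ(m))^\perp$. What you \emph{need}, however, is $(X(m),\alpha(m))\in\mathsf D(m)+\K(m)$, which is equivalent to orthogonality to $\mathsf D(m)\cap\K^\perp(m)=\mathsf D(m)\cap\bigl(T_mM\oplus\V^\circ(m)\bigr)$. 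At singular points $\T(m)\subsetneq T_mM$ and $\V_G^\circ(m)\subsetneq\V^\circ(m)$, so the subspace against which you have orthogonality is strictly smaller than the one required; the pointwise algebra you invoke does not give the $\K$-correction there.

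Your proposed remedy --- restrict to $M_H$, invoke the free case for the $N(H)/H$-action, and patch across strata --- is where the real content of the theorem lives, and your description is too schematic to be a proof. Two specific points are not established: first, that the spanning hypothesis on $\mathsf D\cap(\T\oplus\V_G^\circ)$ implies the analogous spanning hypothesis for the induced Dirac structure on $M_H$ with respect to the $N(H)/H$-action (this is not automatic and is precisely the kind of statement that \cite{JoRaSn11} has to prove); second, that the stratumwise corrections $v_H\in\Gamma(\V|_{M_H})$ can be assembled into a single smooth $G$-invariant section $v\in\Gamma(\V)$ on $M$ --- the strata meet along lower-dimensional pieces and smooth extension across them is not guaranteed by averaging alone. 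Without these two ingredients the argument is incomplete.
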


\begin{theorem}\label{singred}
Let $(M,\mathsf D)$ be a Dirac manifold with a proper Dirac action of a connected Lie group 
$G$ on it.
Let $\bar P$ be a stratum of the quotient space $\bar M$.
If $\mathsf D\cap(\T\oplus\V_G^\circ)$ is spanned poitwise by its descending sections, then
$\mathsf D_{\bar P}$ defined in \eqref{def_D_barP} is a Dirac structure on ${\bar P}$.
If $(M,\mathsf D)$ is integrable, then $(\bar P,  \mathsf D_{\bar P})$ is integrable.
\end{theorem}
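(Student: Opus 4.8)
The approach is to show that $\mathsf{D}_{\bar P}$ is a Lagrangian subbundle of $\mathsf{P}_{\bar P}$, and then, under the integrability hypothesis, that it is closed. The first half is essentially an application of Theorem \ref{singred2}: that theorem says exactly that $\bar\D$ is orthogonal-closed in $\mx(\bar M)\times\Omega^1(\bar M)$, i.e.\ $\bar\D^\perp = \bar\D$ as sets of ``sections'' over $\bar M$. Restricting to a fixed stratum $\bar P$, one transports this statement to $\D_{\bar P}$ and hence, passing to values at a point $s\in\bar P$, to the fibre $\mathsf{D}_{\bar P}(s)\subseteq T_s\bar P\times T_s^*\bar P$. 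First I would verify the Lagrangian (self-orthogonal) property pointwise: isotropy of $\mathsf{D}_{\bar P}(s)$ follows because $\langle(X,\alpha),(Y,\beta)\rangle=0$ on $M$ for $(X,\alpha),(Y,\beta)\in\Gamma(\mathsf D)$ pushes down to $\langle(X_{\bar P},\alpha_{\bar P}),(Y_{\bar P},\beta_{\bar P})\rangle=0$ on $\bar P$ via $\pi^*$; and maximality follows from Theorem \ref{singred2}, since any $(\bar v,\bar\mu)\in T_s\bar P\times T_s^*\bar P$ orthogonal to $\mathsf{D}_{\bar P}(s)$ extends (using the $G$-invariant averaging machinery of Section \ref{sec:proper} to produce genuine descending representatives) to a section orthogonal to all of $\bar\D$ on a neighbourhood of $s$ in $\bar P$, hence lies in $\bar\D$, hence its value lies in $\mathsf{D}_{\bar P}(s)$. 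The constant-rank/subbundle property then comes from $\dim\mathsf{D}_{\bar P}(s)=\dim\bar P$ for every $s$ together with smoothness of $\Gamma(\mathsf D_{\bar P})=\D_{\bar P}$.

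The second half, integrability, is the part I expect to be the main obstacle. Assume $(M,\mathsf D)$ is closed. One wants $[\Gamma(\mathsf D_{\bar P}),\Gamma(\mathsf D_{\bar P})]\subseteq\Gamma(\mathsf D_{\bar P})$ for the Courant--Dorfman bracket \eqref{Courant_bracket} on $\bar P$. The natural strategy is: take $(X_{\bar P},\alpha_{\bar P}),(Y_{\bar P},\beta_{\bar P})\in\D_{\bar P}$ coming from descending sections $(X,\alpha),(Y,\beta)\in\D^G$; show that $\D^G$ is closed under the Courant--Dorfman bracket on $M$ (the bracket of two descending sections is descending — this uses that $[X,\Gamma(\V)]\subseteq\Gamma(\V)$ and $[Y,\Gamma(\V)]\subseteq\Gamma(\V)$ imply $[[X,Y],\Gamma(\V)]\subseteq\Gamma(\V)$ by the Jacobi identity, and that $\pounds_X\beta-\ip{Y}\dr\alpha\in\Gamma(\V^\circ)^G$, which follows by pairing against fundamental vector fields and using $G$-invariance of $\alpha,\beta$); and then show the bracket is compatible with the push-forward to $\bar P$, i.e.\ $\pi$-relatedness of vector fields is preserved by the Lie bracket and $\pi^*$ intertwines $\dr$ and the Lie derivative along $\pi$-related fields. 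Combining these, $[(X_{\bar P},\alpha_{\bar P}),(Y_{\bar P},\beta_{\bar P})]$ is the push-forward to $\bar P$ of $[(X,\alpha),(Y,\beta)]\in\D^G$ (using closedness of $\mathsf D$ to land back in $\Gamma(\mathsf D)$), hence lies in $\D_{\bar P}$.

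The delicate points — and where I would spend the real effort — are, first, that $\D^G$ need not span $\Gamma(\mathsf D)$ pointwise in the naive sense, so one must be careful that ``descending representatives'' exist in enough supply; this is precisely what the hypothesis ``$\mathsf D\cap(\T\oplus\V_G^\circ)$ is spanned pointwise by its descending sections'' buys, and I would lean on the $G$-invariant averaging of Section \ref{sec:proper} (which sends sections of a $G$-invariant distribution to $G$-invariant sections) to manufacture them. Second, the push-forward of forms and vector fields only makes sense on the quotient at the level of the stratified space $\bar M$, so all identities must be checked stratum-by-stratum, using that each stratum $\bar P$ is a genuine manifold (a quotient of an isotropy type manifold by a free proper action, as in Section \ref{tubeth}) and that restriction to $\bar P$ commutes with the relevant operations. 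I would handle this by fixing $\bar P$, pulling everything back along $\pi$ restricted to the corresponding orbit type manifold, doing the computation upstairs where $\mathsf D$ lives, and pushing down. The closedness of $\mathsf D$ enters exactly once but crucially: without it, $[(X,\alpha),(Y,\beta)]$ need not be a section of $\mathsf D$ at all, and then there is nothing to push down.
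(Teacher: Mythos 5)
First, a point of context: the paper does not actually prove Theorem \ref{singred}. Section 2.4 is explicitly a \emph{review} of standard Dirac reduction, and Theorems \ref{singred2}, \ref{singred} and \ref{conj-orbits-red} are quoted from \cite{JoRaSn11}, where the proofs live. So your proposal can only be measured against the supporting machinery the present paper does develop (Theorem \ref{singred2}, the averaging of Section \ref{sec:proper}, and Lemma \ref{lemma42}). On the integrability half your plan is essentially correct and matches that machinery: the closure of the set of descending sections of $\mathsf D$ under the Courant--Dorfman bracket is exactly Lemma \ref{lemma42} (Jacobi identity for $[[X,Y],\Gamma(\V)]\subseteq\Gamma(\V)$, pairing $\boldsymbol{\pounds}_X\beta-\ip{Y}\dr\alpha$ against fundamental vector fields, $G$-invariance by differentiating along $\xi_M$), the extension to $C^\infty(M)$-combinations is the Leibniz rule as in the proposition following that lemma, and compatibility of $\pi$-relatedness with brackets and pull-backs of forms is routine. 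Closedness of $\mathsf D$ enters exactly where you say it does.

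The genuine gap is in the Lagrangian (maximality) step. You propose to take $(\bar v,\bar\mu)\in T_s\bar P\times T_s^*\bar P$ orthogonal to the fibre $\mathsf D_{\bar P}(s)$ and ``extend it to a section orthogonal to all of $\bar\D$ on a neighbourhood of $s$,'' so that Theorem \ref{singred2} applies. This extension does not exist in general: orthogonality of a pair to $\mathsf D_{\bar P}$ at the single point $s$ is a closed, pointwise condition, and an arbitrary smooth extension of $(\bar v,\bar\mu)$ will pair to a function that vanishes at $s$ but not on any neighbourhood. Theorem \ref{singred2} is a statement about sections that are orthogonal to $\bar\D$ \emph{everywhere} on their common domain, so it cannot be invoked fibrewise without an intermediate argument. (Averaging does not help here either: it produces descending representatives of given sections, not extensions with a prescribed orthogonality property.) What is actually needed --- and what constitutes the technical heart of the proof in \cite{JoRaSn11} --- is a direct rank computation showing $\dim\mathsf D_{\bar P}(s)=\dim\bar P$: one computes the rank of $\mathsf D\cap(\T\oplus\V_G^\circ)$ along the orbit type manifold lying over $\bar P$ (constant on each connected component, by the pointwise spanning hypothesis together with $G$-invariance), identifies the kernel of the push-forward $(X,\alpha)\mapsto(X_{\bar P},\alpha_{\bar P})$ with an intersection involving $\V$, and subtracts. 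Isotropy plus this dimension count gives the Lagrangian property; your sketch supplies only the isotropy half and the subbundle claim then inherits the same gap, since you derive constant rank from the missing maximality.
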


In the regular case, this simplifies to the following statement.
\begin{theorem}\label{conj-orbits-red} 
Let $G$ be a connected Lie group acting in a proper way on the manifold 
$M$ such that all isotropy subgroups are conjugated. Assume that $\mathsf D\cap\K^\perp$
has constant rank on $M$, where $\K^\perp:=TM\oplus\V^\circ$.
Then the  Dirac structure $\mathsf D$ on $M$ induces a Dirac structure 
$\bar{\mathsf D}$ on the quotient $\bar M=M/G$ given by
\begin{equation}\label{red_dir_conj_iso}
\bar{\mathsf D}(\bar m):=\left\{\left(\bar X(\bar m),\bar\alpha(\bar m)\right)\in
  T_{\bar m}\bar M\times T^*_{\bar m}\bar M \,\bigg|
\begin{array}{c}\exists X\in
  \mx(M)\text{ such that } X\sim_{\pi}\bar X\\ 
\text{ and }(X,\pi^*\bar\alpha)\in\Gamma(\mathsf D)\end{array}\right\}
\end{equation} for all $\bar m\in \bar M$. If $\mathsf D$ is integrable, 
then $\bar{\mathsf D}$ is also integrable.
\end{theorem}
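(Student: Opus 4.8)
The plan is to derive Theorem~\ref{conj-orbits-red} as a special case of Theorems~\ref{singred2} and~\ref{singred}. First I would observe that when all isotropy subgroups of the proper $G$-action are conjugate, the orbit type stratification of $M$ is trivial: there is a single stratum, namely $M$ itself, so that $\bar M = M/G$ is a genuine smooth manifold rather than a stratified space. Hence there is exactly one stratum $\bar P = \bar M$, and the generalized distribution $\mathsf D_{\bar P}$ of \eqref{def_D_barP} becomes a generalized distribution $\bar{\mathsf D}$ on all of $\bar M$. The first task is therefore to check that the constant rank hypothesis ``$\mathsf D\cap\K^\perp$ has constant rank on $M$'' implies the hypothesis of Theorems~\ref{singred2} and~\ref{singred}, namely that $\mathsf D\cap(\T\oplus\V_G^\circ)$ is spanned pointwise by its descending sections. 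Note that in the conjugate-isotropy case $\T = TM$, so $\T\oplus\V_G^\circ = TM\oplus\V_G^\circ$, which sits inside $\K^\perp = TM\oplus\V^\circ$. Using the averaging procedure of Section~\ref{sec:proper} (the $G$-invariant average of sections), together with the fact that $\V_G^\circ(m) = \{\dr f(m)\mid f\in C^\infty(M)^G\}$ and the constant rank assumption, one shows that every element of $\mathsf D\cap(TM\oplus\V_G^\circ)$ at a point $m$ is the value of a descending section: average a local section to make its one-form component $G$-invariant and annihilating $\V$, and correct the vector field component to satisfy $[X,\Gamma(\V)]\subseteq\Gamma(\V)$, using constant rank to stay inside $\mathsf D$. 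This is the step I expect to require the most care.

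Next I would invoke Theorem~\ref{singred} with $\bar P = \bar M$ to conclude that $\mathsf D_{\bar P} = \bar{\mathsf D}$ is a Dirac structure on $\bar M$, and Theorem~\ref{singred2} to obtain the self-orthogonality (Lagrangian) property directly. It then remains to check that the abstract description of $\bar{\mathsf D}$ coming from \eqref{def_D_barP}---built from pushforwards of descending sections $(X,\alpha)\in\D^G$---coincides with the explicit formula \eqref{red_dir_conj_iso}. One inclusion is essentially by definition: if $(X,\pi^*\bar\alpha)\in\Gamma(\mathsf D)$ with $X\sim_\pi\bar X$, then $\pi^*\bar\alpha$ annihilates $\V$ (since $\bar\alpha$ is a genuine one-form on $\bar M$) and is $G$-invariant, while $X$ projecting to $\bar X$ forces $[X,\Gamma(\V)]\subseteq\Gamma(\V)$; hence $(X,\pi^*\bar\alpha)\in\D^G$ and $(\bar X,\bar\alpha)$ lies in the distribution of \eqref{def_D_barP}. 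For the reverse inclusion, given $(\bar X,\bar\alpha)$ arising from some descending section, one checks that $\pi^*\bar\alpha$ is again the one-form component of a section of $\mathsf D$ with vector field projecting to $\bar X$---here one may again need the averaging/constant-rank machinery to replace an arbitrary preimage one-form by $\pi^*\bar\alpha$ exactly.

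For the integrability statement, I would assume $(M,\mathsf D)$ is closed and apply the integrability part of Theorem~\ref{singred}, which gives that $(\bar P,\mathsf D_{\bar P})$ is integrable; with $\bar P=\bar M$ this is exactly the assertion that $\bar{\mathsf D}$ is closed. Alternatively, and perhaps more transparently, one can argue directly: descending sections of $\mathsf D$ are closed under the Courant--Dorfman bracket \eqref{Courant_bracket} (the bracket of two $G$-invariant, $\V$-annihilating one-forms is again such, and the bracket of two vector fields normalizing $\Gamma(\V)$ again normalizes it), they $\pi$-project compatibly with the bracket, and by Theorem~\ref{singred2} they span $\bar{\mathsf D}$; so $[\Gamma(\bar{\mathsf D}),\Gamma(\bar{\mathsf D})]\subseteq\Gamma(\bar{\mathsf D})$ follows from $[\Gamma(\mathsf D),\Gamma(\mathsf D)]\subseteq\Gamma(\mathsf D)$.

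The main obstacle, as indicated, is verifying the pointwise-spanning-by-descending-sections hypothesis from the constant rank condition, i.e.\ producing, near each $m\in M$, enough $G$-invariant descending sections of $\mathsf D$ whose values at $m$ exhaust $\mathsf D(m)\cap(T_mM\oplus\V_G^\circ(m))$---and correspondingly showing that the two descriptions of $\bar{\mathsf D}$ agree. Everything else is either a direct appeal to the already-stated Theorems~\ref{singred2}--\ref{singred} with the trivial stratification, or a routine unwinding of the pushforward definitions; I would keep those parts brief and concentrate the write-up on the averaging argument.
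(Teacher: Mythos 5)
Your strategy coincides with the paper's: Theorem \ref{conj-orbits-red} is stated here as a review result (proved in \cite{JoRaSn11}) and is explicitly presented as the regular-case specialization of Theorems \ref{singred2} and \ref{singred}, which is exactly the reduction you carry out, and your identifications $\T=TM$, single stratum $\bar P=\bar M$, and the matching of \eqref{def_D_barP} with \eqref{red_dir_conj_iso} are all correct. The only substantive step, which you rightly flag, is passing from the constant-rank hypothesis on $\mathsf D\cap\K^\perp$ to the pointwise-spanning-by-descending-sections hypothesis; the paper attributes this to the invariant-frames result of \cite{JoRaZa11}, and your averaging sketch does close the gap once two observations are added: (i) since each $h\in G_m$ fixes $m$ and covers the identity on $\bar M$, it acts trivially on $T_mM/\V(m)$, so averaging a section of $\mathsf D\cap\K^\perp$ leaves the covector value at $m$ unchanged and moves the vector value only by an element of $\V(m)$; and (ii) $\mathsf D\cap\K$ also has constant rank (because $\mathsf D+\K=(\mathsf D\cap\K^\perp)^\perp$ does), so the resulting discrepancy $(v_m-X_G(m),0)\in(\mathsf D\cap\K)(m)$ is the value of a smooth section of $\mathsf D\cap\K\subseteq\V\oplus\{0\}$, which is automatically descending. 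With these two points your argument is complete and faithful to the intended proof.
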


\section{The free case}
\label{section_free_case}

In this section we present, without proofs, the theory of optimal reduction for free Lie group actions. We do 
this because the main ideas are easier to follow in this situation and because this case follows 
closely the non-free Poisson case. 
The proofs will be given later for the general case of a proper 
action; this is technically considerably more involved due to
the fact that the characteristic distribution $\mathsf{P_1}$ 
of an arbitrary Dirac structure is not equal to $T^*M$ (as was the
case for a Poisson structure). 
In the general case, \emph{two} types of natural optimal distributions arise simultaneously
as generalizations
of the Poisson optimal distribution. This is related to the fact that
one can consider \emph{orbit type} and \emph{isotropy type} manifolds
when carrying out singular reduction.
In addition, as we shall see, 
these two natural distributions are not integrable, in general.
In the free case, where they are both equal, one has to assume 
that the intersection of the Dirac structure with 
the vector bundle $\mathcal K^\perp$ associated to the action has constant rank
(as for the standard reduction). In the general case, this doesn't make sense because $\mathcal K^\perp$
is not necessarily smooth; we will give additional hypotheses guaranteeing the integrability of the optimal distributions.

\medskip

Let $(M,\mathsf D)$ be a \textit{closed} Dirac manifold, $G$ a symmetry 
Lie group of $\mathsf D$ acting freely and properly on
$M$. Assume in the following that $\mathsf D\cap \mathcal{K}^\bot$ is
a vector bundle, where $ \mathcal{K} = \mathcal{V} \oplus \{0\} \subset  TM \oplus T ^\ast M $ and 
$\mathcal{K}^\bot=TM\oplus \mathcal{V}^\circ$. To define the optimal momentum map (as in \cite{OrRa04}, \S5.5.7) 
we need to introduce an additional smooth distribution.
Define 
\[
\mathcal{D}_G(m):=\{X(m)\mid \text{there is } \alpha \in \Gamma(\mathcal{V}^\circ)
\subseteq\Omega^1(M)\text{ such that }(X,\alpha) \in
\Gamma(\mathsf D)\}\subseteq\mathsf{G_1}(m)
\]
for all $m\in M$. Then $\mathcal{D}_G = \cup_{m \in  M}\mathcal{D}_G(m)$ is a smooth distribution on $M$.  

If the manifold $M$ is Poisson and the Dirac structure is the graph of the
Poisson map $\sharp:T^*M \rightarrow TM$, then 
$\mathcal{D}_G(p)=\{X_f(p)\mid \text{there is } f\in C^\infty(M)^G \text{ such that } X_f= \sharp(\mathbf{d}f)\}$, which recovers the definition in \cite{OrRa04}.

Returning to the general case of Dirac manifolds, note that
\begin{equation}
\label{D_G_characterization}
\mathcal{D}_G = \pi_{TM}(\mathsf{D}\cap
(TM\oplus\mathcal{V}^\circ))=\pi_{TM}(\mathsf{D}\cap 
\mathcal{K}^\bot),
\end{equation}
where $\pi_{TM}: TM \oplus T ^\ast M \rightarrow TM$ is the projection on the first factor 
and that we always have 
\[
\mathsf{G_0}\subseteq\mathcal{D}_G\subseteq\mathsf{G_1}.
\]
We have the following lemmas.
\begin{lemma}\label{ginv}
Let  $(X,\alpha), (Y,\beta) \in \Gamma(\mathsf D\cap\mathcal{K}^\perp)$, i.e., 
 $X,Y\in \Gamma(\mathcal{D}_G)$. Then the $1$-form ${\boldsymbol{\pounds}}_{X}\beta-{\mathbf{i}}_{Y} \mathbf{d} \alpha$
is a local section of $\mathcal{V}^\circ$. 
\end{lemma}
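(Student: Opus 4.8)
The plan is to evaluate the one-form $\ldr{X}\beta - \ip{Y}\dr\alpha$ on an arbitrary fundamental vector field $\xi_M$, $\xi \in \lie g$, and show the result vanishes. The natural tool is the closedness of $\mathsf D$: since $(X,\alpha), (Y,\beta) \in \Gamma(\mathsf D)$, the Courant--Dorfman bracket \eqref{Courant_bracket} gives $[(X,\alpha),(Y,\beta)] = ([X,Y], \ldr{X}\beta - \ip{Y}\dr\alpha) \in \Gamma(\mathsf D)$. So the one-form in question is precisely the cotangent component of a section of $\mathsf D$ whose tangent component is $[X,Y]$. Thus it suffices to show that this bracket section actually lies in $\Gamma(\mathsf D \cap \K^\perp)$, i.e., that $\ldr{X}\beta - \ip{Y}\dr\alpha \in \Gamma(\V^\circ)$; equivalently, $\pair{([X,Y], \ldr{X}\beta - \ip{Y}\dr\alpha), (\xi_M, 0)} = (\ldr{X}\beta - \ip{Y}\dr\alpha)(\xi_M) = 0$ for all $\xi$.

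First I would rewrite the pairing using the fact that $\mathsf D$ is Lagrangian and $G$-invariant. Because the action is canonical, $\ldr{\xi_M}$ preserves $\Gamma(\mathsf D)$, so $(\ldr{\xi_M}X, \ldr{\xi_M}\alpha) \in \Gamma(\mathsf D)$ and similarly for $(Y,\beta)$. The key computational identity is the infinitesimal invariance of the pairing \eqref{pairing}: for any sections $(U,\mu),(W,\nu)$ of $\mathsf P_M$,
\[
\ldr{\xi_M}\pair{(U,\mu),(W,\nu)} = \pair{(\ldr{\xi_M}U,\ldr{\xi_M}\mu),(W,\nu)} + \pair{(U,\mu),(\ldr{\xi_M}W,\ldr{\xi_M}\nu)}.
\]
Applying this to $(X,\alpha)$ and $(Y,\beta)$, whose pairing is identically zero since $\mathsf D$ is Lagrangian, yields $\pair{(\ldr{\xi_M}X,\ldr{\xi_M}\alpha),(Y,\beta)} = -\pair{(X,\alpha),(\ldr{\xi_M}Y,\ldr{\xi_M}\beta)}$; since both terms are pairings of two sections of the Lagrangian $\mathsf D$, each is separately zero. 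That gives us relations among $\ldr{\xi_M}\alpha$, $\ldr{\xi_M}\beta$, $X$, $Y$, $\alpha$, $\beta$. Now expand $(\ldr{X}\beta - \ip{Y}\dr\alpha)(\xi_M)$ using Cartan calculus: $(\ldr{X}\beta)(\xi_M) = \ldr{X}(\beta(\xi_M)) - \beta(\ldr{X}\xi_M) = \ldr{X}(\beta(\xi_M)) - \beta([X,\xi_M])$, and $(\ip{Y}\dr\alpha)(\xi_M) = \dr\alpha(Y,\xi_M) = \ldr{Y}(\alpha(\xi_M)) - \ldr{\xi_M}(\alpha(Y)) - \alpha([Y,\xi_M])$.

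The crucial structural input is $\alpha, \beta \in \Gamma(\V^\circ)$, so $\alpha(\xi_M) = 0$ and $\beta(\xi_M) = 0$ identically; hence the terms $\ldr{X}(\beta(\xi_M))$ and $\ldr{Y}(\alpha(\xi_M))$ and $\ldr{\xi_M}(\alpha(Y))$ (the last because $\alpha(Y)$... — actually here one must be careful: $\alpha(Y)$ need not vanish, but note $\ldr{\xi_M}(\alpha(Y)) = (\ldr{\xi_M}\alpha)(Y) + \alpha(\ldr{\xi_M}Y) = (\ldr{\xi_M}\alpha)(Y) + \alpha([\xi_M,Y])$). After these reductions, what remains is a combination of $\beta([X,\xi_M])$, $\alpha([Y,\xi_M])$, and $(\ldr{\xi_M}\alpha)(Y)$. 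Here is where I expect the main obstacle: one must show this residual combination vanishes, and that is exactly where the relations from Lagrangian-plus-invariance must be fed in. Specifically, $\pair{(\ldr{\xi_M}X,\ldr{\xi_M}\alpha),(Y,\beta)} = (\ldr{\xi_M}\alpha)(Y) + \beta(\ldr{\xi_M}X) = 0$ and $\beta(\ldr{\xi_M}X) = \beta([\xi_M,X]) = -\beta([X,\xi_M])$, so $(\ldr{\xi_M}\alpha)(Y) = \beta([X,\xi_M])$; similarly $(\ldr{\xi_M}\beta)(X) = \alpha([Y,\xi_M])$, and the Lagrangian identity ties $(\ldr{\xi_M}\alpha)(Y)$ to $-(\ldr{\xi_M}\beta)(X)$ up to the vanishing pairing terms. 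Substituting these back collapses the residual expression to zero. I would carry out this bookkeeping carefully — it is the heart of the lemma — and conclude $(\ldr{X}\beta - \ip{Y}\dr\alpha)(\xi_M) = 0$ for all $\xi \in \lie g$, hence $\ldr{X}\beta - \ip{Y}\dr\alpha \in \Gamma(\V^\circ)$ as claimed.
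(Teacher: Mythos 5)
Your argument is correct, and it closes: after the Cartan-calculus reduction the residual term is exactly
\[
\bigl(\ldr{X}\beta-\ip{Y}\dr\alpha\bigr)(\xi_M)=\beta([\xi_M,X])+(\ldr{\xi_M}\alpha)(Y)=\left\langle (\ldr{\xi_M}X,\ldr{\xi_M}\alpha),(Y,\beta)\right\rangle,
\]
which vanishes because $(\ldr{\xi_M}X,\ldr{\xi_M}\alpha)\in\Gamma(\mathsf D)$ (canonicity of the action) and $\mathsf D$ is Lagrangian. This is, however, a genuinely different route from the paper's. The paper proves the lemma by pointing to Lemma \ref{lemma42}, whose proof first writes $X=X^G+X^\V$ and $Y=Y^G+Y^\V$ using the \emph{descending} property of the sections (available here because $\mathsf D\cap\K^\perp$ is a vector bundle and hence spanned by descending sections), and then kills each term separately using $[X,\xi_M],[Y,\xi_M]\in\Gamma(\V)$, $\alpha(\V)=\beta(\V)=0$, and the $G$-invariance of $\alpha(Y^G)$. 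Your proof substitutes all of that structural input with a single use of the infinitesimal $G$-invariance of $\mathsf D$ paired against the Lagrangian condition. What this buys you is that the argument applies verbatim to arbitrary sections of $\mathsf D\cap\K^\perp$ — which is exactly the hypothesis of Lemma \ref{ginv} as stated — without invoking the decomposition into invariant plus vertical parts; what the paper's version buys is that it simultaneously handles the stronger conclusion needed in Lemma \ref{lemma42} (that the bracket is again a \emph{descending} section, i.e., that the one-form is moreover $G$-invariant and $[[X,Y],\Gamma(\V)]\subseteq\Gamma(\V)$), which your computation does not address and is the part of Lemma \ref{lemma42} actually used in the non-free theory. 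For the statement as given, your proof is complete once the bookkeeping you sketch is written out; I have verified that it does collapse as you predict.
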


For the proof see Lemma \ref{lemma42}.

\begin{corollary}
If $\mathsf D$ is integrable, the space of local sections of the intersection of vector bundles 
$\mathsf D\cap \mathcal{K}^\bot$ is closed under the Courant bracket. Hence, under the assumption that 
$\mathsf D\cap \mathcal{K}^\bot$ has constant dimensional fibers, this vector bundle inherits a Lie algebroid
structure relative to the  Courant-Dorfman bracket on  $\Gamma (\mathsf D\cap \mathcal{K}^\bot)$ and the
anchor map $\pi_{TM}:\mathsf D\cap \mathcal{K}^\bot\to TM$. Thus, the distribution
$\mathcal{D}_G=\pi_{TM}(\mathsf D\cap \mathcal{K}^\bot)$ is integrable in the sense of Stefan-Sussmann.
\end{corollary}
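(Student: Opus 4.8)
The plan is to run the standard algebroid-to-integrability argument on the intersection $\mathsf D\cap\mathcal K^\perp$, using Lemma~\ref{ginv} as the one nontrivial input. First I would show that $\Gamma(\mathsf D\cap\mathcal K^\perp)$ is closed under the Courant--Dorfman bracket \eqref{Courant_bracket}. Let $(X,\alpha),(Y,\beta)\in\Gamma(\mathsf D\cap\mathcal K^\perp)$, so that $\alpha,\beta\in\Gamma(\mathcal V^\circ)$ and $X,Y\in\Gamma(\mathcal D_G)\subseteq TM$. Since $\mathsf D$ is integrable, the Courant--Dorfman bracket $[(X,\alpha),(Y,\beta)]=([X,Y],\ldr{X}\beta-\ip{Y}\dr\alpha)$ is again a section of $\mathsf D$. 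It remains to check that this section lies in $\mathcal K^\perp=TM\oplus\mathcal V^\circ$, i.e.\ that its cotangent component is a local section of $\mathcal V^\circ$. But that is exactly the content of Lemma~\ref{ginv}: $\ldr{X}\beta-\ip{Y}\dr\alpha\in\Gamma(\mathcal V^\circ)$. Hence $[(X,\alpha),(Y,\beta)]\in\Gamma(\mathsf D)\cap\Gamma(TM\oplus\mathcal V^\circ)=\Gamma(\mathsf D\cap\mathcal K^\perp)$, proving the first assertion.

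Next, under the standing hypothesis that $\mathsf D\cap\mathcal K^\perp$ has constant-rank fibers, it is a genuine vector subbundle of $\mathsf{P}_M$. The restriction of the Courant--Dorfman bracket to $\Gamma(\mathsf D\cap\mathcal K^\perp)$ then satisfies the Jacobi identity (it does so on all of $\Gamma(\mathsf D)$ when $\mathsf D$ is closed) and the Leibniz rule with respect to the anchor $\pi_{TM}\colon\mathsf D\cap\mathcal K^\perp\to TM$; these are inherited verbatim from the Lie algebroid structure on $\mathsf D$ recalled in Section~\ref{sec:Dirac_structures}, since $\mathsf D\cap\mathcal K^\perp$ is a bracket-closed subbundle. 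Therefore $\mathsf D\cap\mathcal K^\perp$ is a Lie algebroid over $M$ with anchor $\pi_{TM}$.

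Finally, I would invoke the general fact that the image of the anchor of a Lie algebroid is an integrable generalized distribution in the sense of Stefan--Sussmann: the image $\pi_{TM}(\mathsf D\cap\mathcal K^\perp)$ is spanned pointwise by the anchored images of a local frame of the algebroid, these vector fields are closed under the Lie bracket up to the structure functions of the algebroid (this is precisely condition (3) in Theorem~\ref{frobenius with vector fields}, with the $\lambda_{ij}$ coming from the algebroid bracket on frame sections), and hence the spanned distribution is integrable by Theorem~\ref{frobenius with vector fields}. Since $\mathcal D_G=\pi_{TM}(\mathsf D\cap\mathcal K^\perp)$ by \eqref{D_G_characterization}, this gives the claimed integrability of $\mathcal D_G$.

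The only real obstacle in this argument is Lemma~\ref{ginv}, whose proof is deferred to Lemma~\ref{lemma42}; everything else is a routine packaging of the integrable-Dirac/Lie-algebroid formalism already set up in the excerpt. The constant-rank hypothesis is what lets us pass from the ``algebraically involutive family of sections'' picture to a bona fide subbundle and hence apply the Frobenius-type statement cleanly; without it one would at best get the weaker conclusion that $\Gamma(\mathsf D\cap\mathcal K^\perp)$ is bracket-closed, which is why it is stated as a hypothesis here.
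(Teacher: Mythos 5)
Your proposal is correct and follows essentially the same route as the paper: closedness of $\mathsf D$ keeps the Courant--Dorfman bracket in $\Gamma(\mathsf D)$, Lemma \ref{ginv} keeps its cotangent component in $\Gamma(\mathcal V^\circ)$, and the constant-rank hypothesis then yields a Lie algebroid whose anchor image is Stefan--Sussmann integrable (the paper cites \cite{Courant90a} for this last standard fact). Nothing to add.
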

This corollary is an immediate consequence of Lemma \ref{ginv}
and the closedness of $\mathsf D$. Note that in the general case of proper nonfree actions,
we will have to \emph{assume} that the corresponding
distributions are integrable, or give additionnal hypotheses under which this is true. 

Thus, if $ \mathsf D \cap \mathcal{K} ^\perp$ is a vector bundle,
it is a Lie algebroid and  $M$ admits hence a generalized foliation by the 
leaves of the generalized
distribution $\mathcal{D}_G$ (see \cite{Courant90a}). The optimal
 momentum map is now defined like in \cite{OrRa04}.

\begin{definition} Assume that $ \mathsf D \cap \mathcal{K} ^\perp$ is a vector subbundle of 
$ TM \oplus T ^\ast M $. The projection 
\begin{equation}
\mathcal{J}: M\to M/\mathcal{D}_G
          \end{equation}
on the leaf space of $\mathcal{D}_G$ is called the \textbf{(Dirac) optimal
  momentum map}.
\end{definition}

In order to formulate  the reduction theorem for the
optimal momentum map, we need an induced action of $G$ on the leaf space of
$\mathcal{D}_G$. This doesn't follow, as usual, from the $G$-equivariance
of the vector fields spanning $\mathcal{D}_G$ because, in this case, they  are  not necessarily $G$-equivariant.

\begin{proposition}\label{inducedaction}
If $m$ and $m'$ are in the same leaf of $\mathcal{D}_G$ then $\Phi_g(m)$ and $\Phi_g(m')$ are in the same leaf of $\mathcal{D}_G$ for all $g \in
G$. Hence there is a well defined action $\bar{\Phi}:G\times M/\mathcal{D}_G \to M/\mathcal{D}_G$ given by
\begin{equation*}
\bar{\Phi}_g(\mathcal{J}(m)):=\mathcal{J}(\Phi _g(m)) 
\end{equation*}
\end{proposition}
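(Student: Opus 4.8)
The plan is to show that the $G$-action maps leaves of $\mathcal{D}_G$ to leaves of $\mathcal{D}_G$, which then makes $\bar\Phi$ well defined on the leaf space. The key point is that although the vector fields spanning $\mathcal{D}_G$ need not be $G$-equivariant, the \emph{distribution} $\mathcal{D}_G$ is $G$-invariant in the sense that $T_m\Phi_g\bigl(\mathcal{D}_G(m)\bigr)=\mathcal{D}_G(\Phi_g(m))$ for all $g\in G$ and $m\in M$. To see this, use the characterization \eqref{D_G_characterization}: $\mathcal{D}_G=\pi_{TM}(\mathsf D\cap\mathcal K^\perp)$. The action is canonical, so $\mathsf D$ is $G$-invariant; the distribution $\mathcal V$ is $G$-invariant because $\Phi_g^*\xi_M=(\Ad_{g^{-1}}\xi)_M$, hence $\mathcal K=\mathcal V\oplus\{0\}$ and its smooth orthogonal $\mathcal K^\perp=TM\oplus\mathcal V^\circ$ are $G$-invariant as well. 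Therefore the intersection $\mathsf D\cap\mathcal K^\perp$ is a $G$-invariant vector bundle, and applying $\pi_{TM}$ (which is $G$-equivariant since $T\Phi_g$ respects the splitting $TM\oplus T^*M$) gives the claimed $G$-invariance of $\mathcal{D}_G$.

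First I would record this $G$-invariance carefully at the level of sections: if $(X,\alpha)\in\Gamma(\mathsf D\cap\mathcal K^\perp)$ then $(\Phi_g^*X,\Phi_g^*\alpha)\in\Gamma(\mathsf D\cap\mathcal K^\perp)$, so $\Phi_g^*X\in\Gamma(\mathcal{D}_G)$ and hence $T_m\Phi_g\bigl(\mathcal{D}_G(m)\bigr)\subseteq\mathcal{D}_G(\Phi_g(m))$; equality follows by applying the same to $g^{-1}$. Next, since $\mathsf D\cap\mathcal K^\perp$ is a vector bundle and $\mathsf D$ is closed, the Corollary preceding the Definition tells us $\mathcal{D}_G$ is integrable in the sense of Stefan--Sussmann, so $M$ is partitioned into the maximal integral leaves of $\mathcal{D}_G$. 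The $G$-invariance of the distribution now forces $\Phi_g$ to send each leaf into a leaf: if $L$ is the leaf through $m$, then $\Phi_g(L)$ is a connected integral manifold of $\mathcal{D}_G$ through $\Phi_g(m)$ of maximal dimension at every point (because $T\Phi_g$ is a diffeomorphism carrying $\mathcal{D}_G(p)$ onto $\mathcal{D}_G(\Phi_g(p))$), hence is contained in the maximal leaf $L'$ through $\Phi_g(m)$; applying the same to $g^{-1}$ gives $\Phi_{g^{-1}}(L')\subseteq L$, so $\Phi_g(L)=L'$. In particular, if $m$ and $m'$ lie in the same leaf, so do $\Phi_g(m)$ and $\Phi_g(m')$ — this is exactly the first assertion.

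From this the well-definedness of $\bar\Phi_g$ is immediate: $\mathcal{J}(m)=\mathcal{J}(m')$ means $m,m'$ lie in a common leaf, hence $\Phi_g(m),\Phi_g(m')$ do too, i.e.\ $\mathcal{J}(\Phi_g(m))=\mathcal{J}(\Phi_g(m'))$, so $\bar\Phi_g(\mathcal{J}(m)):=\mathcal{J}(\Phi_g(m))$ does not depend on the representative. That $\bar\Phi$ is an action (i.e.\ $\bar\Phi_e=\Id$ and $\bar\Phi_{gh}=\bar\Phi_g\circ\bar\Phi_h$) follows formally from the identity $\mathcal{J}\circ\Phi_g=\bar\Phi_g\circ\mathcal{J}$ together with the fact that $\Phi$ is an action and $\mathcal{J}$ is surjective. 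I would expect the only genuine subtlety to be the leaf-to-leaf argument in the previous paragraph — specifically, justifying that the diffeomorphic image of a maximal integral leaf is again maximal — which is where one invokes that $T\Phi_g$ carries the distribution onto itself pointwise; everything else is bookkeeping. (One may also phrase this last step via Theorem~\ref{frobenius with vector fields}: $\mathcal{D}_G$ is spanned by a family $\mathcal F$ of local vector fields, its leaves are the $A_{\mathcal F}$-orbits, and $\Phi_g$ conjugates $A_{\mathcal F}$ into the pseudogroup generated by $\Phi_g^*\mathcal F\subseteq\Gamma(\mathcal{D}_G)$, hence permutes the orbits.)
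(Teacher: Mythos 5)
Your proposal is correct and follows essentially the same route as the paper: the key input in both is the $G$-invariance of $\mathsf D\cap\mathcal K^\perp$ (equivalently, $\Phi_g^*X\in\Gamma(\mathcal D_G)$ for every section $X$ of $\mathcal D_G$) together with the Stefan--Sussmann integrability of $\mathcal D_G$. The only cosmetic difference is in the last step: the paper connects $m$ to $m'$ by an integral curve of a section of $\mathcal D_G$ and checks that its image under $\Phi_g$ is an integral curve of $\Phi_{g^{-1}}^*X\in\Gamma(\mathcal D_G)$, hence stays in one leaf, whereas you conclude leaf-permutation from the maximality of integral manifolds (and mention the accessible-set/pseudogroup version, which is exactly the paper's argument, as an aside).
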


For the proof see Propositions  \ref{inducedaction} and \ref{inducedaction2}.

Denote by  $G_\rho$ the isotropy subgroup of $\rho\in M/\mathcal{D}_G$
for this induced action. If $g\in G_\rho$ and $m\in \mathcal{J}^{-1}(\rho)$, then
\[\mathcal{J}(\Phi_g(m))=\bar{\Phi}_g(\mathcal{J}(m))
=\bar{\Phi}_g(\rho)=\rho=\mathcal{J}(m)\] 
and we conclude, as usual, that $G_\rho$ leaves $\mathcal{J}^{-1}(\rho)$ invariant. Thus we get an induced
action of $G_\rho$ on $\mathcal{J}^{-1}(\rho)$, which is free if the original $G$-action on $M$ is free. 

Also, $\mathcal{J}^{-1}(\rho)$ is an initial submanifold of $M$ since it is a leaf of the generalized foliation defined by the integrable distribution $\mathcal{D}_G$. By Proposition 3.4.4 in \cite{OrRa04}, there is a unique smooth structure on $G_ \rho$ with respect to which this subgroup is an initial Lie subgroup of $G $ with Lie algebra
\[
\mathfrak{g}_ \rho = \{ \xi\in \mathfrak{g} \mid \xi_M(m) \in T_m \mathcal{J}^{-1}( \rho) , \text{ for all } m \in \mathcal{J}^{-1}( \rho) \}.
\] 
In general, $G _\rho$ is not closed in $G $.

\begin{definition}\label{noether_property}
Let $(M,\mathsf D)$ be a Dirac manifold with integrable Dirac structure $\mathsf D$ and $G$ a
Lie group acting canonically on it. Let $P$ be a set and $\mathbf{J}:M\to P$ a
map. We say that $\mathbf{J}$ has the \textbf{Noether property} for the
$G$-action on $(M,\mathsf D)$ if the flow $F_t$ of any implicit Hamiltonian vector
field associated to any $G$-invariant admissible function $h\in C^\infty(M)$
preserves the fibers of $\mathbf{J}$, that is, 
\begin{equation*}
\mathbf{J}\circ F_t=\mathbf{J}\arrowvert_{{\rm Dom}(F_t)}
\end{equation*}
where ${\rm Dom}(F_t)$ is the domain of definition of $F_t$.
\end{definition}

Like in the Poisson case (see Theorem 5.5.15 in \cite{OrRa04}), 
one gets the following universality property. 
Note that if  $\mathsf D\cap \mathcal K^\perp$ is spanned by 
sections with exact cotangent projections, i.e.,
by the family
$\{(X_f,\dr f)\in\Gamma(\mathsf D)\mid \dr f\in\Gamma(\V^\circ)\}$, 
then, by $G$-invariant averaging, it is spanned 
by 
$\{(X_f,\dr f)\in\Gamma(\mathsf D)\mid  f\in C^\infty(M)^G\}$.

\begin{theorem}
Let $G$ be a symmetry Lie group of the Dirac manifold $(M,\mathsf D)$ and 
$\mathbf{J}:M\to P$ a function with the Noether property.
Assume that $\mathsf D\cap \mathcal K^\perp$ is spanned by 
sections with exact cotangent projections.
Then there exists a unique map $\phi:M/\mathcal{D}_G\to P$
such that the following diagram commutes:
\[\begin{xy}
\xymatrix@!R{
      M \ar[rr]^{\mathbf{J}} \ar[rd]_{\mathcal{J}}  &     &  P  \\
                             &  M/\mathcal{D}_G \ar[ur]_\phi &
  }\end{xy}\]If $\mathbf{J}$ is  $G$-equivariant with respect to some $G$-action on $P$,
then  $\phi_G$  is
 also $G$-equivariant.
If $\mathbf{J}$ is smooth
 and $M/\mathcal D_G$ is a smooth manifold, then $\phi_G$  is
 also smooth.
\end{theorem}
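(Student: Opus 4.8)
The plan is to establish the universality of $\mathcal{J}$ by showing that $\mathbf{J}$ is constant on the leaves of $\mathcal{D}_G$, and then to read off the equivariance and smoothness from the construction. The key observation is the hypothesis that $\mathsf D\cap\mathcal K^\perp$ is spanned by sections with exact cotangent projections, so that, after $G$-invariant averaging (as noted just before the statement), the distribution $\mathcal{D}_G=\pi_{TM}(\mathsf D\cap\mathcal K^\perp)$ is spanned pointwise by the Hamiltonian vector fields $X_f$ of $G$-invariant admissible functions $f\in C^\infty(M)^G$. First I would recall from Theorem \ref{frobenius with vector fields} and the corollary preceding the definition of $\mathcal{J}$ that the leaf of $\mathcal{D}_G$ through a point $m$ is precisely the accessible set of $m$ under the pseudogroup generated by this family of vector fields; that is, two points lie in the same $\mathcal{D}_G$-leaf if and only if each can be reached from the other by a finite composition of flows $F^{(1)}_{t_1}\circ\cdots\circ F^{(k)}_{t_k}$, where each $F^{(i)}$ is the flow of some $X_{f_i}$ with $f_i\in C^\infty(M)^G$.

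Next I would invoke the Noether property of $\mathbf{J}$: by Definition \ref{noether_property}, the flow $F_t$ of the implicit Hamiltonian vector field associated to any $G$-invariant admissible function preserves the fibers of $\mathbf{J}$, i.e. $\mathbf{J}\circ F_t=\mathbf{J}$ on $\dom(F_t)$. Composing finitely many such flows still preserves the fibers of $\mathbf{J}$, so $\mathbf{J}$ is constant on each accessible set, hence on each leaf of $\mathcal{D}_G$. Therefore $\mathbf{J}$ factors through the quotient: there is a unique set map $\phi:M/\mathcal{D}_G\to P$ with $\phi\circ\mathcal{J}=\mathbf{J}$, which is exactly the commutativity of the diagram. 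Uniqueness is immediate since $\mathcal{J}$ is surjective.

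For equivariance, suppose $\mathbf{J}$ is $G$-equivariant for some $G$-action on $P$. Using Proposition \ref{inducedaction}, $G$ acts on $M/\mathcal{D}_G$ via $\bar\Phi_g(\mathcal{J}(m))=\mathcal{J}(\Phi_g(m))$, and then for any $\rho=\mathcal{J}(m)$ we compute $\phi(\bar\Phi_g(\rho))=\phi(\mathcal{J}(\Phi_g(m)))=\mathbf{J}(\Phi_g(m))=g\cdot\mathbf{J}(m)=g\cdot\phi(\rho)$, so $\phi$ is equivariant. For smoothness, if $\mathbf{J}$ is smooth and $M/\mathcal{D}_G$ carries a smooth manifold structure making $\mathcal{J}$ a smooth submersion (which is the standing assumption in this subsection, $\mathcal{J}$ being the projection onto the leaf space of an integrable distribution), then $\phi$ is smooth by the universal property of submersions: a map out of a quotient by a submersion is smooth iff its composition with the submersion is.

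I expect the only genuine subtlety — and the step to state carefully rather than the main difficulty — is the passage from the defining family of $\mathcal{D}_G$ (sections with exact cotangent projections, not necessarily $G$-invariant) to the family of $X_f$ with $f\in C^\infty(M)^G$; this is where the $G$-invariant averaging of Section \ref{sec:proper} is used, together with the remark that averaging a descending section of $\mathsf D\cap\mathcal K^\perp$ yields a $G$-invariant one spanning the same pointwise values. Everything else is a diagram chase once the Noether property has been leveraged to show $\mathbf{J}$ descends.
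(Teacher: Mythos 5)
Your proposal is correct and follows essentially the same route as the paper: identify the leaves of $\mathcal{D}_G$ with the accessible sets of the family of $G$-invariant Hamiltonian vector fields $X_f$, $f\in C^\infty(M)^G$ (obtained from the exactness hypothesis via $G$-invariant averaging), apply the Noether property to conclude $\mathbf{J}$ is constant on leaves, and then read off uniqueness, equivariance, and smoothness from the quotient construction. No gaps.
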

For the proof, see Theorem \ref{universality}.

\bigskip

Now we can generalize the optimal reduction procedure from Poisson
manifolds (see \cite{OrRa04}, Theorem 9.1.1) to closed Dirac manifolds. As we shall see,
with appropriately extended definitions this important desingularization
method works also for  Dirac manifolds.

\begin{theorem}[Optimal point reduction by Dirac actions]\label{red_theorem}
Let $(M,\mathsf D)$ be an integrable Dirac manifold and $G$ a Lie group acting
freely and properly on $M$ and leaving the Dirac structure invariant. Assume
that $\mathsf D\cap\mathcal{K}^\perp$ is constant dimensional and let
$\mathcal{J}: M\to M/\mathcal{D}_G$ be the optimal (Dirac) momentum map
associated to this action. Then, for any $\rho \in M/\mathcal{D}_G$ whose
isotropy subgroup $G_\rho$ acts properly on $\mathcal{J}^{-1}(\rho)$, the orbit space $M_\rho=\mathcal{J}^{-1}(\rho)/G_\rho$ is a smooth
presymplectic regular quotient manifold with presymplectic form 
$\omega_\rho \in \Omega^2(M _\rho)$ defined by 
\begin{equation}\label{presymp}
\left(\pi_\rho^*\omega_\rho \right)(m)(X(m),Y(m))=\alpha_m(Y(m))=-\beta_m(X(m))
\end{equation}
for any $m \in \mathcal{J}^{-1}(\rho)$ and any $X,Y\in
\Gamma(\mathcal{D}_G)$ defined on an open set containing $m$, where 
$\alpha, \beta \in \Gamma(\mathcal{V}^\circ)$ are such that $(X,\alpha), 
(Y,\beta) \in \Gamma(\mathsf D\cap\mathcal{K}^\perp)$, and $\pi_\rho:\mathcal{J}^{-1}(\rho)\to M_\rho$ is the projection. The pair $(M_\rho,\mathsf D_\rho)$ is called the \emph{(Dirac optimal) point reduced space} of $(M,\mathsf D)$ at $\rho$, where $\mathsf D_\rho$ is the graph of the presymplectic form $\omega_\rho$.
\end{theorem}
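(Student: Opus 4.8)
The plan is to mimic the classical optimal point reduction argument from \cite{OrRa04}, but working with the Dirac-theoretic objects introduced above. First I would establish the manifold structure of $M_\rho$. Since $\mathcal{D}_G$ is integrable (by the Corollary following Lemma \ref{ginv}, using that $\mathsf D\cap\K^\perp$ is a constant-rank vector bundle and $\mathsf D$ is closed), its leaf $\mathcal{J}^{-1}(\rho)$ is an initial submanifold of $M$; the group $G_\rho$ acts on it, and this action is free because the $G$-action on $M$ is. Given the standing hypothesis that $G_\rho$ acts \emph{properly} on $\mathcal{J}^{-1}(\rho)$, the quotient $M_\rho=\mathcal{J}^{-1}(\rho)/G_\rho$ is a smooth regular quotient manifold and $\pi_\rho:\mathcal{J}^{-1}(\rho)\to M_\rho$ is a surjective submersion. (One subtlety worth a remark: $G_\rho$ need not be closed in $G$, so properness of its action on the leaf is exactly the extra assumption that repairs this; cf.\ the discussion preceding Definition \ref{noether_property}.)

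Next I would construct the two-form. For $m\in\mathcal{J}^{-1}(\rho)$ and $X,Y\in\Gamma(\mathcal{D}_G)$ defined near $m$, choose $\alpha,\beta\in\Gamma(\V^\circ)$ with $(X,\alpha),(Y,\beta)\in\Gamma(\mathsf D\cap\K^\perp)$, and set
\[
\Omega(m)(X(m),Y(m)):=\alpha_m(Y(m))=-\beta_m(X(m)).
\]
The equality $\alpha_m(Y(m))=-\beta_m(X(m))$ holds because $\langle(X,\alpha),(Y,\beta)\rangle=0$ on $\mathsf D$. I must check this is well defined, i.e.\ independent of the choices of $X,Y$ representing tangent vectors to $\mathcal{J}^{-1}(\rho)$ and of $\alpha,\beta$. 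Independence of $\beta$ follows since two choices differ by a section of $\mathsf{P_0}\cap\V^\circ$, which pairs to zero with $X\in\mathsf{G_1}$ precisely when... — more carefully, changing $\beta$ by $\gamma$ with $(0,\gamma)\in\Gamma(\mathsf D)$ changes $\beta_m(X(m))$ by $\gamma_m(X(m))=\langle(X,\alpha),(0,\gamma)\rangle=0$. Independence of the vector-field extensions $X,Y$ of given tangent vectors in $T_m\mathcal{J}^{-1}(\rho)=\mathcal{D}_G(m)$ is the analogue of Formula \eqref{induceddiracN}: one shows that if $X(m)=0$ then $\alpha_m\in\mathcal{D}_G(m)^\circ$ paired against any $Y(m)$ vanishes, using closedness of $\mathsf D$ together with Lemma \ref{ginv}. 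Then I would show $\Omega$ drops to a two-form $\omega_\rho$ on $M_\rho$: it is $G_\rho$-invariant because $G$ acts canonically and $\mathcal{D}_G$, $\V^\circ$ are $G$-invariant (so $\Phi_g^*$ sends admissible data to admissible data), and it is horizontal, i.e.\ $\ip{\xi_M}\Omega=0$ for $\xi\in\lie g_\rho$, because $\xi_M(m)\in T_m\mathcal{J}^{-1}(\rho)=\mathcal{D}_G(m)$ and the corresponding $\alpha$ annihilates $\V\supseteq$ the $G_\rho$-orbit directions. Hence $\pi_\rho^*\omega_\rho=\Omega$ on $\mathcal{J}^{-1}(\rho)$, which is \eqref{presymp}.

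Finally I would verify closedness of $\omega_\rho$. It suffices to show $\dr\Omega=0$ on $\mathcal{J}^{-1}(\rho)$ since $\pi_\rho$ is a submersion. Here the cleanest route is: restrict to a presymplectic leaf $N$ of $\mathsf{G_1}$, on which $\mathsf D$ induces the presymplectic form $\omega_N$ of \eqref{induceddiracN}; since $\mathcal{D}_G\subseteq\mathsf{G_1}$, the leaf $\mathcal{J}^{-1}(\rho)$ sits inside a leaf $N$, and by construction $\Omega=\iota^*\omega_N$ where $\iota:\mathcal{J}^{-1}(\rho)\hookrightarrow N$ is the inclusion (both sides are computed from the same $\alpha$-pairings). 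As $\dr\omega_N=0$ on $N$ (a standard fact for closed Dirac structures, recalled in Section \ref{sec:Dirac_structures}), we get $\dr\Omega=\iota^*\dr\omega_N=0$, hence $\dr\omega_\rho=0$. Declaring $\mathsf D_\rho$ to be the graph of $\flat_{\omega_\rho}:TM_\rho\to T^*M_\rho$ gives an (automatically closed) Dirac structure on $M_\rho$, completing the proof. The main obstacle I anticipate is the well-definedness of $\Omega$ under change of the vector-field representatives: this is exactly where closedness of $\mathsf D$ and Lemma \ref{ginv} must be combined carefully, and it is the Dirac analogue of the step in \cite{OrRa04} where the Poisson bracket's Jacobi identity is used; the identification $\Omega=\iota^*\omega_N$ with the leafwise presymplectic form is the conceptually right way to package it and also simultaneously delivers closedness.
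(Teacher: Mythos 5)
Your proof is correct, and for most of it (manifold structure of the quotient, well-definedness of the pairing at a point, descent via $G_\rho$-invariance plus horizontality) it runs parallel to the paper's argument, which is carried out in the general proper case as Theorem \ref{red_theorem1}; in the free setting the conjugacy-of-isotropy-groups step needed there is vacuous, as you note. Where you genuinely diverge is in the proof that $\omega_\rho$ is closed. The paper establishes $\dr\omega_{(\rho)}=0$ by a direct computation: it evaluates $\dr\left(\pi_{(\rho)}^*\omega_{(\rho)}\right)$ on three vector fields, substitutes the identity $\left(\pi_{(\rho)}^*\omega_{(\rho)}\right)([\tilde X,\tilde Y],\tilde Z)=\left(\boldsymbol{\pounds}_X\beta-\ip{Y}\dr\alpha\right)(Z)\circ\iota_{(\rho)}$ (valid because closedness of $\mathsf D$ puts the Courant bracket of two admissible sections back into $\Gamma(\mathsf D)$), and cancels terms using the isotropy relations $\gamma(X)+\alpha(Z)=0$, etc. You instead observe that $\mathcal{J}^{-1}(\rho)$, being an integral manifold of $\mathcal{D}_G\subseteq\mathsf{G_1}$, lies inside a single presymplectic leaf $N$ of $\mathsf{G_1}$, and that your form $\Omega$ is literally $\iota^*\omega_N$ since both are computed from the same pairings $\alpha_m(Y(m))$ of formula \eqref{induceddiracN}; closedness is then inherited from $\dr\omega_N=0$, a fact the paper already recalls from \cite{Courant90a} and uses elsewhere. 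Your route is shorter and conceptually transparent in the free case, and it anticipates the content of Theorem \ref{presympl}, which identifies $M_\rho$ with a presymplectic leaf downstairs; the cost is that it outsources the closedness to Courant's leafwise theorem rather than exhibiting the cancellation explicitly, and it does not transfer verbatim to the nonfree setting of Theorems \ref{red_theorem1}--\ref{red_theorem3}, where the paper's self-contained computation is reused for all three reduced spaces. One small point worth tightening: for the independence of the vector-field extensions you invoke closedness of $\mathsf D$ and Lemma \ref{ginv}, but in fact only the isotropy of $\mathsf D$ is needed there, since $X(m)=0$ gives $\alpha_m(Y(m))=-\beta_m(X(m))=0$ directly; closedness enters only through $\dr\omega_N=0$.
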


Note that if $\mathsf D$ is the graph of a Poisson structure on $M$, the
distribution $\mathsf{G}_0$ is $\{0\}$, all functions in $C^\infty(M)$ are admissible,
and we are in the setting of the Optimal point reduction by Poisson actions
 (see \cite{OrRa04}, Theorem 9.1.1).

\medskip

Recall that, since $\mathsf D\cap \mathcal{K}^\perp$ is assumed to have constant
dimensional fibers, one can build the reduced Dirac manifold 
$(\bar{M}, \bar{\mathsf D})$ as in Theorem \ref{conj-orbits-red}.
 The following theorem gives the relation between
the reduced manifold $\bar M$ and the reduced manifolds $M_\rho$ given by the
optimal reduction theorem. 

\begin{theorem}\label{presympl} If $m\in\mathcal{J}^{-1}(\rho)\subseteq M$, the reduced
  manifold $M_\rho$ is diffeomorphic to the presymplectic leaf $\bar N$ through
  $\pi(m)$ of the reduced Dirac manifold $(\bar M,\mathsf D_{\rm red})$ via the map
  $\Theta: M_\rho\to \bar N$, $\pi_\rho(x)\mapsto (\pi\circ i_\rho)(x)$. Furthermore, $\Theta^*\omega_{\bar N}=\omega_\rho$, where $\omega_{\bar N}$ is the
  presymplectic form on $\bar N$.
\end{theorem}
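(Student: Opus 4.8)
The plan is to show that $\Theta$ is a well-defined diffeomorphism intertwining the projections and then verify the presymplectic form identity by a direct computation using the defining formulas \eqref{presymp} and \eqref{induceddiracN}. First I would check that $\Theta$ is well-defined: if $\pi_\rho(x) = \pi_\rho(x')$ with $x, x' \in \mathcal{J}^{-1}(\rho)$, then $x' = \Phi_g(x)$ for some $g \in G_\rho$, so $(\pi \circ i_\rho)(x') = \pi(\Phi_g(x)) = \pi(x)$ since $\pi$ is the orbit map; thus $\Theta$ is a well-defined map $M_\rho \to \bar M$. Next I would identify its image. Since $\mathcal{D}_G = \pi_{TM}(\mathsf D \cap \mathcal K^\perp)$ and, by the Corollary following Lemma \ref{ginv}, $\mathcal{D}_G$ is integrable with $\mathcal{J}^{-1}(\rho)$ its leaf through $m$, the pushforward $\pi(\mathcal{J}^{-1}(\rho))$ is exactly the leaf of the pushed-forward distribution $\bar{\mathcal D}_G := T\pi(\mathcal{D}_G)$ through $\pi(m)$. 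The key point here is that, because $G$ acts freely and properly and $\mathsf D \cap \mathcal K^\perp$ is $G$-invariant of constant rank, $\mathcal{D}_G$ contains $\mathcal V$ (indeed $\mathsf{G_0} \subseteq \mathcal{D}_G$ and in fact $\mathcal V \subseteq \mathcal{D}_G$ since fundamental vector fields pair to zero with $\mathcal V^\circ$... wait, one needs $(\xi_M, 0) \in \Gamma(\mathsf D)$ which is not automatic), so I would instead argue that $\bar{\mathcal D}_G$ coincides with the characteristic distribution $\mathsf{G_1}^{\bar M}$ of the reduced Dirac structure $\mathsf D_{\rm red}$ on $\bar M$, whose leaves are precisely the presymplectic leaves $\bar N$. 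Concretely, using \eqref{red_dir_conj_iso}, $X \sim_\pi \bar X$ with $(X, \pi^*\bar\alpha) \in \Gamma(\mathsf D)$ gives, upon $G$-invariant averaging of $X$, a section witnessing $\bar X(\pi(m)) \in \mathsf{G_1}^{\bar M}(\pi(m))$ iff $X(m) \in \mathcal{D}_G(m)$, since $\pi^*\bar\alpha \in \Gamma(\mathcal V^\circ)$. This shows $\Theta$ maps $M_\rho$ onto $\bar N$.

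For injectivity of $\Theta$: suppose $\pi(x) = \pi(x')$ with $x, x' \in \mathcal{J}^{-1}(\rho)$, so $x' = \Phi_g(x)$ for some $g \in G$. By Proposition \ref{inducedaction}, $\bar\Phi_g(\rho) = \bar\Phi_g(\mathcal J(x)) = \mathcal J(\Phi_g(x)) = \mathcal J(x') = \rho$, hence $g \in G_\rho$, so $\pi_\rho(x) = \pi_\rho(x')$. This proves $\Theta$ is a bijection onto $\bar N$. For smoothness: $\Theta \circ \pi_\rho = \pi \circ i_\rho$ is smooth, and $\pi_\rho$ is a surjective submersion (the quotient map of a free proper action), so $\Theta$ is smooth. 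That $\bar N$ is an initial submanifold of $\bar M$ (a leaf of a generalized foliation) together with smoothness of $\pi \circ i_\rho$ as a map into $\bar M$ gives smoothness of $\Theta$ as a map into $\bar N$; for the inverse one uses that $\pi(\mathcal J^{-1}(\rho)) = \bar N$ and a local section of $\pi$ to produce a smooth local right inverse, after checking it lands in $\mathcal J^{-1}(\rho)$. Here I would appeal to the analogous argument in \cite{OrRa04}, Theorem 9.1.1, adapted to the Dirac setting, since the manifold-structure bookkeeping is identical once the distributions are identified.

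Finally, the form identity $\Theta^*\omega_{\bar N} = \omega_\rho$. I would pull back both sides to $\mathcal J^{-1}(\rho)$ via $\pi_\rho$ and check equality there, using $\Theta \circ \pi_\rho = \pi \circ i_\rho$, so that $\pi_\rho^* \Theta^* \omega_{\bar N} = i_\rho^* \pi^* \omega_{\bar N}$. Given $m \in \mathcal J^{-1}(\rho)$ and $X, Y \in \Gamma(\mathcal D_G)$ near $m$ with $(X, \alpha), (Y, \beta) \in \Gamma(\mathsf D \cap \mathcal K^\perp)$, the right-hand side is evaluated using the formula \eqref{induceddiracN} for $\omega_{\bar N}$ on the presymplectic leaf, applied to the pushed-forward sections $\bar X, \bar Y$ (which are $\iota_{\bar N}$-related to the appropriate vector fields on $\bar N$), with the one-forms $\bar\alpha = \pi_*\alpha$, $\bar\beta = \pi_*\beta$ arising from the reduced Dirac structure $\mathsf D_{\rm red}$; one gets $\omega_{\bar N}(\bar X, \bar Y)(\pi(m)) = \bar\alpha(\bar Y)(\pi(m))$. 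Pulling back by $\pi$ and using $\pi^*(\bar\alpha(\bar Y)) = \alpha(Y)$ (the defining property of $\pi_*$ recalled in \S\ref{subsec_descending_sections}), this equals $\alpha_m(Y(m))$, which by \eqref{presymp} is exactly $(\pi_\rho^*\omega_\rho)(m)(X(m), Y(m))$. Since $\pi_\rho$ is a submersion this forces $\Theta^*\omega_{\bar N} = \omega_\rho$.

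The main obstacle I expect is the clean identification of the pushed-forward distribution $T\pi(\mathcal D_G)$ with the characteristic distribution of $\mathsf D_{\rm red}$ — equivalently, showing that a leaf of $\mathcal D_G$ projects onto a full presymplectic leaf of $\bar M$ rather than a proper subset. This requires carefully tracking how the reduction of \eqref{red_dir_conj_iso} interacts with the intersection $\mathsf D \cap \mathcal K^\perp$ and using $G$-invariant averaging (from \S\ref{tubeth}) to replace arbitrary sections by $G$-invariant ones without leaving $\mathsf D \cap \mathcal K^\perp$, as guaranteed by the remark that $(X_G, \alpha_G) \in \Gamma(\Delta)$ whenever $(X, \alpha) \in \Gamma(\Delta)$ for $G$-invariant $\Delta$. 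Everything else is essentially a transcription of the Poisson argument in \cite{OrRa04}.
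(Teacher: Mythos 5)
Your proposal is correct and follows essentially the same route as the paper's proof (given there for the general proper action in the comparison section, of which this is the free special case): well-definedness and injectivity via the induced $G_\rho$-action, identification of $\pi(\mathcal{J}^{-1}(\rho))$ with the presymplectic leaf through the correspondence between sections of $\mathsf D\cap\mathcal K^\perp$ that descend (after $G$-invariant averaging) and sections of the reduced Dirac structure, smoothness from the quotient and initial-submanifold structures, and the form identity by pulling back through $\pi_\rho$ and using $\pi^*(\bar\alpha(\bar Y))=\alpha(Y)$. The one step you treat more lightly than the paper is the smoothness of $\Theta^{-1}$ — the paper establishes that the leaf topology on $\bar N$ is the final topology induced by $\pi\circ\iota_\rho$ from $\mathcal{J}^{-1}(\rho)$ via the accessible-sets description, rather than using a local section of $\pi$ (which would have to be constructed inside $\mathcal{J}^{-1}(\rho)$, not just inside $\pi^{-1}(\bar N)$) — but you have correctly isolated this, together with the leaf-onto-leaf projection, as the genuine content of the argument.
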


\begin{example}
Consider a closed Dirac manifold $(M,\mathsf D)$ and the trivial Lie group 
$G=\{e\}$. Then the trivial action of $G$
on $M$ is Dirac and its vertical space is just the zero section in $TM$.
Thus, the intersection $\mathsf D\cap \mathcal K^\perp$
is equal to $\mathsf D\cap (TM\oplus T^*M)=\mathsf D$.
The projection $\pi_{TM}$ of 
$\mathsf D\cap \mathcal K^\perp$ 
is hence just the smooth distribution $\mathsf{G_1}$, 
which is known to be completely integrable in the sense
of Stefan and Sussmann (see \cite{Courant90a}).

In this situation, we get consequently the leaves of the presymplectic foliation of the 
Dirac manifold $(M,\mathsf D)$
as reduced spaces and we recover also the statement of the preceding theorem
in the case of a trivial action.
\end{example}

In the following, we will show how these results generalize to the non-free case.
The main difficulty is the fact that the distribution $\mathcal K$ doesn't have constant rank in this case,
and $\mathcal K$ and $\mathcal K^\perp$ are not necessarily spanned by sections that descend to the 
quotient. Also, the sets spanned by the pairs of vector fields and one-forms that 
descend to the quotient, and by the vector fields and one-forms that descend to the quotient 
and are $G$-invariant, are different, in general. Taking the intersection of the 
Dirac structures with each one of them yield two different singular foliations, 
that are related to the stratifications by orbit types, and by isotropy types, respectively.

\section{The optimal distributions}
\label{sec:optimal_distributions}

From now on we assume that $(M,\mathsf{D})$ is an integrable
Dirac manifold and  $G$ a
symmetry Lie group of $\mathsf{D}$ acting properly on $M$.
Let $\T$ be the tangent distribution on $M$ spanned  pointwise by the  vector fields that
\emph{descend} to $\bar{M}: = M/G$, i.e., $\mathcal{T}$ is
the distribution spanned pointwise by
\begin{align*}
F:=&\{X\in\mx(M)\mid X=X^G+X^\V,\text{ with } X^G\in\mx(M)^G,\text{ and }
X^\V\in\Gamma(\V)\}\\
=&\{X\in\mx(M)\mid [X,\Gamma(\V)]\subseteq \Gamma(\V)\}.
\end{align*} 
We have shown in \cite{JoRaSn11} that $\T$ is integrable and its leaves are 
the connected components of the orbit type manifolds. 
In the same manner, let $\T_G$ be the distribution on $M$ spanned by the 
set $F_G$ of $G$-invariant vector fields on $M$:
\[
F_G=\{X\in\mx(M)\mid \Phi^*_gX=X\quad  \forall g\in G\}.
\]
It is shown in \cite{OrRa04} that this smooth distribution is also completely
integrable in the sense of Stefan and Sussmann; its leaves are the 
connected components of the isotropy type manifolds. Note that the 
integrability of $\T$ follows from the integrability of $\T_G$ and Proposition 
3.4.6 in \cite{OrRa04}.
In particular, both distributions coincide in the case of a free action. This is why 
the constructions made here simplify in a significant manner in this special case reviewed in Section \ref{section_free_case}.

By the considerations at the end of Subsection \ref{subsec_descending_sections}, the
generalized distribution $\T\oplus \V_G^\circ$ is  the generalized
distribution spanned pointwise by the descending sections of $\mathsf P_M$. 
The descending sections of $\mathsf{D}$  
will be of great importance in the rest of this paper. These sections 
necessarily  lie in  $\mathsf{D}\cap(\T\oplus \V_G^\circ)$.

In the free case, $\mathsf{D}\cap(\T\oplus \V_G^\circ)=
\mathsf{D}\cap\mathcal{K}^\perp$, which is assumed 
to be of constant rank. Since in the general case, 
$\mathcal{T}\oplus \mathcal{V}_G^\circ$
does not have constant rank, it does not make much sense 
to assume that its intersection with $\mathsf{D}$ has constant 
rank. In the first subsection, we assume that $\mathsf{D}\cap
(\mathcal{T}\oplus \mathcal{V}_G^\circ)$
is  spanned by its descending sections, as is required for the standard singular reduction 
(see Theorems \ref{singred}, \ref{singred2}).
We show that, under this hypothesis, 
the distributions are both algebraically involutive and 
the integrability of
$\mathcal D_G$ follows from the integrability of $\mathcal D$.
In the second subsection, we conclude  the integrability
of $\mathcal D_G$ from an ``exactness'' condition. The proof is  in the same spirit as the proof 
of the integrability of the characteristic distribution associated to a Poisson structure.

\begin{definition}
Assume that $\mathsf{D}\cap(\T\oplus \V_G^\circ)$
and $\mathsf{D}\cap(\T_G\oplus \V_G^\circ)$
are smooth generalized distributions.
The smooth tangent distributions
\[\D:=\pi_{TM}(\mathsf{D}\cap(\T\oplus \V_G^\circ))\quad
\text{ and }
\quad \D_G:=\pi_{TM}(\mathsf{D}\cap(\T_G\oplus \V_G^\circ))\]
are
 called the \emph{orbit type optimal
  distribution} and the \emph{isotropy type optimal
  distribution}.
\end{definition}

\subsection{The  ``locally finitely generated'' assumption}
In this section, we \textbf{assume that $\mathsf{D}\cap
(\T\oplus \V_G^\circ)$ is  spanned by its descending sections}, 
hence it is smooth. Recall the notation $\K:=\V\oplus \{0\}$.
We want to show that, under this assumption,
$\D$ and $\D_G$ are  algebraically involutive. Furthermore, 
the integrability of $\D_G$ will be a consequence of the 
integrability of $\D$. 

Note that in the free case, we have $\mathcal D=\mathcal D_G=\pi_{TM}(\mathsf D\cap \mathcal K^\perp)$.
If $\mathsf D\cap \mathcal K^\perp$ has constant rank on the manifold $M$, it has the structure of a
Lie  algebroid
and, by a standard result in \cite{Courant90a}, $\mathcal D$ is completely integrable.
The main result in \cite{JoRaZa11} implies that  
$\mathsf D\cap \mathcal K^\perp$ is then spanned by its descending sections, and hence  
$\mathcal D$ has the same property. 

\begin{lemma}\label{lemma42}
If $(X,\alpha)$ and $(Y,\beta)$ are descending sections of $\mathsf{D}$, then the
Courant bracket $[(X,\alpha),(Y,\beta)]$ is also a descending section of $\mathsf{D}$.
\end{lemma}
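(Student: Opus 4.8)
The plan is to verify that $[(X,\alpha),(Y,\beta)]$ satisfies all three requirements in the definition of a descending section: it is a section of $\mathsf D$, its cotangent part lies in $\Gamma(\V^\circ)^G$, and its tangent part satisfies $[\,\cdot\,,\Gamma(\V)]\subseteq\Gamma(\V)$. Closedness of $\mathsf D$ gives the first point immediately: since $(X,\alpha),(Y,\beta)\in\Gamma(\mathsf D)$, integrability of $\mathsf D$ forces $[(X,\alpha),(Y,\beta)]\in\Gamma(\mathsf D)$. So the content is in the other two points.

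For the tangent part, recall that the Courant bracket has first component $[X,Y]$. Both $X$ and $Y$ satisfy $[X,\Gamma(\V)]\subseteq\Gamma(\V)$ and $[Y,\Gamma(\V)]\subseteq\Gamma(\V)$, i.e., they are in the normalizer $F$ of $\Gamma(\V)$ inside $\mx(M)$. Since $F$ is a Lie subalgebra of $\mx(M)$ — this is just the Jacobi identity: for $Z\in\Gamma(\V)$, $[[X,Y],Z]=[X,[Y,Z]]-[Y,[X,Z]]\in\Gamma(\V)$ — we get $[X,Y]\in F$, which is exactly the required condition on the tangent part. This step is routine.

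The main work is showing the cotangent part, $\boldsymbol{\pounds}_X\beta-\mathbf{i}_Y\mathbf{d}\alpha$, lies in $\Gamma(\V^\circ)^G$. Membership in $\Gamma(\V^\circ)$ is precisely the statement of Lemma \ref{ginv} (to be proved as Lemma \ref{lemma42}'s companion): since $X,Y\in\Gamma(\D_G)$ — equivalently $(X,\alpha),(Y,\beta)\in\Gamma(\mathsf D\cap\K^\perp)$, which holds here because descending sections of $\mathsf D$ lie in $\mathsf D\cap(\T\oplus\V_G^\circ)\subseteq\mathsf D\cap\K^\perp$ — that lemma yields $\boldsymbol{\pounds}_X\beta-\mathbf{i}_Y\mathbf{d}\alpha\in\Gamma(\V^\circ)$. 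For $G$-invariance I would argue as follows. One option is a direct computation: $\Phi_g^*(\boldsymbol{\pounds}_X\beta-\mathbf{i}_Y\mathbf{d}\alpha)=\boldsymbol{\pounds}_{\Phi_g^*X}\Phi_g^*\beta-\mathbf{i}_{\Phi_g^*Y}\mathbf{d}(\Phi_g^*\alpha)$, using naturality of $\boldsymbol{\pounds}$, $\mathbf{d}$, and $\mathbf{i}$ under pullback. Now $\alpha,\beta\in\Gamma(\V^\circ)^G$ are already $G$-invariant, but $X,Y$ need not be; write $X=X^G+X^\V$ with $X^G\in\mx(M)^G$ and $X^\V\in\Gamma(\V)$, and similarly for $Y$. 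The terms involving $X^G,Y^G$ produce a $G$-invariant form. The terms involving $X^\V,Y^\V$ produce, using that $\alpha,\beta$ annihilate $\V$ and standard Cartan-calculus identities ($\mathbf{i}_{Y^\V}\mathbf{d}\alpha = \boldsymbol{\pounds}_{Y^\V}\alpha - \mathbf{d}(\mathbf{i}_{Y^\V}\alpha) = \boldsymbol{\pounds}_{Y^\V}\alpha$ since $\mathbf{i}_{Y^\V}\alpha=0$, and $\boldsymbol{\pounds}_{X^\V}\beta$), one-forms that are again sections of $\V^\circ$ by Lemma \ref{ginv}-type reasoning; but a cleaner route is to observe that $\boldsymbol{\pounds}_X\beta-\mathbf{i}_Y\mathbf{d}\alpha$ is the cotangent part of a section of $\mathsf D\cap\K^\perp$, and then apply the $G$-invariant averaging operation of Section \ref{sec:proper}: since $(\,[X,Y],\boldsymbol{\pounds}_X\beta-\mathbf{i}_Y\mathbf{d}\alpha\,)$ is $G$-related, on the tube, to its $G$-average, and since the one-form part already lies in $\V^\circ$, $G$-invariance of the averaged object combined with the fact that $\V_G^\circ$ is by definition spanned by $G$-invariant sections of $\V^\circ$ pins it down. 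The hypothesis that $\mathsf D\cap(\T\oplus\V_G^\circ)$ is spanned by its descending sections is what guarantees these averaged objects remain inside the relevant distribution.

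The step I expect to be the genuine obstacle is precisely the $G$-invariance of $\boldsymbol{\pounds}_X\beta-\mathbf{i}_Y\mathbf{d}\alpha$, because $X$ and $Y$ are only descending, not $G$-invariant, so the naive pullback computation leaves a residual term whose vanishing (modulo $\V^\circ$, in the invariant sense) requires combining Lemma \ref{ginv} with the averaging machinery and the spanning hypothesis; everything else (closedness for the $\mathsf D$-membership, the Jacobi identity for the normalizer condition) is formal.
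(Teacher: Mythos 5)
Your overall structure is right, and the first two points (membership in $\Gamma(\mathsf{D})$ from closedness, and $[[X,Y],\Gamma(\V)]\subseteq\Gamma(\V)$ from the Jacobi identity) coincide with the paper's proof. The gap is in the third point, which is where all the content of the lemma lies. You obtain $\boldsymbol{\pounds}_X\beta-\mathbf{i}_Y\mathbf{d}\alpha\in\Gamma(\V^\circ)$ by citing Lemma \ref{ginv}; but in the paper that lemma has no independent proof---its proof reads ``see Lemma \ref{lemma42}''---so you are assuming exactly the computation you are supposed to supply. What is needed, and what the paper does, is the direct evaluation on fundamental vector fields: writing $Y=Y^G+Y^\V$ with $Y^G\in\mx(M)^G$ and $Y^\V\in\Gamma(\V)$, one expands $(\boldsymbol{\pounds}_X\beta-\mathbf{i}_Y\mathbf{d}\alpha)(\xi_M)$ via Cartan's formula and kills every term using $\alpha(\xi_M)=\beta(\xi_M)=0$, $[X,\xi_M],[Y,\xi_M]\in\Gamma(\V)$, and the $G$-invariance of the function $\alpha(Y)=\alpha(Y^G)$. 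None of this appears in your write-up.

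For $G$-invariance you begin a pullback computation that would in fact succeed: the residual terms have the form $\mathbf{i}_V\mathbf{d}\beta$ with $V\in\Gamma(\V)$, and these vanish because $V=\sum_i f_i\,\xi^i_M$ and $\mathbf{i}_{\xi_M}\mathbf{d}\beta=\boldsymbol{\pounds}_{\xi_M}\beta-\mathbf{d}(\beta(\xi_M))=0$ by $G$-invariance of $\beta$ together with $\beta\in\Gamma(\V^\circ)$. But you abandon this route in favour of an averaging argument that does not prove the claim: averaging produces a $G$-invariant section, yet nothing you say shows that $\boldsymbol{\pounds}_X\beta-\mathbf{i}_Y\mathbf{d}\alpha$ \emph{equals} its average, and landing in $\V^\circ_G$ (a $C^\infty(M)$-span of invariant forms) is strictly weaker than the required membership in $\Gamma(\V^\circ)^G$. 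The paper's argument is precisely the infinitesimal version of the computation you dropped: it shows $\boldsymbol{\pounds}_{\xi_M}(\boldsymbol{\pounds}_X\beta-\mathbf{i}_Y\mathbf{d}\alpha)=-\mathbf{i}_{[\xi_M,Y]}\mathbf{d}\alpha=0$ by expanding $[\xi_M,Y]$ over the fundamental vector fields, and then integrates using connectedness of $G$. Note also that the lemma requires no hypothesis that $\mathsf{D}\cap(\T\oplus\V_G^\circ)$ be spanned by its descending sections; that assumption enters only in the subsequent proposition on involutivity of $\D$, and importing it here is a sign that the averaging route is not the right mechanism.
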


\begin{proof}
Since $\mathsf{D}$ is closed, we have automatically 
\[[(X,\alpha),(Y,\beta)]=\left([X,Y],\boldsymbol{\pounds}_{X}\beta-\ip{Y}\dr\alpha\right)\in
\Gamma(\mathsf{D}).\] 
The inclusion $[[X,Y],\Gamma(\V)]\subseteq \Gamma(\V)$ is easy to see. 
Indeed if $V$ is a section of $\V$, using the Jacobi  identity we get
\[ [[X,Y],V]=[[X,V],Y]+[X,[Y,V]]\in\Gamma(\V),\]
since $[[X,V],Y]$, $[X,[Y,V]]\in\Gamma(\V)$.

Next, we have to show that $\boldsymbol{\pounds}_{X}\beta-\ip{Y}\dr\alpha$ is a $G$-invariant
section of $\V^\circ$. Write $X=X^G+X^\V$ and $Y=Y^G+Y^\V$ with $X^G$,
$Y^G\in\mx(M)^G$ and $X^\V$, $Y^\V\in\Gamma(\V)$ (see \cite{JoRaSn11}). For each $\xi\in \mathfrak{g}$ we get
\begin{align*}
(\boldsymbol{\pounds}_{X}\beta-\ip{Y}\dr\alpha)(\xi_M)
={}&(\ip{X}\dr\beta)(\xi_M)+\dr(\ip{X}\beta)(\xi_M)-(\ip{Y}\dr\alpha)(\xi_M)\\
={}&X(\beta(\xi_M))-\xi_M(\beta(X))-\beta([X,\xi_M])+\xi_M(\beta(X))\\
&-Y(\alpha(\xi_M))+\xi_M(\alpha(Y^G+Y^\V))+\alpha([Y,\xi_M])\\
={}&X(0)-\xi_M(\beta(X))-0+\xi_M(\beta(X))-Y(0)+\xi_M(\alpha(Y^G))+0=0,
\end{align*}
where we have used the the $G$-invariance of of the function 
$\alpha(Y^G)$, $[X,V]$, $[Y,V] \in \mathcal{V}$ for all $V\in \mathcal{V}$, 
and $\beta(V)=\alpha(V)=0$ for all $V\in\Gamma(\V)$. Since the
fundamental vector fields $\xi_M$, $\xi\in\lie g$, span 
 $\Gamma(\V)$ as a $C^\infty(M)$-module,  we get
$\boldsymbol{\pounds}_{X}\beta-\ip{Y}\dr\alpha\in\Gamma(\V^\circ)$. 
It remains to show that $\boldsymbol{\pounds}_{X}\beta-\ip{Y}\dr\alpha$ is 
$G$-equivariant. To see this, let $\xi\in \mathfrak{g}$ and compute
\begin{align*}
\boldsymbol{\pounds}_{\xi_M}(\boldsymbol{\pounds}_{X}\beta-\ip{Y}\dr\alpha)&=\boldsymbol{\pounds}_{\xi_M}\boldsymbol{\pounds}_{X}\beta-\boldsymbol{\pounds}_{\xi_M}\ip{Y}\dr\alpha\\
&=\boldsymbol{\pounds}_{[\xi_M,X]}\beta+\boldsymbol{\pounds}_{X}\boldsymbol{\pounds}_{\xi_M}\beta-\ip{[\xi_M,Y]}\dr\alpha-\ip{Y}\boldsymbol{\pounds}_{\xi_M}\dr\alpha
=0+0-\ip{[\xi_M,Y]}\dr\alpha-0,
\end{align*}
since $[\xi_M,X]\in\Gamma(\V)$ and $\beta$, $\dr\alpha$ are
$G$-equivariant. Because  $[\xi_M,Y]\in \Gamma(\V)$, we can write
$[\xi_M,Y]=\sum_{i=1}^kf_i\xi^i_M$  with $\xi^1,\ldots,\xi^k\in\lie g$ and
$f_1,\ldots,f_k\in C^\infty(M)$, and hence we have  
\begin{align*}
\boldsymbol{\pounds}_{\xi_M}(\boldsymbol{\pounds}_{X}\beta-\ip{Y}\dr\alpha)&=-\ip{[\xi_M,Y]}\dr\alpha=-\ip{\sum_{i=1}^kf_i\xi^i_M}\dr\alpha\\
&=-\sum_{i=1}^kf_i\ip{\xi^i_M}\dr\alpha
=-\sum_{i=1}^kf_i\boldsymbol{\pounds}_{\xi^i_M}\alpha+\sum_{i=1}^kf_i\dr(\ip{\xi^i_M}\alpha)
=-\sum_{i=1}^if_i\cdot 0+\sum_{i=1}^if_i\dr(0)=0.
\end{align*} 
Hence we have 
\[\frac{d}{dt}\Phi_{\exp(t\xi)}^*(\boldsymbol{\pounds}_{X}\beta-\ip{Y}\dr\alpha)
=\Phi_{\exp(t\xi)}^*\boldsymbol{\pounds}_{\xi_M}(\boldsymbol{\pounds}_{X}\beta-\ip{Y}\dr\alpha)=0,\]
for all $\xi\in\lie g$. This yields
$\Phi_{\exp(t\xi)}^*(\boldsymbol{\pounds}_{X}\beta-\ip{Y}\dr\alpha)=\boldsymbol{\pounds}_{X}\beta-\ip{Y}\dr\alpha$
for all $t\in \R$ and $\xi\in\lie g$. Since $G$ is a connected Lie group and 
hence generated, as a group, by $\exp\lie g$, the one-form $\boldsymbol{\pounds}_{X}\beta-\ip{Y}\dr\alpha$ is $G$-invariant.
\end{proof}
The following  proposition shows \textit{the algebraic involutivity of $\D$}.

\begin{proposition}
Assume that $\mathsf D\cap(\T\oplus\V_G^\circ)$ is spanned
by its descending sections.
Let $(X,\alpha)$ and $(Y,\beta)$ be sections of 
$\mathsf{D}\cap(\T\oplus\V_G^\circ)$. Then
the Courant bracket 
\[[(X,\alpha),(Y,\beta)]=\left([X,Y],\boldsymbol{\pounds}_{X}\beta-\ip{Y}\dr\alpha\right)\]
is also an element of $\Gamma(\mathsf{D}\cap(\T\oplus\V_G^\circ))$.
Hence, if $X, Y \in \Gamma( \mathcal{D})$ then 
$[X,Y]\in \Gamma(\D)$, and $\mathcal D$ is algebraically involutive.
\end{proposition}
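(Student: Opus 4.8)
The plan is to deduce the statement from Lemma~\ref{lemma42} by using that $\Gamma(\mathsf{D}\cap(\T\oplus\V_G^\circ))$ is a $C^\infty(M)$-module on which the Courant--Dorfman bracket \eqref{Courant_bracket} acts with a Leibniz rule. Note first \emph{why} one cannot simply repeat the computation of Lemma~\ref{lemma42}: a section $(X,\alpha)$ of $\mathsf{D}\cap(\T\oplus\V_G^\circ)$ is in general \emph{not} descending, since $X$ only takes values in $\T$ and need not satisfy $[X,\Gamma(\V)]\subseteq\Gamma(\V)$; hence in the evaluation of $(\ldr{X}\beta-\ip{Y}\dr\alpha)(\xi_M)$ the term $\beta([X,\xi_M])$ need not vanish. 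So the argument has to go through the descending sections and the $C^\infty(M)$-module they generate.

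First I would observe that whenever $e=(Z,\gamma)$ is a descending section of $\mathsf{D}$ and $f\in C^\infty(M)$, the section $fe$ still lies in $\Gamma(\mathsf{D}\cap(\T\oplus\V_G^\circ))$: its tangent part $fZ$ still takes values in $\T$, and its cotangent part $f\gamma$ still takes values pointwise in $\V_G^\circ$ — even though $fe$ is itself no longer descending. By hypothesis $\mathsf{D}\cap(\T\oplus\V_G^\circ)$ is spanned by its descending sections, so near any point I may write $(X,\alpha)=\sum_i f_i e_i$ and $(Y,\beta)=\sum_j g_j e_j'$ with $f_i,g_j\in C^\infty(M)$ and $e_i,e_j'$ descending sections of $\mathsf{D}$. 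Using the Leibniz identities for \eqref{Courant_bracket},
\[
[fe_1,e_2]=f\,[e_1,e_2]-\bigl(\pi_{TM}(e_2)f\bigr)\,e_1+\pair{e_1,e_2}\,(0,\dr f),\qquad
[e_1,ge_2]=g\,[e_1,e_2]+\bigl(\pi_{TM}(e_1)g\bigr)\,e_2,
\]
the vanishing of $\pair{e_1,e_2}$ for $e_1,e_2\in\Gamma(\mathsf{D})$ (since $\mathsf{D}$ is Lagrangian for \eqref{pairing}), and $\R$-bilinearity of the bracket, I obtain
\[
[(X,\alpha),(Y,\beta)]=\sum_{i,j}\Bigl(f_i g_j\,[e_i,e_j']+f_i\bigl(\pi_{TM}(e_i)g_j\bigr)\,e_j'-g_j\bigl(\pi_{TM}(e_j')f_i\bigr)\,e_i\Bigr).
\]
By Lemma~\ref{lemma42} each $[e_i,e_j']$ is again a descending section of $\mathsf{D}$, and the $e_i,e_j'$ are themselves descending; hence every summand is a $C^\infty(M)$-multiple of a descending section and so belongs to $\Gamma(\mathsf{D}\cap(\T\oplus\V_G^\circ))$. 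As this set is a $C^\infty(M)$-module and membership in a generalized distribution is a pointwise (hence local) condition, $[(X,\alpha),(Y,\beta)]\in\Gamma(\mathsf{D}\cap(\T\oplus\V_G^\circ))$.

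The last assertion then follows by applying $\pi_{TM}$: the first component of \eqref{Courant_bracket} is the Lie bracket, and since $\D=\pi_{TM}(\mathsf{D}\cap(\T\oplus\V_G^\circ))$, every local section of $\D$ is the tangent part of a local section of $\mathsf{D}\cap(\T\oplus\V_G^\circ)$; so for $X,Y\in\Gamma(\D)$ one chooses lifts $(X,\alpha),(Y,\beta)\in\Gamma(\mathsf{D}\cap(\T\oplus\V_G^\circ))$ and gets $[X,Y]=\pi_{TM}\bigl([(X,\alpha),(Y,\beta)]\bigr)\in\Gamma(\D)$; thus $\D$ is algebraically involutive.

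The essential input is Lemma~\ref{lemma42}, which is already proved, so there is no deep obstacle; the only point that requires care is bookkeeping the Leibniz correction terms — in particular using the Lagrangian property of $\mathsf{D}$ to kill the $\pair{e_1,e_2}\,\dr f$ contribution — so that no summand leaves the module $\Gamma(\mathsf{D}\cap(\T\oplus\V_G^\circ))$, together with the observation (made at the outset) that the naive pointwise computation must be replaced by this module argument.
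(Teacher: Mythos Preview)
Your proof is correct and follows essentially the same route as the paper: write the given sections as $C^\infty(M)$-combinations of descending sections, expand the Courant--Dorfman bracket via the Leibniz rule (using that the symmetric pairing vanishes on $\mathsf{D}$), and invoke Lemma~\ref{lemma42} on the brackets of descending sections. Your additional remarks---why the pointwise computation of Lemma~\ref{lemma42} cannot be repeated directly, and the explicit observation that the Lagrangian property of $\mathsf{D}$ kills the $\pair{e_1,e_2}\,\dr f$ term---make the argument more transparent but do not change its substance.
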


\begin{proof}
Write $(X,\alpha)$ and $(Y,\beta)$ as sums
\[
(X,\alpha)=\sum_{i=1}^kf_i(X_i,\alpha_i)\quad \text{ and }\quad (Y,\beta)
=\sum_{j=1}^lg_j(Y_j,\beta_j),
\]
with $f_1,\ldots,f_k,g_1,\ldots,g_l\in C^\infty(M)$ and
$(X_1,\alpha_1),\ldots,(X_k,\alpha_k),(Y_1,\beta_1),\ldots,(Y_l,\beta_l)$
descending sections of $\mathsf{D}$. Since
\[
[(X_1,\alpha_1),f(X_2,\alpha_2)]=f[(X_1,\alpha_1),(X_2,\alpha_2)]+X_1(f)(X_2,\alpha_2)
\]
for all $(X_1,\alpha_1), (X_2,\alpha_2)\in\Gamma(\mathsf{D})$ and $f\in 
C^\infty(M)$, we get
\begin{align*}
[(X,\alpha),(Y,\beta)]&=\left[\sum_{i=1}^kf_i(X_i,\alpha_i),\sum_{j=1}^lg_j(Y_j,\beta_j)\right]\\
&=\sum_{i,j=1}^{k,l}\bigl(g_jf_i\left[(X_i,\alpha_i),(Y_j,\beta_j)
  \right]-g_jY_j(f_i)(X_i,\alpha_i)+f_iX_i(g_j)(Y_j,\beta_j)\bigr).
\end{align*}
This shows that $[(X,\alpha),(Y,\beta)]$ can be written  as a 
$C^\infty(M)$-combination of  the descending sections  $[(X_j,\alpha_j),
(Y_i,\beta_i)]$, $(X_j,\alpha_j)$,
and $(Y_i,\beta_i)$ of $\mathsf{D}$ ($i=1,\ldots,k$, $j=1,\ldots,l$). 
The bracket
 $[(X,\alpha),(Y,\beta)]$ is thus a section of 
$\mathsf{D}\cap(\T\oplus\V_G^\circ)$.
\end{proof}

We study now properties of the smooth distribution $\D_G$.
\begin{proposition}\label{invgen}
If $\mathsf{D}\cap(\T\oplus\V^\circ_G)$ is spanned pointwise by its descending sections, 
then
$\mathsf{D}\cap(\T_G\oplus\V^\circ_G)$ is spanned \emph{pointwise}
by its $G$-invariant 
descending
sections and it is, in particular, smooth.
\end{proposition}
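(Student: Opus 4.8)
The plan is to establish the following sharper statement, from which both assertions follow at once: for every $m\in M$ and every pair $(v,\alpha_m)\in\bigl(\mathsf{D}\cap(\T_G\oplus\V_G^\circ)\bigr)(m)$ there is a \emph{$G$-invariant} descending section $(\hat X,\hat\alpha)$ of $\mathsf{D}$ with $\hat X(m)=v$ and $\hat\alpha(m)=\alpha_m$. Indeed, any $G$-invariant descending section of $\mathsf D$ is automatically a smooth section of $\mathsf D\cap(\T_G\oplus\V_G^\circ)$ — a $G$-invariant vector field takes values in $\T_G$, and a $G$-invariant section of $\V^\circ$ takes values in $\V_G^\circ$ — so producing such a section through each $(v,\alpha_m)$ yields the smoothness of $\mathsf D\cap(\T_G\oplus\V_G^\circ)$ and the fact that it is spanned pointwise by its $G$-invariant descending sections.

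First I would use the inclusion $\T_G\subseteq\T$ (a $G$-invariant vector field $X$ satisfies $[X,\Gamma(\V)]\subseteq\Gamma(\V)$; equivalently, isotropy type manifolds sit inside orbit type manifolds), which gives $\mathsf D\cap(\T_G\oplus\V_G^\circ)\subseteq\mathsf D\cap(\T\oplus\V_G^\circ)$. By hypothesis the latter is spanned pointwise by its descending sections, so $(v,\alpha_m)=\sum_i c_i\bigl(X_i(m),\alpha_i(m)\bigr)$ for finitely many descending sections $(X_i,\alpha_i)$ of $\mathsf D$ and constants $c_i\in\R$. Descending sections of $\mathsf D$ are stable under $\R$-linear combinations, so $(X,\alpha):=\sum_i c_i(X_i,\alpha_i)$ is a descending section of $\mathsf D$, defined near $m$, with $(X(m),\alpha(m))=(v,\alpha_m)$. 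Multiplying by a $G$-invariant cutoff equal to $1$ near $m$ — which again produces a descending section of $\mathsf D$, since $G$-invariant functions are annihilated by the fundamental vector fields — I may assume $(X,\alpha)$ is defined on a tube $U$ at $m$.

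Next I would pass to the $G$-invariant average $(X_G,\alpha_G)$ on $U$, in the sense of Section~\ref{sec:proper}. Since the action is canonical, $\mathsf D$ is a $G$-invariant distribution and hence $(X_G,\alpha_G)\in\Gamma(\mathsf D)^G$; because $\alpha\in\Gamma(\V^\circ)^G$ is already $G$-invariant, $\alpha_G=\alpha$, so $(\hat X,\hat\alpha):=(X_G,\alpha)$ is a $G$-invariant section of $\mathsf D$ with $\hat\alpha\in\Gamma(\V^\circ)^G$, that is, a $G$-invariant descending section of $\mathsf D$. It then remains to compute its value at $m$. Writing $m=[e,0]_H$ with $H=G_m$ and using $\Phi_h(m)=m$ for $h\in H$, the averaging formula collapses at $m$ to $\hat X(m)=\int_H T_m\Phi_{h^{-1}}\bigl(X(m)\bigr)\,dh=\int_H T_m\Phi_{h^{-1}}(v)\,dh$, while $\hat\alpha(m)=\alpha(m)=\alpha_m$ is immediate. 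The decisive point is that $v\in\T_G(m)$: writing $v=Y(m)$ for some $G$-invariant $Y$, one has $T_m\Phi_{h^{-1}}(v)=(\Phi_h^\ast Y)(m)=Y(m)=v$ for all $h\in H$, whence $\hat X(m)=\int_H v\,dh=v$ because $\int_H dh=1$.

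The one genuinely delicate step is this last computation: averaging an arbitrary section of $\mathsf D\cap(\T\oplus\V_G^\circ)$ would in general change its value at $m$, and it is precisely because $v$ lies in the smaller distribution $\T_G$ — hence is fixed by the isotropy group $H$ of $m$ — that the average still passes through $(v,\alpha_m)$. This is the mechanism by which the ``spanned by descending sections'' hypothesis on $\mathsf D\cap(\T\oplus\V_G^\circ)$ is strong enough to force the ``spanned by $G$-invariant descending sections'' conclusion for $\mathsf D\cap(\T_G\oplus\V_G^\circ)$. Everything else — stability of descending sections under linear combinations and multiplication by invariant cutoffs, and $\alpha_G=\alpha$ for invariant $\alpha$ — is routine bookkeeping.
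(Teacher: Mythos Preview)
Your proof is correct and follows essentially the same route as the paper's own argument: pass to a descending section through $(v,\alpha_m)$, extend over a tube, take the $G$-invariant average, and use that $v\in\T_G(m)$ is fixed by the isotropy representation of $H=G_m$ so that the average still hits $v$. The only cosmetic differences are that the paper cites an external lemma for the fixed-point computation $\int_H T_m\Phi_{h^{-1}}(v)\,dh=v$ whereas you argue it directly via $v=Y(m)$ with $Y$ $G$-invariant, and the paper uses an arbitrary bump function (noting the average at $m$ depends only on the value at $m$) rather than a $G$-invariant cutoff.
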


\begin{proof}
Choose  $m\in M$, and \[(v_m,l_m)\in
\bigl(\mathsf{D}\cap(\T_G\oplus\V^\circ_G)\bigr)(m)
\subseteq\bigl(\mathsf{D}\cap(\T\oplus\V^\circ_G)\bigr)(m).\]  
Since $\mathsf{D}\cap(\T\oplus\V^\circ_G)$ is spanned by its 
descending sections, we find a smooth section $(X,\alpha)$ of 
$\mathsf{D}\cap(\T\oplus\V^\circ_G)$ such that
the vector field $X$ is descending, 
$\alpha\in\Gamma(\V^\circ)^G$, $X(m)=v_m$, and $\alpha(m)=l_m$. 
Since the action of $G$ on $M$ is proper,
there exists a tube $U$ for the action at $m$ (see Theorem
\ref{tube_thm}). Assume that 
$(X,\alpha)$ is defined on the whole of $U$; otherwise, 
multiply $(X,\alpha)$ by a bump function that is equal to 
$1$ on a neighborhood $U_1\subsetneq U$ of $m$, and equal to
$0$ on the complement of a neighborhood $U_2$ of $m$ with 
$U_1\subsetneq U_2\subsetneq U$. Consider the $G$-invariant 
averaging $(X_G,\alpha_G)$ at $m$ of the pair
$(X,\alpha)$. Since $(X,\alpha)$ is a section of $\mathsf{D}
\cap(\T\oplus\V^\circ_G)$,
the pair $(X_G,\alpha_G)$ is also a section of this intersection, and we get
\begin{align*}
X_G(m)&=X_G([e,0]_H)=\int_HT_{[h,0]_H}\Phi_{h^{-1}}X([h,0]_H)dh,
\\
\alpha_G(m)&=\alpha_G([e,0]_H)=\int_H\alpha_{[h,0]_H}\circ T_{[e,0]_H}\Phi_{h}dh,
\end{align*}
where $H=G_m$ is the isotropy group of the point $m$. Note that $X_G(m)$ and
$\alpha_G(m)$ only depend on the values of $X$ and $\alpha$ at $m$, and thus,
the multiplication with the bump function doesn't change the situation if the
section $(X,\alpha)$ was not defined on the whole of $U$. Since $\alpha$ is
$G$-invariant and $H$ is the isotropy group of $m$, 
we have, in particular, $\alpha(m)\circ T_m\Phi_h=\alpha(m)$ for
all $h\in H$ (note that again, the multiplication with the bump function
doesn't change anything). Thus, since $[h,0]_H=[hh^{-1},h\cdot 0]_H=[e,0]_H=m$
for all $h\in H$, we get
\[X_G(m)=\int_HT_{m}\Phi_{h^{-1}}X(m)dh\]
and 
\[\alpha_G(m)=\int_H\alpha(m)dh=\alpha(m)=l_m.\]
Since $X(m)=v_m\in\T_G(m)$, it is tangent to the isotropy type manifold
through $m$ and hence, using Lemma 2.4 in \cite{CuSn01}, we conclude that
$X_G(m)= \int_HT_{m}\Phi_{h^{-1}}X(m)dh=X(m)=v_m$. 

The section $(X_G,\alpha_G)$ is hence a $G$-invariant section of
$\mathsf{D}\cap(\T_G\cap\V^\circ_G)$ taking the value $(v_m,l_m)$ at $m$.
\end{proof}
\begin{theorem} \label{integrability_implication}
Assume that $\mathsf{D}\cap(\T\oplus\V^\circ_G)$ is spanned pointwise by its descending sections.
If $\D$ is an integrable distribution, then $\D_G$ is also integrable. 
\end{theorem}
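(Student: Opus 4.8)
The goal is to deduce integrability of $\D_G$ (whose leaves should be the connected components of the isotropy type manifolds intersected with the $\D$-leaves) from integrability of $\D$, under the standing assumption that $\mathsf D\cap(\T\oplus\V_G^\circ)$ is spanned pointwise by its descending sections. The key structural fact, which I would use first, is Proposition \ref{invgen}: under this hypothesis, $\mathsf D\cap(\T_G\oplus\V_G^\circ)$ is spanned pointwise by its \emph{$G$-invariant} descending sections, hence $\D_G$ is spanned pointwise by the $G$-invariant vector fields lying in $\D$. The second ingredient is that by integrability of $\D$, the manifold $M$ is partitioned into the leaves of $\D$, each an initial submanifold, and $\D$ restricts to the tangent bundle of each leaf. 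The strategy is then to verify the Stefan--Sussmann criterion (Theorem \ref{frobenius with vector fields}, e.g. condition (1)) for the family $\mathcal F_G := F_G\cap\Gamma(\D)$ of $G$-invariant vector fields taking values in $\D$: namely that $\D_G$ is invariant under the flows of its own sections.

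First I would record algebraic involutivity: the Courant bracket computation in Lemma \ref{lemma42} / the Proposition preceding Proposition \ref{invgen} shows that the bracket of two descending sections of $\mathsf D$ is again a descending section, and $G$-invariance of the vector-field parts is preserved (the $X^G$-parts bracket to $G$-invariant fields modulo $\Gamma(\V)$, but for the \emph{fully} $G$-invariant generators given by Proposition \ref{invgen} one checks $\Phi_g^*[X,Y]=[X,Y]$ directly). So $\mathcal F_G$ is closed under the Lie bracket and $\D_G$ is algebraically involutive. Second, I would show the flow-invariance. Let $X\in\mathcal F_G$ with flow $\phi_t$. Since $X\in\Gamma(\D)$ and $\D$ is integrable, $\phi_t$ maps each $\D$-leaf $L$ into itself, and $T_m\phi_t(\D(m))=\D(\phi_t(m))$ by Theorem \ref{frobenius with vector fields} applied to $\D$. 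Since $X$ is $G$-invariant, $\phi_t$ is $G$-equivariant, hence $\phi_t$ maps each isotropy type manifold $M_H$ into itself, so $T_m\phi_t$ maps $T_mM_H=\T_G(m)\cap(\text{tangent part})$ appropriately. The point is then to show $T_m\phi_t$ carries $\D_G(m)$ to $\D_G(\phi_t(m))$. Given $v\in\D_G(m)$, choose by Proposition \ref{invgen} a $G$-invariant section $Y\in\mathcal F_G$ with $Y(m)=v$; then $\phi_t^*Y$ is again $G$-invariant (as $\phi_t$ is $G$-equivariant) and, since the Dirac structure is invariant under $\phi_t$ (Theorem \ref{invDirac}, applied to the admissible Hamiltonian associated to a descending section after the standard averaging/cutoff argument) and $\phi_t^*$ preserves $\V^\circ$ and descending-ness, $\phi_t^*Y\in\Gamma(\D_G)$; hence $T_m\phi_t(v)=(\phi_{-t}^*Y)(\phi_t(m))\in\D_G(\phi_t(m))$. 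The reverse inclusion is symmetric, giving equality, which is exactly condition (1) of Theorem \ref{frobenius with vector fields} for $\mathcal F_G$. Therefore $\D_G=\mathcal D_{\mathcal F_G}$ is integrable.

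The main obstacle is the step that a $G$-invariant section $Y\in\Gamma(\D)$ with values in $\T_G\oplus\V_G^\circ$ has $\phi_t^*Y$ still a section of $\D_G$ — i.e. that the flow of a $G$-invariant vector field in $\D$ preserves the whole package $\mathsf D\cap(\T_G\oplus\V_G^\circ)$, not just $\mathsf D$ and not just the tangent leaves separately. Preservation of $\mathsf D$ is Theorem \ref{invDirac} provided the vector field is Hamiltonian for an admissible function; here $X\in\Gamma(\D)$ need not be Hamiltonian, so one must instead argue via the Lie-algebroid structure on $\mathsf D\cap\mathcal K^\perp$-type object: the bracket-closedness from Lemma \ref{lemma42} says the descending sections of $\mathsf D$ form a sheaf of Lie subalgebras closed under $C^\infty(M)$-multiplication, so the flow of any such section preserves the distribution it spans, by Theorem \ref{frobenius with vector fields} condition (3) (the $\lambda_{ij}$ exist because of algebraic involutivity plus local finite generation). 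Preservation of $G$-invariance is automatic from $G$-equivariance of $\phi_t$, and preservation of the $\V_G^\circ$-component follows because $\phi_t$ commutes with the $G$-action and hence sends $G$-invariant annihilators of $\V$ to the same. Assembling these is the technical heart; once it is in place, the Stefan--Sussmann criterion closes the argument.
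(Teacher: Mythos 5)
Your overall strategy is the paper's: pass to the $G$-invariant spanning sections via Proposition \ref{invgen} and then verify the flow-invariance criterion of Theorem \ref{frobenius with vector fields} for the family of $G$-invariant descending vector fields in $\D$. However, the step you yourself single out as the technical heart --- that $\phi_t^*Y\in\Gamma(\D_G)$ when $Y$ is a $G$-invariant vector field in $\D$ and $\phi$ is the flow of another such field $X$ --- is not correctly closed. The appeal to Theorem \ref{invDirac} fails because $X$ need not be a Hamiltonian vector field of an admissible function, as you note; and the proposed substitute (Theorem \ref{frobenius with vector fields}, condition (3), applied to ``the distribution spanned by the descending sections of $\mathsf D$'') does not do the job either: that theorem concerns tangent distributions and flows of vector fields on $M$, so at best it reproduces the hypothesis that $\D$ is flow-invariant; it gives no control over the transported cotangent components $\phi_t^*\beta$, and condition (3) moreover presupposes a local finite generation with integrable coefficients that is not among the hypotheses. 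So, as written, the argument that the flow preserves ``the whole package'' $\mathsf D\cap(\T_G\oplus\V_G^\circ)$ has a genuine gap.

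The gap is avoidable, and the fix is essentially contained in your own text. You never need the flow to preserve the pairs $(Y,\beta)$; you only need the tangent part. Integrability of $\D$ gives $T_m\phi_t(\D(m))=\D(\phi_t(m))$ (you state this), so $\phi_t^*Y$ takes values in $\D$; $G$-equivariance of $\phi_t$ makes $\phi_t^*Y$ a $G$-invariant vector field, hence a section of $\T_G$. Now observe that pointwise $\D_G=\D\cap\T_G$: if $w\in\D(z)\cap\T_G(z)$, then there is some $l\in\V_G^\circ(z)$ with $(w,l)\in\mathsf D(z)$, and since $w\in\T_G(z)$ this pair lies in $\bigl(\mathsf D\cap(\T_G\oplus\V_G^\circ)\bigr)(z)$, whence $w\in\D_G(z)$; the reverse inclusion is immediate from $\T_G\subseteq\T$. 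Therefore $\phi_t^*Y\in\Gamma(\D_G)$ with no statement about $\phi_t^*\beta$ required, and Stefan--Sussmann finishes the proof. This is exactly how the paper argues: at each point one produces \emph{some} covector companion in $\V_G^\circ$ for the transported vector, never the transported covector itself.
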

\begin{proof}
We have seen in the preceding proposition that $\mathsf{D}\cap(\T_G\oplus\V^\circ_G)$
is spanned pointwise by its $G$-invariant sections 
\[\left\{(X,\alpha)\in\Gamma(\mathsf{D})\mid X\in\mx(M)^G 
\text{ and }\alpha\in\Gamma(\V^\circ)^G \right\} .\]
Let $(X,\alpha)$ be a descending section of $\mathsf{D}$ and  
$\phi$ the flow of the vector field $X$. Since $\D$ is 
integrable, we know that $\phi_t^*Y$ is is a section
of $\mathsf{D}\cap(\T\oplus\V^\circ_G)$ provided 
$(Y,\beta)$ is a descending section of $\mathsf{D}$ for all 
$t $ for which $\phi_t$ is defined (see Theorem 
\ref{frobenius with vector fields} about  the integrability of smooth
distributions spanned pointwise by families of vector fields).

Consider, in particular, a $G$-invariant pair $(X,\alpha)$ of
$\mathsf{D}\cap(\T_G\oplus\V^\circ_G)$. Then $(X,\alpha)$ is a descending section of $\mathsf{D}$
and, denoting again by $\phi$ the flow of $X$, we know that  $\phi_t^*Y$ is a
section of $\mathsf{D}\cap(\T\oplus\V^\circ_G)$ if $(Y,\beta)$ is a $G$-invariant
descending section of $\mathsf{D}$. But since $X$ and $Y$  are $G$-invariant vector fields, the
flow $\phi$ of $X$ is $G$-equivariant and the vector field $\phi_t^*Y$ is,
consequently, $G$-invariant. Let $\beta^t$ be a section of $\V_G^\circ$ such
that $(\phi_t^*Y,\beta^t)\in\Gamma(\mathsf{D}\cap(\T\oplus\V^\circ_G))$. Since
$\phi_t^*Y\in\mx(M)^G$, we even have 
$(\phi_t^*Y,\beta^t)\in\Gamma(\mathsf{D}\cap(\T_G\oplus\V^\circ_G))$, and hence
$\phi_t^*Y\in\Gamma(\D_G)$.

By  Theorem \ref{frobenius with vector fields}, we conclude that 
$\D_G$ is completely integrable in the sense of Stefan and Sussmann.
\end{proof}

\subsection{The ``exactness'' condition}\label{subsec_exactness}
We make now another assumption: the intersection
$\mathsf{D}\cap(\T\oplus\V_G^\circ)$ is spanned pointwise
by its set of \emph{exact} descending
sections
\[\left\{(X_f,\dr f)\in \Gamma(\mathsf{D})\mid f\in C^\infty(M)^G, \,
  [X_f,\Gamma(\V)]\subseteq \Gamma(\V)\right\}.\]
We will show that the (isotropy type) optimal distribution is also integrable
in this case. 

\begin{theorem}
If the generalized distribution $\mathsf{D}\cap(\T\oplus\V_G^\circ)$ is
spanned pointwise by its set of \emph{exact} descending sections, then the distribution $\D_G$
is smooth and integrable.
\end{theorem}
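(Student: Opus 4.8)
The plan is to exhibit a family of vector fields spanning $\D_G$ whose flows preserve $\D_G$ pointwise, and then invoke the first characterization in Theorem \ref{frobenius with vector fields}. First I would observe that, by Proposition \ref{invgen}, the ``exactness'' hypothesis on $\mathsf{D}\cap(\T\oplus\V_G^\circ)$ implies (via $G$-invariant averaging applied to the exact sections $(X_f,\dr f)$, using that $f_G\in C^\infty(M)^G$ and $\dr(f_G)=(\dr f)_G$) that $\mathsf{D}\cap(\T_G\oplus\V_G^\circ)$ is spanned pointwise by sections $(X_f,\dr f)$ with $f\in C^\infty(M)^G$ and $X_f$ a $G$-invariant Hamiltonian vector field. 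In particular $\D_G$ is smooth, being spanned pointwise by such $G$-invariant Hamiltonian vector fields $X_f$; note that each $f\in C^\infty(M)^G$ occurring here is automatically admissible for $(M,\mathsf{D})$.

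Next I would use Theorem \ref{invDirac}: for each such admissible $G$-invariant $f$, the flow $\phi_t$ of $X_f$ satisfies $(\phi_t^*X,\phi_t^*\alpha)\in\Gamma(\mathsf{D})$ for all $(X,\alpha)\in\Gamma(\mathsf{D})$. It remains to check that $\phi_t$ also preserves the auxiliary piece $\T_G\oplus\V_G^\circ$, i.e.\ that $(\phi_t^*Y,\phi_t^*\beta)$ is again a $G$-invariant section of $TM\oplus\V^\circ$ whenever $(Y,\beta)$ is. Since $X_f$ is $G$-invariant, its flow $\phi_t$ is $G$-equivariant, so $\phi_t^*Y$ is $G$-invariant (hence a section of $\T_G$) and $\phi_t^*\beta$ is a $G$-invariant one-form; and $\phi_t^*\beta$ annihilates $\V$ because $\phi_t$ maps isotropy type manifolds to themselves and $\beta$ already annihilates $\V$ (equivalently, $\Phi$-equivariance of $\phi_t$ gives $\phi_t^*\beta(\xi_M)=\phi_t^*(\beta((\Ad\text{-conjugated }\xi)_M))=0$ pointwise after unwinding). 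Therefore $(\phi_t^*Y,\phi_t^*\beta)\in\Gamma(\mathsf{D}\cap(\T_G\oplus\V_G^\circ))$, so $\phi_t^*Y\in\Gamma(\D_G)$.

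Having shown that the flow of each spanning vector field $X_f$ of $\D_G$ carries sections of $\D_G$ to sections of $\D_G$, I would conclude that $\D_G$ is invariant under the pseudogroup $A_{\mathcal F}$ generated by this family $\mathcal F=\{X_f\}$; indeed pullback by $\phi_t$ sends the spanning set into $\Gamma(\D_G)$, hence $T\phi_t(\D_G)=\D_G$ along the flow, and this propagates to all of $A_{\mathcal F}$. By the equivalence (1)$\Leftrightarrow$(4) in Theorem \ref{frobenius with vector fields}, $\D_G$ is integrable in the sense of Stefan and Sussmann.

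\textbf{Main obstacle.} The only real subtlety is the interplay between the averaging step and the exactness hypothesis: one must be sure that averaging an \emph{exact} descending section still yields something with the requisite structure at the chosen point and that the $G$-invariant Hamiltonian vector fields so produced genuinely span $\D_G$ pointwise (rather than merely $\D$). This is handled exactly as in Proposition \ref{invgen} — the value of the average at $m$ depends only on the value of the section at $m$, and a vector already tangent to the isotropy type manifold through $m$ is fixed by the averaging, by Lemma 2.4 in \cite{CuSn01}. Once that is in place, the integrability argument is a routine application of Theorems \ref{invDirac} and \ref{frobenius with vector fields}, entirely parallel to the proof of Theorem \ref{integrability_implication}.
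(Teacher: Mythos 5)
Your proposal is correct and follows essentially the same route as the paper: average the exact descending sections to see that $\mathsf{D}\cap(\T_G\oplus\V_G^\circ)$ is spanned pointwise by exact $G$-invariant descending sections $(X_f,\dr f)$ with $f\in C^\infty(M)^G$ (using $(\dr f)_G=\dr f$ and that a vector in $\T_G(m)$ is fixed by the averaging), then use Theorem \ref{invDirac} together with the $G$-equivariance of the flow of a $G$-invariant Hamiltonian vector field to get flow-invariance of the spanning family, and conclude via Theorem \ref{frobenius with vector fields}. The only cosmetic difference is that the paper checks closure of the family $F^G$ itself under pullback, whereas you verify preservation of the full intersection $\mathsf{D}\cap(\T_G\oplus\V_G^\circ)$; these amount to the same argument.
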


\begin{proof}
Since $\mathsf{D}\cap(\T\oplus\V^\circ_G)$ is spanned pointwise by its exact descending sections,
we can show, as in the proof of Proposition \ref{invgen},
 that  $\mathsf{D}\cap(\T_G\oplus\V^\circ_G)$ is spanned pointwise by its exact
$G$-invariant descending sections: choose $m\in M$ and 
 $(v_m,\alpha_m)\in(\mathsf D\cap (\T_G\oplus\V_G^\circ))(m)$. Then 
$(v_m,\alpha_m)\in(\mathsf D\cap (\T\oplus\V_G^\circ))(m)$
and, by hypothesis, we find a smooth exact descending section
$(X_f,\dr f)$ of $\mathsf D$ defined in a neighborhood of $m$
and such that $(X_f,\dr f)(m)=(v_m,\alpha_m)$. Consider the
$G$-invariant average $(X_G,\alpha_G)$ of $(X_f,\dr f)$ in a tube 
centered at $m$. The pair  $(X_G,\alpha_G)$ is a section of $\mathsf D$ 
because $\mathsf D$ is $G$-invariant. Since $\dr f\in\Gamma(\V^\circ)^G$, 
we find $\alpha_G=\dr f$ and hence $(X_G,\dr f)$ is a $G$-invariant 
descending section of $\mathsf D$. Since $v_m\in\T_G(m)$, $v_m$ is 
tangent to the isotropy type manifold through $m$ and we find
\[
X_G(m)=\int_HT_m\Phi_{h\inv}X_f(m)dh=\int_HT_m\Phi_{h\inv}v_mdh=v_m,
\]
where $H=G_m$. Thus, $(X_G,\dr f)$ satisfies $(X_G,\dr f)(m)=
(v_m,\alpha_m)$.

Hence, the distribution $\D_G$ is spanned pointwise by the family
\[
F^G:=\{X\in \mx(M)^G\mid \text{there exists }  f\in C^\infty(M)^G \text{ such that } (X,\dr
f)\in\Gamma(\mathsf{D})\} \subseteq \mathfrak{X}(M).
\]
We write $X_f$ for a $G$-invariant vector field
corresponding to the admissible function $f\in C^\infty(M)$. 
Let $X_f$ be an element of $F^G$ and denote by $\phi$ the flow of $X_f$. Let
$X_g$ be an element of $F^G$, corresponding to the admissible function $g\in
C^\infty(M)^G$. By Theorem \ref{invDirac}, we know that
$(\phi_t^*X_g,\phi_t^*\dr g)$ is a section of $\mathsf{D}$. Furthermore, since $X_f$ is
$G$-invariant, its flow $\phi_t$ is $G$-equivariant and thus,
$\phi_t^*X_g\in\mx(M)^G$ and $\phi_t^*g\in C^\infty(M)^G$. This shows that
$\phi_t^*X_g$ is an element of $F^G$ and hence that $\D_G$ is completely
integrable in the sense of Stefan and Sussmann.
\end{proof}

The following proposition is not needed in the rest of the paper; we add it here for the sake of completeness.

\begin{proposition}
If the intersection $\mathsf{D}\cap(\T_G\oplus\V_G^\circ)$ is smooth, then 
it is spanned pointwise by its $G$-invariant descending sections.
\end{proposition}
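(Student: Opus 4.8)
The plan is to take an arbitrary point $m\in M$ and an arbitrary vector $(v_m,\alpha_m)\in\bigl(\mathsf{D}\cap(\T_G\oplus\V_G^\circ)\bigr)(m)$, and to produce a $G$-invariant descending section of $\mathsf{D}$ passing through $(v_m,\alpha_m)$ at $m$. The strategy mirrors exactly the proof of Proposition~\ref{invgen}, the only novelty being that here we start from the \emph{smoothness} assumption on $\mathsf{D}\cap(\T_G\oplus\V_G^\circ)$ rather than from the ``descending sections'' hypothesis on the larger intersection $\mathsf{D}\cap(\T\oplus\V_G^\circ)$. First I would invoke smoothness to obtain \emph{some} smooth section $(X,\alpha)$ of $\mathsf{D}\cap(\T_G\oplus\V_G^\circ)$ with $(X,\alpha)(m)=(v_m,\alpha_m)$, defined on a neighborhood of $m$. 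Since the $G$-action is proper, I pass to a tube $U$ at $m$ (Theorem~\ref{tube_thm}) and, if necessary, multiply $(X,\alpha)$ by a $G$-invariant bump function equal to $1$ near $m$, so that $(X,\alpha)$ is globally defined on $U$; this does not affect the value at $m$.

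Next I would form the $G$-invariant average $(X_G,\alpha_G)$ of $(X,\alpha)$ as in Subsection~\ref{tubeth}. Because $\mathsf{D}$ is $G$-invariant, $(X_G,\alpha_G)$ is again a section of $\mathsf{D}$; and because $\V^\circ$ and $\T$ are $G$-invariant distributions with the averaging operation preserving invariant generalized distributions (the remark at the end of Subsection~\ref{tubeth}), $(X_G,\alpha_G)$ is in fact a $G$-invariant section of $\mathsf{D}\cap(\T\oplus\V_G^\circ)$, so in particular $X_G\in\mx(M)^G$ is a descending vector field and $\alpha_G\in\Gamma(\V^\circ)^G$. It remains to check that $(X_G,\alpha_G)(m)=(v_m,\alpha_m)$. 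For the cotangent part: since $\alpha$ is already $G$-invariant and $H:=G_m$ fixes $m$, one has $\alpha(m)\circ T_m\Phi_h=\alpha(m)$ for all $h\in H$, whence $\alpha_G(m)=\int_H\alpha(m)\,dh=\alpha(m)=\alpha_m$. For the tangent part: the averaging formula at $m=[e,0]_H$ reduces to $X_G(m)=\int_H T_m\Phi_{h^{-1}}X(m)\,dh$, and since $v_m=X(m)\in\T_G(m)$ is tangent to the isotropy type manifold through $m$ (whose tangent space at $m$ is the fixed-point set of the linearized $H$-action), Lemma~2.4 in \cite{CuSn01} gives $X_G(m)=X(m)=v_m$.

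Putting these together, $(X_G,\alpha_G)$ is a $G$-invariant descending section of $\mathsf{D}$ with value $(v_m,\alpha_m)$ at $m$; since it lies in $\mathsf{D}$ and $X_G$ is $G$-invariant (hence tangent to isotropy type manifolds, so a section of $\T_G$) and $\alpha_G\in\Gamma(\V^\circ)^G$, it is a section of $\mathsf{D}\cap(\T_G\oplus\V_G^\circ)$. As $m$ and $(v_m,\alpha_m)$ were arbitrary, this shows $\mathsf{D}\cap(\T_G\oplus\V_G^\circ)$ is spanned pointwise by its $G$-invariant descending sections. I do not expect any genuine obstacle here: the only point requiring a little care is verifying that the averaging construction keeps the section inside the intersection $\mathsf{D}\cap(\T\oplus\V_G^\circ)$ (and not just inside $\mathsf{D}$), and that the value at the fixed point $m$ is preserved — both of which are settled by the invariance of the relevant distributions and by the fixed-point argument via \cite{CuSn01}. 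The proof is essentially a transcription of the argument for Proposition~\ref{invgen}, simplified because the desired section is produced directly from the smoothness hypothesis rather than assembled from descending sections of the larger intersection.
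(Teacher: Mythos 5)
Your overall strategy --- take a smooth section through the given value, localize to a tube, average over the compact isotropy group $H=G_m$, and check that the value at $m$ is unchanged --- is exactly the paper's proof. There is, however, one unjustified step: you assert that ``$\alpha$ is already $G$-invariant'' because it is a section of $\V_G^\circ$. This is false in general: $\V_G^\circ$ is the distribution spanned \emph{pointwise} by the $G$-invariant sections of $\V^\circ$, so a section $\alpha$ of it is only a $C^\infty(M)$-combination $\alpha=\sum_{i=1}^k f_i\alpha_i$ with $\alpha_i\in\Gamma(\V^\circ)^G$ and the coefficients $f_i$ arbitrary (not necessarily invariant) smooth functions; such an $\alpha$ need not satisfy $\Phi_g^*\alpha=\alpha$. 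The identity you actually need, namely $\alpha(m)\circ T_m\Phi_h=\alpha(m)$ for all $h\in H$, is nevertheless true, but for a different reason: the \emph{value} $\alpha(m)=l_m$ lies in $\V_G^\circ(m)$, hence equals $\beta(m)$ for some genuinely $G$-invariant one-form $\beta$, and $\beta(m)\circ T_m\Phi_h=(\Phi_h^*\beta)(m)=\beta(m)$ because $h$ fixes $m$. The paper implements exactly this repair by writing $\alpha=\sum_i f_i\alpha_i$ and computing $\alpha_G(m)=\sum_i\int_H f_i(hm)\,\alpha_i(hm)\circ T_m\Phi_h\,dh=\sum_i f_i(m)\alpha_i(m)=\alpha(m)$, using $hm=m$ and the invariance of each $\alpha_i$. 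With this correction your argument coincides with the paper's: the tangent part ($X_G(m)=X(m)=v_m$ via Lemma 2.4 of \cite{CuSn01}, since $v_m\in\T_G(m)$ is tangent to the isotropy type manifold) and the observation that averaging a section of the $G$-invariant generalized distribution $\mathsf{D}\cap(\T_G\oplus\V_G^\circ)$ produces a $G$-invariant section of that same distribution are both correct as you state them.
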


\begin{proof}
If $m\in M$ and  $(v_m,l_m)\in \mathsf{D}\cap(\T_G\oplus\V_G^\circ)(m)$, 
then we find a smooth section 
$(X,\alpha)$ of $\mathsf{D}\cap(\T_G\oplus\V_G^\circ)$ 
defined on a neighborhood $U'$ of $m$ in $M$ such that $(X(m),
\alpha(m))=(v_m,l_m)$. Let $H=G_m$ be the isotropy group of the point 
$m$ and let $U$ be a tube for the action of $G$ at $m$; assume that 
$(X,\alpha)$ is defined on the whole of $U$ (otherwise, we multiply  
$(X,\alpha)$ with a bump function as in the proof of Proposition 
\ref{invgen}). The average $(X_G,\alpha_G)$ is a $G$-invariant section 
of $ \mathsf{D}\cap(\T_G\oplus\V_G^\circ)$,
hence a $G$-invariant descending section of $\mathsf{D}$.

As in the proof of Proposition
\ref{invgen}, we deduce that $X_G(m)=X(m)=v_m$.
Since $\alpha$ is a section of $\V_G^\circ$, it can be written as
$\alpha=\sum_{i=1}^kf_i\alpha_i$, where $f_i\in C^\infty(M)$
and $\alpha_i\in\Gamma(\V^\circ)^G$.
Again, as in the proof of Proposition \ref{invgen}, we compute
\begin{align*}
\alpha_G(m)&=\sum_{i=1}^k\int_Hf_i(hm)\alpha_i(hm)\circ T_m\Phi_hdh
=\sum_{i=1}^k\int_Hf_i(m)\alpha_i(m)dh=\sum_{i=1}^kf_i(m)\alpha_i(m)=\alpha(m)=l_m.
\end{align*}
Hence, we have found a smooth $G$-invariant descending section
$(X_G,\alpha_G)$ of $\mathsf{D}$ taking the value $(v_m,l_m)$ at
 the point $m$.
\end{proof}

\begin{remark}
Let  $(M,\pois)$ be a smooth Poisson manifold with a canonical and
proper smooth action of a Lie group $G$ on it. Let $\mathsf{D}$ be 
the Dirac structure associated to the Poisson bracket on $M$. Since 
$\V^\circ_G$ is generated by the differentials of the $G$-invariant smooth 
functions, the intersection $\mathsf{D}\cap(TM\oplus\V^\circ_G)$ is
spanned pointwise by the pairs $(X_f,\dr f)$, where $f\in C^\infty(M)^G$. The
vector field  $X_f$ corresponding to a $G$-invariant function $f\in
C^\infty(M)^G$ is $G$-invariant and we get 
\[\mathsf{D}\cap(TM\oplus\V^\circ_G)=\mathsf{D}\cap(\T\oplus\V^\circ_G)
=\mathsf{D}\cap(\T_G\oplus\V^\circ_G),\]
which is spanned by its exact $G$-invariant descending sections.
Hence, we are now in the situation of the exactness condition; the distributions $\D_G$ and $\D$ are equal and completely integrable
 in the sense of Stefan and Sussmann.
 \end{remark}

\section{The optimal momentum maps}
\label{sec:optimal_momentum}
In this section, we define the two optimal momentum maps and show that there is an action 
of the Lie group on the leaf spaces of the two distributions, such that the momentum maps 
are $G$-equivariant. We study also the isotropy subgroups of these actions.

\medskip
Assume  that the distributions $\D$ and $\D_G$ are spanned pointwise by their
descending sections and are completely integrable in the sense of Stefan
and Sussmann.  Let $\mathcal{J}: M\to M/\D$ and $\mathcal{J}_G: M\to M/\D_G$ be
the projections  on the leaf spaces of $\D$ and $\D_G$, respectively. The 
map $\mathcal{J}$  (respectively $\J_G$) is called the \emph{(orbit type)
Dirac optimal momentum map} (respectively the \emph{(isotropy type)  
Dirac optimal momentum map}).  
In this section, we will study these two momentum maps separately.

\subsection{The orbit type Dirac optimal momentum map}
We construct an optimal momentum map induced by the distribution $\mathcal{D}$.

\begin{proposition}\label{inducedaction}
If $m$ and $m'$ are in the same leaf of $\mathcal{D}$ then $\Phi_g(m)$ and
$\Phi_g(m')$ 
are in the same leaf of $\mathcal{D}$ for all $g \in
G$. Hence there is a well defined action $\bar{\Phi}:G\times M/\mathcal{D} \to M/\mathcal{D}$ given by
\begin{equation}
\bar{\Phi}_g(\mathcal{J}(m)):=\mathcal{J}(\Phi _g(m)) \label{actionleafsporbit}
\end{equation}
\end{proposition}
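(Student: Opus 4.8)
The plan is to show that the distribution $\mathcal{D}$ is $G$-invariant in the sense that $T_m\Phi_g(\mathcal{D}(m)) = \mathcal{D}(\Phi_g(m))$ for all $g \in G$ and $m \in M$; once this is established, $\Phi_g$ maps leaves of $\mathcal{D}$ onto leaves of $\mathcal{D}$ (it carries integral manifolds to integral manifolds, and the image of a maximal integral manifold is again a maximal integral manifold, hence a leaf), which is exactly what is needed for $\bar\Phi$ to be well defined on $M/\mathcal{D}$.

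To prove the $G$-invariance of $\mathcal{D}$, recall that $\mathcal{D} = \pi_{TM}(\mathsf{D}\cap(\T\oplus\V_G^\circ))$ and that, by hypothesis, $\mathsf{D}\cap(\T\oplus\V_G^\circ)$ is spanned pointwise by its descending sections. So it suffices to check that if $(X,\alpha)$ is a descending section of $\mathsf{D}$, then $(\Phi_g^*X, \Phi_g^*\alpha)$ is again a descending section of $\mathsf{D}$ (after which one takes $\pi_{TM}$ and uses that $\pi_{TM}$ intertwines $\Phi_g^*$ on sections with $T\Phi_{g^{-1}}$ on vectors). First, $(\Phi_g^*X,\Phi_g^*\alpha) \in \Gamma(\mathsf{D})$ because $G$ is a symmetry Lie group of $\mathsf{D}$. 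Second, $\Phi_g^*\alpha \in \Gamma(\V^\circ)^G$: since $\alpha$ is $G$-invariant we have $\Phi_g^*\alpha = \alpha$, so nothing changes. Third, one must check $[\Phi_g^*X, \Gamma(\V)]\subseteq\Gamma(\V)$; but again, writing $X = X^G + X^\V$ with $X^G$ $G$-invariant and $X^\V \in \Gamma(\V)$, one has $\Phi_g^*X = X^G + \Phi_g^*X^\V$, and $\Phi_g^*X^\V$ is still a section of $\V$ because $\V$ is the $G$-invariant distribution spanned by the fundamental vector fields (concretely, $\Phi_g^*(\xi_M) = (\Ad_{g^{-1}}\xi)_M$). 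Hence $\Phi_g^*X$ is again a descending vector field, and $(\Phi_g^*X,\Phi_g^*\alpha)$ is a descending section of $\mathsf{D}$.

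With the $G$-invariance of $\mathcal{D}$ in hand, the conclusion follows the standard pattern. If $m$ and $m'$ lie in the same leaf $L$ of $\mathcal{D}$, then $\Phi_g(L)$ is a connected integral manifold of $\mathcal{D}$ containing both $\Phi_g(m)$ and $\Phi_g(m')$; since it is everywhere of maximal dimension (as $T_{\Phi_g(p)}\Phi_g(L) = T_p\Phi_g(\mathcal{D}(p)) = \mathcal{D}(\Phi_g(p))$ by invariance) it is contained in the unique maximal integral manifold, i.e.\ the leaf, through $\Phi_g(m)$; therefore $\Phi_g(m)$ and $\Phi_g(m')$ are in the same leaf. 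Consequently $\mathcal{J}(\Phi_g(m))$ depends only on $\mathcal{J}(m)$, so $\bar\Phi_g(\mathcal{J}(m)) := \mathcal{J}(\Phi_g(m))$ is well defined; that $\bar\Phi$ is a group action is immediate from $\mathcal{J}\circ\Phi_{gh} = \mathcal{J}\circ\Phi_g\circ\Phi_h$ and $\mathcal{J}\circ\Phi_e = \mathcal{J}$.

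The main obstacle is the third point above — verifying that the ``descending'' condition $[X,\Gamma(\V)]\subseteq\Gamma(\V)$ is preserved under $\Phi_g^*$, and more fundamentally, the fact that the vector fields spanning $\mathcal{D}$ need not themselves be $G$-equivariant (as the statement explicitly warns), so one genuinely has to go through the pointwise-spanning hypothesis and the $\mathsf{D}$-symmetry of the action rather than simply pushing forward equivariant generators. Everything else is soft point-set/foliation bookkeeping using the Stefan--Sussmann picture recalled in Section \ref{dis}.
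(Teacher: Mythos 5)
Your proof is correct and follows essentially the same route as the paper's: the key point in both is that pulling back a descending section of $\mathsf{D}$ by $\Phi_g$ again yields a descending section (using the Dirac symmetry for membership in $\Gamma(\mathsf{D})$, $G$-invariance of $\alpha$, and preservation of $[\,\cdot\,,\Gamma(\V)]\subseteq\Gamma(\V)$ via the decomposition $X=X^G+X^\V$), so that $\Gamma(\D)$ is $G$-stable. The only cosmetic difference is that the paper transports the connecting integral curves directly through $\Phi_g$ (the accessible-set picture), whereas you establish the pointwise identity $T_m\Phi_g(\D(m))=\D(\Phi_g(m))$ and invoke maximality of integral manifolds; both are equivalent packagings of the Stefan--Sussmann argument.
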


\begin{proof}
Let $m$ and $m'$ be in the same leaf of $\mathcal{D}$. Without 
loss of generality, we can assume that there exists a vector field $X\in\Gamma(\D)$ 
with flow $\phi$ such that  $\phi_t(m)=m'$ for
some $t$ (in reality, $m$ and $m'$ can be joined by finitely many such
curves). Since $(X,\alpha)\in \Gamma(\mathsf{D}\cap(\T\oplus\V_G^\circ))$ 
for some $\alpha\in \Gamma(\mathcal{V}_G^\circ)$ and 
$\mathsf{D}\cap(\T\oplus\V_G^\circ) $ is $G$-invariant, it follows that
$(\Phi^*_gX,\Phi^*_g\alpha)\in\Gamma(\mathsf{D}\cap(\T\oplus\V_G^\circ))$ 
for all $g \in G$.  Hence, $\Phi_g^*X\in \Gamma(\mathcal{D})$ for all 
$g \in G$. For all $s \in [0,t]$ we have
\begin{align*}
\frac{d}{ds} \left( \Phi_g\circ \phi_s\right) (m)
&=T_{\phi_s(m)}\Phi_g \left( X(\phi_s(m)) \right) 
= (\Phi_{g^{-1}}^*X)(\Phi_{g}(\phi_s(m)))\in \mathcal{D}((\Phi_{g}\circ\phi_s)(m)).
\end{align*}
Thus the curve $c:[0,t]\to M$, $s\mapsto\left(\Phi_g\circ \phi_{s}\right) (m)$  connecting
$c(0)=\Phi_g(m)$ to $c(t)= \Phi _g (\phi_t(m)) = \Phi_g(m')$ has all its
tangent vectors in the distribution $ \mathcal{D} $ and hence it lies
entirely in the leaf of $\mathcal{D}$ through the
point $\Phi_g(m)$.
\end{proof}

Denote by  $G_{(\rho)}$ the isotropy subgroup of 
$\rho\in M/\mathcal{D}$ for this induced action. If 
$g\in G_{(\rho)}$ and $m\in \mathcal{J}^{-1}(\rho)$, 
then
\[
\mathcal{J}(\Phi_g(m))=\bar{\Phi}_g(\mathcal{J}(m))
=\bar{\Phi}_g(\rho)=\rho=\mathcal{J}(m)
\] 
and we conclude, as usual, that $G_{(\rho)}$ leaves 
$\mathcal{J}^{-1}(\rho)$ invariant. Thus we get an induced
action $\Phi^{(\rho)}: G_{(\rho)} \times \mathcal{J}^{-1}(\rho) 
\rightarrow \mathcal{J}^{-1}(\rho)$.

Since $\D$ is, by definition,  a subdistribution of
the integral tangent distribution $\T$, each leaf of 
$\mathcal{D}$ is be contained
in a leaf of $\mathcal{T}$. Since the leaves of $\T$ are the connected components 
of the orbit type submanifolds of $M$, the induced action $\Phi^{(\rho)}$ 
of $G_{(\rho)}$ on $\mathcal{J}^{-1}(\rho)$ has  isotropy
subgroups that are conjugated in $G$ for each $\rho\in M/\D_G$, as is 
shown in  the next proposition.

\begin{proposition}\label{iso-subgroups}
Let $\J^{-1}(\rho)$ be a leaf of $\D$ and $m\in\J^{-1}(\rho)$. Then the
isotropy subgroup $\left(G_{(\rho)}\right)_m$ of $m$ by the action of $G_{(\rho)}$ on
$\mathcal{J}^{-1}(\rho)$ is equal to the isotropy subgroup $G_m$.
Hence we have the inclusion 
\[\underset{\J(m)=\rho}\bigcup G_m\subseteq G_{(\rho)}.\]
\end{proposition}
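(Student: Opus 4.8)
The plan is to unwind the definitions of the two isotropy subgroups and verify the two inclusions directly; no hard analysis is involved, the statement being essentially a bookkeeping consequence of the way the induced action was constructed.

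First I would observe that, by construction, the induced action $\Phi^{(\rho)}\colon G_{(\rho)}\times\J^{-1}(\rho)\to\J^{-1}(\rho)$ is simply the restriction of the ambient $G$-action $\Phi$; in particular $\Phi^{(\rho)}_g(m)=\Phi_g(m)$ for every $g\in G_{(\rho)}$. Consequently
\[
\left(G_{(\rho)}\right)_m=\{g\in G_{(\rho)}\mid \Phi_g(m)=m\}=G_{(\rho)}\cap G_m,
\]
which already yields the inclusion $\left(G_{(\rho)}\right)_m\subseteq G_m$.

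For the reverse inclusion I would take $g\in G_m$, so that $\Phi_g(m)=m$, and apply $\J$: using the defining formula \eqref{actionleafsporbit} of $\bar\Phi$ we get $\bar\Phi_g(\rho)=\bar\Phi_g(\J(m))=\J(\Phi_g(m))=\J(m)=\rho$, hence $g\in G_{(\rho)}$. Since also $g\in G_m$, this shows $g\in G_{(\rho)}\cap G_m=\left(G_{(\rho)}\right)_m$. Combining the two inclusions gives $\left(G_{(\rho)}\right)_m=G_m$.

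Finally, the displayed inclusion $\bigcup_{\J(m)=\rho}G_m\subseteq G_{(\rho)}$ is immediate: applying the middle step of the previous paragraph to an arbitrary $m'$ with $\J(m')=\rho$ shows $G_{m'}\subseteq G_{(\rho)}$, and taking the union over all such $m'$ finishes the argument. The only place where the preceding material intervenes is the well-definedness of $\Phi^{(\rho)}$, i.e.\ the fact that $G_{(\rho)}$ leaves $\J^{-1}(\rho)$ invariant, which was established just before the statement using Proposition~\ref{inducedaction}; beyond that there is no genuine obstacle to overcome.
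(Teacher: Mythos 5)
Your argument is correct and is essentially identical to the paper's proof: the forward inclusion is the observation that $g\cdot m=m$ forces $g\in G_m$, and the reverse inclusion uses the same computation $\bar\Phi_g(\rho)=\mathcal{J}(g\cdot m)=\mathcal{J}(m)=\rho$ to place $g\in G_{(\rho)}$ and hence in $\bigl(G_{(\rho)}\bigr)_m$. The concluding union over all $m'$ with $\mathcal{J}(m')=\rho$ is likewise how the paper obtains the displayed inclusion.
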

\begin{proof}
Choose $m\in\J^{-1}(\rho) $ and $g\in\left(G_{(\rho)}\right)_m$. Then we have $g\cdot m=m$,
which leads to $g\in G_m$. Conversely, choose $g\in G_m$ and compute
\[\bar\Phi_g(\rho)=\bar{\Phi}_g(\mathcal{J}(m))=\mathcal{J}(g\cdot
m)=\mathcal{J}(m)=\rho.\]
Thus, we have shown that $g\in G_{(\rho)}$. But since $g\cdot m=m$, we have in
particular $g\in (G_{(\rho)})_m$.
\end{proof}
We will see later that the isotropy subgroups of the action of $G_{(\rho)}$ on
$\J^{-1}(\rho)$ are conjugated in $G_{(\rho)}$.

In general, the action of $G_{(\rho)}$ on
$\mathcal{J}^{-1}(\rho)$ is not proper. A sufficient  condition for this
is, for example, the closedness of $G_{(\rho)}$ in $G$, which is not true, in general.

\subsection{The isotropy type Dirac optimal momentum map}

The optimal distribution $\mathcal{D}_G$ gives rise to a second
Dirac optimal momentum map. The results are analogous to 
those in the previous subsection.

\begin{proposition}\label{inducedaction2}
If $m$ and $m'$ are in the same leaf of $\mathcal{D}_G$ then $\Phi_g(m)$ and
$\Phi_g(m')$ 
are in the same leaf of $\mathcal{D}_G$ for all $g \in
G$. Hence there is a well defined action $\bar{\Phi}:G\times M/\mathcal{D}_G \to M/\mathcal{D}_G$ given by
\begin{equation}
\bar{\Phi}_g(\mathcal{J}_G(m)):=\mathcal{J}_G(\Phi _g(m)) \label{actionleafsp}
\end{equation}
\end{proposition}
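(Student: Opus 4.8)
The plan is to run the argument of Proposition \ref{inducedaction} essentially verbatim, replacing the orbit-type data $(\T,\D)$ by the isotropy-type data $(\T_G,\D_G)$. Recall that in the ambient section $\D_G$ is assumed to be integrable (so ``same leaf of $\D_G$'' makes sense) and spanned pointwise by its descending, indeed $G$-invariant, sections. The two ingredients I need are: that $\mathsf{D}\cap(\T_G\oplus\V_G^\circ)$ is invariant under the $G$-action $(X,\alpha)\mapsto(\Phi_g^*X,\Phi_g^*\alpha)$ on $\Gamma(\mathsf{P}_M)$; and a one-line computation showing that $\Phi_g$ maps integral curves of sections of $\D_G$ to integral curves of sections of $\D_G$.

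For the first ingredient I would argue as follows. The Dirac structure $\mathsf{D}$ is $G$-invariant by hypothesis. For $\T_G\oplus\V_G^\circ$, recall that $\Gamma(\T_G)$ is the $C^\infty(M)$-span of $\mx(M)^G$ and $\Gamma(\V_G^\circ)$ the $C^\infty(M)$-span of $\Gamma(\V^\circ)^G$; since $\Phi_g^*$ fixes every $G$-invariant vector field and every $G$-invariant one-form, one has, for $X=\sum_i f_iX_i$ with $X_i\in\mx(M)^G$,
\[
\Phi_g^*X=\sum_i (f_i\circ\Phi_g)\,\Phi_g^*X_i=\sum_i (f_i\circ\Phi_g)\,X_i\in\Gamma(\T_G),
\]
and likewise $\Phi_g^*\alpha\in\Gamma(\V_G^\circ)$ for $\alpha\in\Gamma(\V_G^\circ)$. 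Hence $\mathsf{D}\cap(\T_G\oplus\V_G^\circ)$ is $G$-invariant, and consequently $Y\in\Gamma(\D_G)$ forces $\Phi_g^*Y\in\Gamma(\D_G)$ for all $g\in G$.

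Next I would prove that leaves map to leaves. Let $m,m'$ be in the same leaf of $\D_G$; without loss of generality $m'=\phi_t(m)$ for the flow $\phi$ of some $X\in\Gamma(\D_G)$ (in general $m$ and $m'$ are joined by finitely many such arcs). Fix $g\in G$ and set $c(s):=\Phi_g(\phi_s(m))$. Using the convention $(\Phi_h^*X)(p)=T_{hp}\Phi_{h^{-1}}X(hp)$ one computes
\[
\frac{d}{ds}c(s)=T_{\phi_s(m)}\Phi_g\bigl(X(\phi_s(m))\bigr)=(\Phi_{g^{-1}}^*X)\bigl(c(s)\bigr)\in\D_G\bigl(c(s)\bigr),
\]
because $\Phi_{g^{-1}}^*X\in\Gamma(\D_G)$ by the previous step. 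Thus $c$ is an integral curve of a section of $\D_G$ from $\Phi_g(m)=c(0)$ to $\Phi_g(m')=c(t)$, so $\Phi_g(m')$ lies in the leaf of $\D_G$ through $\Phi_g(m)$. This makes $\bar\Phi_g(\J_G(m)):=\J_G(\Phi_g(m))$ well defined on $M/\D_G$, and the group identities $\bar\Phi_e=\Id$, $\bar\Phi_g\circ\bar\Phi_h=\bar\Phi_{gh}$ follow immediately from those for $\Phi$ on $M$.

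There is no genuine obstacle here; the only point deserving care — and the reason it is worth writing out rather than merely citing Proposition \ref{inducedaction} — is that replacing $\T$ by the generally smaller distribution $\T_G$ could, a priori, destroy $G$-invariance of the relevant intersection. It does not, precisely because $\T_G$ is generated by $G$-invariant vector fields. Equivalently, one could invoke that the leaves of $\D_G$ are accessible sets of a family of $G$-invariant vector fields via Theorem \ref{frobenius with vector fields} and use the $G$-equivariance of their flows; this gives the same conclusion slightly faster.
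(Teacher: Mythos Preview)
Your proof is correct and follows exactly the approach the paper takes: the paper's own argument simply says that the proof is almost identical to that of Proposition \ref{inducedaction}, using the flow of $X\in\Gamma(\D_G)$ together with the $G$-invariance of $\mathsf{D}\cap(\T_G\oplus\V_G^\circ)$. You have spelled out precisely these two ingredients (the invariance argument via the $C^\infty(M)$-span of $G$-invariant generators, and the integral-curve computation), so your write-up is a faithful and more detailed version of what the paper records in two lines.
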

\begin{proof}
The proof is almost identical to that of Proposition \ref{inducedaction}.
The only difference is that we have to consider the flow of
$X\in\Gamma(\D_G)$. Then we use $G$-invariance of the 
distribution $\mathsf{D}\cap(\T_G\oplus\V_G^\circ)$.
\end{proof}

Let $G_\sigma$ be the isotropy subgroup of $\sigma\in M/\D_G$. 
Then there exists a unique smooth structure on $G _\sigma$ with respect to 
which $G _\sigma$ is an initial Lie subgroup of $G$; the  Lie algebra of $G _
\sigma$ is
\[
\mathfrak{g}_\sigma: = \{ \xi\in \mathfrak{g} \mid \xi_M(m) \in 
\mathcal{D}_G(m), \text{ for all } m \in \mathcal{J}_G ^{-1}(\sigma) \}
\]
 (see \cite{OrRa04},  Proposition 3.4.4). In particular
 \begin{equation}
\label{dim_g_sigma}
 \dim \mathfrak{g}_\sigma = \dim \left(\mathcal{V}(m) \cap \mathcal{D}_G(m) 
 \right) + \dim G_m, \quad \text{for any} \quad m \in \mathcal{J}_G ^{-1}
 (\sigma).
\end{equation}
Indeed, the surjective linear map $\xi\in \mathfrak{g}_\sigma \mapsto 
\xi_M(m) \in \mathcal{V}(m) \cap \mathcal{D}_G(m) $ has kernel $\{ \xi\in 
\mathfrak{g} \mid \xi_M(m) = 0 \}$.

Since $G_\sigma$ leaves $\J_G^{-1}(\sigma)$ invariant,  we get 
an induced action $\Phi^\sigma$ of $G_\sigma$ on 
$\J_G^{-1}(\sigma)$. 
Since $\D_G\subseteq \T_G$, the points of $\J_G^{-1}(\sigma)$ are all 
in the same connected component of an isotropy type submanifold of 
$M$ and hence $G_m=H_\sigma$ for all $m\in \J_G^{-1}(\sigma)$
and some compact subgroup $H_\sigma\subseteq G$.

\begin{proposition}
Let $\J^{-1}_G(\sigma)$ be a leaf of $\D_G$ and $m\in\J^{-1}_G(\sigma)$. 
The isotropy subgroup $\left(G_\sigma\right)_m$ of $m$ by the action 
of $G_\sigma$ on $\mathcal{J}^{-1}_G(\sigma)$ is equal to the compact subgroup $H_\sigma$ which is
automatically a subset of $G_\sigma$.
\end{proposition}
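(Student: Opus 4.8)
The plan is to mirror the argument of Proposition \ref{iso-subgroups}, but with $G_{(\rho)}$ replaced by $G_\sigma$ and $\J$ by $\J_G$, together with the structural observation recorded just before the statement, namely that $\D_G \subseteq \T_G$ forces all points of a leaf $\J_G^{-1}(\sigma)$ to lie in a single connected component of an isotropy type submanifold, so that $G_m = H_\sigma$ for every $m \in \J_G^{-1}(\sigma)$. First I would fix $m \in \J_G^{-1}(\sigma)$ and establish the two inclusions $H_\sigma \subseteq G_\sigma$ and $(G_\sigma)_m = G_m$.

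For $(G_\sigma)_m = G_m$: if $g \in (G_\sigma)_m$ then by definition $g \cdot m = m$, so $g \in G_m = H_\sigma$; conversely, if $g \in G_m$, then $\J_G(\Phi_g(m)) = \bar\Phi_g(\J_G(m)) = \bar\Phi_g(\sigma)$, and since $g \cdot m = m$ this equals $\J_G(m) = \sigma$, which shows $\bar\Phi_g(\sigma) = \sigma$, i.e.\ $g \in G_\sigma$; combined with $g \cdot m = m$ this gives $g \in (G_\sigma)_m$. This is the verbatim analogue of the proof of Proposition \ref{iso-subgroups}, using the well-definedness of $\bar\Phi$ from Proposition \ref{inducedaction2}.

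It then remains to argue $H_\sigma \subseteq G_\sigma$, i.e.\ that the common isotropy group $H_\sigma = G_{m}$ of the points in $\J_G^{-1}(\sigma)$ actually fixes the leaf $\sigma$ as a whole. But the previous paragraph already shows that any $g \in G_m = H_\sigma$ satisfies $\bar\Phi_g(\sigma) = \sigma$, hence $g \in G_\sigma$; since this holds for the single chosen $m$ and $G_m = H_\sigma$, we get $H_\sigma \subseteq G_\sigma$ immediately. (Alternatively one invokes $G_m = H_\sigma$ for \emph{all} $m \in \J_G^{-1}(\sigma)$, but a single point suffices.) Putting the two facts together yields $(G_\sigma)_m = G_m = H_\sigma$ and $H_\sigma \subseteq G_\sigma$, which is exactly the assertion.

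The proof is essentially routine once the right prior results are cited; the only point requiring a little care is making sure the induced action $\bar\Phi$ on $M/\D_G$ is genuinely well defined so that the computation $\J_G(\Phi_g(m)) = \bar\Phi_g(\J_G(m))$ is legitimate — this is precisely Proposition \ref{inducedaction2}, so no obstacle arises. I do not anticipate any serious difficulty; the main thing to watch is notational consistency between $G_\sigma$, $H_\sigma$, and the constancy of $G_m$ on the leaf, all of which have been set up in the lines preceding the statement.
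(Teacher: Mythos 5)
Your proof is correct and follows essentially the same route as the paper, which itself simply states that the argument of Proposition \ref{iso-subgroups} adapts to this situation: you establish $(G_\sigma)_m = G_m$ by the same two-inclusion computation via the well-defined induced action of Proposition \ref{inducedaction2}, and then identify $G_m$ with $H_\sigma$ using the constancy of isotropy groups along the leaf ($\D_G \subseteq \T_G$), with the inclusion $H_\sigma \subseteq G_\sigma$ falling out of the same computation. No gaps.
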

\begin{proof}
Here also, the proof of the analogue in the orbit type case (Proposition
\ref{iso-subgroups}) can be adapted to this particular situation.
\end{proof}

With this last proposition, we can show the missing detail in the preceding
subsection.

\begin{proposition}\label{conj_isotropy_subgroups}
Choose  $\rho\in M/\D$ and let $G_{(\rho)}$ be its isotropy subgroup by 
the action of $G$ on $M/\D$. Then we find for all $m, m'\in\J^{-1}(\rho)$ 
an element $g\in G_{(\rho)}$ such that $G_m=gG_{m'}g^{-1}$.
\end{proposition}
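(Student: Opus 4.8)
The plan is a \emph{gauge} argument: although the leaf $\J^{-1}(\rho)$ of $\D$ is not $G$-invariant, moving along it differs from moving along the $G$-invariant part of $\D$ only by a $G$-valued correction, and the $G$-invariant part is tangent to the isotropy type manifolds. Begin with two reductions. Since $\D\subseteq\T$ and, by \cite{JoRaSn11}, the leaves of $\T$ are the connected components of the orbit type manifolds, the whole leaf $\J^{-1}(\rho)$ lies in one connected component of $M_{(H)}$, where $H:=G_m$; in particular $G_{m'}$ is $G$-conjugate to $H$. And since $\bar\Phi_g\circ\J=\J\circ\Phi_g$, a point $n\in\J^{-1}(\rho)$ satisfies $\Phi_g(n)\in\J^{-1}(\rho)$ precisely when $g\in G_{(\rho)}$; so it suffices to produce $g\in G$ with $\Phi_g(m')\in M_H\cap\J^{-1}(\rho)$, since then $g\in G_{(\rho)}$ automatically and $G_m=H=G_{\Phi_g(m')}=gG_{m'}g^{-1}$.

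To produce such a $g$, join $m$ to $m'$ by finitely many flows of the vector fields $X_1,\dots,X_N\in\Gamma(\D)$ underlying descending sections $(X_i,\alpha_i)$ of $\mathsf D$ that span $\D$ pointwise (by the standing hypothesis $\D$ is spanned pointwise by such fields and is integrable, so $\J^{-1}(\rho)$ is exactly the accessible set of $m$ for this family; cf.\ Theorem \ref{frobenius with vector fields}). Decompose $X_i=X_i^G+X_i^\V$, $X_i^G\in\mx(M)^G$, $X_i^\V\in\Gamma(\V)$, taking $X_i^G$ to be the $G$-invariant average of $X_i$. Then $X_i^G\in\Gamma(\D_G)\subseteq\Gamma(\D)$: indeed $(X_i,\alpha_i)\in\Gamma(\mathsf D\cap(\T\oplus\V_G^\circ))$, this intersection is $G$-invariant, so its $G$-invariant average $((X_i)_G,(\alpha_i)_G)$ is a section of $\mathsf D\cap(\T_G\oplus\V_G^\circ)$ (Section \ref{sec:proper}), whose projection on $TM$ is $X_i^G$. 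Since $X_i$ is descending and $X_i-X_i^G=X_i^\V\in\Gamma(\V)$ is vertical, $X_i$ and $X_i^G$ push forward to the same vector field on $\bar M=M/G$; hence, writing $\phi^i$ for the flow of $X_i$ and $\psi^i$ for the flow of $X_i^G$, the points $\phi^i_t(n)$ and $\psi^i_t(n)$ always lie on the same $G$-orbit. Using that each $\psi^i$ is $G$-equivariant ($X_i^G$ being $G$-invariant), one commutes all the resulting group factors to the left, step by step along the concatenation, and obtains $m'=\bar g\cdot m^*$ with $\bar g\in G$ and $m^*:=\bigl(\psi^N_{t_N}\circ\cdots\circ\psi^1_{t_1}\bigr)(m)$.

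Each $X_i^G$ is $G$-invariant, hence tangent to the isotropy type manifolds, so $m^*$ stays in the isotropy type component of $m$ and $G_{m^*}=H$; and each $X_i^G\in\Gamma(\D)$, so $m^*$ stays in the leaf of $\D$ through $m$, i.e.\ $m^*\in\J^{-1}(\rho)$. Thus $g:=\bar g^{-1}$ satisfies $\Phi_g(m')=m^*\in M_H\cap\J^{-1}(\rho)$, and by the reduction above $g\in G_{(\rho)}$ with $gG_{m'}g^{-1}=G_m$. (Read through $\J_G$: since $m^*$ lies in the $\D_G$-leaf through $m$, one has $\J_G(m')=\bar\Phi_{\bar g}(\J_G(m))$, so $\J_G(m)$ and $\J_G(m')$ lie on a single $G$-orbit in $M/\D_G$; this is where the isotropy type optimal momentum map and its isotropy-subgroup proposition enter.)

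The step I expect to be the main obstacle is the gauge relation $\phi^i_t(n)\in G\cdot\psi^i_t(n)$ and its use in the concatenation: one must verify that a descending field and its averaged $G$-invariant part induce the same flow on the stratified quotient $\bar M$, keep track of the $G$-valued factor under composition (matching flow domains, moving factors past one another via equivariance), and confirm $X_i-X_i^G\in\Gamma(\V)$ and $X_i^G\in\Gamma(\D_G)$. All of this rests on the structure of descending sections and the averaging machinery of \cite{JoRaSn11} recalled in Section \ref{sec:proper}; the remainder is bookkeeping.
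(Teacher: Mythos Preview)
Your argument is correct and rests on the same mechanism as the paper's: averaging a descending section $(X_i,\alpha_i)$ produces $((X_i)_G,\alpha_i)\in\Gamma(\mathsf D\cap(\T_G\oplus\V_G^\circ))$, so the descending generators of $\D$ differ from sections of $\D_G$ by vertical fields. The paper, however, packages this one level higher: it records the pointwise inclusion $\D_G\subseteq\D\subseteq\V+\D_G$, notes that the leaves of $\V+\D_G$ are the sets $G\cdot\J_G^{-1}(\sigma)$ (Proposition~3.4.6 of \cite{OrRa04}), and hence obtains $\J^{-1}(\rho)\subseteq G\cdot\J_G^{-1}(\sigma)$ in one line. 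Then $m'=g\,m''$ with $m''\in\J_G^{-1}(\sigma)\subseteq M_{G_m}\cap\J^{-1}(\rho)$ gives $G_{m'}=gG_mg^{-1}$ and $g\in G_{(\rho)}$ immediately. Your proof reconstructs exactly this leaf inclusion along a chosen broken flow line, tracking the group correction $\bar g$ by hand via the equivariance of the $\psi^i$. That makes explicit the step the paper hides behind ``by definition'', but it also forces you to confront the issue you flag at the end: the averaged field $(X_i)_G$ lives only in a tube, so the comparison $\phi^i_t(n)\in G\cdot\psi^i_t(n)$ and the concatenation need a subdivision argument to be fully rigorous. The paper's distribution-level formulation sidesteps this bookkeeping entirely.
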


\begin{proof}
We have $\D_G\subseteq \D\subseteq \V+\D_G$, by definition.
For $m\in M$, the leaf $\J^{-1}(\rho)$ of $\D$ through $m$ is thus contained 
in  $G\cdot\J_G^{-1}(\sigma)$, where $\sigma=\J_G(m)$ and $\rho=\J(m)$, 
and the leaf $\J_G^{-1}(\sigma)$ is contained in $\J^{-1}(\rho)$. 

Choose $m'\in \J^{-1}(\rho)$. Then $m'$ can be written as $m'=gm''$ with 
$m''\in \J_G^{-1}(\sigma)\subseteq\J^{-1}(\rho) $ and $g\in G$ . 
Then we have $G_{m'}=gG_{m''}g^{-1}$. But since $m''$ is
an element of $\J_G^{-1}(\J_G(m))\subseteq M_{G_m}$, 
we have $G_{m''}=G_m$ and hence $G_{m'}=gG_mg^{-1}$. Furthermore,
$g\cdot\rho=g\cdot\J(m)=g\cdot\J(m'')=\J(g\cdot m'')=\J(m')
=\rho$, which shows that $g\in G_{(\rho)}$.
\end{proof}

\subsection{Universality of the optimal map $\mathcal J_G$ under the exactness condition 
(\S\ref{subsec_exactness})}

We assume here that the intersection
$\mathsf{D}\cap(\T\oplus\V_G^\circ)$ is spanned pointwise by its set of \emph{exact} descending
sections
\[\left\{(X_f,\dr f)\in \Gamma(\mathsf{D})\mid f\in C^\infty(M)^G, \,
  [X_f,\Gamma(\V)]\subseteq \Gamma(\V)\right\}.\]
We have shown in Subsection 
\ref{subsec_exactness} that the distribution
$\mathcal D_G$ is then completely integrable in the sense of Stefan and Sussmann.
Hence, the isotropy type optimal momentum map is defined.

Note that in this particular case,
the smooth distribution $\mathcal D_G$ is spanned pointwise
by the following family
of vector fields:
\[\mathcal F=\{X_f\in\mx(M)^G\mid (X_f,\dr f)\in \Gamma(\mathsf D), 
f\in C^\infty(M)^G\},
\]
and its leaves are hence the accessible sets of this family of vector fields.
Using this, we prove a universality property of the isotropy type optimal momentum map.
This theorem suggests that the isotropy type optimal momentum map 
should be the more ``natural'' one, provided that the exactness condition above is satisfied.
Recall that in the Poisson case, the optimal distribution is always spanned 
by the family $\mathcal F$, and the following statement is hence true
(see \cite{OrRa04}).

\begin{theorem}\label{universality}
Let $G$ be a symmetry Lie group of the Dirac manifold $(M,\mathsf D)$ and 
$\mathbf{J}:M\to P$ a function with the Noether property 
{\rm (}see Definition \ref{noether_property}{\rm )}.
Then there exists a unique map
$\phi_G:M/\mathcal{D}_G\to P$
such that the following diagram commutes:
\[\begin{xy}
\xymatrix@!R{
      M \ar[rr]^{\mathbf{J}} \ar[rd]_{\mathcal{J}}  &     &  P  \\
                             &  M/\mathcal{D}_G \ar[ur]_{\phi_G} &
  }\end{xy}.\]
If $\mathbf{J}$ is  $G$-equivariant with respect to some $G$-action on $P$,
then  $\phi_G$  is
 also $G$-equivariant.
If $\mathbf{J}$ is smooth
 and $M/\mathcal D_G$ is a smooth manifold, then $\phi_G$  is
 also smooth.
\end{theorem}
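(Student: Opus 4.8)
The plan is to construct $\phi_G$ fiberwise: given $\rho \in M/\mathcal{D}_G$, pick any $m \in \mathcal{J}_G^{-1}(\rho)$ and set $\phi_G(\rho) := \mathbf{J}(m)$. The entire content of the statement then reduces to three claims: (1) this is well-defined, i.e.\ $\mathbf{J}$ is constant on the leaves of $\mathcal{D}_G$; (2) the resulting $\phi_G$ is $G$-equivariant when $\mathbf{J}$ is; and (3) $\phi_G$ is smooth when $\mathbf{J}$ is smooth and $M/\mathcal{D}_G$ is a manifold. Uniqueness is immediate, since $\mathcal{J}_G$ is surjective and any $\phi_G$ with $\phi_G \circ \mathcal{J}_G = \mathbf{J}$ is forced to take the value $\mathbf{J}(m)$ on $\mathcal{J}_G(m)$.

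\textbf{Well-definedness (the main step).} Under the exactness assumption of \S\ref{subsec_exactness}, we have shown that $\mathcal{D}_G$ is spanned pointwise by the family $\mathcal F = \{X_f \in \mx(M)^G \mid (X_f,\dr f) \in \Gamma(\mathsf D),\ f \in C^\infty(M)^G\}$, and that its leaves are precisely the accessible sets of $\mathcal F$ (the $A_{\mathcal F}$-orbits, in the language of Theorem \ref{frobenius with vector fields}). So two points $m, m'$ lie in the same leaf iff $m'$ is obtained from $m$ by a finite composition of flows $\phi^{(i)}_{t_i}$ of vector fields $X_{f_i} \in \mathcal F$. It therefore suffices to show $\mathbf{J}(\phi_t(m)) = \mathbf{J}(m)$ for the flow $\phi$ of a single $X_f \in \mathcal F$ and all $t$ where defined. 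But each such $X_f$ is a Hamiltonian vector field for the $G$-invariant admissible function $f$, so $\phi$ is exactly an implicit Hamiltonian flow of a $G$-invariant admissible function; the Noether property (Definition \ref{noether_property}) gives $\mathbf{J} \circ \phi_t = \mathbf{J}|_{\dom(\phi_t)}$. Chaining finitely many such identities along a path joining $m$ to $m'$ in the leaf yields $\mathbf{J}(m) = \mathbf{J}(m')$. Hence $\phi_G$ is well-defined and $\phi_G \circ \mathcal{J}_G = \mathbf{J}$. I expect this to be the crux: the only subtlety is making sure the exactness hypothesis is genuinely what licenses the identification of $\mathcal{D}_G$-leaves with $\mathcal F$-orbits (which is the previous theorem) so that the Noether property can be applied flow-by-flow.

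\textbf{Equivariance and smoothness.} For equivariance, suppose $\mathbf{J}$ is $G$-equivariant for a $G$-action on $P$. Recall from Proposition \ref{inducedaction2} that $G$ acts on $M/\mathcal{D}_G$ by $\bar\Phi_g(\mathcal{J}_G(m)) = \mathcal{J}_G(\Phi_g(m))$. Then for $\rho = \mathcal{J}_G(m)$,
\[
\phi_G(\bar\Phi_g(\rho)) = \phi_G(\mathcal{J}_G(\Phi_g(m))) = \mathbf{J}(\Phi_g(m)) = g \cdot \mathbf{J}(m) = g \cdot \phi_G(\rho),
\]
which is the required equivariance. For smoothness, assume $M/\mathcal{D}_G$ carries a smooth manifold structure; by the definition of that quotient structure, $\mathcal{J}_G : M \to M/\mathcal{D}_G$ is a smooth surjective submersion, hence admits local smooth sections $s : U \to M$ with $\mathcal{J}_G \circ s = \operatorname{Id}_U$. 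On such $U$ we have $\phi_G|_U = \mathbf{J} \circ s$, a composition of smooth maps, so $\phi_G$ is smooth. This completes the proof.
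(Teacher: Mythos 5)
Your proof is correct and follows essentially the same route as the paper's: define $\phi_G(\rho):=\mathbf{J}(m)$ for any $m\in\mathcal{J}_G^{-1}(\rho)$, use the identification (under the exactness hypothesis) of the leaves of $\mathcal{D}_G$ with the accessible sets of the family $\mathcal F$ of $G$-invariant Hamiltonian vector fields, and apply the Noether property flow-by-flow to get well-definedness, with equivariance and uniqueness following formally. The only cosmetic difference is in the smoothness step, where you argue via local sections of the submersion $\mathcal{J}_G$ while the paper simply invokes the standard fact about quotients of smooth maps by projections; both are fine.
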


\begin{proof} The proof is the same as for Poisson manifolds (see \cite{OrRa04}, Theorem 5.5.15). 
Define $ \phi : M / \mathcal{D}_G \rightarrow  P $ by 
$ \phi ( \rho ) : = \mathbf{J} (m) $, where $ \rho = \mathcal{J} (m) $. 
The map $ \phi $ is well defined since if $m'\in\mathcal{J}^{-1}(\rho)$,
 then there is a finite composition $F$  of flows of elements of $\mathcal F$
 such that $m'=F(m)$. Since $\mathbf{J}$ is a Noether momentum map we have
\[\mathbf{J}(m')=\mathbf{J}(F(m))=\mathbf{J}(m)=\phi(\rho).\]
The definition immediately implies that the diagram commutes. Uniqueness of $ \phi $ 
follows from the requirement that the diagram commutes and the surjectivity of $\mathcal{J}$. 
Equivariance of $ \phi $ is a direct consequence of the definition \eqref{actionleafsp} of the 
$ G $-action on $ M /\mathcal{D}_G $. Finally, if all objects are smooth manifolds and 
$ \mathcal{J} $, $ \mathbf{J} $ are smooth maps then $ \phi $ is a smooth map as the 
quotient of the smooth map $ \mathbf{J} $ by the projection $ \mathcal{J} $ (see \cite{Bourbaki67}). 
\end{proof}

\section{Optimal reduction}
\label{sec:optimal_reduction}
In this section we generalize the optimal reduction theorem for 
Poisson manifolds (see \cite{OrRa04}, Theorem 9.1.1) 
to closed Dirac manifolds. As we shall see,
with necessary assumptions and appropriately extended 
definitions, this important desingularization method works 
also for  Dirac manifolds.

We shall assume throughout this section that the distributions 
$\D$ (respectively $\D_G$) are spanned pointwise by their descending (respectively $G$-invariant) 
sections and are
completely integrable in the sense of Stefan and Sussmann. 
Recall from Subsection \ref{int_sing_dis} that their leaves are then the accessible
sets of these special  families of vector fields.

As we have seen above, there are two optimal momentum maps that we 
can consider. We have hence  two optimal point reduction theorems. We will 
also prove an orbit reduction theorem and shall see that the two optimal 
momentum maps give rise to the same orbit reduction theorem. At the end 
of this section it will be shown that  the three optimally reduced 
manifolds at $\J(m)$ and $\J_G(m)$, for a point $m\in M$,  are isomorphic presymplectic manifolds.

\subsection{The reduction theorems: Optimal point reduction by Dirac 
actions}
\label{optimal_point_reduction_thm}
Let $(M,\mathsf{D})$ be a smooth integrable Dirac manifold and $G$ a Lie group 
acting smoothly and properly on $M$ and  leaving the Dirac structure 
invariant.  Assume  that $\mathsf{D}\cap(\T\oplus\V_G^\circ)$ is 
spanned pointwise by the descending sections of $\mathsf{D}$  and that $\D$ is 
integrable. Let $\mathcal{J}: M\to M/\mathcal{D}$ be the orbit type Dirac 
optimal momentum map associated to this action. If $\rho\in M/\D$, 
denote by $\inc:\mathcal{J}^{-1}(\rho)\hookrightarrow M$ the regular 
immersion.  Recall that $\mathcal{J}^{-1}(\rho)$ is an initial submanifold 
of $M$.

\begin{theorem}\label{red_theorem1}
For any $\rho \in M/\mathcal{D}$ with
isotropy subgroup $G_{(\rho)}$ acting properly on $\mathcal{J}^{-1}(\rho)$, 
the orbit space $M_{(\rho)}:=\mathcal{J}^{-1}(\rho)/G_{(\rho)}$ is a regular
quotient manifold such that  the projection 
$\pi_{(\rho)}:\mathcal{J}^{-1}(\rho)\to M_{(\rho)}$ is a smooth submersion.
Define $\omega_{(\rho)} \in \Omega^2\left(M _{(\rho)}\right)$ by 
\begin{equation}\label{presymp1}
\left(\pi_{(\rho)}^*\omega_{(\rho)}\right)(m)(v_m,w_m):=
\alpha_{\inc(m)}(Y(\inc(m)))
=-\beta_{\inc(m)}(X(\inc(m)))
\end{equation}
for any $m \in \mathcal{J}^{-1}(\rho)$ and any $X,Y\in
\Gamma(\mathcal{D})$ defined on an open set around $\inc(m)$, where  
$(X,\alpha), (Y,\beta) \in \Gamma(\mathsf{D}\cap(\T\oplus\V_G^\circ))$ 
are such that $T_m\inc v_m=X(\inc(m))$ and $T_m\inc w_m=Y(\inc(m))$.

Then $\left(M_{(\rho)},\omega_{(\rho)}\right)$ is a presymplectic manifold.
The pair $\left(M_{(\rho)},\mathsf{D}_{(\rho)}\right)$ is called the 
\emph{orbit type Dirac optimal point reduced space} of 
$(M,\mathsf{D})$ at $\rho$, where $\mathsf{D}_{(\rho)}$ is 
the graph of the presymplectic form $\omega_{(\rho)}$.
\end{theorem}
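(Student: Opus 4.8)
The plan is to follow the well-established pattern of optimal/symplectic point reduction, adapting it to the Dirac-presymplectic setting. First I would establish that $M_{(\rho)} := \mathcal{J}^{-1}(\rho)/G_{(\rho)}$ is a smooth regular quotient manifold with $\pi_{(\rho)}$ a surjective submersion. By Proposition~\ref{iso-subgroups}, the isotropy subgroup of any $m \in \mathcal{J}^{-1}(\rho)$ under the $G_{(\rho)}$-action equals $G_m$, and by Proposition~\ref{conj_isotropy_subgroups} these isotropy groups are all conjugate in $G_{(\rho)}$. Hence the $G_{(\rho)}$-action on $\mathcal{J}^{-1}(\rho)$ has a single orbit type; together with the standing hypothesis that this action is proper, this gives that the quotient is a smooth manifold and $\pi_{(\rho)}$ is a submersion by the standard quotient manifold theorem for proper actions of a single orbit type (e.g.\ the results recalled in Section~\ref{sec:proper}, cf.\ \cite{DuKo00}). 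Note $\mathcal{J}^{-1}(\rho)$ is an initial submanifold, being a leaf of the integrable distribution $\D$.

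Next I would show that $\omega_{(\rho)}$ is well defined by \eqref{presymp1}. There are several independences to check. Pick $m \in \mathcal{J}^{-1}(\rho)$ and $v_m, w_m \in T_m M_{(\rho)}$; choose $X, Y \in \Gamma(\D)$ near $\inc(m)$ with $T_m\inc\, v_m = X(\inc(m))$, $T_m\inc\, w_m = Y(\inc(m))$, and one-forms $\alpha, \beta \in \Gamma(\V_G^\circ)$ with $(X,\alpha), (Y,\beta) \in \Gamma(\mathsf{D}\cap(\T\oplus\V_G^\circ))$; such choices exist since $\D = \pi_{TM}(\mathsf{D}\cap(\T\oplus\V_G^\circ))$ is spanned pointwise by its descending sections. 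The key point is that $\alpha(Y) = -\beta(X)$ along $\mathcal{J}^{-1}(\rho)$: since $\langle(X,\alpha),(Y,\beta)\rangle = \alpha(Y)+\beta(X) = 0$ because $\mathsf{D}$ is Lagrangian, this is immediate, which also shows the two expressions in \eqref{presymp1} agree. Independence of the choice of $\alpha$ (for fixed $X$): if $(X,\alpha')$ is another such section, then $(0,\alpha-\alpha') \in \Gamma(\mathsf{D})$, so $\alpha - \alpha' \in \Gamma(\mathsf{P_0}) \subseteq \Gamma(\mathsf{G_1}^\circ)$; since $Y \in \Gamma(\D) \subseteq \Gamma(\mathsf{G_1})$, we get $(\alpha-\alpha')(Y) = 0$. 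Independence of the choice of $X$ representing $v_m$ modulo the tangent to the $G_{(\rho)}$-orbit: if $T_m\inc\, v_m$ is also represented by $X'$ with $(X',\alpha')$ descending, then $X(\inc(m)) - X'(\inc(m)) \in T_{\inc(m)}(G_{(\rho)}\cdot\inc(m))$, i.e.\ it equals $\xi_M(\inc(m))$ for some $\xi \in \mathfrak{g}_{(\rho)}$ (the Lie algebra of $G_{(\rho)}$, consisting of $\xi$ with $\xi_M$ tangent to $\mathcal{J}^{-1}(\rho)$ along it, cf.\ the discussion of $\mathfrak{g}_\sigma$ in \eqref{dim_g_sigma}). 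Then $\beta(\xi_M)(\inc(m)) = 0$ because $\beta \in \Gamma(\V_G^\circ) \subseteq \Gamma(\V^\circ)$. This, combined with the previous independences, and a symmetric argument for $Y$, shows $\omega_{(\rho)}$ descends to a well-defined $2$-form on $M_{(\rho)}$. Skew-symmetry is clear from $\alpha(Y) = -\beta(X)$. Smoothness of $\omega_{(\rho)}$ follows because locally, in a slice chart for the $G_{(\rho)}$-action adapted to $\mathcal{J}^{-1}(\rho)$, one can choose the sections $X,Y,\alpha,\beta$ to depend smoothly on the point, and $\pi_{(\rho)}$ being a submersion then transfers smoothness of $\pi_{(\rho)}^*\omega_{(\rho)}$ down to $\omega_{(\rho)}$.

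Finally I would verify that $\omega_{(\rho)}$ is closed, so that $(M_{(\rho)}, \omega_{(\rho)})$ is genuinely presymplectic and $\mathsf{D}_{(\rho)} := \mathrm{graph}(\flat_{\omega_{(\rho)}})$ is a Dirac structure. The cleanest route is to relate $\omega_{(\rho)}$ to the presymplectic leaf structure of $\mathsf{D}$. Since $\mathcal{J}^{-1}(\rho) \subseteq \T$ and $\D \subseteq \mathsf{G_1}$, the leaf $\mathcal{J}^{-1}(\rho)$ is contained in a single presymplectic leaf $(N,\omega_N)$ of $(M,\mathsf{D})$; pulling back \eqref{induceddiracN} along $\mathcal{J}^{-1}(\rho)\hookrightarrow N$ and comparing with \eqref{presymp1} identifies $\pi_{(\rho)}^*\omega_{(\rho)}$ with the pullback of $\omega_N$ to $\mathcal{J}^{-1}(\rho)$. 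Since $\omega_N$ is closed (it is a presymplectic form, being locally $\dr$ of admissible primitives as in the computation preceding Theorem~\ref{invDirac}), its pullback is closed, hence $\dr(\pi_{(\rho)}^*\omega_{(\rho)}) = 0$, and since $\pi_{(\rho)}$ is a submersion, $\dr\omega_{(\rho)} = 0$. The main obstacle I expect is precisely making the independence-of-$X$ argument fully rigorous when $v_m$ is represented by sections $X$ with values only in $\D$ (not necessarily tangent to $\mathcal{J}^{-1}(\rho)$ globally, only at the point, and only modulo the orbit direction at that point): one must carefully track that the ambiguity is exactly the $\mathfrak{g}_{(\rho)}$-orbit direction on which $\beta \in \Gamma(\V_G^\circ)$ vanishes, and that the pairing $\langle(X,\alpha),(Y,\beta)\rangle=0$ is available because \emph{both} sections lie in $\mathsf{D}$. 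This is routine once the role of the descending-section hypothesis and the identity $\alpha(Y)=-\beta(X)$ are in place, but it is the step where the Dirac (rather than Poisson) nature — the presence of a nontrivial $\mathsf{G_0}$ and nontrivial $\mathsf{P_0}$ — must be handled with care.
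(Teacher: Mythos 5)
Your proposal is correct. For the quotient-manifold step and the well-definedness of $\omega_{(\rho)}$ it follows essentially the paper's route: properness plus the conjugacy of the isotropy subgroups (Proposition \ref{conj_isotropy_subgroups}) yields the regular quotient, and the ambiguity in the representing data is absorbed by the Lagrangian identity $\alpha(Y)+\beta(X)=0$ together with the vanishing of $\alpha,\beta\in\Gamma(\V^\circ)$ on the fundamental vector fields spanning $\ker T\pi_{(\rho)}$. The paper packages the independence checks and the $G_{(\rho)}$-invariance into one computation comparing $(X',\alpha')$ with $\left((\Phi_{g^{-1}})^*Y,(\Phi_{g^{-1}})^*\beta\right)$, using the $G$-invariance of $\mathsf{D}$ so that the pulled-back pair is again a section of $\mathsf{D}$ and the Lagrangian pairing applies at $m'$; your itemized version covers the same ground, but you should make that last point explicit, since it is exactly where the canonicity of the action enters. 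Where you genuinely diverge is the closedness of $\omega_{(\rho)}$. The paper proves $\dr\bigl(\pi_{(\rho)}^*\omega_{(\rho)}\bigr)=0$ by a direct computation: it evaluates the exterior derivative on $\inc$-related triples, uses closedness of $\mathsf{D}$ under the Courant bracket to write $\bigl(\pi_{(\rho)}^*\omega_{(\rho)}\bigr)\bigl([\tilde X,\tilde Y],\tilde Z\bigr)=\bigl(\boldsymbol{\pounds}_{X}\beta-\ip{Y}\dr\alpha\bigr)(Z)\circ\inc$, and cancels everything via $\gamma(X)+\alpha(Z)=0$ and its companions. You instead observe that $\mathcal{J}^{-1}(\rho)$ lies in a single presymplectic leaf $N$ of $\mathsf{G_1}$ and that \eqref{presymp1} is the pullback of $\omega_N$ from \eqref{induceddiracN}, so closedness is inherited from $\dr\omega_N=0$. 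That is shorter and more conceptual, but it outsources the substance to Courant's theorem that leaves of a closed Dirac structure carry closed $2$-forms, and it requires two small verifications you should record: the inclusion $\mathcal{J}^{-1}(\rho)\hookrightarrow N$ is smooth because $N$ is initial, and the value of \eqref{induceddiracN} is unchanged when the representing pairs are taken in $\Gamma\bigl(\mathsf{D}\cap(\T\oplus\V_G^\circ)\bigr)$ rather than in all of $\Gamma(\mathsf{D})$. The paper's computation has the advantage of being self-contained and of transferring verbatim to Theorems \ref{red_theorem2} and \ref{red_theorem3}.
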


Since $\mathsf D\cap(\T\oplus\V_G^\circ)$
is spanned pointwise by its descending sections 
and $\D$ is integrable,   $\mathcal{D}_G = \pi_{TM} 
\left(\mathsf{D}\cap(\T_G\oplus\V_G^\circ)\right)$ is spanned
pointwise by its $G$-invariant descending sections  and is completely integrable
in the sense of Stefan and Sussmann by Proposition \ref{invgen} and Theorem 
\ref{integrability_implication}. Let
$\mathcal{J}_G: M\to M/\mathcal{D}_G$ be the isotropy type 
Dirac optimal momentum map. For $\sigma\in M/\D_G$, denote by
$\incg:\mathcal{J}_G^{-1}(\sigma)\hookrightarrow M$ the regular immersion.

\begin{theorem}\label{red_theorem2}
For any $\sigma \in M/\mathcal{D}_G$ with
isotropy subgroup $G_\sigma$ acting properly on $\mathcal{J}_G^{-1}
(\sigma)$, the orbit space 
$M_\sigma:=\mathcal{J}_G^{-1}(\sigma)/G_\sigma$ is a regular
quotient manifold such that  the projection 
$\pi_\sigma:\mathcal{J}^{-1}(\sigma)\to M_\sigma$ is a smooth submersion.
Define  $\omega_\sigma \in \Omega^2\left(M _\sigma\right)$ by 
\begin{equation}\label{presymp2}
\left(\pi_{\sigma}^*\omega_{\sigma}\right)(m)(v_m,w_m):=
\alpha_{\incg(m)}(Y(\incg(m)))
=-\beta_{\incg(m)}(X(\incg(m)))
\end{equation}
for any $m \in \mathcal{J}^{-1}(\sigma)$ and any $X,Y\in
\Gamma(\mathcal{D}_G)$ defined on an open set around $\incg(m)$, where  $(X,\alpha), 
(Y,\beta) \in \Gamma(\mathsf{D}\cap(\T_G\oplus\V_G^\circ))$ are such that 
$T_m\incg v_m=X(\incg(m))$ and
$T_m\incg w_m=Y(\incg(m))$.

Then $(M_{\sigma},\omega_{\sigma})$ is a presymplectic manifold.
The pair
$(M_{\sigma},\mathsf{D}_{\sigma})$ is called the 
\emph{isotropy type Dirac optimal point reduced space} of 
$(M,\mathsf{D})$ at $\sigma$, where $\mathsf{D}_{\sigma}$ is the graph of the presymplectic form 
$\omega_{\sigma}$.
\end{theorem}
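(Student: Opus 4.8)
The strategy is to mirror the proof of Theorem \ref{red_theorem1} (the orbit type case) line by line, replacing $\D$ by $\D_G$, $\T$ by $\T_G$, $G_{(\rho)}$ by $G_\sigma$, and $\mathsf{D}\cap(\T\oplus\V_G^\circ)$ by $\mathsf{D}\cap(\T_G\oplus\V_G^\circ)$ throughout. First I would establish the quotient manifold structure: by hypothesis $G_\sigma$ acts properly on $\mathcal{J}_G^{-1}(\sigma)$, and by Proposition \ref{invgen} together with Theorem \ref{integrability_implication} the distribution $\D_G$ is completely integrable, so $\mathcal{J}_G^{-1}(\sigma)$ is an initial submanifold of $M$. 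The key point needed to obtain a \emph{regular} quotient is that the induced action of $G_\sigma$ on $\mathcal{J}_G^{-1}(\sigma)$ has conjugate (indeed \emph{equal}) isotropy subgroups — this is exactly the proposition preceding Theorem \ref{red_theorem2}, which identifies $(G_\sigma)_m = H_\sigma$ for all $m\in\mathcal{J}_G^{-1}(\sigma)$. A proper action with constant isotropy type on a connected manifold is a principal-type action, hence the quotient $M_\sigma=\mathcal{J}_G^{-1}(\sigma)/G_\sigma$ is a smooth manifold and $\pi_\sigma$ a submersion; I would cite the relevant result from \cite{OrRa04} (the argument is identical to the Poisson case).

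Next I would verify that $\omega_\sigma$ is well defined by formula \eqref{presymp2}. This has three independent checks, each handled as in Theorem \ref{red_theorem1}. (i) \emph{Independence of the choice of Hamiltonian pair}: if $(X,\alpha),(X,\alpha')\in\Gamma(\mathsf{D}\cap(\T_G\oplus\V_G^\circ))$ induce the same vector $v_m$, then $(0,\alpha-\alpha')\in\Gamma(\mathsf{D})$ with $\alpha-\alpha'\in\Gamma(\V_G^\circ)$, so $\alpha-\alpha'$ annihilates the second tangent vector $Y(\iota_\sigma(m))$, which lies in $\D_G(\iota_\sigma(m))\subseteq\T_G$; since $\V_G^\circ$ pairs trivially with $\mathcal V$ and $\D_G$ is cut out from $\T_G\oplus\V_G^\circ$, the difference contributes zero. (ii) \emph{Skew-symmetry}, i.e. $\alpha(Y)=-\beta(X)$ on $\mathcal{J}_G^{-1}(\sigma)$: this is $\langle(X,\alpha),(Y,\beta)\rangle=0$, which holds since both pairs are sections of the Lagrangian bundle $\mathsf{D}$. (iii) \emph{$G_\sigma$-invariance and horizontality}, so that $\omega_\sigma$ descends through $\pi_\sigma$: here I would use the $G$-invariance of $\mathsf{D}\cap(\T_G\oplus\V_G^\circ)$ together with the fact (Proposition \ref{invgen} / the exactness discussion) that this intersection is spanned pointwise by $G$-invariant sections, so $\pi_\sigma^*\omega_\sigma$ is annihilated by the fundamental vector fields of $G_\sigma$ (using $\alpha\in\Gamma(\V^\circ)^G$ to kill the vertical directions coming from $\mathcal V$, and the compact factor $H_\sigma$ acting trivially on the leaf) and is $G_\sigma$-basic. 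A standard averaging/reduction argument then produces the unique $\omega_\sigma\in\Omega^2(M_\sigma)$.

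Finally I would check closedness of $\omega_\sigma$. Since $\mathcal{J}_G^{-1}(\sigma)$ is a leaf of the integrable distribution $\D_G$ and the sections involved are restrictions of closed-Dirac sections, one argues as in Section \ref{sec:Dirac_structures}: the presymplectic leaves of $(M,\mathsf{D})$ carry closed two-forms $\omega_N$ via \eqref{induceddiracN}, $\mathcal{J}_G^{-1}(\sigma)$ sits inside such a leaf $N$, and $\pi_\sigma^*\omega_\sigma$ is the pullback of $\omega_N$ along $\iota_\sigma$ restricted appropriately; pulling back a closed form gives a closed form, and since $\pi_\sigma$ is a surjective submersion, $\dr\omega_\sigma=0$ follows. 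Then $\mathsf{D}_\sigma:=\mathrm{graph}(\flat_{\omega_\sigma})$ is a (closed) Dirac structure on $M_\sigma$, as noted for induced forms in Subsection \ref{sec:Dirac_structures}. The main obstacle — and the only place where this case genuinely differs from the orbit type one — is the well-definedness/basicness step (iii): one must be careful that $G_\sigma$ is generally \emph{not closed} in $G$, so that its action on $\mathcal{J}_G^{-1}(\sigma)$ is an action by an initial Lie subgroup (Proposition 3.4.4 in \cite{OrRa04}), and all the quotient and averaging arguments must be phrased intrinsically on $\mathcal{J}_G^{-1}(\sigma)$ rather than borrowed from $M$; but the properness hypothesis on the $G_\sigma$-action on $\mathcal{J}_G^{-1}(\sigma)$ is precisely what makes these arguments go through, exactly as in \cite{OrRa04}.
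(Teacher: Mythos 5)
Your proposal is essentially the paper's intended argument: the paper proves Theorem \ref{red_theorem1} in detail and then states that Theorem \ref{red_theorem2} "has a similar proof," i.e.\ one repeats that proof with $\D$, $\T$, $G_{(\rho)}$ replaced by $\D_G$, $\T_G$, $G_\sigma$, which is exactly what you do; you also correctly identify that the regular quotient structure comes from properness plus the constancy of the isotropy groups $(G_\sigma)_m=H_\sigma$ established in the proposition preceding the theorem. The one place you genuinely deviate is closedness: the paper verifies $\dr(\pi_{(\rho)}^*\omega_{(\rho)})=0$ by a direct computation on three vector fields using the Courant bracket and the pairing identities, whereas you observe that $\mathcal{J}_G^{-1}(\sigma)$, being a leaf of $\D_G\subseteq\mathsf{G_1}$, sits initially inside a presymplectic leaf $N$ of $(M,\mathsf{D})$ and that $\pi_\sigma^*\omega_\sigma$ is the pullback of the closed form $\omega_N$ of \eqref{induceddiracN}; this is shorter and valid, at the cost of invoking the leaf structure of the ambient Dirac manifold rather than arguing self-containedly. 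One small imprecision in your step (i): for two pairs $(X,\alpha),(X',\alpha')$ inducing the same tangent vector at $m$ one cannot conclude $(0,\alpha-\alpha')\in\Gamma(\mathsf{D})$ (the vector fields need only agree at the single point); the independence follows instead pointwise from the Lagrangian pairing, $\alpha_m(Y(m))=-\beta_m(X(m))=-\beta_m(X'(m))=\alpha'_m(Y(m))$, which is the mechanism the paper actually uses.
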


\begin{proof}[of Theorem \ref{red_theorem1}]
Recall the notation $\Phi^ {(\rho)}: G _{(\rho)} \times \mathcal{J} ^{-1}(\rho) 
\rightarrow \mathcal{J} ^{-1}(\rho) $ for the restriction of the original
$G$-action 
on $M$ to the Lie group $G _{(\rho)}$ and the manifold 
$\mathcal{J}^{-1}( \rho)$. Since the $G_{(\rho)}$-action on
$\mathcal{J}^{-1}(\rho)$ is, by hypothesis, proper and its isotropy subgroups
are conjugated by Proposition \ref{conj_isotropy_subgroups}, 
the quotient $M_{(\rho)}:=\mathcal{J}^{-1}(\rho)/G_{(\rho)}$ is a regular
quotient manifold and the projection
$\pi_{(\rho)}:\mathcal{J}^{-1}(\rho)\to\mathcal{J}^{-1}(\rho)/G_{(\rho)}$ 
is a smooth surjective submersion.

We show that $\omega_{(\rho)}$ given by (\ref{presymp1}) is well-defined.
Let $m,m'\in \mathcal{J}^{-1}(\rho)$ be such that 
$\pi_{(\rho)}(m)=\pi_{(\rho)}(m')$
and let $v,w\in T_m\mathcal{J}^{-1}(\rho)$,
$v',w'\in T_{m'}\mathcal{J}^{-1}(\rho)$ be such that $T_m\pi_{(\rho)}(v)=T_{m'}\pi_{(\rho)}(v')$,
$T_m\pi_{(\rho)}(w)=T_{m'}\pi_{(\rho)}(w')$. Let $(X,\alpha)$,$(X',\alpha')$, $(Y,\beta)$,
$(Y',\beta')$ be sections of $\mathsf{D}\cap(\T\oplus\V_G^\circ)$  such that
\begin{equation*}
\begin{array}{ll}
X(\iota_{(\rho)}m)=T_m\iota_{(\rho)}v,& X'(\inc(m'))=T_{m'}\inc v',\\
Y(\inc(m))=T_m\inc w,& Y'(\inc(m'))=T_{m'}\inc w'.
\end{array}
\end{equation*}
 The condition $\pi_{(\rho)}(m)=\pi_{(\rho)}(m')$ implies the existence of an
element $g\in G_{(\rho)}\subseteq G$ such that $m'=\Phi_g^{(\rho)}(m)$. We 
have then $\pi_{(\rho)}=\pi_{(\rho)}\circ \Phi_g^{(\rho)}$ and thus 
$T_m\pi_{(\rho)}=T_{m'}\pi_{(\rho)}\circ T_m\Phi_g^{(\rho)}$. 
Furthermore, because of the equalities $T_m\pi_{(\rho)}(v)=T_{m'}
\pi_{(\rho)}(v')$,
$T_m\pi_{(\rho)}(w)=T_{m'}\pi_{(\rho)}(w')$, we have 
\[
T_{m'}\pi_{(\rho)}(T_m\Phi_g^{(\rho)}(v)-v')=0\qquad
\text{and} \qquad T_{m'}\pi_{(\rho)}(T_m\Phi_g^{(\rho)}(w)-w')=0
\]
and there exist elements $\xi^1$, $\xi^2 \in 
\lie g$
 such that $\xi^1_M(\iota_{(\rho)}(m')), \xi^2_M(\iota_{(\rho)}(m'))
\in\D(\iota_{(\rho)}(m'))$, 
\begin{align*}
X'(\inc(m'))-T_{\inc(m)}\Phi_g(X(\inc(m)))
=&T_{\inc(m')}(v'-T_m\Phi_g^{(\rho)}(v))
=T_{\inc(m')}\xi^1_{\mathcal{J}^{-1}(\rho)}(m')
=\xi^1_M(\inc(m'))
\end{align*}
and
\begin{align*}
Y'(\inc(m'))-T_{\inc(m)}\Phi_g(Y(\inc(m)))
=&T_{\inc(m')}(w'-T_m\Phi_g^{(\rho)}(w))
=T_{\inc(m')}\xi^2_{\mathcal{J}^{-1}(\rho)}(m')=\xi^2_M(\inc(m')),
\end{align*}
where we have used the equality $\inc\circ\Phi^{(\rho)}_g=\Phi_g\circ\inc$.
This yields 
\[
X'(n')=\left((\Phi_{g^{-1}})^*X\right)(n')+\xi^1_M(n')\qquad 
\text{and} \qquad Y'(n')=\left((\Phi_{g^{-1}})^*Y\right)(n')+\xi^2_M(n'),
\]
where we let $n:=\inc(m)$ and $n':=\inc(m')$.
Since  $(X', \alpha') $ and $((\Phi_{g^{-1}})^*Y,(\Phi_{g^{-1}})^*\beta)$
are sections of $\mathsf{D}$ in a neighborhood of the point $n'$, we have 
\begin{equation}\label{phik}
((\Phi_{g^{-1}})^*\beta)(X')=-\alpha'\left((\Phi_{g^{-1}})^*Y\right),
\end{equation}
and thus we conclude 
\begin{align*}
 \omega_{(\rho)}(\pi_{(\rho)}(m'))(T_{m'}\pi_{(\rho)}(v'),T_{m'}\pi_{(\rho)}(w'))
=&\,(\pi_{(\rho)}^*\omega_{(\rho)})(m')(v',w')\\
=&\,\alpha'(n')(Y'(n'))=\alpha'(n')\left(\left((\Phi_{g^{-1}})^*Y\right)(n')
+\xi^2_M(n')\right)\\
=&\alpha'(n')\left(\left((\Phi_{g^{-1}})^*Y\right)(n')\right)
+\alpha'(n')\left(\xi^2_M(n')\right)\\
\overset{(*)}=&\,\alpha'(n')\left(\left((\Phi_{g^{-1}})^*Y\right)(n')\right)
\overset{\makebox[0pt][c]{\tiny\eqref{phik}}}{=}
-((\Phi_{g^{-1}})^*\beta)(n')(X'(n'))\\
=&-((\Phi_{g^{-1}})^*\beta)(n')\left(\left((\Phi_{g^{-1}})^*X\right)(n')
+\xi^1_M(n')\right)\\
\overset{(*)}=&-((\Phi_{g^{-1}})^*\beta)(n')
\left(\left((\Phi_{g^{-1}})^*X\right)(n')\right)
=-\beta(n)(X(n))\\
=&\,\omega_{(\rho)}(\pi_{(\rho)}(m))(T_{m}\pi_{(\rho)}(v),T_{m}\pi_{(\rho)}(w)).
\end{align*}
For the equalities $(*)$, we use the fact that $\alpha'$ and
$\Phi^*_{g^{-1}}\beta$  are sections of $\V^\circ$.

Finally, we show that $\omega_{(\rho)}$ is closed.
Let $m\in \mathcal{J}^{-1}(\rho)$ and choose $\tilde X,\tilde Y,\tilde Z\in
\mx(\mathcal{J}^{-1}(\rho))$, defined on a neighborhood of $m$ in
$\mathcal{J}^{-1}(\rho)$. Then there exist sections
$(X,\alpha)$, $(Y,\beta)$, $(Z,\gamma)\in \Gamma(\mathsf{D}\cap(\T\oplus\V_G^\circ))$
defined on a neighborhood of $\inc(m)$ in $M$ such that 
\[
\tilde X\sim_{\inc}X,\quad \tilde Y\sim_{\inc}Y, \quad \text{and}\quad
\tilde Z\sim_{\inc}Z.
\] 
Since $[\tilde X,\tilde Y]\sim_{\inc}[X,Y]$ and
\[
\left([X,Y],  \boldsymbol{\pounds}_{X}\beta-\ip{Y}\dr\alpha\right)\in\Gamma(\mathsf{D}),
\]
we have (by definition \eqref{presymp1})
\begin{equation}\label{form_on_bracket}
\left(\pi_{(\rho)}^*\omega_{(\rho)}\right)\left([\tilde X,\tilde Y],\tilde Z\right)
=-\gamma([X,Y])\circ\inc=\left(\boldsymbol{\pounds}_{X}\beta-\ip{Y}\dr\alpha\right)(Z)\circ\inc.
\end{equation}
Thus, recalling the definition \eqref{presymp1}, we get
\begin{align*}
\mathbf{d}\left(\pi_{(\rho)}^*\omega_{(\rho)}\right)(\tilde X,\tilde Y,\tilde Z)
&=\tilde X\left[(\pi_{(\rho)}^*\omega_{(\rho)})(\tilde Y,\tilde Z)\right]
-\tilde Y\left[(\pi_{(\rho)}^*\omega_{(\rho)})(\tilde X,\tilde Z)\right]\\
&\qquad +\tilde Z\left[(\pi_{(\rho)}^*\omega_{(\rho)})(\tilde X,\tilde
  Y)\right]-(\pi_{(\rho)}^*\omega_{(\rho)})\left([\tilde X,\tilde Y],\tilde Z\right)\\
& \qquad +(\pi_{(\rho)}^*\omega_{(\rho)})\left([\tilde X,\tilde Z],\tilde
  Y\right)-(\pi_{(\rho)}^*\omega_{(\rho)})\left([\tilde Y,\tilde Z],\tilde X\right)\\
&\overset{\eqref{form_on_bracket}}=
\tilde X\left[\beta(Z)\circ\inc\right]+\tilde
Y\left[\gamma(X)\circ\inc\right]+\tilde Z\left[\alpha(Y)\circ\inc\right]\\
&\qquad +\gamma([X,Y])\circ\inc
+(\pounds_{X}\gamma-{\mathbf{i}}_{Z}\mathbf{d}\alpha)(Y)\circ\inc
 +\alpha([Y,Z])\circ\inc\\
& =\Bigl(X\left[\beta(Z)\right]+Y\left[\gamma(X)\right]+Z\left[\alpha(Y)\right]
+\gamma\left([X,Y]\right)-\gamma\left([X,Y]\right)\\
&\qquad 
+X\left[\gamma(Y)\right]-Z\left[\alpha(Y)\right] +Y\left[\alpha(Z)\right]+\alpha\left([Z,Y]\right)
+\alpha\left([Y,Z]\right)\Bigr)\circ \inc\\
& =
\Bigl(X\left[\beta(Z)+\gamma(Y)\right]+Y\left[\gamma(X)+\alpha(Z)\right] 
\Bigr)\circ \inc =0,
\end{align*} 
where we used the fact that $\gamma(X)+\alpha(Z)=0$ and $\gamma(Y)+\beta(Z)=0$
(this follows directly from $(X,\alpha), (Y,\beta), (Z,\gamma)\in\Gamma(\mathsf{D})$).
Thus,
$\pi_{(\rho)}^*\mathbf{d}\omega_{(\rho)}=\mathbf{d}(\pi_{(\rho)}^*\omega_{(\rho)})=0$
and, because $\pi_{(\rho)}$ is a surjective submersion, this yields 
$\mathbf{d}\omega_{(\rho)}=0$. Therefore, $\omega_{(\rho)}$ is a well-defined 
presymplectic form on $M_{(\rho)}$.
\end{proof}
Theorem \ref{red_theorem2} has a similar proof.

\subsection{Optimal orbit reduction}
Let $(M,\mathsf{D})$ be a smooth integrable Dirac manifold with a smooth and 
proper canonical action of a Lie group $G$ on it. Assume that the same 
conditions on $\mathsf D\cap(\T\oplus\V_G^\circ)$
as in the preceding subsection are satisfied.
Let  $\J:M\to M/\D$, $\J_G:M\to M/\D_G$ be the optimal momentum maps.
Consider the distribution $\D_G+\V\subseteq TM$. By
Proposition 3.4.6 in \cite{OrRa04}, it is integrable. 
Since $G$ is connected, the leaves of $\D_G+\V$ are the sets
 $G\cdot\J_G\inv(\sigma)=G\cdot\J\inv(\J(m))$ for any 
 $m\in\J_G\inv(\sigma)$ (recall that $\J\inv(\J(m))\subseteq G\cdot\J_G
 \inv(\sigma)$ for any $m\in\J_G\inv(\sigma)$). The leaves of $\D_G+\V$
are initial submanifolds of $M$; that is, the maps $\iota_{\sigma,G}:G\cdot
\J_G\inv(\sigma)\hookrightarrow M$ are regular immersions for all $\sigma\in M/\D_G$.

\begin{lemma}
Choose $\sigma \in M/\D_G$ such that the action of $G_\sigma$ on
$\J_G\inv(\sigma)$ is proper.
The integral leaf $G\cdot \J_G\inv(\sigma)$ of $\D_G+\V$ is diffeomorphic to
the regular quotient manifold 
\[
G\times_{G_\sigma}\J_G\inv(\sigma):=\left(G\times\J_G\inv(\sigma)\right)/
G_\sigma,
\]
where the action $A_\sigma$ of $G_\sigma$ on $G\times\J_G\inv(\sigma)$ 
is the twisted action
\[ 
\begin{array}{lccc}
A_\sigma:&G_\sigma\times \left(G\times\J_G\inv(\sigma)\right)
&\to&G\times\J_G\inv(\sigma)\\
&(h,(g,m))&\mapsto & (g h\inv,h\cdot m).
\end{array}\] 
\end{lemma}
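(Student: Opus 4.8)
The plan is to construct the diffeomorphism explicitly using the twisted quotient structure, exactly as in the analogous statement for free Poisson actions in \cite{OrRa04}. First I would define the map
\[
\Lambda: G\times \J_G\inv(\sigma)\to G\cdot\J_G\inv(\sigma),\qquad (g,m)\mapsto \Phi_g(m),
\]
and observe that $\Lambda$ is surjective onto the leaf $G\cdot\J_G\inv(\sigma)$ by the description of the leaves of $\D_G+\V$ given just before the statement. The key point is that $\Lambda$ is $A_\sigma$-invariant: for $h\in G_\sigma$ we have $\Lambda(gh\inv,h\cdot m)=\Phi_{gh\inv}(\Phi_h(m))=\Phi_g(m)=\Lambda(g,m)$, using that $G_\sigma$ preserves $\J_G\inv(\sigma)$ so that $h\cdot m\in\J_G\inv(\sigma)$. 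Hence $\Lambda$ descends to a map
\[
\bar\Lambda: G\times_{G_\sigma}\J_G\inv(\sigma)\to G\cdot\J_G\inv(\sigma),\qquad [g,m]\mapsto\Phi_g(m).
\]

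Next I would verify that $\bar\Lambda$ is a bijection. Surjectivity is immediate from that of $\Lambda$. For injectivity, suppose $\Phi_g(m)=\Phi_{g'}(m')$ with $m,m'\in\J_G\inv(\sigma)$; then $m'=\Phi_{g'^{-1}g}(m)$, so setting $h:=g'^{-1}g$ we have $\Phi_h(m)=m'\in\J_G\inv(\sigma)$. The distribution $\D_G$ is $G$-invariant and $\J_G\inv(\sigma)$ is a leaf of $\D_G$, so $\Phi_h$ maps the leaf through $m$ to the leaf through $\Phi_h(m)=m'$; since both $m$ and $m'$ lie in $\J_G\inv(\sigma)$, these are the same leaf, i.e. $\J_G(\Phi_h(m))=\J_G(m)$. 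By the induced action of Proposition \ref{inducedaction2}, $\bar\Phi_h(\sigma)=\bar\Phi_h(\J_G(m))=\J_G(\Phi_h(m))=\J_G(m)=\sigma$, so $h\in G_\sigma$. Then $(g',m')=(gh\inv\cdot h, \;h\inv\cdot(h\cdot m))$; more precisely $g'=gh\inv$ (from $h=g'^{-1}g$) and $m'=h\cdot m$, which is exactly the relation $(g',m')=A_\sigma(h,(g,m))$, so $[g,m]=[g',m']$ in the twisted quotient. This proves $\bar\Lambda$ is a bijection.

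It remains to upgrade $\bar\Lambda$ to a diffeomorphism of smooth manifolds. The quotient $G\times_{G_\sigma}\J_G\inv(\sigma)$ is a smooth manifold because the twisted action $A_\sigma$ is free (it is free already in the $G$-factor) and proper: properness of $A_\sigma$ follows from properness of the $G_\sigma$-action on $\J_G\inv(\sigma)$, which is assumed, together with properness of left translation on $G$, by the standard argument for twisted products (cf.\ the Tube Theorem \ref{tube_thm}, where exactly this construction $G\times_H B$ appears). On the target side, $G\cdot\J_G\inv(\sigma)$ carries the initial-submanifold structure coming from its being a leaf of the integrable distribution $\D_G+\V$; since $\iota_{\sigma,G}$ is a regular immersion, it suffices to check that $\bar\Lambda$ is smooth as a map into $M$ and an immersion, and then invoke the universal property of initial submanifolds plus the inverse function theorem. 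Smoothness of $\bar\Lambda$ into $M$ is clear since $\Lambda=\Phi\circ(\mathrm{id}\times\incg)$ is smooth and $G_\sigma$-invariant with $\pi_{A_\sigma}$ a surjective submersion. The immersion property is a dimension count combined with surjectivity of the differential onto $T(G\cdot\J_G\inv(\sigma))$: at a point $[e,m]$ the image of $T\bar\Lambda$ contains $\V(m)$ (from the $G$-direction) and $\D_G(m)=T_m\J_G\inv(\sigma)$ (from the fiber direction), hence all of $(\D_G+\V)(m)$, and conjugating by $G$ gives the same at every point.

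The main obstacle I anticipate is the bookkeeping around the smooth structures: one must be careful that the initial-submanifold structure on the leaf $G\cdot\J_G\inv(\sigma)$ is the one making $\iota_{\sigma,G}$ a regular immersion, and that $\bar\Lambda$ is not merely a smooth bijection of immersed submanifolds (which need not be a diffeomorphism) but genuinely a diffeomorphism; this is precisely where the initial-submanifold property is used — both $\bar\Lambda$ and its inverse factor through smooth maps into $M$, and the initial property promotes these to smooth maps between the manifolds themselves. Everything else is a direct transcription of the free Poisson argument, with $G_\sigma$ in place of the (possibly non-closed) optimal isotropy group and $\J_G\inv(\sigma)$ in place of the optimal momentum level set.
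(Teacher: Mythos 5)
Your proposal is correct and follows essentially the same route as the paper: the explicit map $[g,m]_{G_\sigma}\mapsto gm$, well-definedness and injectivity via $g\inv g'\in G_\sigma$, smoothness of the forward map through the quotient submersion and the regular immersion $\iota_{\sigma,G}$ into $M$, and upgrading the smooth bijection to a diffeomorphism by combining a rank argument on the differential with equality of dimensions. The only substantive step you assert without carrying out is that dimension count, $\dim\left(G\times_{G_\sigma}\J_G\inv(\sigma)\right)=\dim\left(\D_G(m)+\V(m)\right)$, which in the paper rests on the formula $\dim\mathfrak{g}_\sigma=\dim\left(\V(m)\cap\D_G(m)\right)+\dim G_m$ of \eqref{dim_g_sigma}.
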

\begin{remark}
The properness of the action of $G_\sigma$ on $G\times \J_G\inv(\sigma)$
follows from the properness of the action of $G_\sigma$ on $\J_G\inv(\sigma)$.
\end{remark}

\begin{proof}
Define $F:G\times_{G_\sigma}\J_G\inv(\sigma)\to G\cdot \J_G\inv(\sigma)$ 
by  $F([g,x]_{G_\sigma})=gx\in G\cdot \J_G\inv(\sigma)$. The map $F$ is
well-defined. To see this, note that if $[g,x]_{G_\sigma}=[g',x']_{G_\sigma}$, 
then there exists $h\in G_\sigma$ such that $g'=gh\inv$ and $x'=hx$. But then we have
$F([g',x']_{G_\sigma})=g'x'=gh\inv hx=gx=F([g,x]_{G_\sigma})$.

The inverse of the function $F$ is given by $F^{-1}:G\cdot \J_G\inv(\sigma)\to
G\times_{G_\sigma}\J_G\inv(\sigma)$, $F^{-1}(g\cdot x)=[g,x]_{G_\sigma}$ for any
$g\cdot x\in G\cdot \J_G\inv(\sigma)$. Indeed, $F ^{-1}$ is well-defined since
$gx=g'x'$ for $x,x'\in \J_G\inv(\sigma)$ and $g,g'\in G$ implies
$g\inv g'\in G_\sigma$ by definition of $G_\sigma$ and hence
$[g',x']_{G_\sigma}=[g'(g\inv g')\inv,(g\inv g')
x']_{G_\sigma}=[g,x]_{G_\sigma}$. We have obviously 
$F\circ F^{-1}=\Id_{G\cdot \J_G\inv(\sigma)}$ and 
$F^{-1}\circ F=\Id_{G\times_{G_\sigma}\J_G\inv(\sigma)}$.

It remains hence to show that $F$ and $F^{-1}$ are smooth functions.
We use the commutative diagram
\begin{displaymath}
\begin{xy}
\xymatrix{
G\times\J_G\inv(\sigma)\ar[d]_{\pi_{G_\sigma}}\ar[drrr]^{\Phi\an{G\times\J_G\inv(\sigma)}} \ar[dr]_{\tilde{ \Phi}}& &&\\
G\times_{G_\sigma}\J_G\inv(\sigma)\ar[r]_F&G\cdot \J_G\inv(\sigma)\ar[rr]_{ \iota_{G, \sigma}}&& M
}
\end{xy}
\end{displaymath}
for the smoothness of $F$. Recall that $G \cdot \mathcal{J}_G ^{-1}
(\sigma) $ is an initial submanifold of $M$ because it is a leaf of the 
integrable tangent distribution $\mathcal{D}_G + \mathcal{V}$, that is,  
the inclusion $\iota_{G, \sigma} $ is regular.  The map
$\Phi\an{G\times\J_G\inv(\sigma)}=\Phi\circ(\Id_G\times \iota_\sigma):G
\times \J_G\inv(\sigma) \rightarrow M$ is smooth. Since its image is $G 
\cdot \mathcal{J}_G ^{-1}( \sigma)$, the map $\tilde{ \Phi}: G \times 
\mathcal{J}_G ^{-1}( \sigma) \rightarrow G \cdot  \mathcal{J}_G ^{-1}
( \sigma)$ defined by $\tilde{ \Phi}(g, m) : = \Phi(g, m) $ for all $g \in G $, 
$m \in \mathcal{J}_G ^{-1}( \sigma)$, is well defined and it is smooth 
because $\iota_{G, \sigma} $ is regular. Therefore $F$ is smooth by the
properties of the quotient map $\pi_{G_\sigma}$.

To show that $F ^{-1}$ is smooth we shall prove that $\dim\left(G \times _{G _\sigma} 
\mathcal{J}_G ^{-1}( \sigma)\right) 
= \dim \left(G \cdot \mathcal{J}_G ^{-1}( \sigma) \right)$ and that 
\begin{equation}
\label{injectivity}
T_{\pi_{G_ \sigma}(g, m)} \left( \iota_{G, \sigma} \circ F\right) : 
T_{\pi_{G_ \sigma}(g, m)} 
\left(G \times _{G _\sigma} \mathcal{J}_G ^{-1}( \sigma)\right) 
\rightarrow T_{\iota_{G, \sigma}(F(g,m))} M 
\end{equation}
 is injective. Indeed, if this is known, then $F $ is a bijective smooth map which is a local diffeomorphism, hence a diffeomorphism.

Since $\mathcal{J}_G ^{-1}( \sigma) $ is a leaf of $\mathcal{D}_G $, we 
have $\dim \mathcal{J}_G ^{-1}( \sigma) = \dim \mathcal{D}_G(m) $, for 
any $m \in \mathcal{J}_G ^{-1}( \sigma)$. Therefore, since the $G _\sigma
$-action on $G \times \mathcal{J}_G ^{-1}( \sigma)$ is free and proper, 
using \eqref{dim_g_sigma}, we have 
\begin{align*}
\dim \left(G \times _{G _\sigma} 
\mathcal{J}_G ^{-1}( \sigma)\right) 
&= \dim \mathcal{D}_G(m) + \dim G - \dim  G_\sigma \\
&\overset{\eqref{dim_g_sigma}}
= \dim \mathcal{D}_G(m) + \dim G - \dim ( \mathcal{V}(m) \cap 
\mathcal{D}_G(m)) - \dim G_m \\
& = \dim ( \mathcal{D}_G(m) + \mathcal{V}
(m) )- \dim \mathcal{V}(m)  + \dim G  - \dim G_m \\
&= \dim ( \mathcal{D}_G(m) 
+ \mathcal{V}(m) ) = \dim \left(G \cdot \mathcal{J}_G ^{-1}( \sigma)\right).
\end{align*}
Next we show the injectivity of \eqref{injectivity}. Let $T_{(g,n)} \pi_{G_ \sigma} (v_g, w_n) \in T_{\pi_{G_ \sigma}(g, n)} \left(G \times _{G _\sigma} \mathcal{J}_G ^{-1}( \sigma)\right)$ be such that $T_{\pi_{G_ \sigma}(g, n)} F \left(T_{(g,n)} \pi_{G_ \sigma} (v_g, w_n) \right) = 0$. From the diagram it follows that $0 = T_{(g,n)} \Phi(v _g, w _n) = T_g \Phi^n(v _g) + T_n \Phi _g( w _n)$, where $\Phi^n(h) : = h \cdot n $, for all $h \in G $. Therefore, choosing $\xi\in \mathfrak{g}$ such that  $v_g = T_e L_g \xi$, where $L_g(h) : = gh$, for all $h \in G $,  we have
\begin{align*}
w_n = - T_{g \cdot n} \Phi_{ g ^{-1}} \left(T_g \Phi^n( T_e L_g \xi) \right)
= -T_e \left(\Phi^n \circ L_{ g ^{-1}} \circ L _g\right)( \xi) = -\xi_M(n).
\end{align*}
Hence $w _n = - \xi_M(n) \in \mathcal{V}(n) \cap \mathcal{D}_G (n) $ which implies that $\xi \in \mathfrak{g}_\sigma$ by \eqref{dim_g_sigma}. Thus $\exp ( t \xi) \in G _\sigma$ for small $|t| $ and we get
\[
T_{(g, n)} \pi_{G_ \sigma}(v_g, w_n) = \left.\frac{d}{dt}\right|_{t=0} \pi_{G_ \sigma} ( g \exp ( t \xi), \exp (- t \xi) \cdot  n )  = 0 
\]
which proves the injectivity of \eqref{injectivity}.
\end{proof}

A leaf  $G\cdot \J_G\inv(\sigma)$  of $\D_G+\V$ is contained in $M_{(H)}$,
where $H\subseteq G$ is the compact subgroup such that
$\J_G\inv(\sigma)\subseteq M_H$. The induced action of $G$ on $G\cdot
\J_G\inv(\sigma)$ has hence conjugated isotropy subgroups. Using the fact that
the topology on $G\cdot \J_G\inv(\sigma)$ is stronger that
the topology induced on it by the topology of $M$, it is easy
to show that the action of $G$ on $G\cdot \J_G\inv(\sigma)$ is proper.

We have the following \emph{Dirac Optimal Orbit Reduction Theorem}, 
which is proved in the same manner as Theorems \ref{red_theorem1} 
and \ref{red_theorem2}. 

\begin{theorem}\label{red_theorem3}
Let $\sigma \in M/\mathcal{D}_G$. The
orbit space $M^G_\sigma:=\left(G\cdot\mathcal{J}_G^{-1}(\sigma)\right)/G$ 
is a regular quotient manifold such that  the projection 
$\pi:G\cdot\mathcal{J}^{-1}(\sigma)\to M^G_\sigma$ is a smooth 
submersion. Define 
$\omega^G_\sigma \in \Omega^2\left(M^G_\sigma\right)$ by 
\begin{equation}\label{presymp3}
\left({\pi}^*\omega^G_{\sigma}
\right)(m)(v_m,w_m)=\alpha(Y)\left(\iota_{\sigma,G}(m)\right)
=-\beta(X)\left(\iota_{\sigma,G}(m)\right)
\end{equation}
for any $m \in \mathcal{J}^{-1}(\sigma)$ and any $X,Y\in
\Gamma(\mathcal{D}_G)$ defined on an open set around 
$\iota_{\sigma,G}(m)$, where  $(X,\alpha), 
(Y,\beta) \in \Gamma(\mathsf{D}\cap(\T_G\oplus\V_G^\circ))$ are such that 
$T_m\iota_{\sigma,G} v_m=(X+V_1)(\iota_{\sigma,G}(m))$ and
$T_m\iota_{\sigma,G} w_m=(Y+V_2)(\iota_{\sigma,G}(m))$ for some 
smooth sections  $V_1,V_2\in\Gamma(\V)$.

Then $\left(M^G_{\sigma},\omega^G_{\sigma}\right)$ is a presymplectic 
manifold. The pair
$(M_{\sigma}^G,\mathsf{D}^G_{\sigma})$ is called the 
\emph{Dirac optimal orbit reduced space} of 
$(M,\mathsf{D})$ at $\sigma$, where $\mathsf{D}^G_{\sigma}$ 
is the graph of the presymplectic form $\omega^G_{\sigma}$.
\end{theorem}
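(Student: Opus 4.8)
The plan is to follow the proof of Theorem \ref{red_theorem1} (and its twin Theorem \ref{red_theorem2}) essentially verbatim, the only genuinely new feature being the $\mathcal{V}$-valued ambiguity in the admissible lifts: the tangent space of $G\cdot\mathcal{J}_G^{-1}(\sigma)$ at a point $m$ is $\mathcal{D}_G(m)+\mathcal{V}(m)$ rather than just $\mathcal{D}_G(m)$, so in \eqref{presymp3} a lift of $v_m$ is determined only up to sections of $\mathcal{V}$. For the manifold structure, the preceding Lemma identifies $G\cdot\mathcal{J}_G^{-1}(\sigma)$ with the regular quotient $G\times_{G_\sigma}\mathcal{J}_G^{-1}(\sigma)$; being a leaf of the integrable distribution $\mathcal{D}_G+\mathcal{V}$ it is an initial submanifold carrying a topology finer than the subspace topology and it sits inside the orbit type manifold $M_{(H)}$, where $H$ is the compact subgroup with $\mathcal{J}_G^{-1}(\sigma)\subseteq M_H$; hence the induced $G$-action on $G\cdot\mathcal{J}_G^{-1}(\sigma)$ is proper with all isotropy subgroups conjugate to $H$, and the quotient manifold theorem gives that $M^G_\sigma=(G\cdot\mathcal{J}_G^{-1}(\sigma))/G$ is a regular quotient manifold with $\pi$ a smooth surjective submersion. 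Since $\mathcal{J}_G^{-1}(\sigma)$ meets every $G$-orbit of $G\cdot\mathcal{J}_G^{-1}(\sigma)$ and $\mathcal{D}_G(m)$ surjects onto $(\mathcal{D}_G(m)+\mathcal{V}(m))/\mathcal{V}(m)$, the restriction of $\pi$ to $\mathcal{J}_G^{-1}(\sigma)$ is already a surjective submersion onto $M^G_\sigma$; this is what makes the values prescribed by \eqref{presymp3} enough to determine $\omega^G_\sigma$.

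For well-definedness of $\omega^G_\sigma$ I would fix $m\in\mathcal{J}_G^{-1}(\sigma)$ and $v_m,w_m\in T_m(G\cdot\mathcal{J}_G^{-1}(\sigma))$ and control two sources of non-uniqueness. First, if $(X+V_1)(m)=(X'+V_1')(m)$ with $X,X'\in\Gamma(\mathcal{D}_G)$ and $V_1,V_1'\in\Gamma(\mathcal{V})$, then $X(m)-X'(m)\in\mathcal{V}(m)$, so $\beta_m(X(m))=\beta_m(X'(m))$ because $\beta\in\Gamma(\mathcal{V}^\circ)$, and similarly for $Y$; the equality $\alpha_m(Y(m))=-\beta_m(X(m))$ is itself immediate from $(X,\alpha),(Y,\beta)\in\Gamma(\mathsf{D})$ and $\langle(X,\alpha),(Y,\beta)\rangle=0$. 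Second, moving $m$ within its $\pi$-fibre: if $m'\in\mathcal{J}_G^{-1}(\sigma)$ with $\pi(m)=\pi(m')$, then $m'=\Phi_g(m)$ with $\bar\Phi_g(\sigma)=\sigma$, i.e. $g\in G_\sigma$; one transports the sections by $\Phi_{g^{-1}}$, and since $\mathsf{D}$, $\mathcal{T}_G$, and $\mathcal{V}_G^\circ$ are $G$-invariant (the last two being spanned by $G$-invariant sections), $(\Phi_{g^{-1}}^*X,\Phi_{g^{-1}}^*\alpha)$ is again a section of $\mathsf{D}\cap(\mathcal{T}_G\oplus\mathcal{V}_G^\circ)$ near $m'$, and the fundamental-vector-field discrepancies between these transported sections and sections realising $v_{m'},w_{m'}$ are annihilated by $\alpha'$ and $\Phi_{g^{-1}}^*\beta$, both sections of $\mathcal{V}^\circ$ — this is exactly the computation with the two $(\ast)$ steps in the proof of Theorem \ref{red_theorem1}, now applied with $g\in G_\sigma$ as in Theorem \ref{red_theorem2}.

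For closedness it suffices, since $\pi$ restricted to $\mathcal{J}_G^{-1}(\sigma)$ is a surjective submersion onto $M^G_\sigma$, to show that the pullback of $\omega^G_\sigma$ to $\mathcal{J}_G^{-1}(\sigma)$ is closed; on $\mathcal{J}_G^{-1}(\sigma)$ every tangent vector lies in $\mathcal{D}_G$, so one takes $V_1=V_2=0$ and \eqref{presymp3} becomes \eqref{presymp2}. Choosing $\iota_{\sigma,G}$-related lifts $X,Y,Z$ with $(X,\alpha),(Y,\beta),(Z,\gamma)\in\Gamma(\mathsf{D}\cap(\mathcal{T}_G\oplus\mathcal{V}_G^\circ))$, the closedness of $\mathsf{D}$ gives $([X,Y],\boldsymbol{\pounds}_X\beta-\ip{Y}\dr\alpha)\in\Gamma(\mathsf{D})$, and the Cartan-formula expansion of $\dr$ of the pullback collapses to $0$ using $\gamma(X)+\alpha(Z)=0$ and $\gamma(Y)+\beta(Z)=0$, verbatim as in Theorem \ref{red_theorem1}. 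Thus $\omega^G_\sigma$ is a well-defined presymplectic form and $\mathsf{D}^G_\sigma$ is its graph.

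The step needing real care is the first one above: one must verify that lifts of a tangent vector to $G\cdot\mathcal{J}_G^{-1}(\sigma)$ differing by an arbitrary section of $\mathcal{V}$ give the same number, and that this $\mathcal{V}$-ambiguity is compatible with the $G_\sigma$-ambiguity; both are absorbed precisely because all the one-forms involved annihilate $\mathcal{V}$. A more economical route avoids rechecking the $G_\sigma$-invariance and closedness: the diffeomorphism $G\times_{G_\sigma}\mathcal{J}_G^{-1}(\sigma)\to G\cdot\mathcal{J}_G^{-1}(\sigma)$ of the preceding Lemma descends to a diffeomorphism $\Lambda\colon M_\sigma=\mathcal{J}_G^{-1}(\sigma)/G_\sigma\to M^G_\sigma$ with $\Lambda(\pi_\sigma(m))=\pi(m)$; one then sets $\omega^G_\sigma:=(\Lambda^{-1})^*\omega_\sigma$, which is presymplectic by Theorem \ref{red_theorem2}, and only has to check that it satisfies \eqref{presymp3}, which follows from $\mathcal{V}(m)\subseteq\ker T_m\pi$ together with $X(m),Y(m)\in\mathcal{D}_G(m)=T_m\mathcal{J}_G^{-1}(\sigma)$ and \eqref{presymp2}.
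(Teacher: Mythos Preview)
Your proposal is correct and matches the paper's own treatment: the paper does not give a separate proof of this theorem but simply states that it ``is proved in the same manner as Theorems \ref{red_theorem1} and \ref{red_theorem2}''. You have supplied precisely the adaptation the paper leaves implicit, correctly isolating the one new ingredient (the $\mathcal{V}$-valued ambiguity in the lifts, absorbed because all one-forms involved lie in $\mathcal{V}^\circ$) and noting that properness of the $G$-action on the leaf and conjugacy of isotropy groups were already established in the paragraph preceding the theorem. Your alternative ``economical route'' via the diffeomorphism $M_\sigma\cong M^G_\sigma$ is in fact the content of the paper's subsequent comparison theorem, so it anticipates rather than shortcuts the argument.
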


\subsection{Comparison of the three methods}
We  show next that if the hypotheses of Theorems 
\ref{red_theorem1}, \ref{red_theorem2}, \ref{red_theorem3} 
hold, then the three methods yield the same reduced objects.

\begin{theorem}
Choose  $m\in M$ and set $\sigma:=\J_G(m)$, $\rho:=\J(m)$. Assume 
that the three optimal reduced Dirac manifolds  
$\left(M_\sigma,\mathsf D_\sigma\right)$, $\left(M_{(\rho)},\mathsf D_{(\rho)}
\right)$, and $\left(M_{\sigma}^G,\mathsf{D}^G_{\sigma}\right)$
are defined. The reduced presymplectic spaces $\left(M_\sigma,
\omega_\sigma\right)$, $\left(M_{(\rho)},\omega_{(\rho)}\right)$, and 
$\left(M^G_\sigma,\omega^G_\sigma\right)$ are presymplectomorphic.
\end{theorem}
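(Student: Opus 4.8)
The plan is to build explicit maps between the three reduced spaces and show they are presymplectomorphisms, using the inclusions of distributions $\D_G \subseteq \D \subseteq \V + \D_G$ established in Proposition \ref{conj_isotropy_subgroups}. Fix $m \in M$, $\sigma := \J_G(m)$, $\rho := \J(m)$. First I would compare $M_\sigma$ with $M^G_\sigma$. Since $\J_G^{-1}(\sigma) \subseteq G \cdot \J_G^{-1}(\sigma)$, the composition $\J_G^{-1}(\sigma) \hookrightarrow G \cdot \J_G^{-1}(\sigma) \to M^G_\sigma$ descends to a map $M_\sigma = \J_G^{-1}(\sigma)/G_\sigma \to M^G_\sigma = (G \cdot \J_G^{-1}(\sigma))/G$, because $G_\sigma$ is precisely the stabilizer of $\sigma$ and the $G$-orbit of a point of $\J_G^{-1}(\sigma)$ meets $\J_G^{-1}(\sigma)$ exactly in its $G_\sigma$-orbit (this last fact is the key set-theoretic observation, and it follows from the definition of $G_\sigma$ together with the fact that $g \cdot \J_G^{-1}(\sigma) = \J_G^{-1}(\bar\Phi_g(\sigma))$). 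Using the diffeomorphism $G \cdot \J_G^{-1}(\sigma) \cong G \times_{G_\sigma} \J_G^{-1}(\sigma)$ from the Lemma preceding Theorem \ref{red_theorem3}, one sees this induced map is a diffeomorphism: both quotients are $\J_G^{-1}(\sigma)/G_\sigma$. Comparing the presymplectic forms is then a matter of unwinding definitions \eqref{presymp2} and \eqref{presymp3}: given $v_m, w_m$ tangent to $\J_G^{-1}(\sigma)$, lift them by sections $(X,\alpha),(Y,\beta) \in \Gamma(\mathsf D \cap (\T_G \oplus \V_G^\circ))$ with $X(\incg(m)) = T_m\incg v_m$, etc.; the defining formulas then agree on the nose once one notes that in \eqref{presymp3} one may take $V_1 = V_2 = 0$ when $m$ itself lies in $\J_G^{-1}(\sigma)$, and that $\alpha(V_i) = \beta(V_i) = 0$ since $\alpha,\beta \in \Gamma(\V^\circ)$ — so adding vertical corrections does not change the value.

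Next I would compare $M_{(\rho)}$ with $M^G_\sigma$. Here the relevant inclusion is $\J_G^{-1}(\sigma) \subseteq \J^{-1}(\rho) \subseteq G \cdot \J_G^{-1}(\sigma)$, the first two from $\D_G \subseteq \D$ and the leaf-containment argument in the proof of Proposition \ref{conj_isotropy_subgroups}, the last from $\D \subseteq \V + \D_G$. Thus $\J^{-1}(\rho)$ is $G_{(\rho)}$-invariant and sits inside the $G$-saturation $G \cdot \J_G^{-1}(\sigma)$; I claim the inclusion induces a diffeomorphism $M_{(\rho)} = \J^{-1}(\rho)/G_{(\rho)} \to M^G_\sigma = (G \cdot \J_G^{-1}(\sigma))/G$. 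Surjectivity follows because every $G$-orbit in $G \cdot \J_G^{-1}(\sigma)$ meets $\J^{-1}(\rho)$ (indeed it meets $\J_G^{-1}(\sigma) \subseteq \J^{-1}(\rho)$); injectivity follows because if $g \cdot x = x'$ with $x, x' \in \J^{-1}(\rho)$ then $\rho = \J(x') = \J(g x) = \bar\Phi_g(\J(x)) = \bar\Phi_g(\rho)$, so $g \in G_{(\rho)}$. Smoothness in both directions is handled exactly as in the Lemma before Theorem \ref{red_theorem3}: write $G \cdot \J_G^{-1}(\sigma) \cong G \times_{G_\sigma} \J_G^{-1}(\sigma)$, observe $\J^{-1}(\rho) \cong G_{(\rho)} \times_{G_\sigma} \J_G^{-1}(\sigma)$ (using $G_\sigma \subseteq G_{(\rho)}$ and that $\J^{-1}(\rho)$ is the $G_{(\rho)}$-saturation of $\J_G^{-1}(\sigma)$), and note the two associated-bundle quotients by the residual $G$- resp.\ $G_{(\rho)}$-action are canonically the same space $\J_G^{-1}(\sigma)/G_\sigma$. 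For the forms, compare \eqref{presymp1} and \eqref{presymp3}: a tangent vector to $\J^{-1}(\rho)$ at a point of $\J_G^{-1}(\sigma)$ need not be liftable by a section of $\D_G$, but it is liftable by a section of $\D_G + \V$ — say $X + V_1$ with $X \in \Gamma(\D_G)$, $V_1 \in \Gamma(\V)$ and this is exactly the form of the lift allowed in \eqref{presymp3}; since $\alpha(V_i) = \beta(V_i) = 0$, formulas \eqref{presymp1} and \eqref{presymp3} again produce the same number, so the diffeomorphism pulls $\omega^G_\sigma$ back to $\omega_{(\rho)}$.

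Composing the two presymplectomorphisms just constructed gives $M_\sigma \cong M^G_\sigma \cong M_{(\rho)}$, proving all three reduced presymplectic spaces are presymplectomorphic. The main obstacle will be the purely set-theoretic bookkeeping: verifying precisely that every $G$-orbit in $G\cdot\J_G^{-1}(\sigma)$ meets each of $\J_G^{-1}(\sigma)$ and $\J^{-1}(\rho)$ in a single orbit of $G_\sigma$ resp.\ $G_{(\rho)}$, and that these identifications are compatible with the associated-bundle structure $G \times_{G_\sigma} \J_G^{-1}(\sigma)$. This rests entirely on the chain $\D_G \subseteq \D \subseteq \V + \D_G$ together with the identities $g \cdot \J^{-1}(\tau) = \J^{-1}(\bar\Phi_g(\tau))$ for the induced actions, both of which are available from Propositions \ref{inducedaction}, \ref{inducedaction2}, and \ref{conj_isotropy_subgroups}. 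Once the diffeomorphisms are in place, matching the presymplectic forms is routine because in all three defining formulas the value depends only on the pairing of a $\V^\circ$-valued one-form against a $\D_G$-or-$\D$-valued vector field representing the tangent direction, and vertical corrections are annihilated.
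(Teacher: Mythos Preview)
Your approach is essentially the same as the paper's: construct explicit maps between the three quotients using the inclusions $\J_G^{-1}(\sigma)\subseteq\J^{-1}(\rho)\subseteq G\cdot\J_G^{-1}(\sigma)$, check they are diffeomorphisms, and then compare the defining formulas \eqref{presymp1}, \eqref{presymp2}, \eqref{presymp3} for the presymplectic forms. The paper organizes this into four maps $\Psi,\Theta,\Lambda,\Phi$ and a commuting triangle, but the content is the same as yours.

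Two technical points deserve comment. First, for smoothness the paper does not invoke an associated-bundle description $\J^{-1}(\rho)\cong G_{(\rho)}\times_{G_\sigma}\J_G^{-1}(\sigma)$ as you propose; instead it uses directly that $\iota_{\sigma,G}$ and $\inc$ are \emph{regular} immersions (initial submanifolds) together with the fact that $\pi_{(\rho)},\pi_\sigma,\pi$ are smooth surjective submersions. This is cleaner, since establishing your associated-bundle identification rigorously would itself require a lemma analogous to the one preceding Theorem~\ref{red_theorem3}, with $G$ replaced by the merely initial subgroup $G_{(\rho)}$.

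Second, and more substantively, in comparing $\omega_{(\rho)}$ with $\omega_\sigma^G$ you assert that a tangent vector to $\J^{-1}(\rho)$ at a point of $\J_G^{-1}(\sigma)$ is ``liftable by a section of $\D_G+\V$, say $X+V_1$ with $X\in\Gamma(\D_G)$''. The pointwise inclusion $\D\subseteq\D_G+\V$ does not immediately give you an $X$ that extends to a section paired with some $\alpha\in\Gamma(\V_G^\circ)$ in $\mathsf D$. The paper produces exactly this decomposition by $G$-invariant averaging: starting from a descending section $(X,\alpha)$ of $\mathsf D$ with $X(n)=v$, one forms $(X_G,\alpha_G)$ and observes that at the point $n$ one has $X(n)=X_G(n)+(V-V_G)(n)$ with $V-V_G\in\Gamma(\V)$ and $\alpha_G(n)=\alpha(n)$. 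This averaging step is the one non-obvious ingredient you are missing; once it is in place your computation goes through verbatim.
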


\begin{proof}
Define the maps 
\begin{equation*}
\begin{array}{lccc}
\Psi:&M^G_\sigma&\longrightarrow& M_\sigma\\
&\left(\pi\circ\pi_{G_\sigma}\right)(g,m)&\longmapsto &\pi_\sigma(m)
\end{array}
\quad\text{ and }\quad
\begin{array}{lccc}
\Theta:&M_\sigma& \longrightarrow & M^G_\sigma\\
&\pi_{\sigma}(m)&\longmapsto &\left(\pi\circ\iota_\sigma^G\right)(m)
\end{array}
\end{equation*}
by the following commutative diagrams.
\begin{displaymath}\begin{xy}
\xymatrix{ 
& G\times \mathcal{J}_G^{-1}(\sigma)\ar[ld]_{\pi_{G_\sigma}}\ar[rd]^{{\rm p}_2}&  \\
G\cdot\mathcal{J}_G^{-1}(\sigma)\ar[d]_{\pi}
& & \mathcal{J}_G^{-1}(\sigma)\ar[d]^{\pi_\sigma}\\
M_\sigma^G\ar[rr]_{\Psi}&&M_\sigma
}
\end{xy}
\qquad \qquad 
\begin{xy}
\xymatrix{ &\\
\mathcal{J}_G^{-1}(\sigma) \ar[r]^{\iota^G_{\sigma}}\ar[d]_{\pi_\sigma}
& G\cdot\mathcal{J}_G^{-1}(\sigma)\ar[d]^\pi \\
M_\sigma\ar[r]_{\Theta}&M_\sigma^G\\
}
\end{xy}
\end{displaymath}
We use the diagrams to check that the maps $\Psi$ and $\Theta$ are 
inverses of each other and hence both bijective. Indeed,  for all 
$(g,m)\in G\times\J^{-1}_G(\sigma)$ we have
\begin{align*}
(\Theta\circ\Psi)\big((\pi\circ\pi_{G_\sigma})(g,m)\big)
&=\Theta(\pi_\sigma(m))
=\left(\pi\circ\iota_\sigma^G\right)(m)=\pi(g\cdot m)
=\left(\pi\circ\pi_{G_\sigma}\right)(g,m),
\end{align*}
and for all $m\in \J^{-1}_G(\sigma)$,
\begin{align*}
(\Psi\circ\Theta)(\pi_\sigma(m))=\Psi\left(\pi\circ\iota^G_\sigma(m)\right)
=\Psi\left((\pi\circ\pi_{G_\sigma})(g,m)\right)
=\pi_\sigma(m).
\end{align*}
The diagrams are also used to show that both maps are smooth.
The equality 
\[\Psi\circ\pi\circ\pi_{G_\sigma}=\pi_\sigma\circ {\rm p}_2\]
shows that $\Psi$ is smooth, since $\pi_\sigma\circ {\rm p}_2$ is smooth and
$\pi\circ\pi_{G_\sigma}$ is a smooth open map. The equality 
\[ \Theta\circ\pi_\sigma=\pi\circ\iota_\sigma^G\]
shows that $\Theta$ is smooth since $\pi_\sigma$ is a smooth open map and
$\pi\circ\iota_\sigma^G$ is smooth.

We define in the same manner the maps
\begin{equation*}
\begin{array}{lccc}
\Lambda:&M_{(\rho)}&\longrightarrow & M^G_\sigma\\
&\pi_{(\rho)}(m)&\longmapsto &(\pi\circ\iota)(m)
\end{array}
\quad\text{ and }\quad
\begin{array}{lccc}
\Phi:&M_\sigma& \longrightarrow& M_{(\rho)}\\
&\pi_{\sigma}(m)&\longmapsto 
&\left(\pi_{(\rho)}\circ\iota_{\sigma,\rho}\right)(m)
\end{array}
\end{equation*}
by the following commutative diagrams.

\begin{displaymath}
\begin{xy}
\xymatrix{ 
 & M   &  \\
\mathcal{J}^{-1}(\rho)\ar[d]_{\pi_{(\rho)}}\ar[ur]^{\inc}\ar[rr]^{\iota}
&&G\cdot\mathcal{J}_G^{-1}(\sigma)\ar[ul]_{\iota_{\sigma,G}}\ar[d]^{\pi}\\
M_{(\rho)}\ar[rr]_{\Lambda}&&M_\sigma^G
}
\end{xy}
\qquad
\begin{xy}
\xymatrix{ 
 & M   &  \\
\mathcal{J}_G^{-1}(\sigma)\ar[d]_{\pi_\sigma}\ar[ur]^{\incg}\ar[rr]^{\iota_{\sigma,\rho}}
&&\mathcal{J}^{-1}(\rho)\ar[ul]_{\inc}\ar[d]^{\pi_{(\rho)}}\\
M_\sigma\ar[rr]_{\Phi}&&M_{(\rho)}
}
\end{xy}
\end{displaymath}

We have then the commutative diagram
\begin{displaymath}
\begin{xy}
\xymatrix{ 
M_{(\rho)}\ar[dr]_{\Lambda} &  &M_\sigma\ar[ll]_\Phi\ar[dl]^{\Theta}  \\
&M^G_\sigma\ar[ur]^{\Psi}&
}
\end{xy}
\end{displaymath}
which shows that $\Phi$ and $\Lambda$ are bijective. We have
$\Lambda^{-1}=\Phi\circ\Psi$ and $\Phi^{-1}=\Psi\circ\Lambda$.
Thus, we have only to show that $\Lambda$ and $\Phi$ are smooth. 
But using the commutative diagrams, we get 
$\Lambda\circ\pi_{(\rho)}=\pi\circ\iota$. In addition, using
$\iota_{\sigma,G}\circ\iota=\iota_{(\rho)}$ and smoothness of
$\iota_{(\rho)}$, we conclude the smoothness of
$\iota$ since $ \iota_{\sigma,G}$ is a regular immersion. The 
map $\pi\circ\iota$ is consequently smooth and  $\Lambda$ is 
thus smooth because $\pi_{(\rho)}$ is a smooth open map. 

Analogously, we have $\Phi\circ\pi_\sigma=\pi_{(\rho)}\circ\iota_{\sigma,\rho}$. An argument similar to the one above
shows that the inclusion $\iota_{\sigma,\rho}$ is smooth.
Thus, $\Phi$ is smooth, using the fact that $\pi_\sigma$ is a
smooth open map.

Finally we prove  the equalities 
\[
\Phi^*\omega_{(\rho)}=\omega_\sigma,
\quad \text{and }\quad \Lambda^*\omega_\sigma^G=\omega_{(\rho)}
\]
which immediately imply that $\Theta$ preserves the presymplectic forms:
\[\Theta^*\omega_\sigma^G=(\Psi^{-1})^*\omega_\sigma^G
=(\Lambda\circ\Phi)^*\omega_\sigma^G=\Phi^*\Lambda^*\omega_\sigma^G
=\Phi^*\omega_{(\rho)}=\omega_\sigma.
\]
Choose $m\in\J_G\inv(\sigma)$ and vectors $v,w\in T_m\J_G\inv(\sigma)$. 
Then there exist $G$-invariant descending sections $(X,\alpha)$ and $(Y,\beta)$ 
of $\mathsf{D}$ such that $T_m\incg v=X(\incg(m))$ and $T_m\incg
w=Y(\incg(m))$.
We have 
\begin{align*}
\left(\pi_\sigma^*\Phi^*\omega_{(\rho)}\right)(m)(v,w)
&=
\left(\iota_{\sigma,\rho}^*\pi_{(\rho)}^*\omega_{(\rho)}\right)(m)(v,w)
=\left(\pi_{(\rho)}^*\omega_{(\rho)}\right)(\iota_{\sigma,\rho}(m))
(T_m\iota_{\sigma,\rho}v,T_m\iota_{\sigma,\rho}w).
\end{align*}
Since 
\[T_{\iota_{\sigma,\rho}(m)}\inc T_m\iota_{\sigma,\rho}
v=T_m\incg v=X(\incg(m))=X((\inc\circ\iota_{\sigma,\rho})(m))\]
and
\[T_{\iota_{\sigma,\rho}(m)}\inc T_m\iota_{\sigma,\rho}
w=T_m\incg v=Y(\incg(m))=Y((\inc\circ\iota_{\sigma,\rho})(m)),\]
formula \eqref{presymp1} and the fact that $(X,\alpha)$ and $(Y,\beta)$ are descending
sections of $\mathsf{D}$, imply
\begin{align*}
\left(\pi_\sigma^*\Phi^*\omega_{(\rho)}\right)(m)(v,w)
&=\left(\alpha(Y)\circ\inc\right)(\iota_{\sigma,\rho}(m))
=\left(\alpha(Y)\circ\incg\right)(m)\overset{\eqref{presymp2}}
=(\pi_\sigma^*\omega_\sigma)(m)(v,w),
\end{align*}
i.e., $\pi_\sigma^*\Phi^*\omega_{(\rho)}=\pi_\sigma^*\omega_\sigma$. Since $\pi_\sigma$ 
is a smooth surjective submersion, the equality
$\Phi^*\omega_{(\rho)}=\omega_\sigma$ is proved.
\medskip

Next, we prove the equality $\Lambda^*\omega_\sigma^G=
\omega_{(\rho)}$. Choose $m\in\J\inv(\rho)$ and $v,w\in
T_m\J\inv(\rho)$.
Then there exist descending sections $(X,\alpha)$ and $(Y,\beta)$ of
$\mathsf{D}$ defined on a neighborhood of $\inc(m)$ in $M$ such that 
$T_m\inc v=X(\inc(m))$ and $T_m\inc
w=Y(\inc(m))$. Since $X$ and $Y$ are descending vector fields, they
can be written $X=X^G+V$ and $Y=Y^G+W$ with $X^G,
Y^G\in\mx(M)^G$ and $V, W\in\Gamma(\V)$.
Assume that $(X,\alpha)$ and $(Y,\beta)$ are defined on a whole tube $U$
for the action of $G$ at $\inc(m)$; otherwise, we multiply $(X,\alpha)$ and
$(Y,\beta)$ with a bump function that is equal to $1$ on a
neighborhood $U_1\subsetneq U$ of $\inc(m)$, and equal to $0$ outside from
a neighborhood $U_2$ of $\inc(m)$ such that $U_1\subsetneq U_2\subsetneq U$. 

Consider the $G$-invariant averages $(X_G,\alpha_G), 
(Y_G,\beta_G)\in\Gamma(\mathsf{D})$ of $(X,\alpha)$ and $(Y,\beta)$ at $\inc(m)=:n$. 
We have, with $H=G_n$,  
\[X_G(n)=\int_H(T_n\Phi_{h^{-1}}X^G(n)+T_n\Phi_{h^{-1}}V(n))dh
=X^G(n)+V_G(n)=X(n)+(V_G-V)(n)
\]
and 
\[\alpha_G(n)=\int_H\alpha(h\cdot n)\circ T_n\Phi_h dh=\alpha(n).
\]
Hence, the sections $(X_G,\alpha_G)$ and $(Y_G,\beta_G)$ are
$G$-invariant descending sections of $\mathsf D$ such that
\begin{align*}
T_{\iota(m)}\iota_{\sigma,G}(T_m\iota\, v)
&=T_m\inc v=X(\inc(m))\\
&=X(n)=X_G(n)+(V-V_G)(n)=(X_G+(V-V_G))
\bigl(\iota_{\sigma,G}(\iota(m))\bigr)
\end{align*}
and analogously 
\[T_{\iota(m)}\iota_{\sigma,G}(T_m\iota\, w)
=(Y_G+(W-W_G))\bigl(\iota_{\sigma,G}(\iota(m))\bigr).\]
We get, using this and definitions \eqref{presymp3} and \eqref{presymp1}
\begin{align*}
\left(\pi_{(\rho)}^*\Lambda^*\omega_\sigma^G\right)(m)(v,w)
&=\left(\iota^*\pi^*\omega_\sigma^G\right)(m)(v,w)
=\left(\pi^*\omega_\sigma^G\right)(\iota(m))(T_m\iota\,
v,T_m\iota\, w)\\
&\overset{\makebox[0pt][c]{\tiny\eqref{presymp3}}}{=}-\beta_G(X_G)\bigl(\iota_{\sigma,G}(\iota(m))\bigr)
=-\beta_G(X_G+(V-V_G))\bigl(\inc(m)\bigr)\\
&=-\beta_G(n)(X_G(n)+(V-V_G)(n))
=-\beta(n)(X(n))\\
&=-\beta(X)\bigl(\inc(m)\bigr)\overset{\small\eqref{presymp1}}
=\left(\pi_{(\rho)}^*\omega_{(\rho)}\right)(m)(v,w),
\end{align*}
that is,  $\pi_{(\rho)}^*\Lambda^*\omega_\sigma^G
=\pi_{(\rho)}^*\omega_{(\rho)}$.
Since $\pi_{(\rho)}$ is a smooth surjective
submersion, we conclude
$\Lambda^*\omega_\sigma^G=\omega_{(\rho)}$.
\end{proof}

\subsection{Reduction of dynamics}
In this subsection, we write $\pois$ for the Poisson bracket
$\pois_{\mathsf D}$ on the admissible functions 
of $(M,\mathsf D)$. We assume that the hypotheses of the preceding subsections
are satisfied and study the reduction of dynamics.

\begin{theorem}
Let $(M,\mathsf D)$ be a smooth Dirac manifold with a proper Lie group action, 
such that the orbit optimal momentum map $\mathcal J$ is defined. 
Choose $m\in M$ such that $G_{(\rho)}$ 
acts properly on $\J\inv(\rho)$, where $\rho=\J(m)$.
Let $h\in C^\infty(M)^G$ be a $G$-invariant 
admissible smooth function on
$M$ defined on a neighborhood $U$ of $m$ in $M$. Then:
\begin{enumerate}
\item There
exists a $G$-invariant vector field $X_h$ defined on $U$ such that 
$(X_h,\dr h)$ is a 
($G$-invariant descending) section of $\mathsf{D}$. 
\item The flow $\phi$ of $X_h$ commutes with the $G$-action and  leaves
  $\J\inv(\rho)$ invariant. Thus it restricts to a flow $\tilde \phi$
on $\J\inv(\rho)$, that is, with $\phi_t\circ\inc=\inc\circ\tilde \phi_t$
for all 
  $t\in \mathbb{R}$ where the left hand side  is defined.
The flow $\tilde \phi$ commutes then with the $G_{(\rho)}$-action
 and induces  therefore a flow $\phi^{(\rho)}$ on
  $M_{(\rho)}$ uniquely determined by the relation $\pi_{(\rho)}\circ
  \tilde \phi_t= \phi_t^{(\rho)}\circ \pi_{(\rho)}$ for all 
  $t\in \mathbb{R}$ where the left hand side  is defined. 
\item The vector field $X_h^{(\rho)}$ defined by the flow $\phi^{(\rho)}$ on
  $M_{(\rho)}$ is a section of $\mathsf{G}_1^{(\rho)}$; more precisely, we
  have 
\[ (X_h^{(\rho)},\dr h_{(\rho)})\in\Gamma(\mathsf{D}_{(\rho)}),\]
where $h_{(\rho)}$ is the smooth function on $M_{(\rho)}$ defined by 
$h_{(\rho)}\circ\pi_{(\rho)}=h\circ\iota_{(\rho)}$.
\item Let $k\in C^\infty(M)^G$ be another admissible function, and
  $\{\cdot,\cdot\}_{(\rho)}$ the bracket on admissible functions on
  $M_{(\rho)}$ defined by $\mathsf{D}_{(\rho)}$. Then we have 
\[\left(\{h,k\}\right)_{(\rho)}=\{h_{(\rho)},k_{(\rho)}\}_{(\rho)},\]
where the function $\left(\{h,k\}\right)_{(\rho)}$ is defined by 
\[\left(\{h,k\}\right)_{(\rho)}\circ\pi_{(\rho)}=\{h,k\}\circ\iota_{(\rho)}.\]
This makes sense because $\{h,k\}=-X_h(k)$ is $G$-invariant. 
\end{enumerate}
\end{theorem}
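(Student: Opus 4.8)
The plan is to prove the four assertions in turn, obtaining every Hamiltonian vector field in $G$-invariant form by the $G$-invariant averaging of Section~\ref{sec:proper}, and then transporting $(X_h,\dr h)$ through the quotient using $\inc$- and $\pi_{(\rho)}$-relatedness together with the defining formula \eqref{presymp1} of $\omega_{(\rho)}$. For (1): since $h$ is admissible, choose locally a vector field $X$ with $(X,\dr h)\in\Gamma(\mathsf D)$, shrink to a tube at $m$ (patching, if necessary, by a $G$-invariant partition of unity), and set $X_h:=X_G$. As $\mathsf D$ is $G$-invariant, $(X_G,(\dr h)_G)\in\Gamma(\mathsf D)$; since $h\in C^\infty(M)^G$ we have $(\dr h)_G=\dr h$, so $(X_h,\dr h)\in\Gamma(\mathsf D)$ with $X_h\in\mx(M)^G$. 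Moreover $\dr h(\xi_M)=\xi_M(h)=0$ for all $\xi\in\lie g$, so $\dr h\in\Gamma(\V^\circ)^G$, and $X_h$ being $G$-invariant satisfies $[X_h,\Gamma(\V)]\subseteq\Gamma(\V)$; hence $(X_h,\dr h)$ is a $G$-invariant descending section of $\mathsf D$, in particular a section of $\mathsf D\cap(\T_G\oplus\V_G^\circ)$, so $X_h\in\Gamma(\D_G)\subseteq\Gamma(\D)$.

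For (2): a $G$-invariant vector field has a $G$-equivariant flow, since $\Phi_g^*X_h=X_h$ gives $X_h\circ\Phi_g=T\Phi_g\circ X_h$, hence $\phi_t\circ\Phi_g=\Phi_g\circ\phi_t$; in particular $\phi$ commutes with the $G$-action. Because $X_h\in\Gamma(\D)$ and $\D$ is completely integrable in the sense of Stefan and Sussmann with the fibres $\J^{-1}(\rho)$ as its maximal integral leaves (see Theorem~\ref{frobenius with vector fields}), the curve $s\mapsto\phi_s(n)$ through any $n\in\J^{-1}(\rho)$ has all its tangent vectors in $\D$ and therefore stays in the accessible set $\J^{-1}(\rho)$; thus $\phi_t$ restricts to a map of $\J^{-1}(\rho)$, smooth because $\J^{-1}(\rho)$ is an initial submanifold of $M$, and this restriction $\tilde\phi_t$ is the flow of the vector field $\tilde X_h\sim_{\inc}X_h$. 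It commutes with the $G_{(\rho)}$-action on $\J^{-1}(\rho)$ since $\phi_t$ commutes with $\Phi_g$ for $g\in G_{(\rho)}\subseteq G$ and both preserve $\J^{-1}(\rho)$; and $\tilde X_h$ being $G_{(\rho)}$-invariant with $\pi_{(\rho)}$ the $G_{(\rho)}$-orbit map, $\tilde\phi_t$ descends to a flow $\phi^{(\rho)}_t$ with $\pi_{(\rho)}\circ\tilde\phi_t=\phi^{(\rho)}_t\circ\pi_{(\rho)}$, smooth because $\pi_{(\rho)}$ is a surjective submersion; differentiating at $t=0$ gives $\tilde X_h\sim_{\pi_{(\rho)}}X^{(\rho)}_h$.

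For (3): $h_{(\rho)}$ is well defined because $h\circ\inc$ is $G_{(\rho)}$-invariant. To see $(X^{(\rho)}_h,\dr h_{(\rho)})\in\Gamma(\mathsf D_{(\rho)})$, i.e.\ $\ip{X^{(\rho)}_h}\omega_{(\rho)}=\dr h_{(\rho)}$, fix $p\in\J^{-1}(\rho)$ and $w\in T_p\J^{-1}(\rho)$; since $\D(\inc(p))$ is spanned pointwise by descending sections, pick $(Y,\beta)\in\Gamma(\mathsf D\cap(\T\oplus\V_G^\circ))$ with $T_p\inc\,w=Y(\inc(p))$, with restriction $\tilde Y\sim_{\inc}Y$, $\tilde Y(p)=w$. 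Taking the admissible pair $(X_h,\dr h)$ in the role of $(X,\alpha)$ in \eqref{presymp1} (legitimate by (1), and $\omega_{(\rho)}$ is choice-independent) and $v:=\tilde X_h(p)$, we get $(\pi_{(\rho)}^*\omega_{(\rho)})(p)(v,w)=(\dr h)_{\inc(p)}(Y(\inc(p)))=Y(h)(\inc(p))$. Now $Y(h)\circ\inc=\tilde Y(h\circ\inc)=\tilde Y(h_{(\rho)}\circ\pi_{(\rho)})$, whose value at $p$ is $\dr h_{(\rho)}(T_p\pi_{(\rho)}w)$; and $T_p\pi_{(\rho)}v=X^{(\rho)}_h(\pi_{(\rho)}(p))$, so $(\pi_{(\rho)}^*\ip{X^{(\rho)}_h}\omega_{(\rho)})(p)(w)=(\pi_{(\rho)}^*\dr h_{(\rho)})(p)(w)$. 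As $w$ and $p$ were arbitrary, $T_p\pi_{(\rho)}$ is onto and $\pi_{(\rho)}$ a surjective submersion, $\ip{X^{(\rho)}_h}\omega_{(\rho)}=\dr h_{(\rho)}$ follows.

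For (4): $\{h,k\}=X_h(k)$ is $G$-invariant, since $X_h(k)(gm)=(\dr k)_{gm}(T_m\Phi_g X_h(m))=(\dr k)_m(X_h(m))=X_h(k)(m)$ using $G$-invariance of $X_h$ and $k$; hence $(\{h,k\})_{(\rho)}$ is well defined. By (3) applied to $h$ and to $k$, the functions $h_{(\rho)},k_{(\rho)}$ are $\mathsf D_{(\rho)}$-admissible with Hamiltonian vector fields $X^{(\rho)}_h,X^{(\rho)}_k$, so $\{h_{(\rho)},k_{(\rho)}\}_{(\rho)}=X^{(\rho)}_h(k_{(\rho)})$; composing with $\pi_{(\rho)}$ and using $\tilde X_h\sim_{\pi_{(\rho)}}X^{(\rho)}_h$, $\tilde X_h\sim_{\inc}X_h$ and $k_{(\rho)}\circ\pi_{(\rho)}=k\circ\inc$ gives $X^{(\rho)}_h(k_{(\rho)})\circ\pi_{(\rho)}=\tilde X_h(k\circ\inc)=(X_h(k))\circ\inc=\{h,k\}\circ\inc=(\{h,k\})_{(\rho)}\circ\pi_{(\rho)}$, whence equality by surjectivity of $\pi_{(\rho)}$. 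The main obstacle I anticipate is not a single computation but the bookkeeping in (2)--(3): making the descent of $\phi$ to $M_{(\rho)}$ rigorous via the initial-submanifold property of $\J^{-1}(\rho)$ and the submersion property of $\pi_{(\rho)}$, and threading the two relatedness relations $\sim_{\inc}$ and $\sim_{\pi_{(\rho)}}$ through \eqref{presymp1} --- in particular checking that $(X_h,\dr h)$ is a legitimate choice in that formula, which is exactly what (1) provides.
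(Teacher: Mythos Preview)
Your proof is correct and follows essentially the same route as the paper: $G$-invariant averaging on a tube to produce $X_h$ in (1), $G$-invariance of $X_h$ plus $X_h\in\Gamma(\D_G)\subseteq\Gamma(\D)$ for the flow descent in (2), the defining formula \eqref{presymp1} with $(X_h,\dr h)$ in the role of $(X,\alpha)$ for (3), and relatedness through $\inc$ and $\pi_{(\rho)}$ for (4). The only cosmetic difference is that in (4) the paper routes the computation explicitly through $\omega_{(\rho)}$ while you go directly via $X_h^{(\rho)}(k_{(\rho)})\circ\pi_{(\rho)}=\tilde X_h(k\circ\inc)$; these are equivalent one-line rearrangements.
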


\begin{proof}
Since $h$ is admissible, there exists a vector field $X$ such that $(X,\dr h)$
is a section of $\mathsf{D}$. The one-form $\dr h$ is a $G$-invariant section 
of $\V^\circ$. Consider the $G$-invariant average $(X_G,(\dr h)_G)$ of 
$(X,\dr h)$ in a tube for the action of $G$ centered at the point $m$. Since 
$\dr h$ is $G$-invariant, we have $\dr h=(\dr h)_G$. Set $X_h:=X_G$; then 
$(X_h,\dr h)$ is a $G$-invariant descending section of $\mathsf{D}$.

Hence we have $X_h\in\Gamma(\D_G)\subset\Gamma(\D)$
and, consequently, the leaves $\J\inv(\rho)$ and 
$\J_G\inv(\J_G(m))$
of $\D$ and $\D_G$ are left invariant by the
flow $\phi$ of $X_h$; thus we can define the restriction $\tilde \phi_t$ of 
$\phi_t$ to $\J\inv(\rho)$ by $\iota_{(\rho)}\circ\tilde\phi_t=\phi_t\circ
\iota_{(\rho)}$.  Since $X_h$ is $G$-invariant, $\phi_t$
commutes with the $G$-action and, consequently,  $\tilde \phi_t$
commutes with the $G_\rho$-action. Define $\phi^{(\rho)}_t$ on $M_{(\rho)}$
by $\pi_{(\rho)}\circ \tilde \phi_t= \phi_t^{(\rho)}\circ 
\pi_{(\rho)}$ for all $t$ where the left hand side  is defined. 

Let $X_h^{(\rho)}$ be the vector field defined by the flow
$\phi^{(\rho)}$. Then we have 
\[
T\pi_{(\rho)}\tilde X_h=X_h^{(\rho)}\circ \pi_{(\rho)},
\]
where $\tilde X_h$ is the vector field on $\J\inv(\rho)$ that is
$\iota_{(\rho)}$-related to $X_h$ (that is, $\tilde X_h$ is the vector field
defined by the flow $\tilde \phi$ on $\J\inv(\rho)$).
For any $n\in \J\inv(\rho)$ where $h\circ\iota_{(\rho)}$ is defined and any
$\tilde X\in \mx(\J\inv(\rho))$ with flow $\phi^{\tilde X}$ defined on a
neighborhood of $n$, there exists $X\in\Gamma(\D)$ with flow $\phi^X$ such that
$T\iota_{(\rho)}\circ \tilde X=X\circ\iota_{(\rho)}$. We compute
\begin{align*}
\left(\ip{X_h^{(\rho)}}\omega_{(\rho)}\right)
\left(\pi_{(\rho)}(n)\right)(T_n\pi_{(\rho)}\tilde X(n))
&=\omega_{(\rho)}({\pi_{(\rho)}(n)})
\left(X_h^{(\rho)}(\pi_{(\rho)}(n)),T_{n}\pi_{(\rho)}\tilde X(n)\right)\\
&=\omega_{(\rho)}\left({\pi_{(\rho)}(n)}\right)
\left(T_{n}\pi_{(\rho)}\tilde X_h(n),T_n\pi_{(\rho)}\tilde X(n) \right)\\
&=(\pi_{(\rho)}^*\omega_{(\rho)})(n)\left(\tilde X_h(n),\tilde X(n) \right)\\
&\overset{\eqref{presymp1}}=\mathbf{d}h\left(\iota_{(\rho)}(n)\right)(X(\iota_{(\rho)}(n)))
=\mathbf{d}h\left(\iota_{(\rho)}(n)\right)(T_n\iota_{(\rho)}\tilde X(n))\\
&=\mathbf{d}(\iota_{(\rho)}^*h)(n)(
\tilde{X}(n))
=\mathbf{d}(\pi_{(\rho)}^*h_{(\rho)})(n)(\tilde X(n))\\
&=\mathbf{d}h_{(\rho)}\left(\pi_{(\rho)}(n)\right)(T_n\pi_{(\rho)}\tilde X(n)).
\end{align*}
Hence, we have shown the equality $\ip{X_h^{(\rho)}}\omega_{(\rho)}=\dr
h_{(\rho)}$ and, by the definition of $\mathsf{D}_{(\rho)}$, we get 
\[\left(X_h^{(\rho)},\dr h_{(\rho)}\right)\in\Gamma\left(\mathsf{D}_{(\rho)}\right),\]
which yields also the fact that $ h_{(\rho)}\in C^\infty\left(M_{(\rho)}\right)$ is
admissible.

We show the last statement of the theorem in the same manner. Let $X_k$ be the
$G$-invariant vector field such that $(X_k,\dr k)$ is a $G$-invariant
descending section of $\mathsf{D}$. Then we have
\begin{align*}
\{h_{(\rho)},k_{(\rho)}\}_{(\rho)}\circ\pi_{(\rho)}
&=-\left(\dr h_{(\rho)}\left(X_k^{(\rho)}\right)\right)\circ\pi_{(\rho)}
=-\left(\omega_{(\rho)}\circ {\pi_{(\rho)}}\right)
\left(X_h^{(\rho)}\circ\pi_{(\rho)},X_k^{(\rho)}\circ\pi_{(\rho)}\right)\\
&=-\left(\pi_{(\rho)}^*\omega_{(\rho)}\right)(\tilde X_h,\tilde X_k)
=-(\dr h)(X_k)\circ\iota_{(\rho)}=\{h,k\}\circ\iota_{(\rho)}\\
&=:(\{h,k\})_{(\rho)}\circ\pi_{(\rho)}.
\end{align*}
\end{proof}
\begin{remark}
As can be easily seen from this proof, an analogous theorem is true
for the optimal reduced Dirac spaces 
$(M_\sigma,\mathsf D_\sigma)$ and $(M_\sigma^G,\mathsf D_\sigma^G)$
if they are defined.
\end{remark}

\section{Comparison of Optimal and standard Dirac reduction}
\label{sec:comparison}
In this section, we compare the reduced Dirac manifolds 
obtained by the standard reduction method in 
\cite{JoRaSn11} with those obtained by optimal reduction, under the
assumption that all necessary conditions on the smooth generalized 
distribution $\mathsf D\cap(\T\oplus\V_G^\circ)$ are satisfied.
Since we know that the three optimal reduction methods are equivalent if 
they are all possible, we only consider the optimal (orbit type) point reduction
method in this section.

Assume that $\mathsf{D}\cap(\T\oplus\V_G^\circ)$ is spanned pointwise 
by its descending sections and that $\D$ is integrable. Let $\J:M\to M/\D$
be the corresponding orbit optimal momentum map.
Since  $\mathsf{D}\cap(\T\oplus\V_G^\circ)$ is spanned pointwise by the
descending sections of $\mathsf{D}$, the Dirac structure $\mathsf{D}$ 
induces a  Dirac structure on each stratum of the quotient space $M/G$
(Theorems \ref{singred} and \ref{singred2}). 
The following theorem gives the relation between the strata of $M/G$ 
endowed with these reduced structures and the  reduced manifolds
$(M_{(\rho)},\mathsf{D}_{(\rho)})$ given by the (orbit type) Dirac optimal 
reduction theorem (under the assumption that $G_{(\rho)}$ acts properly on
$\mathcal{J}^{-1}(\rho)$). 

\begin{theorem}\label{presympl} Let $m\in\mathcal{J}^{-1}(\rho)$ for some 
$\rho\in M/\D$ such that $G_{(\rho)}$ acts properly on $\mathcal{J}^{-1}
(\rho)$. Then, if $P$ is the connected component through $m$ of the orbit
type manifold $M_{(G_m)}$, we have $\mathcal{J}^{-1}(\rho)\subseteq
P$. The reduced manifold $M_{(\rho)}$ is diffeomorphic to the 
presymplectic leaf $\bar N$ through  $\pi(m)$ of the reduced Dirac 
manifold $(\bar P,\mathsf{D}_{\bar P})$, where $\bar{P}=\pi(P)$ is the 
stratum of $M/G$ through $\pi(m)$, via the map
  $\Pi: M_{(\rho)}\to \bar N$, $\pi_{(\rho)}(x)\mapsto (\pi\circ
  \iota_{(\rho)})(x)$. Furthermore, $\Pi^*\omega_{\bar N}=\omega_{(\rho)}$, where $\omega_{\bar N}$ is the presymplectic form on $\bar N$.
\end{theorem}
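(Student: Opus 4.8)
The plan is to establish the diffeomorphism $\Pi$ first, then check it intertwines the presymplectic forms. First I would observe that, since $m\in\mathcal J^{-1}(\rho)$ and $\mathcal J^{-1}(\rho)$ is a leaf of $\mathcal D\subseteq\mathcal T$, and the leaves of $\mathcal T$ are the connected components of orbit type manifolds, the whole leaf $\mathcal J^{-1}(\rho)$ sits inside the connected component $P$ of $M_{(G_m)}$ through $m$; in particular every point of $\mathcal J^{-1}(\rho)$ has isotropy conjugate to $G_m$. Since $P$ is an embedded submanifold of $M$, the inclusion $\iota_{(\rho)}:\mathcal J^{-1}(\rho)\hookrightarrow M$ factors through a (still injective, immersed, initial) map into $P$, and $\pi\circ\iota_{(\rho)}$ takes values in the stratum $\bar P=\pi(P)$. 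The image is contained in a single leaf $\bar N$ of $\mathsf G_1^{\bar P}$ (the presymplectic leaf of $\mathsf D_{\bar P}$) through $\pi(m)$: this follows because the vector fields spanning $\mathcal D$ push forward, along $\pi$, to vector fields on $\bar P$ that lie in $\mathsf D_{\bar P}$ by construction of $\mathsf D_{\bar P}$ from the descending sections, so the $\pi$-image of the accessible set of $\mathcal D$ through $m$ is contained in the accessible set of $\mathsf G_1^{\bar P}$ through $\pi(m)$.

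Next I would show $\Pi$ is well defined and bijective onto $\bar N$. Well-definedness: if $\pi_{(\rho)}(x)=\pi_{(\rho)}(y)$ then $y=g\cdot x$ for some $g\in G_{(\rho)}$, and $(\pi\circ\iota_{(\rho)})(y)=\pi(g\cdot x)=\pi(x)=(\pi\circ\iota_{(\rho)})(x)$. Injectivity: if $(\pi\circ\iota_{(\rho)})(x)=(\pi\circ\iota_{(\rho)})(y)$ then $y=g\cdot x$ for some $g\in G$; one must argue $g\in G_{(\rho)}$, i.e. $g$ maps the leaf $\mathcal J^{-1}(\rho)$ to itself. Since $G$ maps $\mathcal D$-leaves to $\mathcal D$-leaves (Proposition \ref{inducedaction}), $g\cdot\mathcal J^{-1}(\rho)$ is the leaf through $y=g\cdot x\in\mathcal J^{-1}(\rho)$, hence equals $\mathcal J^{-1}(\rho)$; thus $g\in G_{(\rho)}$ and $x,y$ are in the same $G_{(\rho)}$-orbit. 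Surjectivity onto $\bar N$: any point of $\bar N$ is reached from $\pi(m)$ by flows of vector fields on $\bar P$ in $\mathsf D_{\bar P}$; lifting these flows (the spanning sections of $\mathsf D\cap(\mathcal T\oplus\mathcal V_G^\circ)$ are descending, so their flows are $\pi$-related to the flows downstairs), one reaches a point of $\mathcal J^{-1}(\rho)$ over it. For smoothness of $\Pi$ I would use the commuting relation $\Pi\circ\pi_{(\rho)}=\pi\circ\iota_{(\rho)}$ together with the fact that $\pi_{(\rho)}$ is a surjective submersion (Theorem \ref{red_theorem1}) and $\iota_{(\rho)}$, $\pi$ are smooth; for smoothness of $\Pi^{-1}$ I would either exhibit a smooth local inverse by lifting, or — cleanest — compare dimensions: $\dim M_{(\rho)}=\dim\mathcal J^{-1}(\rho)-\dim G_{(\rho)}+\dim G_m=\dim\mathcal D(m)-\dim(\mathcal V(m)\cap\mathcal D(m))$, which matches $\dim\bar N=\dim(\text{projection of }\mathcal D(m))$, and then argue $\Pi$ is an injective immersion between manifolds of equal dimension, hence a diffeomorphism onto $\bar N$, since $\bar N$ with its leaf topology is an initial submanifold.

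Finally I would verify $\Pi^*\omega_{\bar N}=\omega_{(\rho)}$ by pulling back along $\pi_{(\rho)}$, which suffices as $\pi_{(\rho)}$ is a surjective submersion. Fix $x\in\mathcal J^{-1}(\rho)$ and tangent vectors $v,w$; pick $(X,\alpha),(Y,\beta)\in\Gamma(\mathsf D\cap(\mathcal T\oplus\mathcal V_G^\circ))$ with $T\iota_{(\rho)}v=X(\iota_{(\rho)}(x))$, $T\iota_{(\rho)}w=Y(\iota_{(\rho)}(x))$. By definition \eqref{presymp1}, $(\pi_{(\rho)}^*\omega_{(\rho)})(x)(v,w)=\alpha(Y)(\iota_{(\rho)}(x))$. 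On the other side, $X,Y$ push forward to $\bar X=\pi_*X,\bar Y=\pi_*Y$ tangent to $\bar P$ with $(\bar X,\bar\alpha),(\bar Y,\bar\beta)\in\Gamma(\mathsf D_{\bar P})$ where $\bar\alpha=\pi_*\alpha$, $\bar\beta=\pi_*\beta$; by the defining formula \eqref{induceddiracN} for $\omega_{\bar N}$ on the presymplectic leaf, $\omega_{\bar N}(\bar X,\bar Y)=\bar\alpha(\bar Y)$, and since $\pi^*(\bar\alpha(\bar Y))=\alpha(Y)$ (the descending-section push-forward identity recalled in \S\ref{subsec_descending_sections}), pulling this back along $\iota_{(\rho)}$ and using $\pi\circ\iota_{(\rho)}=\Pi\circ\pi_{(\rho)}$ gives $(\pi_{(\rho)}^*\Pi^*\omega_{\bar N})(x)(v,w)=(\alpha(Y))(\iota_{(\rho)}(x))$, matching $\pi_{(\rho)}^*\omega_{(\rho)}$. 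The main obstacle I anticipate is the careful bookkeeping in the injectivity/surjectivity-onto-$\bar N$ step — in particular making precise that the $\pi$-image of the $\mathcal D$-leaf is exactly the $\mathsf G_1^{\bar P}$-leaf $\bar N$ and not something smaller or larger, which rests on the fact that the spanning sections are descending so flows upstairs and downstairs correspond bijectively once one is on a fixed fiber $\mathcal J^{-1}(\rho)$; the presymplectic-form identity is then essentially forced by the two defining formulas and the push-forward relation for descending one-forms.
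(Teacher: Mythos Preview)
Your plan matches the paper's proof in almost every step: well-definedness, injectivity, surjectivity onto $\bar N$ (all via the correspondence between flows of descending sections upstairs and flows of $\mathsf G_1^{\bar P}$-sections downstairs), smoothness of $\Pi$ from the relation $\Pi\circ\pi_{(\rho)}=\pi\circ\iota_{(\rho)}$, and the verification $\Pi^*\omega_{\bar N}=\omega_{(\rho)}$ by unwinding \eqref{presymp1} and \eqref{induceddiracN}. These parts are fine.

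The gap is in your argument for smoothness of $\Pi^{-1}$. Your dimension count asserts $\dim G_{(\rho)}-\dim G_m=\dim(\mathcal V(m)\cap\mathcal D(m))$, i.e.\ that the tangent space to the $G_{(\rho)}$-orbit through $m$ equals $\mathcal V(m)\cap\mathcal D(m)$. One inclusion is clear, but the other requires: if $\xi_M(m)\in\mathcal D(m)$ at the single point $m$, then $\xi\in\mathfrak g_{(\rho)}$, i.e.\ $\xi_M(m')\in\mathcal D(m')$ for \emph{every} $m'\in\mathcal J^{-1}(\rho)$. The paper proves the analogous identity \eqref{dim_g_sigma} only for the \emph{isotropy-type} case $\mathcal D_G$, where the spanning vector fields are $G$-invariant and hence commute with $\xi_M$, so the condition propagates along flows. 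For $\mathcal D$, the spanning vector fields are merely descending, and there is no reason the pointwise condition $\xi_M(m)\in\mathcal D(m)$ should propagate. The same unproved identity is exactly what you would need to show $\Pi$ is an immersion (the kernel of $T(\pi\circ\iota_{(\rho)})$ must coincide with the kernel of $T\pi_{(\rho)}$), so both halves of your dimension/immersion route rest on a claim that is not established and not obviously true.

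The paper sidesteps this entirely. It observes that both $\mathcal J^{-1}(\rho)$ and $\bar N$ carry their \emph{accessible-set} (leaf) manifold structures, determined respectively by the families $F=\{X:\,(X,\alpha)\text{ descending in }\mathsf D\}$ and $\bar{\mathsf G}_1=\{\bar X:\,(\bar X,\bar\alpha)\in\bar{\mathcal D}\}$; since these two families correspond bijectively under $\pi$ (every descending $X$ pushes down, and every $\bar X$ lifts), the leaf topology on $\bar N$ is precisely the quotient topology induced by $\pi\circ\iota_{(\rho)}$ from $\mathcal J^{-1}(\rho)$, and more: a map out of $\bar N$ is smooth iff its composite with $\pi\circ\iota_{(\rho)}$ is smooth. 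Applying this to $\Pi^{-1}$, whose composite with $\pi\circ\iota_{(\rho)}$ is $\pi_{(\rho)}$, gives smoothness of $\Pi^{-1}$ directly, with no dimension bookkeeping and no need to compute $\mathfrak g_{(\rho)}$.
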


\begin{proof} 
We begin by showing
that the map $\Pi$ is well-defined. Let $x, y \in
\mathcal{J}^{-1}(\rho)$ be such that $\pi_{(\rho)}(x)=\pi_{(\rho)}(y)$. Then there
exists $g\in G_{(\rho)}\subseteq G$ such   that $\Phi^{(\rho)}_g(x)=y$ which implies 
that $\Phi_g(\iota_{(\rho)}(x))=\iota_{(\rho)}(y)$ and 
$\pi(\iota_{(\rho)}(x))=\pi(\iota_{(\rho)}(y))$.
Thus, it remains to show that $\pi(\iota_{(\rho)}(x))\in \bar{N}$. 
Since $x\in\mathcal{J}^{-1}(\rho)$, and
by definition of the integral leaves of $\mathcal{D}$, 
the points $\iota_{(\rho)}(m)$ and $\iota_{(\rho)}(x)$ can be joined by a broken path 
consisting of finitely many pieces of integral curves of descending
sections of $\mathcal{D}$ belonging to descending pairs of $\mathsf{D}$. 
To simplify the notation, we shall write in what
follows simply $x'$ for $\iota_{(\rho)}(x)$ and $m'$ for $\iota_{(\rho)}(m)$. 
Assume, without
loss of generality, that one such curve suffices,
i.e., that $x'=\phi_t(m')$, where $\phi$ is the flow of
a vector field $X\in\Gamma(\mathcal{D})$ for which
there exists $\alpha\in\Omega^1(M)$ such that  $(X,\alpha)$ is a descending
section of $\mathsf{D}$. Since
$X$ is a descending vector field, it can be written as a sum 
$X=V+X^G$ with $X\in\mx(M)^G$ and $V\in\Gamma(\V)$. Then
$[X^G,V]=0$ and we have $\phi_t=\phi_t^{G}\circ
\phi_t^{V}=\phi_t^{V} \circ \phi_t^{G}$, where
$\phi_t^{G}$ and $\phi_t^{V}$ are the flows of $X^G$ and
$V$, respectively. Let
$\bar{\phi}$ be the flow on $\bar{M}$ induced by $\phi $, i.e., $\pi\circ
\phi_s=\pi\circ \phi_s^{V} \circ \phi_s^{G}=\pi\circ
\phi_s^{G}=\bar{\phi}_s\circ \pi$ for all $s$. 
This flow $\bar{\phi}$ generates a vector field $\bar{X}$ on $\bar{M}$ such that
$X\sim_\pi \bar{X}$. Since $(X,\alpha)$ is 
a descending section of $\mathsf{D}\cap(\T\oplus\V^\circ_G)$, we know 
by the definition of the reduced Dirac structure on $\bar P$  that
\begin{equation}\label{sectionofredDirac}
(X_{\bar P},\alpha_{\bar P})\in\Gamma(\mathsf{D}_{\bar P}),
\end{equation}
 where $X_\bp$ is the
restriction to $\bar P$ of $\bar X$ and $\alpha_\bp$ is the restriction of
$\bar \alpha$ to $\bp$, the ``one-form''  $\bar \alpha\in\Omega^1(\bar M)$ being such that
$\pi^*\bar\alpha=\alpha$. Here, we know that $X_\bp\in\mx(\bp)$ because the
flow of $\bar X$ through points in $\bp$ remains in $\bp$. 
We have 
\[(\bar{\phi}_t\circ\pi)(m')=(\pi\circ\phi_t)(m')=\pi(x'),\]
 which yields, using \eqref{sectionofredDirac} and the fact that $\bar\phi_t\an{\bp}$ 
is the
flow of $X_\bp$, that $\pi(\iota_{(\rho)}(x))$ 
and $\pi(\iota_{(\rho)}(m))$ lie in the same presymplectic leaf $\bar{N}$ of 
$(\bar{P},\mathsf{D}_\bp)$. This concludes the proof that $ \Pi : M _\rho
\rightarrow  \bar{N}$ is well defined.

To prove that $\Pi$ is injective, choose $x, y\in\mathcal{J}^{-1}(\rho)$ 
such that $\pi(\iota_{(\rho)}(x))=\pi(\iota_{(\rho)}(y))$. 
Then there 
exists $g\in G$ satisfying \[\Phi_g(\iota_{(\rho)}(x))=\iota_{(\rho)}(y).\] 
This shows that $g\in G_\rho$ 
and $\Phi^{(\rho)}_g(x)=y$, so we get $\pi_{(\rho)}(x)=\pi_{(\rho)}(y)$. 

For the surjectivity of $\Pi$ choose $\pi(x)\in\bar{N}$ and assume, again 
without loss of generality, that
\[
\pi(x)=\phi^\bp_t(\pi(\iota_{(\rho)}(m))),
\] 
where  $\phi^\bp$ is the flow of some $X_\bp\in
\mx(\bar P)$, such that there exists $\alpha_\bp\in\Omega^1(\bar P)$
with 
$(X_\bp,\alpha_\bp)\in\Gamma(\mathsf{D}_\bp)$. Choose a descending 
section $(X,\alpha)\in\Gamma(\mathsf{D})$  and $(\bar{X},\bar\alpha)
\in\bar \D$  such that $X\sim_\pi \bar X$, $\alpha=\pi^*\bar\alpha$, and 
$(\bar X,\bar\alpha)\an{\bp}=(X_\bp,\alpha_\bp)$. 
The pairs $(X,\alpha)$ and $(\bar X,\bar\alpha)$ exist by the proof of the
reduction Theorem \ref{singred}. Then
the flows $\phi$ of $X$ and $\bar \phi$ of $\bar X$ satisfy $\pi\circ \phi_s
=\bar \phi_s\circ \pi$ for all
$s$ and $\phi_s$ restricts to 
$\mathcal{J}^{-1}(\rho)$ since $X$ is a descending
section of $\D$. If we define
$x'\in\J\inv(\rho)$ by $\iota_{(\rho)}(x')=\phi_t(\iota_{(\rho)}(m))$, we get,
using the fact that $\bar \phi_t\an{\bp}=\phi_t^\bp$,
\[\pi(\iota_{(\rho)}(x'))=(\pi\circ\phi_t)(\iota_{(\rho)}(m))
=(\bar \phi_t\circ\pi)(\iota_{(\rho)}(m))=\pi(x)\] 
and hence  $\Pi(\pi_{(\rho)}(x'))=\pi(x)$. 

Note that we have simultaneously shown that
$\pi(\mathcal{J}^{-1}(\rho))\subseteq \bar P$ is equal, as a set, to $\bar
N$. Moreover, we claim that the topology of $\bar N$ (which is in general
\emph{not} the relative topology induced from the topology on $\bar P$) 
is the quotient topology defined by the topology of $\mathcal{J}^{-1}(\rho)$, 
that is, a set is open in $\bar{N}$ if and only if its preimage under
$\pi\circ\iota_{(\rho)}$ is open in $\mathcal{J}^{-1}(\rho)$. This is proved in the following way.

Denote by $\bar{\mathsf{G}}_1\subseteq \mx(\bar M)$ the set of vector 
fields $\bar{X}$ on $\bar{M}$ such that there exists $\bar\alpha\in
\Omega^1(\bar M)$ with  $(\bar X,\bar\alpha)\in\bar\D$.
The presymplectic leaf $\bar{N}$ containing $\bar{m}$ can be seen as 
the accessible set of $\bar{\mathsf{G}}_1$ through $\bar m$, since 
$\bar{P}$  is the accessible set through $\bar m$ of the family of all 
vector fields on $\bar{M}$. The topology on $\bar{N}$ is the relative 
topology induced on $\bar{N}$ by a topology we call the
$\bar{\mathsf{G}}_1$-topology on $\bar M$: this is the strongest topology on
$\bar{M}$ such that all the maps
\begin{equation*}
\begin{array}{ccc}
U&\longrightarrow& \bar M\\
(t_1,\ldots,t_k)&\longmapsto &\left(\bar \phi_{t_1}^{\bar X_1}\circ\dots\circ
\bar{\phi}_{t_k}^{\bar{X}_k}\right)(\bar{m})
\end{array}
\end{equation*} 
are continuous, where $\bar{m}\in\bar{M}$, $\bar{\phi}_{t_i}^{\bar{X}_i}$ 
is the flow of a vector field $\bar{X}_i\in\bar{\mathsf{G}}_1$ for 
$i=1,\ldots, k$, and  $U\subseteq \mathbb{R}^k$ is an
appropriate open set in $\mathbb{R}^k$. 
In the same manner, because $\mathcal{J}^{-1}(\rho)$ is an accessible 
set of the family
\[
F:=\{X\in\mathfrak{X}(M)\mid\exists\alpha\in\Omega^1(M)\text{ such that } (X,\alpha)
\text{ is a descending section of } \mathsf{D}\},  
\]
the topology on $\mathcal{J}^{-1}(\rho)$ is  the relative topology
induced on $\mathcal{J}^{-1}(\rho)$ by the
topology we call the $\D$-topology on $M$: 
this is the strongest topology on
$M$ such that all the maps
\begin{equation*}
\begin{array}{ccc}
U&\longrightarrow& M\\
(t_1,\ldots,t_k)&\longmapsto &\left(\phi_{t_1}^{X_1}\circ\dots\circ
\phi_{t_k}^{ X_k}\right)(m)
\end{array}
\end{equation*} are continuous,
where $m\in M$, $\phi_{t_i}^{X_i}$ is the flow of a vector
field $X_i\in F$ for $i=1,\ldots, k$, and 
$U\subseteq \mathbb{R}^k$ is an
appropriate open set in $\mathbb{R}^k$.

Now our claim is easy to show, using the fact that for each section $X_\bp$
in $\bar{\mathsf{G}}_1$, there exists a descending section $(X,\alpha)$ of
$\mathsf{D}$ such that $X\sim_{\pi}\bar X$ and hence $\bar
\phi_t^{\bar X}\circ\pi=\pi\circ \phi_t^X$. Conversely, for each descending section
$(X,\alpha)$ of $\mathsf{D}$, the vector field  $\bar X$ satisfying
$X\sim_\pi\bar X$ is an element  of $\bar{\mathsf{G}}_1$ and we have $ \bar
\phi_t^{\bar X}\circ\pi=\pi\circ \phi_t^X$.
Hence, a map $f:\bar N\to Q$ is smooth if and only if
$f\circ(\pi\circ\iota_{(\rho)}):{\mathcal{J}^{-1}(\rho)}\to Q$
is smooth, where $Q$ is an arbitrary smooth manifold. 
Thus we have shown that $\bar{N} = \pi( \mathcal{J} ^{-1}( \rho)) \subseteq \bar{P}$ as topological spaces.

Finally, the smoothness of $\Pi$ and of its inverse $\Pi^{-1}:\bar{N}\to 
M_\rho$, $\pi(\iota_{(\rho)}(x))\mapsto \pi_\rho(x)$ follow from  the following 
commutative 
diagrams:
\begin{displaymath}
\begin{xy}
\xymatrix{ 
\mathcal{J}^{-1}(\rho) \ar[r]^{\iota_{(\rho)}}\ar[d]_{\pi_{(\rho)}}& M\ar[d]^{\pi} \\
M_{(\rho)} \ar[r]_{\iota_{\bar N}\circ\Pi}& \bar M}
\end{xy}
\qquad\qquad\qquad
\begin{xy}
  \xymatrix{ 
\mathcal{J}^{-1}(\rho)\ar[rd]^{\pi_{(\rho)}}  
\ar[d]_{\pi\circ\iota_{(\rho)}}   &               \\
      \bar N \ar[r]_{\Pi^{-1}}   &     M_{(\rho)}
                      }
\end{xy}
\end{displaymath}
Consider the first diagram.  Let
$\iota_{\bp,\bar M}:\bp\hookrightarrow \bar M$ and 
$\iota_{\bar N,\bp}:\bar N\hookrightarrow \bp$ be the inclusions.
Since $\pi\circ \iota_{(\rho)}$ is smooth, we have
automatically (by the quotient manifold structure on $M_{(\rho)}$) that
$\iota_{\bar{N}}\circ \Pi=\iota_{\bp,\bar M}\circ\iota_{\bar N,\bp}\circ\Pi$
is smooth.  Since $\bar{N}$ is an initial submanifold of $\bar{P}$ and 
$\bar{P}$ is a stratum of $\bar{M}$, the smoothness of 
$\Pi$ follows.  With the
considerations above and, because $\pi_{(\rho)}$ is smooth,
we get the smoothness of $ \Pi^{-1}$ with the second diagram.

Now we show that $\Pi$ is a \emph{presymplectomorphism}, i.e., 
$\Pi^*\omega_{\bar{N}}=\omega_\rho$. Let $\pi_\rho(x)\in M_\rho$, 
$x\in\mathcal{J}^{-1}(\rho)$, and $v,w\in T_x\mathcal{J}^{-1}(\rho)$,
i.e., 
we have $T_x\iota_{(\rho)} v$, $T_x\iota_{(\rho)}
w\in\mathcal{D}(\iota_{(\rho)}(x))$. Then find $X_\bp,Y_\bp 
\in \mx(\bp)$ and $\alpha_\bp, \beta_\bp \in 
\Omega^1(\bar{P})$ such that $(X_\bp,\alpha_\bp), (Y_\bp,\beta_\bp)
\in \Gamma(\mathsf{D}_\bp)$,  $T_x(\Pi\circ\pi_{(\rho)})v=
T_x(\pi\circ \iota_{(\rho)}) v=X_\bp (\pi(\iota_{(\rho)}(x)))$, and
$T_x(\Pi\circ\pi_{(\rho)})w=T_x(\pi\circ \iota_{(\rho)})
w=Y_\bp(\pi(\iota_{(\rho)}(x)))$. 
Choose $(\bar X,\bar \alpha), (\bar Y,\bar \beta)\in \bar\D$ and 
$X,Y\in\mx(M)$ such that $(X,\pi^*\bar\alpha), (Y,\pi^*\bar\beta)$ are
descending sections of $\mathsf{D}$,
$X\sim_\pi \bar{X}$, 
$Y\sim_\pi\bar{Y}$, and $(X_\bp,\alpha_\bp)$ and $(Y_\bp,\beta_\bp)$ are the
restrictions to $\bar P$ of $(\bar X,\bar \alpha)$ and $ (\bar Y,\bar \beta)$. 
Then we get
\begin{align*}
(\Pi^*\omega_{\bar{N}})(\pi_{(\rho)}(x)) \left( T_x\pi_{(\rho)} v,T_x\pi_{(\rho)} w \right) 
&={\omega_{\bar{N}}} \left((\Pi\circ \pi_{(\rho)})(x) \right) 
(T_x(\Pi\circ\pi_{(\rho)} )v,T_x(\Pi\circ\pi_{(\rho)}) w)\\
&={\omega_{\bar{N}}}((\pi\circ \iota_{(\rho)})(x))
\left(T_x(\pi\circ \iota_{(\rho)} )v,T_x(\pi\circ \iota_{(\rho)}) w\right) \\
&={\omega_{\bar{N}}}((\pi\circ \iota_{(\rho)})(x)) 
\left(X_\bp(\pi(\iota_{(\rho)}(x))),Y_\bp(\pi(\iota_{(\rho)}(x))) \right) \\
&=\alpha_\bp((\pi\circ \iota_{(\rho)})(x)) \left(Y_\bp(\pi(\iota_{(\rho)}(x))
\right) 
=\alpha(\iota_{(\rho)}(x))\left(Y(\iota_{(\rho)}(x)) \right) \\
&={\omega_{(\rho)}}(\pi_{(\rho)}(x)) \left(T_x\pi_{(\rho)} v,T_x\pi_{(\rho)} w\right), 
\end{align*}
where the last equality is the definition of $\omega_{(\rho)}$.
\end{proof}

\section{Examples}
\label{sec:examples}
\begin{example}\label{ex1}
We consider the example of the proper action $\Phi$ of $G:=\mathbb{S}^1\simeq 
\R/(2\pi\Z)$ 
on  $M:=\R^3$ given by  
\[
\Phi_ \alpha(x,y,z)=\alpha\cdot(x,y,z)=(x\cos\alpha-y\sin\alpha,
x\sin\alpha+y\cos\alpha,z).
\]
The orbit and isotropy types of this action coincide 
since the Lie group is Abelian. They are $P_1=\{0\}\times\{0\}\times\R$,
$P_1=M_{H_1}$ with $H_1=\mathbb{S}^1$, and $P_2=\R^3\setminus
P_1$, so $P_2=M_{H_2}$ with $H_2=\{e\}$. 
The orbit of a point $(x,y,z)\in\R^3$ is $\{(x',y',z')\in\R^3\mid
x'^2+y'^2=x^2+y^2\text{ and } z'=z\}$. Thus the reduced space $\bar{M}$
can be identified with $[0,+\infty)\times \R$ with the projection $\pi$ given by
\[
[0, \infty) \times \mathbb{R} \ni (\bar{x}, \bar{z}) : =\pi(x,y,z)=(x^2+y^2,z).
\] 
It is easy to compute, for each $\alpha\in\mathbb{S}^1$:
\begin{align*}
\Phi_\alpha^*(\partial_x)&=\cos\alpha\partial_x-\sin\alpha\partial_y,\quad
\Phi_\alpha^*(\partial_y)=\sin\alpha\partial_x+\cos\alpha\partial_y,\quad
\Phi_\alpha^*(\partial_z)=\partial_z
\end{align*}
and
\begin{align*}
\Phi_\alpha^*(\dr x)&=\cos\alpha\dr x-\sin\alpha\dr y,\quad
\Phi_\alpha^*(\dr y)=\sin\alpha\dr x+\cos\alpha\dr y,\quad
\Phi_\alpha^*(\dr z)=\dr z.
\end{align*}
Hence, the Dirac structure $\mathsf{D}$ given as the span of the sections 
\[ 
(\partial_x,\dr y), \quad (\partial_y,-\dr x), \quad (\partial_z,0)\]
is $\mathbb{S}^1$-invariant, that is, the Lie group 
$\mathbb{S}^1$ acts on $(M,\mathsf{D})$ in a Dirac manner.

In Example \ref{exap1}, we carried out
the explicit computation showing that
$$\T(m)=\T_G(m)=\erz_{\R}\{\partial_z,
x\partial_{x}+y\partial_{y},
x\partial_{y}-y\partial_{x}\}$$
for all $m=(x,y)\in\R^2$.
The set $\mathsf{D}\cap(\T\oplus\V_G^\circ)$ is equal to the 
set $\mathsf{D}\cap(\T_G\oplus\V_G^\circ)$, since the orbit type manifolds
coincide with the isotropy type manifolds, and  is spanned by the sections 
\[ 
(y\partial_x-x\partial_y,x\dr x+y\dr y) \quad \text{and} \quad 
(\partial_z,0)\quad \text{of} \quad  \mathsf{D}.   
\]
These sections are exact $G$-invariant descending sections of 
$\mathsf{D}$ and hence $\D_G=\D$ is completely integrable. Indeed, it 
is easy to see that since $\D$
is spanned pointwise by the vector fields  $\partial_z$ and $y\partial_x-x\partial_y$,
it is integrable and its leaves are the cylinders $M_r:=\{(x,y,z)\in\R^3\mid x^2+y^2=r\}$ for all
$r>0$ and the line $\{(0,0,z)\mid z\in\R\}=:M_0$. 

We identify the leaf space of $\D$ with the closed set $[0,\infty)$;
$\J(x,y,z)=x^2+y^2$ for all $(x,y,z)\in\R^3$. It is easy to see that
$G_r=\sfe$ for all $r\in [0,\infty)$. The Lie group $\sfe$ acts on $M_r$ by
rotations for $r>0$ and trivially for $r=0$. The reduced spaces are then all
lines with trivial presymplectic form.

\medskip

It has been shown in \cite{JoRaSn11}  that the Dirac structures on 
$\bar{M}_1=P_1/\sfe\simeq \R$ and $\bar{M}_2=P_2/\sfe\simeq \R^2$
are given by $\mathsf{D}_{\bar M_1}(\bar{z})=
\operatorname{span}\{(\partial_{\bar{z}},0)\}$ and
$\mathsf{D}_{\bar M_2}(\bar{x},\bar{z})= 
\operatorname{span}\{(\partial_{\bar{z}},0),
(0,\bar{x}\mathbf{d}\bar{x})\}$. Thus, the 
symplectic leaves are all lines with trivial Dirac structures and we recover 
the result of the correspondence theorem.

\end{example}

\begin{example}\label{ex2}
We consider here another example from \cite{JoRaSn11}. Let $M=\R^3\times\R^3$ with the 
(automatically proper) diagonal action of $G=\sfe\simeq \R/(2\pi\Z)$ on it, 
i.e., 
\[
\begin{array}{cccc}
\Phi:&\sfe\times(\R^3\times\R^3)&\longrightarrow&\R^3\times\R^3\\
&\left(\alpha,\begin{pmatrix}x_1\\
y_1\\z_1
\end{pmatrix},\begin{pmatrix}x_2\\
y_2\\z_2
\end{pmatrix}\right)&\longmapsto&
\left(\begin{pmatrix}x_1\cos\alpha-y_1\sin\alpha\\
x_1\sin\alpha+y_1\cos\alpha\\
z_1\end{pmatrix},
\begin{pmatrix}x_2\cos\alpha-y_2\sin\alpha\\
x_2\sin\alpha+y_2\cos\alpha\\z_2
\end{pmatrix}\right).
\end{array}
\]
The functions 
\begin{align*}
&r_1(v,w)=x_1^2+y_1^2=\left\|\begin{pmatrix}
x_1\\
y_1
\end{pmatrix}\right\|^2,\qquad 
r_2(v,w)=x_2^2+y_2^2=\left\|\begin{pmatrix}
x_2\\
y_2
\end{pmatrix}\right\|^2,\\
&d(v,w)=x_1y_2-y_1x_2=\det\begin{pmatrix}
x_1&x_2\\
y_1&y_2
\end{pmatrix},\qquad 
s(v,w)=x_1x_2+y_1y_2=\left\langle\begin{pmatrix}
x_1\\
y_1
\end{pmatrix},\begin{pmatrix}
x_2\\
y_2
\end{pmatrix}\right\rangle,\\
&z_1(v,w)=z_1,\qquad z_2(v,w)=z_2\\
\end{align*}
are $\sfe$-invariant. They also characterize the
$\sfe$-orbits of the action since $d$ and $s$ determine
in a unique way the angle between the vectors $(x_1,y_1)$ and $(x_2,y_2)$. 
Hence, the reduced
manifold is the stratified space $\bar M=\pi(\R^3\times\R^3)\subseteq \R^6$,
where $\pi:\R^3\times\R^3\to\R^6$ is given by
$\pi(v,w)=(r_1,r_2,d,s,z_1,z_2)(v,w)$. We conclude that $\bar{M}$ is the 
semi-algebraic set $\bar{M}=\{(r_1,r_2,d,s,z_1,z_2)
\in\R^6\mid r_1,r_2\geq 0 \text{ and } s^2+d^2=r_1r_2\}$.

The two strata of $\bar{M}$ are $\bar M_0=\{(0,0,0,0,z_1,z_2)\in
\R^6\}$, corresponding to the orbit (isotropy) type manifold
$M_{\sfe}=M_{(\sfe)}= \{(0,0,z_1,0,0,z_2)\in\R^6\}$ with trivial 
$\sfe $-action on it, and 
$\bar M_1=\{(r_1,r_2,d,s,z_1,z_2)\in\R^6\mid (r_1,r_2)\neq(0,0)
\text{ and } d^2+s^2=r_1r_2\}$, 
corresponding to the orbit (isotropy) type manifold
$M_{\{0\}}=M_{(\{0\})}= \{(x_1,y_1,z_1,x_2,y_2,z_2)\in \R^6\mid
(x_1,y_1)\neq (0,0) \text{ or } (x_2,y_2)\neq(0,0)\}$.

Define $U: = \R_{>0}\times\R^4 \subset \R^5$.
Since the points $(r_1,r_2,d,s,z_1,z_2)$ in $\bar M_1$ satisfy
$r_1>0$ or $r_2>0$, we have two charts for $\bar{M}_1$, namely 
$(\psi_1(U), \psi_1^{-1})$ and $(\psi_2(U), \psi_2^{-1})$, where
\[\begin{array}{lccc}
\psi_1:&\R_{>0}\times\R^4 &\rightarrow& \bar M_1\\
&(r_1,d,s,z_1,z_2,)&\mapsto& 
\left(r_1,\frac{d^2+s^2}{r_1},d,s,z_1,z_2\right)\end{array},
\begin{array}{lccc}
\psi_1^{-1}:&\psi_1(U)\subseteq \bar M_1& \rightarrow
&  \R_{>0}\times\R^4\\
&(r_1,r_2,d,s,z_1,z_2)& \mapsto& (r_1,d,s,z_1,z_2)
\end{array}
\]
and 
\[\begin{array}{lccc}
\psi_2:& \R_{>0}\times\R^4& \rightarrow& \bar M_1\\
&(r_2,d,s,z_1,z_2)& \mapsto& 
\left(\frac{d^2+s^2}{r_2},r_2,d,s,z_1,z_2\right)\end{array},
\begin{array}{lccc}
\psi_2^{-1}:&\psi_2(U)\subseteq \bar M_1& \rightarrow
&  \R_{>0}\times\R^4\\
&(r_1,r_2,d,s,z_1,z_2)& \mapsto& (r_2,d,s,z_1,z_2).
\end{array}
\]
Since $\mathcal{V}_G^\circ (m) 
= \left\{\mathbf{d}f(m) \mid f \in C ^{\infty}(M)^G \right\}$ (Lemma 5.8 in \cite{JoRaSn11}),  we have 
for all $m=(x_1,y_1,z_1,x_2,y_2,z_2)\in \R^6$
\[
\V_G^\circ(m) =\erz_{\R}
\left\{\begin{array}{c}
\dr z_1,\quad \dr z_2,\quad x_1\dr x_1+y_1\dr y_1,\quad
x_2\dr x_2+y_2\dr y_2, \\
x_1\dr y_2+y_2\dr x_1-x_2\dr y_1-y_1\dr x_2,\quad
x_1\dr x_2+x_2\dr x_1+y_1\dr y_2+y_2\dr y_1 
\end{array}
\right\},
\]
and, as shown in Example \ref{exap2}, 
\[
\T(m)=\T_G(m)=\erz_{\R}
\left\{\begin{array}{l}
X_1:=\partial_{ z_1},\quad X_2:=\partial_{ z_2},\\
 X_3:=x_1\partial_{x_1}+y_1\partial_{y_1}, \quad
X_4:=x_2\partial_{x_2}+y_2\partial_{y_2}, \\
X_5:=y_1\partial_{x_2}-x_1\partial_{y_2},\quad
X_6:=y_2\partial_{x_1}-x_2\partial_{y_1},\\
X_7:=x_1\partial_{x_2}+y_1\partial_{y_2},\quad
X_8:=x_2\partial_{x_1}+y_2\partial_{y_1},\\
X_9:=x_1\partial_{y_1}-y_1\partial_{x_1},\quad 
X_{10}:=x_2\partial_{y_2}-y_2\partial_{x_2} 
\end{array}
\right\}.
\]
Note that $\V$ is spanned on $M$ by
$X_9+X_{10}=x_1\partial_{y_1}-y_1\partial_{x_1}+x_2\partial_{y_2}-
y_2\partial_{x_2}$.

Consider the Dirac structure $\mathsf{D}\subseteq TM\oplus T^*M$ 
spanned by the pairs
\[
(\partial_{x_1},\dr y_1), \quad (\partial_{y_1},-\dr x_1), \quad 
(\partial_{z_1},0), \quad
(\partial_{x_2},-\dr y_2), \quad (\partial_{y_2},\dr x_2),
\quad(0,\dr z_2).
\]
Comparing this with the sections of $\T=\T_G$ and $\V_G^\circ$ given above, we find
a set $\D^{\sfe}$ of \emph{exact $G$-invariant descending sections} spanning pointwise
the intersection $\mathsf{D}\cap(\T\oplus\V_G^\circ)
= \mathsf{D}\cap(\T_G\oplus\V_G^\circ)$:
\begin{align*}
\D^{\sfe}&=\left\{
\begin{array}{c}
(\partial_{z_1},0), (0,\dr z_2),\\
(-x_1\partial_{y_1}+y_1\partial_{x_1},x_1\dr x_1+y_1\dr y_1),
(x_2\partial_{y_2}-y_2\partial_{x_2},x_2\dr x_2+y_2\dr y_2),\\
(-x_1\partial_{x_2}-y_2\partial_{y_1}-x_2\partial_{x_1}-y_1\partial_{y_2},
x_1\dr y_2+y_2\dr x_1-x_2\dr y_1-y_1\dr x_2),\\
(x_1\partial_{y_2}-x_2\partial_{y_1}-y_1\partial_{x_2}+y_2\partial_{x_1},
x_1\dr x_2+x_2\dr x_1+y_1\dr y_2+y_2\dr y_1)
\end{array}
\right\}\\
&=\left\{
\begin{array}{c}
(\partial_{z_1},0), (0,\dr z_2),
\left(-X_9,\frac{1}{2}\dr r_1\right),\\
\left(X_{10},\frac{1}{2}\dr r_2\right),
(-X_7-X_8,\dr d),
(X_6-X_5,\dr s)
\end{array}
\right\}.
\end{align*}
Thus, we get  for all $m\in \R^6$
\begin{align*}
\D(m)&=\D_G(m)=\erz\left\{
\begin{array}{c}
\partial_{z_1},\qquad X_9=x_1\partial_{y_1}-y_1\partial_{x_1},\\
X_{10}=x_2\partial_{y_2}-y_2\partial_{x_2},\\
X_7+X_8=x_1\partial_{x_2}+y_2\partial_{y_1}+x_2\partial_{x_1}+y_1\partial_{y_2},\\
X_6-X_5=x_1\partial_{y_2}-x_2\partial_{y_1}-y_1\partial_{x_2}+y_2\partial_{x_1}
\end{array}\right\}(m)\\
&=
\left(\erz\left\{\dr z_2,x_1\dr x_1+y_1\dr y_1-x_2\dr x_2-y_2\dr y_2
\right\}\right)^\circ=
\left(\erz\left\{\dr z_2,\dr(r_1-r_2)
\right\}\right)^\circ
\end{align*}
The distribution $\D=\D_G$ is hence integrable and its leaf through a point 
$p=(x_1,y_1,z_1,x_2,y_2,z_2)\in M $ is 
\begin{enumerate}
\item $\{(0,0,t,0,0,z_2)\mid t\in \R \}$
if $r_1(p)=r_2(p)=0$,
\end{enumerate}
and otherwise the level set of the functions $z_2$ and $r_1-r_2$ through 
the point  $p=(x_1,y_1,z_1,x_2,y_2,z_2)\in M $, 
that is,
\begin{enumerate}
\setcounter{enumi}{1}
\item $\{(r\cos\alpha,r\sin\alpha,t,r\cos\beta,r\sin\beta,z_2)\mid r>0,\alpha,\beta,t\in\R
\}$
if $r_1(p)=r_2(p)>0$,
\item $\{(\sqrt{x^2+y^2+k}\cos\alpha,\sqrt{x^2+y^2+k}\sin\alpha,t,x,y,z_2)
\mid x,y,\alpha,t\in\R
\}
$
if $k:=(r_1-r_2)(p)>0$ and
\item $\{(x,y,t,\sqrt{x^2+y^2-k}\cos\alpha,\sqrt{x^2+y^2-k}\sin\alpha,z_2)
\mid x,y,\alpha,t\in\R
\}$
if $k:=(r_1-r_2)(p)<0$.
\end{enumerate} 

The singularity at points where $r_1$ and $r_2$ both vanish can also be seen 
considering the flows $\phi^1$, $\phi^9$, $\phi^{10}$, $\phi^{7+8}$, 
$\phi^{6-5}$ of the vector fields 
$\partial_{z_1},X_9,X_{10},X_7+X_8,X_6-X_5$:
\begin{align*}
\phi^1\left(\begin{pmatrix}x_1\\
y_1\\
z_1
\end{pmatrix},
\begin{pmatrix}x_2\\y_2\\z_2
\end{pmatrix}\right)&=\left(\begin{pmatrix}x_1\\
y_1\\
z_1+t
\end{pmatrix},
\begin{pmatrix}x_2\\y_2\\z_2
\end{pmatrix}\right),\\
\phi^9\left(\begin{pmatrix}x_1\\
y_1\\
z_1
\end{pmatrix},
\begin{pmatrix}x_2\\y_2\\z_2
\end{pmatrix}\right)&=\left(\begin{pmatrix}x_1\cos t-y_1\sin t\\
x_1\sin t+y_1\cos
t\\
z_1
\end{pmatrix},
\begin{pmatrix}x_2\\y_2\\z_2
\end{pmatrix}\right),\\
\phi^{10}\left(\begin{pmatrix}x_1\\
y_1\\
z_1
\end{pmatrix},
\begin{pmatrix}x_2\\y_2\\z_2
\end{pmatrix}\right)&=\left(\begin{pmatrix}x_1\\y_1\\z_1\end{pmatrix},
\begin{pmatrix}x_2\cos t-y_2\sin t\\x_2\sin t+y_2\cos
t\\z_2\end{pmatrix}\right),\\
\phi^{7+8}\left(\begin{pmatrix}x_1\\
y_1\\
z_1
\end{pmatrix},
\begin{pmatrix}x_2\\y_2\\z_2
\end{pmatrix}\right)&=\left(\begin{pmatrix}x_1\cosh t+x_2\sinh t\\y_1\cosh
t+y_2\sinh t\\z_2\end{pmatrix},
\begin{pmatrix}x_2\cosh t+x_1\sinh t\\y_2\cosh t+y_1\sinh t \\z_1
\end{pmatrix}\right),\\
 \phi^{6-5}\left(\begin{pmatrix}x_1\\
y_1\\
z_1
\end{pmatrix},
\begin{pmatrix}x_2\\y_2\\z_2
\end{pmatrix}\right)&=\left(\begin{pmatrix}x_1\cosh t+y_2\sinh t\\y_1\cosh
t-x_2\sinh t\\z_2\end{pmatrix},
\begin{pmatrix}
x_2\cosh t-y_1\sinh t\\y_2\cosh t+x_1\sinh t\\z_1\end{pmatrix}\right).
\end{align*}
Hence we can identify the leaf space $M/\D_G$ with the set
\[\{(r_1,r_2,r_1-r_2,z_2)(p)\mid p\in M\}/\sim,
\]
where $\sim$ is the equivalence relation given on
$\{(r_1,r_2,r_1-r_2,z_2)(p)\mid p\in M\}\subseteq
\R_{>0}\times\R_{>0}\times\R\times \R$ by
$(r_1,r_2,k,t)\sim (r_1',r_2',k',t')$
if and only if $t=t'$ and 
\[k=k'\neq 0\] or \[(k=k'=0) \text{ and }(r_1>0 \text{ or }r_2>0)\text{  and 
}(r_1'>0 \text{ or }r_2'>0)\]
 or 
\[k=k'=0 \text{ and }r_1=r_2=r_1'=r_2'=0.\]

Since $\V\subseteq\D_G$, we find $G_{\sigma}=G=\sfe$ for all $\sigma\in
M/\D_G$. The action of $\sfe$ on each of the leaves is the restriction to the
leaf of the action of $\sfe $on $M$.

We consider the four different cases:
\begin{enumerate}
\item If $\sigma=[0,0,0,a]\in M/\D_G$, we have 
$\J_G^{-1}(\sigma)=\{(0,0,t,0,0,a)\mid t\in\R\}\simeq \R$ and  the induced action of $\sfe$ on $\mathcal{J}_G^{-1}(\sigma)$ is trivial. Thus, the reduced space
$M_\sigma=\J_G^{-1}(\sigma)/\sfe=\J_G^{-1}(\sigma)=\R$
is a  line and the presymplectic form is necessarily trivial.
\item If $\sigma=[R,R,0,a]\in M/\D_G$ with $R>0$, we have 
\begin{align*}
\J_G^{-1}(\sigma)&=\{(r\cos\alpha,r\sin\alpha,t,r\cos\beta,r\sin\beta,a)
\mid r>0,\alpha,\beta,t\in\R
\}\end{align*}
Hence, if we consider it as a subspace of $M/\sfe$, the reduced space is equal to
\begin{align*}
M_\sigma=\J_G^{-1}(\sigma)/\sfe
&=\{(r^2,r^2,r^2\sin(\alpha-\beta),r^2\cos(\alpha-\beta),t,a)\mid r>0,
\alpha,\beta,t\in\R
\}\\
&\simeq \R_{>0}\times \sfe\times\R
\end{align*}
via the diffeomorphism
\[\begin{array}{lccc}
\psi_\sigma:& M_\sigma&\longrightarrow&\R_{>0}\times\sfe\times \R\\
&(r^2,r^2,r^2\sin(\alpha-\beta),r^2\cos(\alpha-\beta),t,a)
&\longmapsto&(r^2,\alpha-\beta,t)\\
\psi_\sigma^{-1}:& \R_{>0}\times\sfe\times \R& \longrightarrow&M_\sigma\\
&(r,\theta,t)&\longmapsto&(r,r,r\sin(\theta),r\cos(\theta),t,a).
\end{array}
\]
Let $\pi: M \rightarrow M/\sfe$ and $\pi_\sigma: 
\mathcal{J}_G ^{-1}( \sigma) \rightarrow M_ \sigma$ be the
canonical projections.

We use the coordinates $(r,\theta,t)$ on $\R_{>0}\times \sfe\times
\R$ and compute the presymplectic form $\omega_\sigma$.
We have \begin{align*}
\partial_r&\sim_{\psi_\sigma^{-1}}\partial_{r_1}+\partial_{r_2}
+\sin\theta\partial_s
+\cos\theta\partial_d=\frac{2r_1\partial_{r_1}+2r_2\partial_{r_2}
+2s\partial_s+2d\partial_d}{2r_1}\,
\text{ and }\,
\partial_\theta\sim_{\psi_\sigma^{-1}}s\partial_d-d
\partial_s
\end{align*}
since $r_1=r_2$ on $M_\sigma$. Since 
\begin{align*}
x_1\partial_{x_1}+y_1\partial_{y_1}+x_2\partial_{x_2}+y_2\partial_{y_2}
&\sim_{\pi}2r_1\partial_{r_1}+2r_2\partial_{r_2}
+2s\partial_s+2d\partial_d
\quad \text{ and }\quad 
X_9\sim_{\pi}-s\partial_d+d
\partial_s,
\end{align*}
this leads to
\begin{align*}
\omega_\sigma(\partial_r,\partial_\theta)&=-\frac{1}{2r_1}
(\pi_\sigma^*\omega_\sigma)
(x_1\partial_{x_1}+y_1\partial_{y_1}+x_2\partial_{x_2}+y_2\partial_{y_2},X_9)\\
&= -\frac{1}{2r_1}(x_1\dr x_1+y_1\dr
y_1)(x_1\partial_{x_1}+y_1\partial_{y_1}+x_2\partial_{x_2}+y_2\partial_{y_2})
=-\frac{1}{2r_1}\left(x_1^2+y_1^2\right)=-\frac{1}{2},\\
\omega_\sigma(\partial_r,\partial_t)&=
\frac{1}{2r_1}(\pi_\sigma^*\omega_\sigma)
(x_1\partial_{x_1}+y_1\partial_{y_1}+x_2\partial_{x_2}+y_2\partial_{y_2},
\partial_{z_1})=0\qquad\text{ and }\\
\omega_\sigma(\partial_\theta,\partial_t)&=
(\pi_\sigma^*\omega_\sigma)(-X_9,\partial_{z_1})=0.
\end{align*}
Thus, we find $\omega_\sigma(r,\theta,t)=\frac{1}{2}\dr \theta\wedge\dr r$.

Note that easy linear algebra arguments show that 
$x_1\partial_{x_1}+y_1\partial_{y_1}+x_2\partial_{x_2}+y_2\partial_{y_2}$
is an element of $\D_G(x_1,y_1,z_1,x_2,y_2,z_2)$ if and only if 
$x_1^2+y_1^2=x_2^2+y_2^2$, that is, if and only if
$(x_1,y_1,z_1,x_2,y_2,z_2)\in\J_G^{-1}(\sigma)$
for $\sigma=[R,R,0,a]$.
\item If $\sigma=[R_1,R_2,k,a]\in M/\D_G$ with $k>0$,
we have 
\begin{align*}
\J_G^{-1}(\sigma)&
=\left\{(\sqrt{x^2+y^2+k}\cos\alpha,\sqrt{x^2+y^2+k}\sin\alpha,t,x,y,a)
\mid x,y,\alpha,t\in\R
\right\}.
\end{align*}
The reduced space $M_\sigma$ is now 
\begin{align*}
M_\sigma=\J_G^{-1}(\sigma)/\sfe
&=\left\{(x^2+y^2+k,x^2+y^2,d,s,t,a)\left|\begin{array}{c}
x,y, t,d,s\in\R\\
d^2+s^2=(x^2+y^2+k)(x^2+y^2)
\end{array}\right.\right\}\simeq \R^3
\end{align*}
via the diffeomorphism
\[\begin{array}{lccc}
\psi_\sigma:& M_\sigma&\longrightarrow&\R^3\\
&(x^2+y^2+k,x^2+y^2,d,s,t,a)&\longmapsto&(d,s,t)\\
\psi_\sigma^{-1}:& \R^3& \longrightarrow&M_\sigma\\
&(d,s,t)&\longmapsto&\left(\frac{\sqrt{k^2+4(d^2+s^2)}+k}{2},
\frac{\sqrt{k^2+4(d^2+s^2)}-k}{2},
d,s,t,a\right).
\end{array}
\]
We use the coordinates $(d,s,t)$ on $\R^3$ 
and compute the presymplectic form $\omega_\sigma$.
We have 
\begin{align*}
\partial_d\sim_{\psi_\sigma\inv}&\frac{2d}{\sqrt{k^2+4(s^2+d^2)}}\partial_{r_1}
+\frac{2d}{\sqrt{k^2+4(s^2+d^2)}}\partial_{r_2}+\partial_d \quad \text{ and }\\
\partial_s\sim_{\psi_\sigma\inv}&\frac{2s}{\sqrt{k^2+4(s^2+d^2)}}\partial_{r_1}
+\frac{2s}{\sqrt{k^2+4(s^2+d^2)}}\partial_{r_2}+\partial_s.
\end{align*}
A computation (see \cite{JoRaSn11}) yields 
\begin{align*}
X_6-X_5\sim_\pi&2d\partial_{r_1}+2d\partial_{r_2}+(r_1+r_2)\partial_d \quad \text{ and
}\quad 
X_7+X_8\sim_\pi2s\partial_{r_1}+2s\partial_{r_2}+(r_1+r_2)\partial_s.
\end{align*}
With $r_1+r_2=\sqrt{k^2+4(s^2+d^2)}  $, this leads to
\begin{align*}
\omega_\sigma(\partial_d,\partial_s)&
=\frac{1}{k^2+4(s^2+d^2)}(\pi_\sigma^*\omega_\sigma)(X_6-X_5,X_7+X_8)\\
&=\frac{1}{k^2+4(s^2+d^2)}(x_1\dr x_2+x_2\dr x_1+y_1\dr y_2+y_2\dr
y_1)(x_1\partial_{x_2}+y_2\partial_{y_1}+x_2\partial_{x_1}
+y_1\partial_{y_2})\\
&=\frac{1}{k^2+4(s^2+d^2)}(x_1^2+x_2^2+y_1^2+y_2^2)=\frac{1}{\sqrt{k^2+4(s^2+d^2)}},\\
\omega_\sigma(\partial_d,\partial_t)&
=\frac{1}{\sqrt{k^2+4(s^2+d^2)}}(\pi_\sigma^*\omega_\sigma)(X_6-X_5,\partial_{z_1})=0\\
\text{ and }\qquad 
\omega_\sigma(\partial_s,\partial_t)&
=\frac{1}{\sqrt{k^2+4(s^2+d^2)}}(\pi_\sigma^*\omega_\sigma)(X_7+X_8,\partial_{z_1})=0,
\end{align*}
which leads to $\omega_\sigma(d,s,t)
=\frac{1}{\sqrt{k^2+4(s^2+d^2)}}\dr d\wedge\dr s$.
\item If $\sigma=[R_1,R_2,k,a]\in M/\D_G$ with $k< 0$,
we have 
\begin{align*}
\J_G^{-1}(\sigma)&
=\left\{(x,y,t,\sqrt{x^2+y^2-k}\cos\alpha,\sqrt{x^2+y^2-k}\sin\alpha,a)
\mid x,y,\alpha,t\in\R
\right\}.
\end{align*}
The reduced space $M_\sigma$ is then equal to
\begin{align*}
M_\sigma=\J_G^{-1}(\sigma)/\sfe
&=\left\{(x^2+y^2,x^2+y^2-k,d,s,t,a)\left|\begin{array}{c}
x,y, t,d,s\in\R\\
d^2+s^2=(x^2+y^2-k)(x^2+y^2)
\end{array}\right.\right\}\simeq \R^3
\end{align*}
via the diffeomorphism
\[\begin{array}{lccc}
\psi_\sigma:& M_\sigma& \longrightarrow&\R^3\\
&(x^2+y^2,x^2+y^2-k,d,s,t,a)&\longmapsto&(d,s,t)\\
\psi_\sigma^{-1}:& \R^3& \longrightarrow&M_\sigma\\
&(d,s,t)&\longmapsto&\left(\frac{\sqrt{k^2+4(d^2+s^2)}+k}{2},
\frac{\sqrt{k^2+4(d^2+s^2)}-k}{2},
d,s,t,a\right).
\end{array}
\]
We use the coordinates $(d,s,t)$ on $\R^3$ and get in the same manner as above\linebreak
$\omega_\sigma(d,s,t) = \frac{1}{\sqrt{k^2+4(s^2+d^2)}}\dr d\wedge\dr s$.
\end{enumerate}

\bigskip

We want to compare these reduced spaces with the presymplectic leaves
of the Dirac structures induced on the two strata $\bar M_0$ and 
$\bar M_1$ of $\bar M$ by standard
singular reduction. These are given by 
$\mathsf D_{\bar M_0}(\bar m)=\erz_\R
\{(\partial_{z_1}\an{\bar m},0), (0,\dr z_2(\bar{m}))\}$
for all $\bar m\in\bar M_0$, and by 
\begin{align*}
\mathsf D_{\bar{M}_1}(r_1,d,s,z_1,z_2)=\erz_\R\left\{
\begin{array}{c}
(\partial_{z_1},0), (0,\dr z_2),
\left(2s\partial_d-2d\partial_s,\dr r_1\right),\\
\left(-2s\partial_{r_1}
-\left(r_1+\frac{s^2+d^2}{r_1}\right)\partial_s,
\dr d\right),\\
\left(2d\partial_{r_1}
+\left(r_1+\frac{s^2+d^2}{r_1}\right)\partial_{d},\dr s\right)
\end{array}
\right\}(r_1,s,d,z_1,z_2)
\end{align*}
in the chart $(U,\psi_1)$ and 
\begin{align*}
\mathsf D_{\bar{M}_1}(r_2,s,d,z_1,z_2)=\erz_\R\left\{
\begin{array}{c}
(\partial_{z_1},0), (0,\dr z_2),
\left(2s\partial_d-2d\partial_s,\dr r_2\right),\\
\left(-2s\partial_{r_2}
-\left(r_2+\frac{s^2+d^2}{r_2}\right)\partial_s,
\dr d\right),\\
\left(2d\partial_{r_2}
+\left(r_2+\frac{s^2+d^2}{r_2}\right)\partial_d,\dr s\right)
\end{array}
\right\}(r_2,s,d,z_1,z_2).
\end{align*}
in the chart  $(U,\psi_2)$ (see \cite{JoRaSn11}).

Take $p\in M$. If $p\in M_{\sfe}$, that is, 
$\pi(p)\in\bar M_0$, we have $p=(0,0,z_1,0,0,z_2)$
and the reduced space 
$(M_\sigma,\omega_\sigma)$ for $\sigma=\J_G(p)$
is of the first type: $(M_\sigma,\omega_\sigma)\simeq (\R,0)$.
The presymplectic leaf of $(\bar M_0,\mathsf D_{\bar M_0})$
through $\pi(p)\in\bar M_0$
is obviously
$\bar N_p=\{(0,0,0,0,t,z_2)\mid t\in \R\}\simeq \R$
with the trivial presymplectic structure. It is easy to see
that $(M_\sigma,\omega_\sigma)\simeq (\bar N_p,0)$
via the diffeomorphism constructed in the proof 
of Theorem \ref{presympl}.

For $p\in M_{\{e\}}$, we have $\pi(p)\in\bar M_1$.
We study the presymplectic leaves of
$(\bar M_1,\mathsf D_{\bar M_1})$. The corresponding distribution
$\mathsf{G_1}$ is given by 
\begin{align*}
\mathsf{G_1}(\pi(p))&=\erz_{\R}\left\{\partial_{z_1}, \quad 
s\partial_{ d}- d\partial_{s},\quad 
2s\partial_{r_1}
+\left(r_1+\frac{s^2+ d^2}{r_1}\right)\partial_{s},\quad 
2 d\partial_{r_1}
+\left(r_1+\frac{s^2+ d^2}{r_1}\right)\partial_{ d}\right\}\\
&=\erz_{\R}\left\{\partial_{z_1}, 
2s\partial_{r_1}
+\left(r_1+\frac{s^2+ d^2}{r_1}\right)\partial_{s},\quad 
2 d\partial_{r_1}
+\left(r_1+\frac{s^2+ d^2}{r_1}\right)\partial_{ d}\right\}
\end{align*}
in the chart $\psi_1$.
We find that $\mathsf{G_1}$
is the smooth annihilator of the codistribution that is spanned pointwise by
$\left\{\dr z_2,  \dr\left(
    r_1-\frac{s^2+ d^2}{r_1}\right)\right\}$
and that has constant rank on $\psi_1(U)\subseteq\bar M_1$. With the same argument
we find that $\mathsf{G_1}$ has constant rank on $\psi_2(U)\subseteq\bar M_1$
and since $\psi_1(U)\cap\psi_2(U)$ is open and dense in $\bar M_1$, the
distribution $\mathsf{G_1}$ is a vector bundle over $\bar M_1$. We have shown
in \cite{JoRaSn11} that it is involutive and so it is
completely integrable in the sense of Frobenius.
We have again three cases:
\begin{enumerate}
\item Suppose that $r_1>0$, $\left(r_1-\frac{s^2+ d^2}{r_1}\right)(p)=0=k$,
  $z_2=a\in\R$. Then the leaf $N_{0,a}$
 of $\mathsf{G_1}$ through $\pi(p)$ is the subset
\begin{align*}
\bar M_1\supseteq \psi_1(U)\supseteq N_{0,a}&=\left\{(r_1,s, d,z_1,a) \mid r_1>0,\,
  r_1^2=s^2+ d^2,z_1 \in\R\right\}\\
&=\{(z,z\cos\alpha,z\sin\alpha,t,a)\mid z>0, \alpha\in\sfe,t\in\R\}
\simeq \R_{>0}\times\sfe\times\R
\end{align*}
via the diffeomorphism
\begin{equation*}
\begin{array}{cccc}
\psi_{0,a}:&N_{0,a}& \longrightarrow&\R_{>0}\times\sfe\times\R\\
&(z,z\cos\alpha,z\sin\alpha,t,a)&\longmapsto&(z,\alpha,t)\\
\psi_{0,a}^{-1}:&\R_{>0}\times\sfe\times\R& \longrightarrow&N_{0,a}\\
&(z,\alpha,t)&\longmapsto&(z,z\cos\alpha,z\sin\alpha,t,a).
\end{array}
\end{equation*}
Note that this leaf of $\mathsf{G_1}$ is included in the intersection
$\psi_1(U)\cap\psi_2(U)$;
the values of $r_1$ and $r_2$ are equal on the leaf. Thus, for instance,
if $r_1$ vanishes, then $r_2$ has to be zero too, which is not possible on 
$\bar{M}_1$.  We compute the presymplectic structure on $N_{0,a}$.
Since
\begin{align*}
\partial_z&\sim_{\psi_{0,a}^{-1}} 
\partial_{r_1}+\cos\alpha\partial_s+\sin\alpha\partial_{ d}
=\frac{1}{r_1}(r_1\partial_{r_1}+s\partial_s+ d\partial_{ d})\\
\partial_{\alpha}&\sim_{\psi_{0,a}^{-1}} s\partial_ d- d\partial_s
\qquad \text{ and } \qquad \partial_t\sim_{\psi_{0,a}^{-1}}\partial_{z_1},
\end{align*}
we have
\begin{align*}
\omega_{N_{0,a}}(\partial_t,\partial_z)&=0,\quad \omega_{N_{0,a}}(\partial_t,\partial_\alpha)=0\\
\text{and}\quad \omega_{N_{0,a}}(\partial_z,\partial_\alpha)&=-\frac{1}{2}\dr r_1
\left(\frac{1}{r_1}(r_1\partial_{r_1}+s\partial_s+ d\partial_{ d})\right)
=-\frac{1}{2}.
\end{align*}
Therefore, $\omega_{N_{0,a}}=\frac{1}{2}\dr \alpha\wedge\dr z$. This
shows that $(N_{0,a},\omega_{N_{0,a}})$ is presymplectomorphic
to $(M_\sigma,\omega_\sigma)$, where $\sigma=\mathcal{J}_G(p)=[r_1,r_1, 0,a]$.

\item Suppose that $r_1>0$, $\left(r_1-\frac{s^2+ d^2}{r_1}\right)(p)=k>0$,
$z_2=a\in\R$. Then the leaf $N_{k,a}$ of $\mathsf{G_1}$ through $\pi(p)$ is 
the subset
\begin{align*}
\bar M_1\supseteq \psi_1(U)\supseteq N_{k,a}
&=\left\{(r_1,s, d,z_1,a) \,\left|\; r_1>0,\,
  r_1-\frac{s^2+ d^2}{r_1}=k,z_1 \in\R\right.\right\}\\
&=\left\{\left.\left(\frac{\sqrt{4(s^2+ d^2)+k^2}+k}{2},s, d,z_1,a\right)\,\right|\;
z_1,s, d\in \R
\right\}.
\end{align*}
Note that since $r_1-r_2$ is equal to $k>0$ on $N_{k,a}$, 
we have  $r_1>0$ on $N_{k,a}$ and hence,
$N_{k,a}\subseteq \psi_1(U)$.
To compute the presymplectic structure on $N_{k,a}$, we study its graph, which
is the induced Dirac structure on the leaf (see \eqref{inducedDirac}).
We have 
\begin{align*}
\Gamma(\mathsf D_{N_{k,a}})
=\erz_{C^\infty(N_{k,a})}\left\{ \begin{array}{c}
(\partial_{z_1},0),\, \left(s\partial_ d- d\partial_s,
\frac{s}{\sqrt{4(s^2+ d^2)+k^2}}\dr s+\frac{ d}
{\sqrt{4(s^2+ d^2)+k^2}}\dr d\right),\\
(-\sqrt{4(s^2+ d^2)+k^2}\partial_s,\dr d),\\
(\sqrt{4(s^2+ d^2)+k^2}\partial_ d,\dr s)
\end{array}\right\}.
\end{align*}
We have used the fact that since $r_1-k=\frac{s^2+ d^2}{r_1}$, we have 
$r_1+\frac{s^2+ d^2}{r_1}=2r_1-k=\sqrt{4(s^2+ d^2)+k^2}$,
and the equality
\begin{align*}
\dr r_1&=8s\cdot \frac{1}{2\cdot2\cdot
  \sqrt{4(s^2+ d^2)+k^2}}\dr s+8 d\cdot \frac{1}{2\cdot2\cdot
  \sqrt{4(s^2+ d^2)+k^2}}\dr d\\
&=\frac{2s}{\sqrt{4(s^2+ d^2)+k^2}}\dr s+\frac{2 d}
{\sqrt{4(s^2+ d^2)+k^2}}\dr d.
\end{align*}
This yields
\begin{align*}
\Gamma(\mathsf D_{N_{k,a}})
=\erz_{C^\infty(N_{k,a})}\left\{ \begin{array}{c}
(\partial_{z_1},0),
(-\sqrt{4(s^2+ d^2)+k^2}\partial_s,\dr d),\\
(\sqrt{4(s^2+ d^2)+k^2}\partial_ d,\dr s)
\end{array}\right\}
\end{align*}
and thus
$\omega_{N_{k,a}}=\frac{1}{\sqrt{4(s^2+ d^2)+k^2}}\dr d\wedge\dr s$.
\item Suppose that $r_2>0$, $\left(\frac{s^2+ d^2}{r_2}-r_2\right)(p)=k<0$,
  $z_2=a\in\R$. Then the leaf $N_{k,a}$
 of $\mathsf{G_1}$ through $p$ is the subset
\begin{align*}
\bar M_1\supseteq \psi_2(U)\supseteq N_{k,a}&=\left\{(r_2,s, d,z_1,a) \left| r_2>0,\,
  \frac{s^2+ d^2}{r_2}-r_2=k,z_1 \in\R\right.\right\}\\
&=\left\{\left.\left(\frac{\sqrt{4(s^2+ d^2)+k^2}+k}{2},s, d,z_1,a\right)\right|
z_1,s, d\in \R
\right\}.
\end{align*}
Note that since $r_1-r_2$ is equal to $k<0$ on $N_{k,a}$, 
we have  $r_2>0$ on $N_{k,a}$ and hence,
$N_{k,a}\subseteq \psi_2(U)$.
To compute the presymplectic structure on $N_{k,a}$, we study its graph, which
is the induced Dirac structure on the leaf (see \eqref{inducedDirac}).
We have 
\begin{align*}
\Gamma(\mathsf D_{N_{k,a}})
=\erz_{C^\infty(N_{k,a})}\left\{ \begin{array}{c}
(\partial_{z_1},0),\, \left(s\partial_ d- d\partial_s,
\frac{s}{\sqrt{4(s^2+ d^2)+k^2}}\dr s+\frac{ d}
{\sqrt{4(s^2+ d^2)+k^2}}\dr d\right),\\
(-\sqrt{4(s^2+ d^2)+k^2}\partial_s,\dr d),\\
(\sqrt{4(s^2+ d^2)+k^2}\partial_ d,\dr s)
\end{array}\right\}
\end{align*}
as in the preceding case.
This yields
\begin{align*}
\Gamma(\mathsf D_{N_{k,a}})
=\erz_{C^\infty(N_{k,a})}\left\{ \begin{array}{c}
(\partial_{z_1},0),
(-\sqrt{4(s^2+ d^2)+k^2}\partial_s,\dr d),\\
(\sqrt{4(s^2+ d^2)+k^2}\partial_ d,\dr s)
\end{array}\right\}
\end{align*}
and thus
$\omega_{N_{k,a}}=\frac{1}{\sqrt{4(s^2+ d^2)+k^2}}\dr d\wedge\dr s$.
\end{enumerate}
\end{example}

\begin{appendix}
\section{The module of equivariant vector fields on a representation 
space}

Let $\Phi:G \times V \rightarrow V$ be a finite dimensional 
representation of a compact Lie group $G$. We review a method presented 
in \cite{ChLa00} to find $C^\infty(V)^G$-generators for the set 
$\mathfrak{X}(V)^G: = \{X \in \mathfrak{X}(V) \mid \Phi_g^\ast X = X\quad \forall
g\in G\}$ 
of $G$-equivariant vector fields on $V$. 

\medskip

Since $G$ is compact, averaging an arbitrary inner product 
on $V$ yields a $G $-invariant inner product 
$\left\langle\!\left\langle\,,\right\rangle\!\right\rangle$ 
on $V$. Thus we can assume, without loss  of generality, 
that the representation $\Phi$ is orthogonal. Denote
by $\left\langle\,, \right\rangle: V ^\ast \times V 
\rightarrow \mathbb{R}$ the nondegenerate duality pairing.

\medskip

The computation of the set of generators for $\mx(V)^G$  is facilitated by
the following two observations.
\medskip

\noindent \textbf{1.)} \textit{There is a bijective correspondence 
between $G$-equivariant vector fields on $V$ and 
$G$-equivariant  maps $V\to V$.}

 Indeed, since $TV\simeq V\times V$ and the 
tangent lift of the representation $\Phi$ to $TV$ is 
given by $g \cdot (v,w):= T_v\Phi_gw=(\Phi_gv,\Phi_gw)$, 
for all $g\in G$, $v, w \in V$, we can associate to 
each $X \in \mathfrak{X}(V)^G$ the smooth 
$G$-equivariant map  $f_X:V\to V$, $f_X(v):=
\operatorname{pr}_2(X(v))$, and vice versa; 
$\operatorname{pr}_2: V \times V \rightarrow V $ is 
the projection on the second factor.
\medskip

\noindent \textbf{2.)} \textit{There is a surjective 
map from the set of $G$-invariant real valued functions 
$C^\infty(V\times V)^G$ on $V \times V$ to the set 
$\mathfrak{X}(V)^G$ of $G$-equivariant vector fields 
on $V$.}

Indeed, if $\varphi:TV=V\times V\to\mathbb{R}$ is a 
smooth $G$-invariant function, define $f_\varphi(v)
:=\mathbf{d}_2\varphi(v,0) \in V ^\ast$ for 
all $v\in V$, where $\mathbf{d}_2 \varphi$ denotes the 
derivative relative to the second variable. Then 
$f_\varphi:V\to V^*$ is a $G$-equivariant 
function, where the $G $-representation on $V ^\ast$ is defined by 
$g \cdot l:=l \circ \Phi_{g ^{-1}} $, for all $l\in V ^\ast$ and $g \in G$. The 
$G$-invariant inner product $\left\langle \! \left\langle\,, 
\right\rangle \! \right\rangle$ on $V$ induces the isomorphism $v \in V 
\mapsto \left\langle \! \left\langle v , \cdot \right\rangle \! \right\rangle\in 
V ^\ast$ of  $G$-representations 
and hence $f _\varphi$ induces a $G$-equivariant map 
$\tilde{f}_\varphi:V\to V$ defined by 
$\left\langle \! \left\langle \tilde{f}_\varphi(v), w \right\rangle \! \right\rangle
: = \left\langle f _\varphi(v), w \right\rangle =
\left\langle \mathbf{d}_2\varphi(v, 0), w \right\rangle$ for all $v,w \in V$.
Therefore, we get $X_ \varphi \in \mathfrak{X}(V)^G$ 
defined by $X_\varphi(v): = (v, \tilde{f}_\varphi(v))$ for all $v \in V$. 

Conversely, each $X\in\mathfrak{X}(V)^G$, uniquely 
defines the smooth $G$-equivariant map $f_X: V 
\rightarrow V $ given by $X(v)=(v, f_X(v)) $ for all 
$v \in V$ and hence the smooth $G$-invariant
function $\varphi_X:V \times V \rightarrow \mathbb{R}$ 
defined by  $\varphi_X(v,w)=\left\langle\!\left\langle 
f_X(v), w\right\rangle\!\right\rangle$ which is linear 
in the second component, that is, $\varphi_X \in 
\mathcal{S}(V \times V)^G: = \{\varphi \in C^{\infty}
(V \times V)^G \mid \varphi(v, \cdot ) \in V ^\ast, 
\text{ for all } v \in V\}$. Note that if $\psi \in 
\mathcal{S}(V \times V)^G$,
then $\psi(v, w) = \left\langle\mathbf{d}_2\psi(v,0), 
w \right\rangle$. Using
this identity, it is easily seen that the correspondences 
$\varphi \in \mathcal{S}(V \times V)^G \mapsto X_ \varphi 
\in \mathfrak{X}(V)^G$, $X\in \mathfrak{X}(V)^G \mapsto 
\varphi_X \in \mathcal{S}(V \times V )^G$ are inverses to 
each other. 

So we have a \textit{bijective map
$\mx(V)^G\leftrightarrow \mathcal S(V\times V)^G$}. In 
particular \textit{the map $\varphi \in C ^{\infty}(V \times V)^G 
\mapsto X_ \varphi\in \mathfrak{X}(V)^G$ is surjective}.

\medskip

Let $\{p_1,\ldots,p_n\}$ be a Hilbert basis for the ring and
finitely generated $\mathbb{R}$-algebra of
$G$-invariant polynomials on $V\times V$. 
The \emph{Hilbert map} $\mathcal{H}: V \times V \rightarrow 
\mathbb{R}^n$, $\mathcal{H}(v, w): = \left(p _1(v,w), \ldots,
p _n(v,w)\right)$ is proper (inverse images of compact sets are 
compact), it separates orbits (if $(v',w') \neq g \cdot (v,w)$ for
all $g \in G$, then $\mathcal{H}(v',w') \neq \mathcal{H}(v,w)$), and 
there is a homeomorphism $\overline{\mathcal{H}}: (V \times V)/G 
\stackrel{\sim}\longrightarrow \mathcal{H}(V \times V) \subseteq \mathbb{R}^n$ 
such that $\overline{\mathcal{H}} \circ \pi = \mathcal{H}$,
where $\pi:V \times V \rightarrow (V \times V)/G$ is the projection on the
orbit space (see \cite{ChLa00}, Theorem 5.2.9). 
The theorem of Schwarz-Mather (see, for instance,
\cite{OrRa04}, Theorem 2.5.3) states that 
for each $G$-invariant function $\varphi\in C^\infty
(V\times V)^G$, there exists $F_\varphi\in C^\infty(\R^n)$ 
such that $\varphi=F_\varphi\circ(p_1,\ldots,p_n)$.
Since 
\[
f_\varphi(v):=\mathbf{d}_2\varphi(v,0)
=\sum_{i=1}^n\frac{\partial F_\varphi}{\partial x_i}(p_1(v,0),\ldots,p_n(v,0)) \mathbf{d}_2 p_i(v,0) \in V ^\ast,
\]
the $G$-equivariant vector fields 
$X_1,\ldots,X_n$, $X_i(v):=(v, \tilde{p}_i(v))$, 
$\left\langle\!\left\langle\tilde{p}_i(v), w 
\right\rangle\!\right\rangle = 
\left\langle\mathbf{d}_2p_i(v,0), w \right\rangle$, 
for all $w \in V $, $i=1, \ldots , n$, associated
to the Hilbert basis $p_1,\ldots,p_n$ are spanning 
vector fields for the $C^\infty(V)^G$-module 
$\mathfrak{X}(V)^G$.

In the examples below, we will need to know the Hilbert basis
for the diagonal actions of $\sfe$ and $\SO$ on $n$ copies of
$\R^2$, respectively $\R^3$.
These bases are given in the following proposition, which is 
proved in \cite{KrPr96}, Theorem 10.2.

\begin{proposition}\label{prop_hilbert}
\begin{enumerate}
\item Consider the diagonal action $\phi$ of $\sfe=\operatorname{SO}(2)$ on $n$ copies 
of $\R^2$, that is,
$\phi:\sfe\times\left(\R^2\right)^n\mapsto \left(\R^2\right)^n$, 
$\phi(\alpha,(v_1,\ldots,v_n))=(\alpha\cdot v_1,\ldots,\alpha\cdot v_n)$.
Write $\mathcal P\left(\left(\R^2\right)^n\right)$ as 
$\R[X_1,Y_1,\ldots,X_n,Y_n]$ and 
define $P_{ij}, Q_{ij}\in\mathcal P\left(\left(\R^2\right)^n\right)^{\sfe}$
by  $P_{ij}=X_iX_j+Y_iY_j$ and $Q_{ij}=X_iY_j-X_jY_i$ for all $i,j=1,\ldots,n$.
Then $\mathcal B_n:=\{P_{ij}, Q_{kl}\mid 1\leq i\leq j\leq n, 1\leq k<l\leq n\}$
is a Hilbert basis for $ \mathcal P\left(\left(\R^2\right)^n\right)^{\sfe}$.
\item Consider the diagonal action $\psi$ of $\SO$ on $n$ copies 
of $\R^3$, that is,
$\psi:\SO\times\left(\R^3\right)^n\mapsto \left(\R^3\right)^n$, 
$\psi(A,(v_1,\ldots,v_n))=(A\cdot v_1,\ldots,A\cdot v_n)$.
Write $\mathcal P\left(\left(\R^3\right)^n\right)$ as 
$\R[X_1,Y_1,Z_1,\ldots,X_n,Y_n,Z_n]$ and 
define $P_{ij}, Q_{ijk}\in\mathcal P\left(\left(\R^3\right)^n\right)^{\SO}$
by  $P_{ij}=X_iX_j+Y_iY_j+Z_iZ_j$ and 
$Q_{ijk}=X_iY_jZ_k+ Y_iZ_jX_k+Z_iX_jY_k-X_iY_kZ_j-Y_iX_jZ_k-Z_iY_jX_k$ 
for all $i,j,k=1,\ldots,n$.
Then $\mathcal C_n:=\{P_{ij}, Q_{klm}\mid 1\leq i\leq j\leq n, 1\leq k<l<m\leq n\}$
is a Hilbert basis for $ \mathcal P\left(\left(\R^3\right)^n\right)^{\SO}$.
\end{enumerate}
\end{proposition}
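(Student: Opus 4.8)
The plan is to recognize both statements as special cases of the First Fundamental Theorem of classical invariant theory for the special orthogonal group. For $\operatorname{SO}(m)$ acting diagonally on $n$ copies $V=(\R^m)^n$ of its defining representation, $\mathcal{P}(V)^{\operatorname{SO}(m)}$ is generated as an $\R$-algebra by the pairwise inner products $\langle v_i,v_j\rangle$ ($1\le i\le j\le n$) together with the $m\times m$ minors $\det(v_{i_1},\dots,v_{i_m})$ ($1\le i_1<\dots<i_m\le n$); Part (1) is the case $m=2$, with $\operatorname{SO}(2)=\sfe$, $P_{ij}=\langle v_i,v_j\rangle$ and $Q_{kl}=\det(v_k,v_l)$, and Part (2) is the case $m=3$, with $P_{ij}=\langle v_i,v_j\rangle$ and $Q_{klm}=\det(v_k,v_l,v_m)$. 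I would first record this reduction, then prove the two ingredients: (a) the $O(m)$-FFT, that inner products alone generate $\mathcal{P}(V)^{O(m)}$, and (b) the descent from $O(m)$ to $\operatorname{SO}(m)$, and finally do the bookkeeping that identifies the listed sets as \emph{minimal} generating sets.

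For Part (1) I would give the short self-contained argument. Identify $\R^2$ with $\mathbb{C}$ so that $\sfe$ acts on $\mathbb{C}^n$ by $z\mapsto e^{i\theta}z$ coordinatewise. Complexifying, $\mathcal{P}((\R^2)^n)^{\sfe}\otimes_{\R}\mathbb{C}\cong\mathbb{C}[z_1,\bar z_1,\dots,z_n,\bar z_n]^{U(1)}$, where $U(1)$ acts with weight $+1$ on each $z_j$ and $-1$ on each $\bar z_j$. A monomial $\prod_j z_j^{a_j}\bar z_j^{b_j}$ is invariant exactly when $\sum_j a_j=\sum_j b_j$, and the additive monoid of such exponent vectors is generated, by an immediate induction on $\sum_j a_j$, by the vectors obtained from a single $z_i$ times a single $\bar z_j$. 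Hence $\mathbb{C}[z,\bar z]^{U(1)}$ is generated by the $z_i\bar z_j$, $1\le i,j\le n$. Since this family is stable under complex conjugation ($\overline{z_i\bar z_j}=z_j\bar z_i$), the real algebra $\mathcal{P}((\R^2)^n)^{\sfe}$ is generated by the real and imaginary parts, and from $z_i\bar z_j=(X_iX_j+Y_iY_j)+i(X_jY_i-X_iY_j)$ these are exactly $P_{ij}$ (symmetric in $i,j$) and $-Q_{ij}$ (antisymmetric, zero on the diagonal); this forces the index ranges $i\le j$ and $k<l$, giving $\mathcal{B}_n$. Minimality follows by inspecting lowest-degree/leading terms: $P_{ii}=X_i^2+Y_i^2$ is the only generator of degree $2$ involving $X_i^2$, etc., so no element of $\mathcal{B}_n$ lies in the algebra generated by the others.

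For Part (2) I would argue as follows. By Hilbert--Noether the invariant algebra is finitely generated and graded, so it suffices to exhibit a homogeneous generating set. First, the $O(3)$-FFT: by full polarization one reduces to the multilinear invariants of tuples of vectors in $\R^3$, and the classical $O(m)$ argument (analyzing the $O(m)$-module $(\R^m)^{\otimes k}$, e.g. via Weyl's theorem or the Capelli identity) shows these are spanned by complete contractions $\langle v_{i_1},v_{i_2}\rangle\langle v_{i_3},v_{i_4}\rangle\cdots$; depolarizing gives that $\mathcal{P}(V)^{O(3)}$ is generated by the $P_{ij}$. Next the descent: since $[O(3):\operatorname{SO}(3)]=2$, fix $\sigma\in O(3)\setminus\operatorname{SO}(3)$ and split $f\in\mathcal{P}(V)^{\operatorname{SO}(3)}$ as $f=\tfrac12(f+\sigma\!\cdot\!f)+\tfrac12(f-\sigma\!\cdot\!f)$; the first summand is $O(3)$-invariant, hence a polynomial in the $P_{ij}$, and the second is $\sigma$-anti-invariant, hence (by the classical fact that the $\sigma$-anti-invariants form a free rank-one module over $\mathcal{P}(V)^{O(3)}$ generated by $\det(v_1,v_2,v_3)$; there are none when $n<3$) equals $\det(v_1,v_2,v_3)$ times an $O(3)$-invariant. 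This writes $f$ in terms of the $P_{ij}$ and the $3\times 3$ minors. The Gram identity $\det(v_{i_1},v_{i_2},v_{i_3})\det(v_{j_1},v_{j_2},v_{j_3})=\det\big(\langle v_{i_a},v_{j_b}\rangle\big)_{a,b}$ shows every product of two minors is a polynomial in the $P_{ij}$, so no minor beyond those $Q_{klm}$ with $k<l<m$ is needed and each is needed only to first order; this, with the symmetry of $P$ and the antisymmetry of $\det$, yields exactly $\mathcal{C}_n$ and its minimality.

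The main obstacle is ingredient (a), the $O(m)$ First Fundamental Theorem: the assertion that complete contractions exhaust the multilinear orthogonal invariants is the one genuinely non-formal input, and in a write-up one would either reproduce the standard Weyl/Capelli proof or simply invoke it (as the paper does, citing \cite{KrPr96}). Everything else --- the polarization reduction, the $O(m)\to\operatorname{SO}(m)$ averaging, the Gram-determinant identity killing products of minors, and the minimality bookkeeping --- is routine.
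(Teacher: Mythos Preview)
The paper does not prove this proposition at all: immediately before stating it, the authors write that it ``is proved in \cite{KrPr96}, Theorem 10.2'' and give no further argument. So there is no ``paper's own proof'' to compare against; your proposal is strictly more than what the paper offers.

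Your outline is essentially the standard route (and indeed is close in spirit to what Kraft--Procesi do): the $U(1)$-weight argument for Part~(1) is clean and correct, and for Part~(2) the scheme ``$O(m)$-FFT $+$ index-two descent $+$ Gram identity'' is exactly the classical proof. One imprecision worth flagging: for $n>3$ the $\sigma$-anti-invariant part of $\mathcal{P}((\R^3)^n)^{\SO}$ is \emph{not} a free rank-one module over $\mathcal{P}((\R^3)^n)^{O(3)}$ generated by a single determinant $\det(v_1,v_2,v_3)$; rather it is the module generated by \emph{all} the $3\times 3$ minors $Q_{klm}$ (which satisfy Pl\"ucker-type relations). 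Your subsequent sentences already use ``the $3\times 3$ minors'' in the plural, so the argument you actually run is fine --- just drop the rank-one claim. Also note that the proposition only asserts that $\mathcal{B}_n$ and $\mathcal{C}_n$ are Hilbert bases (generating sets), not minimal ones, so your minimality bookkeeping is extra credit rather than required.
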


\begin{example}\label{exap1}
We consider Example \ref{ex1}: the (automatically proper) action $\Phi$ of $G:=\mathbb{S}^1\simeq 
\R/(2\pi\Z)$ 
on  $M:=\R^3$ given by  
\[
\Phi_ \alpha(x,y,z)=\alpha\cdot(x,y,z)=(x\cos\alpha-y\sin\alpha,
x\sin\alpha+y\cos\alpha,z).
\]

We want to find $C^\infty(M)^G$-generators for the 
$G$-invariant vector fields on $M = \mathbb{R}^3$, 
$G = \sfe$. Hence, we have to find the invariant functions
for the diagonal action of $\sfe$ on $M\times M$, that 
is, the action $\Psi:\sfe\times(\R^3\times\R^3)\to
(\R^3\times\R^3)$ given by $\Psi_\alpha(v,w)=
(\Phi_\alpha(v),\Phi_\alpha(w))$ for all
$v,w\in\R^3$. Write $(v,w)=(x_v,y_v,z_v,x_w,y_w,z_w)\in\R^6$. Then, 
by Proposition \ref{prop_hilbert},  the Hilbert basis for 
$\mathcal P^{\sfe}((\R^3)^2)$ is given by
$\{p_1,p_2,p_3,p_4,p_5,p_6\}$,
where
\begin{align*}
p_1(v,w)&=x_v^2+y_v^2\qquad p_2(v,w)=x_w^2+y_w^2\qquad p_3(v,w)=z_v\\
p_4(v,w)&=z_w\qquad p_5(v,w)=x_vx_w+y_vy_w\qquad p_6(v,w)=x_vy_w-y_vx_w.
\end{align*}

Since $\mathbf{d}_w p_1(v,0)=0$, 
$\mathbf{d}_w p_2(v,0)=0$, $\mathbf{d}_w p_3(v,0)=0$,
$\mathbf{d}_w p_4(v,0)=\mathbf{d}z_w$,
$\mathbf{d}_w p_5(v,0)=x_v\mathbf{d}{x_w}+
y_v\mathbf{d}{y_w}$, $\mathbf{d}_w p_6(v,0)=
x_v\mathbf{d}{y_w}-y_v\mathbf{d}{x_w}$, the 
method in \cite{ChLa00} reviewed above yields the generators
$X_1=X_2=X_3=0$, $X_4(v)=\partial_{z_v}$,
$X_5(v)=x_v\partial_{x_v}+y_v\partial_{y_v}$,
$X_6(v)=x_v\partial_{y_v}-y_v\partial_{x_v}$
of the $C^{\infty}(\mathbb{R}^{6})^{\sfe}$-module of 
equivariant vector fields $\mathfrak{X}
(\mathbb{R}^{6})^{\sfe}$ on $\mathbb{R}^{6}$. 
Note that $X_6$ is the fundamental vector field of the action of $\sfe$
on $\R^3$
defined by the Lie algebra element
$1\in T_{1}\sfe\simeq\R$.

Thus, we get
$\Gamma(\T)=\Gamma(\T_G)=\erz_{C^\infty(M)}\{\partial_z,
x\partial_{x}+y\partial_{y},
x\partial_{y}-y\partial_{x}\}$
as was used in Example \ref{ex1}.
\end{example}

\begin{example}\label{exap2}
We consider here the action of  Example \ref{ex2}. The vector space 
 $M=\R^3\times\R^3$ is endowed with the 
(automatically proper) diagonal action of $G=\sfe\simeq \R/(2\pi\Z)$ on it, 
i.e., 
\[
\begin{array}{cccc}
\Phi:&\sfe\times(\R^3\times\R^3)&\longrightarrow&\R^3\times\R^3\\
&\left(\alpha,\begin{pmatrix}x_1\\
y_1\\z_1
\end{pmatrix},\begin{pmatrix}x_2\\
y_2\\z_2
\end{pmatrix}\right)&\longmapsto&
\left(\begin{pmatrix}x_1\cos\alpha-y_1\sin\alpha\\
x_1\sin\alpha+y_1\cos\alpha\\
z_1\end{pmatrix},
\begin{pmatrix}x_2\cos\alpha-y_2\sin\alpha\\
x_2\sin\alpha+y_2\cos\alpha\\z_2
\end{pmatrix}\right).
\end{array}
\]
We have to consider the action $\Psi$ of $\sfe$ on 
$\R^{12}\simeq(\R^3\times\R^3)\times(\R^3\times\R^3)  $ 
defined by 
\[
\begin{array}{cccc}
\Psi:&\sfe\times\left((\R^3\times\R^3)\times(\R^3\times\R^3)\right)
&\longrightarrow&\left((\R^3\times\R^3)\times(\R^3\times\R^3)\right)\\
&(v,w,u,t)
&\longmapsto&
(\Phi_\alpha(v),\Phi_\alpha(w),\Phi_\alpha(u),\Phi_\alpha(t)).
\end{array}
\]
We write $v=(x_v,y_v,z_v)$, $w=(x_w,y_w,z_w)$, etc.
By proposition \ref{prop_hilbert}, 
we have $\mathcal H=\{p_i:\R^{12}\to \R\mid i=1,\ldots,20\}$, 
where 
\begin{align*}
p_1(v,w,u,t)&=z_v,\qquad p_2(v,w,u,t)=z_w,\qquad p_3(v,w,u,t)=z_u, 
\qquad p_4(v,w,u,t)=z_t,\\
p_5(v,w,u,t)&=x_v^2+y_v^2,\qquad p_6(v,w,u,t)=x_w^2+y_w^2,
\qquad p_7(v,w,u,t)=x_u^2+y_u^2, \\
p_8(v,w,u,t)&=x_t^2+y_t^2,
\qquad p_9(v,w,u,t)=x_vx_w+y_vy_w,
\qquad p_{10}(v,w,u,t)=x_vx_u+y_vy_u, \\
 p_{11}(v,w,u,t)&=x_vx_t+y_vy_t,
\qquad p_{12}(v,w,u,t)=x_wx_u+y_wy_u,
\qquad p_{13}(v,w,u,t)=x_wx_t+y_wy_t,
\\
 p_{14}(v,w,u,t)&=x_ux_t+y_uy_t,
\qquad p_{15}(v,w,u,t)=x_vy_w-y_vx_w,
\qquad p_{16}(v,w,u,t)=x_vy_u-y_vx_u,\\
p_{17}(v,w,u,t)&=x_vy_t-y_vx_t,
\qquad p_{18}(v,w,u,t)=x_wy_u-y_wx_u,
\qquad p_{19}(v,w,u,t)=x_wy_t-y_wx_t,\\
p_{20}(v,w,u,t)&=x_uy_t-y_ux_t.
\end{align*}
Since these functions determine the lenghts of the four vectors and the
angles between them, we have, as predicted,
$(p_1,\ldots,p_{20})(\R^{12})\simeq \R^{12}/\sfe$.
 We compute the vector fields
$X_1,\ldots,X_{20}$ associated to these polynomial functions in $\mathcal{H}$.  Since
\begin{align*}
\widetilde{p}_1(v,w)&=0,\qquad 
\widetilde{p}_2(v,w)=0,\qquad 
\widetilde{p}_3(v,w)=\dr z_u, \qquad 
\widetilde{p}_4(v,w)=\dr z_t,\\
\widetilde{p}_5(v,w)&=0,\qquad 
\widetilde{p}_6(v,w)=0, \qquad 
\widetilde{p}_7(v,w)=(2x_u\dr x_u+2y_u\dr y_u)\an{(v,w,0,0)}=0, \\
\widetilde{p}_8(v,w)&=(2x_t\dr x_t+2y_t\dr y_t)\an{(v,w,0,0)}=0, \qquad 
\widetilde{p}_9(v,w)=0, \qquad 
\widetilde{p}_{10}(v,w)=x_v\dr x_u+y_v\dr y_u, \\
\widetilde{p}_{11}(v,w)&=x_v\dr x_t+y_v\dr y_t,\qquad 
\widetilde{p}_{12}(v,w)=x_w\dr x_u+y_w\dr y_u,\qquad 
\widetilde{p}_{13}(v,w)=x_w\dr x_t+y_w\dr y_t, \\
\widetilde{p}_{14}(v,w)&=(x_u\dr x_t+x_t\dr x_u+y_u\dr y_t+y_t\dr x_u)\an{(v,w,0,0)}=0, \qquad 
\widetilde{p}_{15}(v,w)=0,\\
\widetilde{p}_{16}(v,w)&=x_v\dr y_u-y_v\dr x_u,
\widetilde{p}_{17}(v,w)=x_v\dr y_t-y_v\dr x_t,\\
\widetilde{p}_{18}(v,w)&=x_w\dr y_u-y_w\dr x_u,\qquad 
\widetilde{p}_{19}(v,w)=x_w\dr y_t-y_w\dr x_t,\\
\widetilde{p}_{20}(v,w)&=(x_u\dr y_t+y_t\dr x_u-y_u\dr x_t-x_t\dr y_u)\an{(v,w,0,0)}=0,
\end{align*}
we get 
\begin{align*}
X_1&=X_2=X_5=X_6=X_7=X_8=X_9=X_{14}=X_{15}=X_{20}=0,\\
X_3(v,w)&=\partial_{z_v}, 
\qquad X_4(v,w)=\partial_{z_w}, \qquad X_{10}(v,w)=x_v\partial_{x_v}+y_v\partial_{y_v},\\
 X_{11}(v,w)&=x_v\partial_{x_w}+y_v\partial_{y_w},
\qquad X_{12}(v,w)=x_w\partial_{x_v}+y_w\partial_{y_v},
\qquad X_{13}(v,w)=x_w\partial_{x_w}+y_w\partial_{y_w},
\\
X_{16}(v,w)&=x_v\partial_{y_v}-y_v\partial_{x_v},\qquad 
X_{17}(v,w)=x_v\partial_{y_w}-y_v\partial_{x_w},\\
X_{18}(v,w)&=x_w\partial_{y_v}-y_w\partial_{x_v},
\quad\text{ and }\quad X_{19}(v,w)=x_w\partial_{y_w}-y_w\partial_{x_w}.
\end{align*}
Thus, $\T_G=\T$ is spanned by 
\begin{align*}
\left\{\begin{array}{c}
X_3(v,w)=\partial_{z_v}, 
\quad X_4(v,w)=\partial_{z_w}, \quad X_{10}(v,w)=
x_v\partial_{x_v}+y_v\partial_{y_v},\\
 X_{11}(v,w)=x_v\partial_{x_w}+y_v\partial_{y_w},
\quad X_{12}(v,w)=x_w\partial_{x_v}+y_w\partial_{y_v},
\quad X_{13}(v,w)=x_w\partial_{x_w}+y_w\partial_{y_w},\\
X_{16}(v,w)=x_v\partial_{y_v}-y_v\partial_{x_v},\quad 
X_{17}(v,w)=x_v\partial_{y_w}-y_v\partial_{x_w},\\
X_{18}(v,w)=x_w\partial_{y_v}-y_w\partial_{x_v},
\quad X_{19}(v,w)=x_w\partial_{y_w}-y_w\partial_{x_w}
\end{array}
\right\}.
\end{align*}
Note that the vertical space  of the action 
is spanned by $V=X_{16}+X_{19}$.
\end{example}

\begin{example}\label{exap3}
Our last example is  an example in \cite{JoRaSn11}, inspired by 
\cite{Bierstone75}. We consider the diagonal action $\Phi$
of $G:=\operatorname{SO}(3)$ on $M:=\R^3\times \R^3$,
that is, $\Phi:\operatorname{SO}(3)\times ( \R^3\times \R^3)\to\R^3\times \R^3$,
$\Phi(A,v,w):= A \cdot (v, w): = (Av,Aw)$. 

Here, we have thus to consider the action $\Psi$ of 
$\operatorname{SO}(3)$ on $\R^{12}$ given by 
$\Psi(A,(v,w,u,t))=(Av,Aw,Au,At)$.
We  write again $v=(x_v,y_v,z_v)$, $w=(x_w,y_w,z_w)$, etc.
By Proposition \ref{prop_hilbert}, 
the Hilbert basis is $\mathcal H=\{p_1,\ldots,p_{14}\}$, 
where the polynomial functions $p_i:\R^{12}\to \R$, 
$i=1,\ldots,14$ are defined by
\begin{align*}
p_1(v,w,u,t)&=\|v\|^2=x_v^2+y_v^2+z_v^2,
\qquad p_2(v,w,u,t)=x_w^2+y_w^2+z_w^2,
\qquad p_3(v,w,u,t)=x_u^2+y_u^2+z_u^2,\\ 
p_4(v,w,u,t)&=x_t^2+y_t^2+z_t^2,\qquad 
p_5(v,w,u,t)=\langle v,w\rangle=
x_vx_w+y_vy_w+z_vz_w,\\
p_6(v,w,u,t)&=x_vx_u+y_vy_u+z_vz_u,\qquad 
p_7(v,w,u,t)=x_vx_t+y_vy_t+z_vz_t, \\
 p_8(v,w,u,t)&=x_wx_u+y_wy_u+z_wz_u,
\qquad p_9(v,w,u,t)=x_wx_t+y_wy_t+z_wz_t,\\
 p_{10}(v,w,u,t)&=x_ux_t+y_uy_t+z_uz_t,\\
p_{11}(v,w,u,t)&=\det(v,w,u)=x_vy_wz_u+x_wy_uz_v+x_uy_vz_w-z_vy_wx_u
-y_vx_wz_u-x_vz_wy_u,\\
p_{12}(v,w,u,t)&=\det(v,w,t)=x_vy_wz_t+x_wy_tz_v+x_ty_vz_w-z_vy_wx_t
-y_vx_wz_t-x_vz_wy_t,\\
p_{13}(v,w,u,t)&=\det(v,u,t)=x_vy_uz_t+x_uy_tz_v+x_ty_vz_u-z_vy_ux_t
-y_vx_uz_t-x_vz_uy_t,
\\
 p_{14}(v,w,u,t)&=\det(w,u,t)=x_wy_uz_t+x_uy_tz_w+x_ty_wz_u-z_wy_ux_t
-y_wx_uz_t-x_wz_uy_t.
\end{align*}
Since the lengths of the four vectors and their relative positions in space are completely determined by these $14$ polynomials,
we find, as expected, $(p_1,\ldots,p_{14})(\R^{12})
\simeq \R^{12}/\operatorname{SO}(3)$.
We compute the vector fields $X_1,\ldots, X_{14}$
associated to these polynomials. Since
\begin{align*}
\widetilde{p}_1(v,w)&=0,\qquad 
\widetilde{p}_2(v,w)=0,\qquad 
\widetilde{p}_3(v,w)=0, \qquad 
\widetilde{p}_4(v,w)=0,\\
\widetilde{p}_5(v,w)&=0,\qquad 
\widetilde{p}_6(v,w)=x_v\dr x_u+y_v\dr y_u+z_v\dr z_u,\qquad 
\widetilde{p}_7(v,w)=x_v\dr x_t+y_v\dr y_t+z_v\dr z_t, \\
\widetilde{p}_8(v,w)&=x_w\dr x_u+y_w\dr y_u+z_w\dr z_u,
\qquad 
\widetilde{p}_9(v,w)=x_w\dr x_t+y_w\dr y_t+z_w\dr z_t,\qquad 
\widetilde{p}_{10}(v,w)=0, \\
\widetilde{p}_{11}(v,w)&=(x_vy_w-y_vx_w)\dr z_u+
(x_wz_v-x_vz_w)\dr y_u+(y_vz_w-z_vy_w)\dr x_u,\\
v_{12}(v,w)&=(x_vy_w-y_vx_w)\dr z_t+(x_wz_v-x_vz_w)\dr y_t
+(y_vz_w-z_vy_w)\dr x_t,\\
\widetilde{p}_{13}(v,w)&=0,\qquad 
\widetilde{p}_{14}(v,w)=0,
\end{align*}
we find 
\begin{align*}
X_1&=X_2=X_3=X_4=X_5=X_{10}=X_{13}=X_{14}=0,\\
X_6(v,w)&=x_v\partial_{x_v}+y_v\partial_{y_v}+z_v\partial_{z_v}, 
\qquad X_7(v,w)=x_v\partial_{x_w}+y_v\partial_{y_w}+z_v\partial_{z_w},\\
X_{8}(v,w)&=x_w\partial_{x_v}+y_w\partial_{y_v}+z_w\partial_{z_v},
\qquad  X_{9}(v,w)=x_w\partial_{x_w}+y_w\partial_{y_w}+z_w\partial_{z_w},\\
X_{11}(v,w)&=(x_vy_w-y_vx_w)\partial_{z_v}+(x_wz_v-x_vz_w)\partial_{y_v}
+(y_vz_w-z_vy_w)\partial_{x_v},\\
 X_{12}(v,w)&=(x_vy_w-y_vx_w)\partial_{z_w}
+(x_wz_v-x_vz_w)\partial_{y_w}+(y_vz_w-z_vy_w)\partial_{x_w}.
\end{align*}
Thus, $\T_G$ is spanned by 
\begin{align*}
\left\{\begin{array}{c}
X_6(v,w)=x_v\partial_{x_v}+y_v\partial_{y_v}
+z_v\partial_{z_v}, 
\quad X_7(v,w)=x_v\partial_{x_w}+y_v\partial_{y_w}
+z_v\partial_{z_w},\\
X_{8}(v,w)=x_w\partial_{x_v}+y_w\partial_{y_v}
+z_w\partial_{z_v},\quad 
 X_{9}(v,w)=x_w\partial_{x_w}+y_w\partial_{y_w}
 +z_w\partial_{z_w},\\
X_{11}(v,w)=(x_vy_w-y_vx_w)\partial_{z_v}
+(x_wz_v-x_vz_w)\partial_{y_v}
+(y_vz_w-z_vy_w)\partial_{x_v},\\
 X_{12}(v,w)=(x_vy_w-y_vx_w)\partial_{z_w}
+(x_wz_v-x_vz_w)\partial_{y_w}+(y_vz_w-z_vy_w)\partial_{x_w}
\end{array}
\right\}.
\end{align*}
The vector field $Y$ defined by
\begin{align*}
Y(v,w)=&((v\times w)\times v)_x\partial_{ x_1}
+((v\times w)\times v)_y\partial_{ y_1}
+((v\times w)\times v)_z\partial_{ z_1}\\
&+((v\times w)\times w)_x\partial_{ x_2}
+((v\times w)\times w)_y\partial_{ y_2}
+((v\times w)\times w)_z\partial_{ z_2}\\
=&\langle v,v\rangle( x_2\partial_{ x_1}+y_2\partial_{y_1}+z_2\partial_{z_1})
-\langle v,w\rangle( x_1\partial_{ x_1}+y_1\partial_{y_1}+z_1\partial_{z_1})\\
&+\langle v,w\rangle( x_2\partial_{ x_2}+y_2\partial_{y_2}+z_2\partial_{z_2})
-\langle w,w\rangle( x_1\partial_{ x_2}+y_1\partial_{y_2}+z_1\partial_{z_2})\\
=&\langle v,v\rangle X_8(v,w)-\langle v,w\rangle X_6(v,w)
+\langle v,w\rangle X_9(v,w)-\langle w,w\rangle X_7(v,w)
\end{align*}
for all $(v,w)\in\R^3\times \R^3$
is then also an element of $\T_G$,
where $((v\times w)\times v)_x$, $((v\times w)\times v)_y$ and $((v\times
w)\times v)_z$
are the $x$-, $y$- and $z$-components of the vector product
$(v\times w)\times v$.
\end{example}

\end{appendix}

\def\cprime{$'$} \def\polhk#1{\setbox0=\hbox{#1}{\ooalign{\hidewidth
  \lower1.5ex\hbox{`}\hidewidth\crcr\unhbox0}}} \def\cprime{$'$}
  \def\cprime{$'$} \def\cprime{$'$} \def\cprime{$'$} \def\cprime{$'$}
  \def\cprime{$'$} \def\cprime{$'$}
  \def\polhk#1{\setbox0=\hbox{#1}{\ooalign{\hidewidth
  \lower1.5ex\hbox{`}\hidewidth\crcr\unhbox0}}}
  \def\polhk#1{\setbox0=\hbox{#1}{\ooalign{\hidewidth
  \lower1.5ex\hbox{`}\hidewidth\crcr\unhbox0}}}
  \def\polhk#1{\setbox0=\hbox{#1}{\ooalign{\hidewidth
  \lower1.5ex\hbox{`}\hidewidth\crcr\unhbox0}}}
  \def\polhk#1{\setbox0=\hbox{#1}{\ooalign{\hidewidth
  \lower1.5ex\hbox{`}\hidewidth\crcr\unhbox0}}} \def\cprime{$'$}
  \def\polhk#1{\setbox0=\hbox{#1}{\ooalign{\hidewidth
  \lower1.5ex\hbox{`}\hidewidth\crcr\unhbox0}}}
  \def\polhk#1{\setbox0=\hbox{#1}{\ooalign{\hidewidth
  \lower1.5ex\hbox{`}\hidewidth\crcr\unhbox0}}}
  \def\polhk#1{\setbox0=\hbox{#1}{\ooalign{\hidewidth
  \lower1.5ex\hbox{`}\hidewidth\crcr\unhbox0}}}
  \def\polhk#1{\setbox0=\hbox{#1}{\ooalign{\hidewidth
  \lower1.5ex\hbox{`}\hidewidth\crcr\unhbox0}}}

\bigskip

\noindent
\textbf{M. Jotz}\\
Section de Math{\'e}matiques\\ 
Ecole Polytechnique
  F{\'e}d{\'e}rale de Lausanne\\ 
CH-1015 Lausanne\\ Switzerland\\
\texttt{madeleine.jotz@epfl.ch}\\
Partially supported by Swiss NSF grant 200021-121512

\medskip

\noindent
\textbf{T.S. Ratiu}\\
Section de Math{\'e}matiques
 and Bernouilli Center\\ 
Ecole Polytechnique
  F{\'e}d{\'e}rale de Lausanne\\ 
CH-1015 Lausanne\\ Switzerland\\
\texttt{tudor.ratiu@epfl.ch}\\
Partially supported by Swiss NSF grant 200021-121512
\end{document}